\documentclass{amsart}
\usepackage{verbatim}
\usepackage{hyperref}
\usepackage{geometry}
\usepackage{amsfonts}
\usepackage{amssymb}
\usepackage{url}
\usepackage{amsmath}
\usepackage{amsthm}
\usepackage{ulem}
\normalem

\usepackage{todonotes}

\newcommand{\xTo}[1]{\stackrel{#1}{\To}}
\newcommand{\knf}{K^{\mathrm{inv}}}

\renewcommand{\todo}[1]{}

\newcommand{\can}{\mathrm{can}}
\newcommand{\THH}{\mathrm{THH}}
\newcommand{\HH}{\mathrm{HH}}

\newcommand{\TP}{\mathrm{TP}}
\renewcommand{\hom}{\mathrm{Hom}}
\newcommand{\HC}{\mathrm{HC}}
\newcommand{\md}{\mathrm{Mod}}
\newcommand{\Sp}{\mathrm{Sp}}
\newcommand{\fun}{\mathrm{Fun}}

\usepackage{xy}
\input xy
\newcommand{\spec}{\mathrm{Spec}}
\renewcommand{\sp}{\mathrm{Sp}}
\xyoption{all}

\renewcommand{\mod}{\mathrm{Mod}}

\theoremstyle{definition}
\newtheorem{definition}{Definition}[section]
\newtheorem{example}[definition]{Example}
\newtheorem{proposition}[definition]{Proposition}

\newtheorem{lemma}[definition]{Lemma}
\newtheorem{corollary}[definition]{Corollary}
\newtheorem{cons}[definition]{Construction}

\newtheorem{remark}[definition]{Remark}
\newtheorem{theorem}[definition]{Theorem}

\newtheorem*{var}{Variant}
\newtheorem*{question}{Question}
\newcommand{\lotimes}{\otimes^{\mathbb{L}}}
\newtheorem{thm}{Theorem}

\newcommand{\bb}{\mathbb}
\newcommand{\To}{\longrightarrow}

\newcommand{\isoto}{\stackrel{\simeq}{\to}}
\DeclareMathOperator{\dlog}{dlog}
\newcommand{\op}{\operatorname}
\newcommand{\sub}[1]{{\mbox{\rm \scriptsize #1}}}

\newcommand{\dotimes}{\otimes^{\bb L}}
\renewcommand{\hat}{\widehat}
\newcommand{\xto}{\xrightarrow}
\newcommand{\TR}{\mathrm{TR}}
\DeclareMathOperator{\Spec}{Spec}
\newcommand{\al}{\alpha}
\newcommand{\CycSp}{\mathrm{CycSp}}
\newcommand{\TC}{\mathrm{TC}}

\newcommand{\nuloc}{\mathrm{Ring}^{\mathrm{nu,loc}}_R}
\newcommand{\nur}{\mathrm{Ring}^{\mathrm{nu}}_R}

\newcommand{\nuh}{\mathrm{Ring}^{\mathrm{nu,h}}_R}
\newcommand{\nuzh}{\mathrm{Ring}^{\mathrm{nu,h}}}

\newcommand{\lan}{\mathrm{Lan}}

\begin{document}

\title{$K$-theory and topological cyclic homology of henselian pairs}
\date{\today}

\author{Dustin Clausen}
\address{Max-Planck-Institut f\"ur Mathematik, Bonn}
\email{dclausen@mpim-bonn.mpg.de}

\author{Akhil Mathew}
\address{Department of Mathematics, University of Chicago}
\email{amathew@math.uchicago.edu}

\author{Matthew Morrow}
\address{CNRS \& Institut de Math\'ematiques de Jussieu-Paris Rive Gauche, Sorbonne Universit\'e
}
\email{matthew.morrow@imj-prg.fr}

\maketitle 
\begin{abstract}
Given a henselian pair $(R, I)$ 
of commutative rings, 
we show that the relative $K$-theory and relative topological cyclic homology
with finite coefficients are identified via the cyclotomic trace $K \to \TC$. 
This yields a generalization of the classical Gabber--Gillet--Thomason--Suslin rigidity theorem 
(for mod $n$ coefficients, with $n$ invertible in $R$) and McCarthy's theorem on
relative $K$-theory (when
$I$ is nilpotent). 

We deduce that the cyclotomic trace is an equivalence in large
degrees between $p$-adic $K$-theory and topological cyclic homology for 
a large class of $p$-adic rings. 
In addition, we show that $K$-theory with finite coefficients satisfies continuity 
for complete noetherian rings which are $F$-finite modulo $p$. 
Our main new ingredient is a basic finiteness property of $\TC$ with finite
coefficients. 
\end{abstract}

\tableofcontents

\section{Introduction}

\subsection{Rigidity results}
The purpose of this paper is to study the (connective) algebraic $K$-theory $K(R)$ of a commutative ring $R$, by means of the cyclotomic trace \cite{BHM}
\[ K(R) \to \TC(R),   \]
from $K$-theory to {topological cyclic homology} $\TC(R)$. 
The cyclotomic trace is known to be an extremely useful tool in studying
$K$-theory. On the one hand, $\TC(R)$ is often easier to calculate directly than
$K(R)$ and has various arithmetic interpretations. For
instance, according to work of Bhatt--Morrow--Scholze \cite{BMS2}, $\TC(R)$ for
$p$-adic rings is a form of syntomic cohomology of $\spec R$. On the other
hand, the cyclotomic trace is often an effective approximation to algebraic
$K$-theory. It is known that the cyclotomic
trace is a $p$-adic equivalence in nonnegative degrees for finite algebras over the Witt vectors over a
perfect field \cite{HM97}, and the cyclotomic trace has been used in several
fundamental calculations of algebraic $K$-theory such as \cite{HMlocal}.

Our main theorem extends the known range of situations in which $K$-theory is
close to TC. To formulate our results cleanly, we introduce the following
notation. 

\begin{definition} 
For a ring $R$, we write  $\knf(R)$ for the  homotopy fiber of the cyclotomic trace $K(R) \to \TC(R)$. 
\end{definition}

Thus $\knf(R)$ measures the difference between $K$ and $\TC$.  The following fundamental result 
(preceded by the rational version due to Goodwillie \cite{Goodrel} and the $p$-adic version
proved by McCarthy \cite{mccarthy} and generalized by Dundas \cite{dundas}) shows that 
$\knf$ is nil-invariant.

\begin{theorem}[Dundas--Goodwillie--McCarthy \cite{dgm}]
\label{DGMthm}
Let $R \to R'$ be a map of rings which is a surjection with nilpotent
kernel.\footnote{Or, more generally, a map of connective $\bb{E}_1$-ring spectra such that $\pi_0R\rightarrow\pi_0R'$ is a surjection with  nilpotent kernel} Then the map $\knf(R) \to \knf(R')$ is an equivalence. \end{theorem}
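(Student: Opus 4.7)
My plan is as follows. First, I would reduce to the case of a square-zero extension. If $J = \ker(R \to R')$ is nilpotent with $J^m = 0$, then the factorization
\[ R = R/J^m \to R/J^{m-1} \to \cdots \to R/J^2 \to R/J = R' \]
has the property that each consecutive map is surjective with square-zero kernel. Since $\knf$ is a functor of rings and the claim is closed under composition (being the statement that a certain map on fibers is an equivalence), it suffices to treat a single square-zero extension $R \to R'$ with kernel $I$ satisfying $I^2 = 0$.

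For this case, I would study both $K$ and $\TC$ via Goodwillie calculus in the ``direction'' of the ideal. Concretely, for a (connective) $R'$-bimodule $M$, form the trivial square-zero extension $R' \ltimes M$ and consider the relative theories
\[ F_K(M) := \mathrm{fib}\bigl(K(R' \ltimes M) \to K(R')\bigr), \qquad F_{\TC}(M) := \mathrm{fib}\bigl(\TC(R' \ltimes M) \to \TC(R')\bigr) \]
as functors of $M$, together with the natural transformation $F_K \to F_{\TC}$ induced by the cyclotomic trace. The classical Goodwillie--Waldhausen identification gives that the first Goodwillie derivative of $F_K$ at $M=0$ is $\THH(R'; M)$, and the analogous statement for $F_{\TC}$ again gives $\THH(R'; M)$; crucially, the cyclotomic trace realizes the same identification on both sides. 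The key claim, which reduces the theorem to a comparison of $\THH$ with itself, is that the cyclotomic trace induces an equivalence not just on first derivatives, but on the entire Goodwillie tower.

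The main obstacle is this last claim. The strategy, originating with McCarthy and extended by Dundas, is to prove by induction that the cyclotomic trace is an equivalence between the $n$-th polynomial approximations $P_n F_K \to P_n F_{\TC}$, using the fibration sequences connecting successive stages of the Taylor tower together with an identification of the layers (cross-effects) in terms of iterated $\THH$-like constructions coming from Waldhausen's $S_\bullet$-formalism. One then needs to verify convergence of the Taylor towers when evaluated on an honest connective ideal such as $I$; this follows from Goodwillie-style connectivity/analyticity estimates for the relative constructions $F_K$ and $F_{\TC}$ restricted to connective bimodules. Finally, to pass back from the trivial extension $R' \ltimes I$ to the actual square-zero extension $R$, one uses that both are classified by the same bimodule structure on $I$ (together with, in general, a $k$-invariant that vanishes for discrete rings and an additive ideal), so the argument for the trivial extension suffices.
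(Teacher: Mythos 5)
This theorem is not proved in the paper at all: it is imported as a black box from the Dundas--Goodwillie--McCarthy book \cite{dgm} (with the rational case due to Goodwillie and the finite-coefficient case to McCarthy and Dundas), so there is no in-paper argument to compare yours against. Your sketch is a recognizable roadmap of that external proof --- reduction to square-zero extensions, Goodwillie calculus in the bimodule direction, identification of first derivatives with $\THH(R';M)$, and an induction up the Taylor tower --- but as written it asserts rather than proves every hard step. In particular, ``the first Goodwillie derivative of $F_K$ is $\THH(R';M)$, and the trace realizes this identification'' is the Dundas--McCarthy theorem that stable $K$-theory is $\THH$, itself a substantial piece of work; the matching of the \emph{higher} layers of the two towers is precisely McCarthy's theorem and is where all the content lies (in McCarthy's argument it is only established after profinite completion, with the integral statement requiring Goodwillie's separate rational comparison of relative $K$ with relative negative cyclic homology, or the connectivity bootstrap of \cite{dgm}); and convergence of the towers on a merely $0$-connected (discrete) ideal $I$ does not follow from generic analyticity estimates but requires a resolution of $I$ by highly connected simplicial bimodules together with the fact that $K$ and $\TC$ commute with the relevant realizations.

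There is also one step that is simply wrong as stated: the claim that one can pass from the trivial extension $R'\ltimes I$ to the actual square-zero extension $R$ because the classifying ``$k$-invariant vanishes for discrete rings.'' Square-zero extensions of a discrete ring $R'$ by a bimodule $I$ are classified by a Hochschild (or Andr\'e--Quillen) cohomology class that is generally nonzero --- $\mathbb{Z}/p^2 \to \mathbb{Z}/p$ is a non-split square-zero extension --- so the general case does not reduce to the split one by this route. The actual arguments either treat arbitrary $0$-connected maps with nilpotent $\pi_0$-kernel directly via the $2k$-connectivity estimates, or reduce to split extensions by a further simplicial resolution; either way this final step needs a genuine argument that your proposal does not supply.
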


The above result is equivalent to the statement that for a nilpotent ideal, the
relative $K$-theory is identified with relative topological cyclic homology. 
Our main result is an extension  of Theorem~\ref{DGMthm} with finite coefficients to a more
general class of surjections. 
We use the following classical definition in commutative algebra (see also
Definition~\ref{def:henspair} below). 
\begin{definition} 
Let $R$ be a commutative ring and $I \subset R$ an ideal. Then $(R, I)$ is said
to be a \emph{henselian pair} if given a polynomial
$f(x) \in R[x]$ and a root $\overline{\alpha} \in R/I$ of $\overline{f} \in (R/I)[x]$ with
$\overline{f}'(\alpha)$ being a unit of $R/I$, then $\overline \alpha$ lifts to a root $\alpha \in R$ of $f$.

Examples of henselian pairs include pairs $(R, I)$ where $R$ is $I$-adically
complete (by Hensel's lemma) and pairs $(R, I)$ where $I$ is locally nilpotent. 
\end{definition} 

Algebraic $K$-theory with finite coefficients prime to the characteristic 
is known to interact well with henselian pairs; one has the following result of Gabber
\cite{gabber}, preceded by work of Suslin \cite{Suslinlocal} and  Gillet--Thomason \cite{GT}. 
See also \cite[Sec.~4.6]{Arf} for a textbook reference. 

\begin{theorem}[Gabber \cite{gabber}] 
\label{gabberthm}
Let $(R, I)$ be a henselian pair. Suppose $n$ is invertible in $R$. Then the map
$K(R)/n \to K(R/I)/n$ is an equivalence of spectra. 
\end{theorem}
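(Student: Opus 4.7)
The plan is to show that the relative $K$-theory spectrum $K(R,I)/n$ is contractible, which via the fiber sequence $K(R,I) \to K(R) \to K(R/I)$ is equivalent to the stated equivalence of spectra. By the universal coefficient theorem and a $p$-primary decomposition, it suffices to treat the case $n = \ell$ for a single prime $\ell$ invertible in $R$.

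Next I would perform a continuity reduction. One writes $R$ as a filtered colimit of its finitely generated $\mathbb{Z}[1/\ell]$-subalgebras $R_\alpha$, and replaces each pair $(R_\alpha, I\cap R_\alpha)$ by its henselization. Because the henselization functor commutes with filtered colimits of rings, a filtered colimit of henselian pairs is henselian, and $K$-theory commutes with filtered colimits, one reduces to the case where $(R,I)$ is the henselization of a noetherian finite type situation. A Nisnevich (or Zariski) descent argument then reduces further to the case of a henselian \emph{local} pair $(A,\mathfrak{m})$ with residue field $k$, where one must verify that $K(A)/\ell \to K(k)/\ell$ is an equivalence.

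The heart of the argument is Suslin's rigidity technique. Given a class $\alpha$ in the relative group $\pi_* K(A,\mathfrak{m})/\ell$, one exploits the henselian property to lift finite separable extensions of $k$ to finite étale extensions of $A$. Combining the compatibility of $\alpha$ with norm/transfer maps along these extensions with the divisibility of the mod $\ell$ $K$-groups of an algebraically closed field (Suslin's theorem) forces $\alpha$ to be killed by arbitrarily large powers of $\ell$, hence to vanish. This handles the case where $A$ is essentially smooth over a perfect base.

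The main obstacle is the passage from the essentially smooth case to an arbitrary henselian local ring. This is exactly the contribution of Gabber over the earlier work of Gillet--Thomason and Suslin: one needs a geometric presentation lemma that dominates a given henselian local ring by smooth models in a functorial enough way that the transfer formalism -- encoded via presheaves with transfers -- can be transported back. Making this devissage precise, and checking that the transfers assemble compatibly with the rigidity argument, is the most technical part of the proof.
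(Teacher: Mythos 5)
This theorem is quoted from \cite{gabber} and not proved in the paper, so there is no internal proof to compare against; but the paper's proof of Theorem~\ref{mainthm} is explicitly modeled on Gabber's argument (Sections 3--4), and measured against that, your outline contains a genuine gap. The problem is the claimed reduction, ``by Nisnevich (or Zariski) descent,'' from a general henselian pair to a henselian \emph{local} pair $(A,\mathfrak{m})$ with residue field $k$. No such reduction exists: the henselian-pair condition is a global condition relating $R$ and $I$, and a descent spectral sequence over $\spec R$ gives no control over the contributions of the points of $\spec R$ lying outside $V(I)$, which have no counterpart on $\spec(R/I)$; rigidity for henselian local rings therefore does not formally imply rigidity for pairs. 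Indeed, the essential case in Gabber's argument is a \emph{non-local} pair, namely the henselization of $(\mathbb{Z}[1/\ell][x_1,\dots,x_n],(x_1,\dots,x_n))$, whose quotient is $\mathbb{Z}[1/\ell]$ rather than a field. The correct devissage is: (i) excision, replacing $(R,I)$ by $(\mathbb{Z}\ltimes I,I)$ so that only the nonunital henselian ring $I$ matters; (ii) the fact that $I\mapsto \pi_*K(\mathbb{Z}\ltimes I,I)/\ell$ is a finitely generated functor on nonunital henselian rings, generated by finitely many classes over the free objects $\mathbb{Z}\left\{x_1,\dots,x_n\right\}^+$ (this is what the paper axiomatizes as projective pseudocoherence, its replacement for Suslin's ``universal homotopies''); and (iii) the $[N]$ trick (Corollary~\ref{rationalhens}, Lemma~\ref{gabberlem}), which uses the colimit $\varinjlim_{[N]}\mathbb{Z}\left\{x_1,\dots,x_n\right\}^+\simeq\mathbb{Q}\left\{x_1,\dots,x_n\right\}^+$ to kill the generators up to controlled torsion and reduce to free henselian algebras over prime fields, i.e.\ to henselian local rings of affine space at the origin. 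Only at that point does your Suslin-style transfer-and-divisibility argument for the smooth local case (which is correct in outline) enter.

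A secondary point: what you describe as Gabber's contribution --- ``a geometric presentation lemma that dominates a given henselian local ring by smooth models,'' phrased via presheaves with transfers --- conflates this with a different result of Gabber (his presentation lemma, used in Gersten-type arguments). The passage from the smooth local case to arbitrary henselian pairs is precisely steps (i)--(iii) above; the smooth local case itself was already handled by Gillet--Thomason and Suslin, and is the only part of \cite{gabber} that the present paper actually invokes in proving Theorem~\ref{mainthm}.
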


The main result of this paper is the following common extension of Theorem
~\ref{gabberthm} and the commutative and profinitely completed case of Theorem ~\ref{DGMthm}.
	
\begin{thm} 
\label{ourthm}
Let $(R, I)$ be a henselian pair. Then for any $n$, the map
$\knf(R)/n \to \knf(R/I)/n$ is an equivalence.  
\end{thm}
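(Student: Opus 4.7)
The plan is as follows. By standard dévissage (cofiber sequences $\bb Z/\ell \to \bb Z/\ell^{k+1} \to \bb Z/\ell^k$ combined with Chinese Remainder Theorem in $\bb Z/n$), reduce first to the case $n = \ell$ prime. Using Zariski descent for both $K$ and $\TC$, further reduce to the situation where $\ell$ is either a unit in $R$ or lies in the Jacobson radical. The two cases then require rather different techniques.

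When $\ell$ is invertible in $R$, Gabber's Theorem~\ref{gabberthm} already supplies $K(R)/\ell \simeq K(R/I)/\ell$, so the task reduces to the analogous statement for $\TC(-)/\ell$. With $\ell$ invertible, $\TC(-)/\ell$ is étale-local on commutative rings (closely related to mod-$\ell$ étale $K$-theory à la Thomason, or to $\ell$-adic étale cohomology), and so rigidity for henselian pairs follows from Gabber's affine analog of proper base change for torsion étale cohomology.

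The substantive case is $\ell = p$ lying in the Jacobson radical. First assume $R$ is $I$-adically complete. Applying Theorem~\ref{DGMthm} to each surjection $R/I^{k} \twoheadrightarrow R/I$ (which has nilpotent kernel) yields $\knf(R/I^k)/p \simeq \knf(R/I)/p$ for every $k \geq 1$. To conclude, identify $\knf(R)/p$ with $\lim_k \knf(R/I^k)/p$ via a continuity statement along the $I$-adic tower. This continuity is the main obstacle and is where the new input intervenes: one needs a finiteness property of $\TC(-;\bb Z/p)$ (the one advertised in the abstract) to commute the formation of $\TC/p$ with the $I$-adic limit, since DGM by itself only controls each nilpotent quotient individually. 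Finally, pass from the complete case to the general henselian case by comparing $(R, I)$ with $(\hat R, I\hat R)$: since $R/I^k \simeq \hat R/(I\hat R)^k$ for all $k$ (reducing to $I$ finitely generated by writing it as a filtered union of such), both $\knf(R)/p$ and $\knf(\hat R)/p$ are recovered as the same pro-limit.
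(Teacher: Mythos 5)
There is a genuine gap, and it sits at the heart of your argument. In the case where $p$ lies in the Jacobson radical you reduce everything to the continuity statement $\knf(R)/p \simeq \varprojlim_k \knf(R/I^k)/p$ for $I$-adically complete $R$, but you offer no proof of it, and none is available by the route you indicate. The finiteness property advertised in the abstract is that $\TC/p$ commutes with \emph{filtered colimits}; it says nothing about commuting $\TC/p$ with the inverse limit along the $I$-adic tower, which is a continuity theorem of Dundas--Morrow requiring noetherian and $F$-finiteness hypotheses. Worse, the corresponding continuity for $K$-theory (Theorem~\ref{Kcont}) is in this paper \emph{deduced from} the rigidity theorem, and it provably \emph{fails} without $F$-finiteness: for $R = k[[t]]$ with $k$ not $F$-finite, $K(R)/p \to \varprojlim_s K(k[t]/t^s)/p$ is not an equivalence, even though the rigidity theorem holds for $(k[[t]],(t))$. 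Since DGM makes the tower $\knf(R/I^k)/p$ constant with value $\knf(R/I)/p$, the continuity of $\knf/p$ you want is \emph{literally equivalent} to the complete case of the theorem; asserting it is circular. The same problem infects your final step: the claim that $\knf(R)/p$ for a non-complete henselian pair is ``recovered as the same pro-limit'' is again the unproven continuity statement, now for non-complete pairs. (The correct comparison of $(R,I)$ with $(\hat R, I\hat R)$, via N\'eron--Popescu and Elkik's theorem exhibiting $R \to \hat R$ as a filtered colimit of split injections, runs in the opposite logical direction: it deduces the henselian case from the complete case \emph{once the complete case is known}, cf.\ Remark~\ref{henspairequivtocontinuity}.) Your opening reduction is also shaky: $\spec R$ is not in general covered by opens on which $\ell$ is invertible or in the Jacobson radical (consider $R=\mathbb{Z}$), and localization does not preserve the henselian pair condition.

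For contrast, the paper's proof avoids any completion or limit argument. It uses excision (Corti\~nas, Geisser--Hesselholt, Dundas--Kittang) to reduce to nonunital henselian rings $I$ with $R = \mathbb{Z}\ltimes I$; establishes that $I \mapsto \knf(\mathbb{Z}\ltimes I, I)/p$ is a \emph{projectively pseudocoherent} functor (via homological stability for $GL_n$, the Borel--Serre compactification, and the cocontinuity of $\TC/p$ on bounded-below cyclotomic spectra); and then runs Gabber's multiplication-by-$N$ argument on free henselian nonunital algebras to reduce to henselizations of polynomial rings over prime fields, where the statement is checked directly using Gabber rigidity in characteristic $\neq p$ and the Geisser--Levine and Geisser--Hesselholt computations in characteristic $p$.
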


What is the significance of such rigidity results in $K$-theory? 
Computing the algebraic $K$-theory of rings and schemes is a fundamental and
generally very difficult problem. 
One of the basic tools in doing so is \emph{descent}: that is, reducing the
computation of 
the $K$-theory of certain rings to that of other (usually easier) rings built
from them.  As is well-known, algebraic $K$-theory generally does not
satisfy descent for the \'etale topology.  
On the other hand, 
a general result of Thomason--Trobaugh \cite{TT90} (see also
\cite{ThomasonICM} for a survey) states that
algebraic $K$-theory of rings and quasi-compact quasi-separated schemes satisfies
descent for the \emph{Nisnevich} topology, which is quite well-behaved for a
noetherian scheme of finite Krull dimension. 
In the Nisnevich topology, the points are given by the spectra of
\emph{henselian}
local rings. Up to a descent spectral sequence, algebraic $K$-theory can thus
be computed if it is understood for henselian local rings.
When $n$ is invertible, 
Theorem~\ref{gabberthm} enables one to reduce the calculation (with mod $n$
coefficients) to the $K$-theory
of fields.  Our main result extends this to the case where $n$ is not assumed invertible, but with the additional term coming from $\TC$.  All of this uses only the local case of Theorem \ref{ourthm}; invoking the general case also gives further information.

\subsection{Consequences}
As a consequence of Theorem~\ref{ourthm}, we deduce various global structural properties about algebraic
$K$-theory and topological cyclic homology, 
especially $p$-adic $K$-theory of $p$-adic rings. 
In many cases, we are able to extend known properties in the smooth case to
provide results on the $K$-theory of singular schemes. 

The first main consequence 
of our results is a general statement that $p$-adic algebraic $K$-theory and
$\TC$ agree in large enough degrees for reasonable $p$-torsion schemes, or
affine schemes on which $(p)$ is henselian. 

\begin{thm}[Asymptotic comparison of $K, \TC$] 
\label{KTCinlarge}
Let $R$ be a ring  henselian along $(p)$ and such that $R/p$ has finite Krull
dimension. 
Let $d = \sup_{x \in \spec (R/p) } \log_p [k(x):
k(x)^p]$ where $k(x)$ denotes the residue field at $x$ and $k(x)^p \subset
k(x)$ the subfield of $p^\sub{th}$ powers. Then 
the map $K(R)/p^i \to \TC(R)/p^i$ is a equivalence in degrees $\geq
\max(d, 1)$ for each $i \geq 1$. 
\end{thm}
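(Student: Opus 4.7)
The strategy is to reduce the claim, via rigidity (Theorem~\ref{ourthm}) and Nisnevich descent, to the case of residue fields of characteristic $p$, where the desired comparison is a consequence of Geisser--Levine together with the computation of $\TC$ of fields.

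Since $(R,(p))$ is henselian by hypothesis, Theorem~\ref{ourthm} (with $n = p^i$) yields $\knf(R)/p^i \simeq \knf(R/p)/p^i$, so the fibers of $K/p^i \to \TC/p^i$ at $R$ and at $R/p$ agree. It therefore suffices to prove the theorem after replacing $R$ by the $\F_p$-algebra $R/p$, which has finite Krull dimension by hypothesis.

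Now use Nisnevich descent on $\Spec(R/p)$. Thomason--Trobaugh give Nisnevich descent for $K(-)/p^i$, while $\TC(-)/p^i$ satisfies Nisnevich descent on $\F_p$-algebras via its identification with (derived) syntomic cohomology of Bhatt--Morrow--Scholze; hence $\knf(-)/p^i$ satisfies Nisnevich descent as well. Because the Nisnevich cohomological dimension of $\Spec(R/p)$ is bounded by its (finite) Krull dimension, the descent spectral sequence
\[
E_2^{s,t} = H^s_{\mathrm{Nis}}\!\bigl(\Spec(R/p),\,\pi_t(\knf/p^i)^{\mathrm{sh}}\bigr) \;\Longrightarrow\; \pi_{t-s}\bigl(\knf(R/p)/p^i\bigr)
\]
converges. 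It thus suffices to show that for every Nisnevich stalk $S$ of $\Spec(R/p)$, i.e., every henselian local $\F_p$-algebra arising as such a stalk, the homotopy groups of $\knf(S)/p^i$ vanish in the appropriate range. Applying Theorem~\ref{ourthm} to the henselian pair $(S,\mathfrak{m}_S)$ identifies $\knf(S)/p^i \simeq \knf(k)/p^i$, where $k = S/\mathfrak{m}_S$ is a residue field of $R/p$ and so satisfies $\log_p[k:k^p] \leq d$.

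The main obstacle is now the field case: for a field $k$ of characteristic $p$ with $d_k := \log_p[k:k^p]$, one must show that $K(k)/p^i \to \TC(k)/p^i$ is an isomorphism on $\pi_n$ for $n \geq \max(d_k,1)$. This is essentially classical. On the $K$-theory side, Geisser--Levine identifies $K_n(k)/p^i$ with the logarithmic de Rham--Witt group $W_i\Omega^n_{k,\log}$, which vanishes for $n > d_k$ since $\Omega^1_{k/\F_p}$ has $k$-dimension $d_k$. On the $\TC$ side the same groups appear via Hesselholt--Madsen and Geisser--Hesselholt, or via the syntomic description of $\TC(k)/p^i$ of Bhatt--Morrow--Scholze. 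Comparing the two identifications through the cyclotomic trace yields the required isomorphism and concludes the proof.
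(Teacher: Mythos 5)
Your proposal is correct and follows essentially the same route as the paper's proof of Theorem~\ref{KTCasymptotic}: reduce to $R/p$ by rigidity, run Nisnevich descent over $\spec(R/p)$, identify the stalks with residue fields by rigidity again, and settle the field case by Geisser--Levine and Geisser--Hesselholt. The one point to tighten is that Thomason--Trobaugh descent holds for \emph{nonconnective} $K$-theory, so one should descend $\mathbb{K}(-)/p^i$ and then use that $K \to \mathbb{K}$ is an equivalence in degrees $\geq 0$ (harmless since the claim concerns degrees $\geq \max(d,1) \geq 1$), exactly as the paper does; likewise the convergence of the descent spectral sequence for a possibly non-noetherian $R/p$ of finite Krull dimension rests on the finite-homotopy-dimension statement the paper defers to \cite{CM18}.
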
 
	
Theorem~\ref{KTCinlarge} specializes to a number of existing results and calculations of algebraic
$K$-theory, and enables new ones. 
\begin{enumerate}
\item  
For finitely generated algebras over a perfect field, the result was shown in
the \emph{smooth} case by Geisser--Levine \cite{GL} and Geisser--Hesselholt \cite{GH}:
in fact, both $K$-theory and TC vanish mod $p$ in sufficiently large degrees.
\item
For singular curves, the
result appears in Geisser--Hesselholt \cite{GHexc}. 
\item 
Our approach also applies to any
semiperfect or semiperfectoid ring, where it shows that $K/p$ is the connective cover of $\TC/p$.  This recovers calculations of
Nizio\l \cite{Niziol-crystalline} and Hesselholt \cite{HesselholtOC} of the $K$-theory of the ring 
$\mathcal{O}_{\mathbb{C}_p}$ of integers in the completed algebraic closure
$\mathbb{C}_p$ of $\mathbb{Q}_p$. See \cite[Sec.~7.4]{BMS2} for some recent
applications. 
\item If $R$ is any noetherian ring  henselian along $(p)$ and such that
$R/p$ is \emph{$F$-finite} (i.e., the Frobenius map on $R/p$
is finite), then the above result applies: $p$-adic $K$-theory and $\TC$ agree
in sufficiently large degrees. 
\end{enumerate}

Another application of our results is to show that $p$-adic \emph{\'etale} $K$-theory
is identified with topological cyclic homology under quite general situations;
this is shown in \cite{GH} in the smooth case. 
As a consequence, we may regard Theorem~\ref{KTCinlarge} as a type of $p$-adic
Lichtenbaum--Quillen statement. 

\begin{thm}[\'Etale $K$-theory is $\TC$ at points of characteristic $p$] 
Let $R$ be a strictly henselian local ring with residue field of characteristic
$p > 0$. Then $\knf(R)/p = 0$, i.e., the map $K(R) \to \TC(R)$ is a $p$-adic equivalence. 
\end{thm}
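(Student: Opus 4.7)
The plan is to invoke Theorem~\ref{ourthm} to reduce to a purely field-theoretic computation. Since $R$ is henselian local, the pair $(R,\mathfrak{m})$ with $\mathfrak{m}$ the maximal ideal is henselian, so Theorem~\ref{ourthm} gives $\knf(R)/p \simeq \knf(k)/p$, where $k = R/\mathfrak{m}$ is separably closed of characteristic $p$, hence perfect. The task becomes to show that the cyclotomic trace $K(k) \to \TC(k)$ is an equivalence modulo $p$ for such a field.

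For the left-hand side, I would invoke Geisser--Levine: $\pi_n K(k)/p \cong \Omega^n_{k,\log}$ for any field $k$ of characteristic $p$, and the logarithmic de Rham--Witt groups vanish in positive degrees when $k$ is perfect, since every unit is a $p$-th power and so $\dlog(u) = p^{-1}\dlog(u^p) = 0$ on $k^\times$. Combined with $K_0(k)/p = \mathbb{F}_p$, this gives $K(k)/p \simeq H\mathbb{F}_p$ concentrated in degree zero.

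For the right-hand side, I would use the Nikolaus--Scholze equalizer presentation
\[
\TC(k) \to \TC^{-}(k) \xrightarrow{\varphi - \can} \TP(k)
\]
combined with the standard homotopy-fixed-point computation of $\TC^{-}, \TP$ for a perfect $\mathbb{F}_p$-algebra (giving $W(k)$-modules in each even degree, with $\varphi$ acting on $\pi_0 = W(k)$ as the Witt Frobenius). The key input special to our setting is that for $k$ separably (equivalently algebraically) closed, $\varphi - 1 \colon W(k) \to W(k)$ is surjective with kernel $\mathbb{Z}_p$, i.e., the Artin--Schreier--Witt sequence is exact. Unwinding the resulting long exact sequence in homotopy shows that $\TC(k)_p^{\wedge} \simeq H\mathbb{Z}_p$, so $\TC(k)/p \simeq H\mathbb{F}_p$; on $\pi_0$ the cyclotomic trace is the canonical map $\mathbb{Z} = K_0(k) \to \mathbb{Z}_p = \pi_0\TC(k)$, which is the identity modulo $p$. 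Hence $K(k)/p \to \TC(k)/p$ is an equivalence and $\knf(k)/p = 0$.

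The main obstacle, once the reduction is made, is carrying out the $\TC$ computation cleanly in all degrees. The agreement in positive degrees already follows from Theorem~\ref{KTCinlarge} applied to $k$ (where $d = \log_p[k:k^p] = 0$), and the $\pi_0$ comparison is direct; the real content is the vanishing of $\pi_{<0}\TC(k)/p$, which is precisely what the Artin--Schreier--Witt surjectivity of $\varphi - 1$ on $W(k)$ provides in the algebraically closed case and what fails for, e.g., $W(k)$ itself in place of $k$.
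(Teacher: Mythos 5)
Your reduction to the residue field via the main rigidity theorem matches the paper, but the step ``$k$ is separably closed of characteristic $p$, hence perfect'' is false, and everything after it breaks. The residue field of a strictly henselian local ring is separably closed, and in characteristic $p$ a separably closed field need not be perfect (nor algebraically closed): for instance the separable closure of $\mathbb{F}_p(t)$ contains no $p$-th root of $t$, since $x^p - t$ is purely inseparable. For such a $k$ one has $K_1(k)/p \cong k^\times/(k^\times)^p \neq 0$, so $K(k)/p$ is \emph{not} $H\mathbb{F}_p$; correspondingly the B\"okstedt-type computation of $\TC^-(k)$, $\TP(k)$ and the Artin--Schreier--Witt argument on $W(k)$ are only available for perfect $k$. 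Your argument does correctly handle the perfect (e.g.\ algebraically closed) case, which is essentially Example~\ref{perfex} of the paper, but it does not prove the theorem.

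The paper's route avoids computing either side outright. Since $\mathbb{F}_p$ is perfect, any field $k$ of characteristic $p$ is ind-smooth over $\mathbb{F}_p$, so Theorem~\ref{GHGL} (Geisser--Levine plus Geisser--Hesselholt) identifies $\pi_n(\knf(k)/p)$ with $\widetilde{\nu}^{\,n+2}(k) = \operatorname{coker}\bigl(1 - C^{-1}\colon \Omega^n_k \to \Omega^n_k/d\Omega^{n-1}_k\bigr)$; the kernel terms $\nu^n(k)$, which are nonzero for imperfect $k$, cancel between $K$ and $\TC$ and never need to vanish. One then checks that $1 - C^{-1}$ is surjective: for $\omega = a\,dx_1\cdots dx_n$ one has $(1-C^{-1})(u\omega) = (u - u^p a^{p-1}x_1^{p-1}\cdots x_n^{p-1})\omega$, and the equation $u - u^p c = 1$ is separable in $u$, hence solvable in any separably closed field. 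That is the only place the hypothesis on $k$ enters, and it is exactly the separable (not algebraic) closedness that is used.
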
 

In addition, we are able to obtain a general split injectivity statement about the
cyclotomic trace of local $\mathbb{F}_p$-algebras. 
\begin{thm}[Split injectivity of the cyclotomic trace]
For any local $\mathbb{F}_p$-algebra $R$ and any $i \geq 1$, the cyclotomic trace $K(R)/p^i \to
\TC(R)/p^i$ is split injective on homotopy groups. 
\end{thm}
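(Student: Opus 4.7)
The plan is to combine Theorem~\ref{ourthm} with known results on the cyclotomic trace for fields of characteristic $p$, supplemented by a descent argument for non-henselian local rings.

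First, I would establish the theorem for $R=k$, a field of characteristic $p$. The Geisser--Levine theorem identifies $K_n(k;\mathbb{Z}/p^i)$ with the logarithmic de Rham--Witt group $W_i\Omega^n_{k,\log}$ for $n\ge 0$. Via the motivic filtration on $\TC$ of Bhatt--Morrow--Scholze, these groups appear as a direct summand of $\TC_n(k;\mathbb{Z}/p^i)$, yielding a retraction $s_k$ of the cyclotomic trace on $\pi_*$.

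Next, for a henselian local $\mathbb{F}_p$-algebra $R$ with residue field $k$, Theorem~\ref{ourthm} applied to the pair $(R,\mathfrak{m})$ gives a homotopy pullback square of spectra
\[K(R)/p^i \simeq K(k)/p^i \times_{\TC(k)/p^i}\TC(R)/p^i.\]
From this pullback structure together with the splitting $s_k$, one constructs a splitting for $R$ on $\pi_*$ via a diagram-chase in the associated Mayer--Vietoris long exact sequence: the splitting $s_k$ together with the augmentation $\TC(R)/p^i \to \TC(k)/p^i$ gives a consistent lift of elements of $\pi_*\TC(R)/p^i$ into the pullback description of $\pi_*K(R)/p^i$, once the boundary terms are controlled.

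Finally, for a general (not necessarily henselian) local $\mathbb{F}_p$-algebra $R$, one descends from the henselization $R^h$. A natural strategy is to prove that $\knf(R)/p^i \to \knf(R^h)/p^i$ is an equivalence, which would reduce the problem for $R$ directly to the henselian case; this would use that $K/p^i$ and $\TC/p^i$ commute with the filtered colimit $R^h=\operatorname{colim} R_\alpha$ of essentially étale pointed extensions, together with a further compatibility argument. I expect this descent step, together with the careful Mayer--Vietoris verification in the henselian case, to be the main obstacle, since transporting split injectivity along non-henselian extensions is not purely formal.
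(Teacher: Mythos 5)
Your field case is fine --- every field of characteristic $p$ is ind-smooth over $\mathbb{F}_p$, so Theorem~\ref{GHGL} (Geisser--Levine plus Geisser--Hesselholt) already gives the splitting there; the BMS motivic filtration is an acceptable alternative justification. The two reduction steps, however, both have genuine gaps, and the second rests on a false statement. In step 2 you try to transfer the splitting from the residue field $k$ \emph{up} to a henselian local $R$, and this goes in the wrong direction. Split injectivity of $\pi_n(K(R)/p^i) \to \pi_n(\TC(R)/p^i)$ requires in particular that $\pi_n(\knf(R)/p^i) \to \pi_n(K(R)/p^i)$ vanish. Rigidity gives $\knf(R)/p^i \simeq \knf(k)/p^i$, hence that the composite to $\pi_n(K(k)/p^i)$ vanishes, but $\pi_n(K(R)) \to \pi_n(K(k))$ has a large kernel (already $K_1(k[\epsilon]/\epsilon^2) \to K_1(k)$), so nothing follows about the map into $\pi_n(K(R))$. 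Likewise your ``consistent lift'' $(t, s_k(\bar t))$ lies in the fiber product only when $\mathrm{tr}_k(s_k(\bar t)) = \bar t$, i.e.\ only when $\bar t$ is in the image of the trace for $k$; since the complementary summand $\widetilde{\nu}^{n+1}(k)$ is nonzero for imperfect $k$, the boundary terms you hope to control are a genuine obstruction. Step 3 is false: $\knf(R)/p \to \knf(R^h)/p$ is not an equivalence in general. Although $\knf/p$ is a Nisnevich sheaf, its value on $R$ is not its stalk at the closed point. Concretely, for ind-smooth local $R$ one has $\pi_n(\knf(R)/p) \cong \widetilde{\nu}^{n+2}(R) = \mathrm{coker}(1 - C^{-1})$, and the Artin--Schreier cokernel $\mathrm{coker}(F-1\colon R \to R)$ is not invariant under henselization: for $R = \mathbb{F}_p[t]_{(t)}$ the element $t$ lies in $(F-1)R^h$ (the equation $x^p - x = t$ has a simple root mod $\mathfrak{m}$) but not in $(F-1)R$.

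The missing idea, which is the paper's actual argument, is to run the transfer in the direction in which it is formal: functorially choose a surjection $R' \twoheadrightarrow R$ with $R'$ ind-smooth \emph{local} and with henselian kernel, namely the henselization of $\mathbb{F}_p[x_a]_{a \in R} \twoheadrightarrow R$ along its kernel. Applying Theorem~\ref{mainthm} to this pair gives a Mayer--Vietoris sequence
$\cdots \to \pi_n(K(R')/p^i) \to \pi_n(\TC(R')/p^i) \oplus \pi_n(K(R)/p^i) \to \pi_n(\TC(R)/p^i) \to \cdots$,
and since the first component $\pi_n(K(R')/p^i) \to \pi_n(\TC(R')/p^i)$ is split injective by Theorem~\ref{GHGL}, the whole first map is split injective, the sequence breaks into split short exact sequences, and one reads off $\pi_n(\TC(R)/p^i) \cong \pi_n(K(R)/p^i) \oplus \widetilde{\nu^{n+1}_i}(R')$ compatibly with the trace. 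This handles arbitrary local $R$ in one stroke, with no separate henselian/non-henselian dichotomy.
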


In fact, the splitting is functorial for ring homomorphisms, and  one
can identify the complementary summand in terms of de Rham--Witt cohomology: see
Proposition~\ref{identifytwiddle}.

Next, our results also imply statements internal to $K$-theory itself, especially the
$K$-theory of $\mathbb{F}_p$-algebras. 
Let $R$ be a local $\mathbb{F}_p$-algebra. 
If $R$ is regular, there is a simple formula for the mod $p$ algebraic $K$-theory of $R$ 
given in the work of Geisser--Levine \cite{GL}. 
We let $\Omega^i_{R, \mathrm{log}}$ denote the subgroup of $\Omega^i_R$
consisting of elements which can be written as sums of products
of forms $dx/x$ for $x$ a unit. 

\begin{theorem}[Geisser--Levine \cite{GL}] 
The mod $p$ $K$-groups
$K_i(R; \mathbb{Z}/p\mathbb{Z}) = \pi_i( K(R)/p)$ are identified with the logarithmic 
forms $\Omega^{i}_{R, \mathrm{log}}$ for $i \geq 0$. 
\end{theorem}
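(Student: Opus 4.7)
My plan is to deduce the theorem from Theorem~\ref{ourthm} together with a direct computation of $\TC(R)/p$ via syntomic / de Rham--Witt cohomology, reducing ultimately to the case of fields.

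First, I would reduce to the case where $R$ is henselian along its maximal ideal $\mathfrak{m}$. Since $K/p$, $\TC/p$, and the logarithmic differentials $\Omega^{i}_{-,\log}$ all commute with the filtered colimit defining henselization, and since the henselization of a regular local ring is again regular local with the same residue field, I can replace $R$ by $R^{h}$ without loss of generality.

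With $(R,\mathfrak{m})$ now a henselian pair, Theorem~\ref{ourthm} gives an equivalence $\knf(R)/p \isoto \knf(k)/p$, where $k = R/\mathfrak{m}$. Thus the fiber sequence $\knf(R)/p \to K(R)/p \to \TC(R)/p$ maps to its analogue for $k$ by an equivalence on the left, yielding a cartesian square comparing $K/p$ of $R$ and $k$ with $\TC/p$ of $R$ and $k$. It therefore suffices to identify $\pi_{*}(\TC(R)/p)$ with $\Omega^{*}_{R,\log}$, to settle the field case $k$, and to check compatibility under the restriction $R \to k$.

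For the field $k$ of characteristic $p$, the result for fields---provable by combining Milnor $K$-theory, the Nesterenko--Suslin identification, and the Bloch--Kato--Gabber calculation---gives $K_{i}(k)/p \cong \Omega^{i}_{k,\log}$, while a Hesselholt--Madsen (or Bhatt--Morrow--Scholze syntomic) computation shows $\pi_{i}\TC(k)/p \cong \Omega^{i}_{k,\log}$, with the two matched through the $\dlog$ symbol map. For $R$ itself, I would compute $\TC(R)/p$ using the Bhatt--Morrow--Scholze identification with $p$-adic syntomic cohomology: in the regular setting, the relevant motivic-style filtration degenerates, and its graded pieces recover $\Omega^{i}_{R,\log}$ in degree $i$. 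The hardest step will be precisely this last computation of $\pi_{*}\TC(R)/p$ for a general regular local $\mathbb{F}_{p}$-algebra: in the $F$-finite case it follows from the Geisser--Hesselholt de Rham--Witt formula together with the BMS motivic filtration, but in full generality one must descend to the $F$-finite case by a continuity / approximation argument, which requires some care.
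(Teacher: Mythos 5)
Your proposal is circular within the logic of this paper. Theorem~\ref{ourthm} (= Theorem~\ref{mainthm}) is \emph{proved using} the Geisser--Levine theorem: the characteristic-$p$ input to the rigidity argument is Proposition~\ref{rigidity:henspaircharp}, whose proof invokes Theorem~\ref{GHGL}, and part (1) of Theorem~\ref{GHGL} is exactly the statement you are trying to establish. The paper itself does not prove this statement; it cites \cite{GL} as an external input (the hard content being the vanishing of mod $p$ motivic cohomology of characteristic-$p$ fields above the weight, together with Bloch--Kato--Gabber). Any argument routed through Theorem~\ref{ourthm} therefore cannot serve as a proof here.

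Even granting Theorem~\ref{ourthm}, two steps fail. First, the identification $\pi_i(\TC(R)/p)\cong\Omega^i_{R,\mathrm{log}}$ is false: by Geisser--Hesselholt (Theorem~\ref{GHGL}(2)) there is a short exact sequence $0\to\widetilde{\nu}^{i+1}(R)\to\pi_i(\TC(R)/p)\to\nu^i(R)\to0$, and $\widetilde{\nu}^{i+1}(R)=\mathrm{coker}(1-C^{-1})$ is nonzero for a general regular local ring, and already for a general field $k$ of characteristic $p$ (e.g.\ $\widetilde{\nu}^{1}(k)$ is the $p$-torsion of the Brauer group, nontrivial for $k=\mathbb{F}_p(t)$); it vanishes only \'etale-locally. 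Indeed, Theorem~\ref{GHGL}(3) identifies $\pi_i(\knf(R)/p)$ with $\widetilde{\nu}^{i+2}(R)$, and the split-injectivity result (Theorem~\ref{injectivityresult}) exists precisely because $K/p\to\TC/p$ is \emph{not} an equivalence on local $\mathbb{F}_p$-algebras. To make your cartesian-square strategy work you would instead need the map $\widetilde{\nu}^{i+1}(R)\to\widetilde{\nu}^{i+1}(k)$ to be an isomorphism --- that is Proposition~\ref{toyFprigidityTC}, which is how the paper combines these ingredients, but in the opposite logical direction (to prove Theorem~\ref{ourthm} from \cite{GL} and \cite{GH}, not conversely). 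Second, your opening reduction runs the wrong way: knowing the statement for the henselization $R^h$ does not yield it for $R$, since $R^h$ is a filtered colimit of \'etale $R$-algebras and neither $K_i(-;\mathbb{Z}/p\mathbb{Z})$ nor $\Omega^i_{-,\mathrm{log}}$ is unchanged under $R\to R^h$; commuting with filtered colimits only lets you pass \emph{to} $R^h$ from its \'etale neighborhoods, not back down to $R$.
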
 

Algebraic $K$-theory of singular $\mathbb{F}_p$-algebras is generally much more
complicated. However, using results of \cite{Morrow-HW, Morrowpro}, we are able to prove a
pro-version of the Geisser--Levine theorem (extending results of Morrow
\cite{Morrow-HW}). 

\begin{thm}[Pro Geisser--Levine]
For any regular local $F$-finite $\mathbb{F}_p$-algebra $R$ and ideal $I \subset R$, 
there is an isomorphism of pro abelian groups
$\{K_i(R/I^n; \mathbb{Z}/p \mathbb{Z})\}_{n \geq 1} \simeq \left\{\Omega^i_{R/I^n,
\mathrm{log}}\right\}_{n \geq 1}$ for each $i \geq 0$. 
\end{thm}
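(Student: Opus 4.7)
The plan is to reduce the pro-comparison to one between $\TC$ modulo $p$ and pro-logarithmic forms, using Theorem~\ref{ourthm} to control the relative $K$-theory via $\TC$ at each finite level, and then to invoke existing pro-computations of $\TC$ due to Morrow.

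First I would show that $\knf(R/I^n)/p = 0$ for every $n \geq 1$. Let $\hat R$ denote the $I$-adic completion of $R$; it is again a regular local $F$-finite $\mathbb{F}_p$-algebra, and $\hat R/(I\hat R)^n = R/I^n$. For each $n$, the pair $(\hat R, (I\hat R)^n)$ is henselian (indeed complete), so Theorem~\ref{ourthm} yields an equivalence $\knf(\hat R)/p \isoto \knf(R/I^n)/p$. On the other hand, Popescu's theorem writes $\hat R$ as a filtered colimit of smooth $\mathbb{F}_p$-algebras, so combining the theorems of Geisser--Levine~\cite{GL} and Geisser--Hesselholt~\cite{GH} with continuity of $K$ and $\TC$ under filtered colimits yields $K(\hat R)/p \simeq \TC(\hat R)/p$, i.e.\ $\knf(\hat R)/p = 0$. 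Consequently $\knf(R/I^n)/p = 0$ for every $n$, and the cyclotomic trace induces an isomorphism $K_i(R/I^n)/p \isoto \TC_i(R/I^n)/p$ at each finite level, hence also as pro-systems.

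Next I would invoke Morrow's pro-computations of $\TC$ modulo $p$ from~\cite{Morrow-HW, Morrowpro}: for a regular $F$-finite $\mathbb{F}_p$-algebra $R$ and an ideal $I$, the natural dlog symbol map induces a pro-isomorphism $\{\Omega^i_{R/I^n, \mathrm{log}}\}_n \isoto \{\TC_i(R/I^n)/p\}_n$. Concatenating with the identification from the previous step yields the desired pro-isomorphism $\{K_i(R/I^n; \mathbb{Z}/p\mathbb{Z})\}_n \cong \{\Omega^i_{R/I^n, \mathrm{log}}\}_n$.

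The \emph{main obstacle} will be justifying that Morrow's pro-$\TC$ computation applies in the full regular $F$-finite generality, not only the smooth case over a perfect field that is primarily treated in~\cite{Morrow-HW}. This should amount to a Popescu-smoothing argument together with compatibility of the dlog maps under filtered colimits and passage to the pro-limit, but the pro-category manipulations require care; one also needs the behaviour of $F$-finiteness under $I$-adic completion. A parallel remark applies to the extension of Geisser--Hesselholt used in the vanishing of $\knf(\hat R)/p$.
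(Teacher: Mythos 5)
Your first step contains a genuine error that the rest of the argument inherits. You claim that $\knf(\hat R)/p = 0$ because $\hat R$ is ind-smooth over $\mathbb{F}_p$ by Popescu. But the cyclotomic trace is \emph{not} a mod $p$ equivalence for general ind-smooth local $\mathbb{F}_p$-algebras: by Theorem~\ref{GHGL}(3), for such a ring one has $\pi_n(\knf(\hat R)/p) \simeq \widetilde{\nu}^{n+2}(\hat R)$, and these cokernels of $1-C^{-1}$ vanish only \'etale-locally, i.e.\ when the ring is strictly henselian (Theorem~\ref{etalekthm}). Already for $R=\mathbb{F}_p$ one has $\pi_{-1}(\TC(\mathbb{F}_p)/p)\neq 0$ while $K(\mathbb{F}_p)/p$ is connective; for a residue field that is not separably closed the discrepancy appears in the degrees relevant to the theorem. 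Your completion-plus-rigidity step correctly shows that $\{\knf(R/I^n)/p\}_n$ is pro-constant with value $\knf(\hat R)/p$, but that value is nonzero, so the asserted levelwise isomorphisms $K_i(R/I^n)/p \isoto \TC_i(R/I^n)/p$ fail. For the same reason the pro-isomorphism $\{\Omega^i_{R/I^n,\mathrm{log}}\}_n \simeq \{\TC_i(R/I^n)/p\}_n$ you attribute to Morrow is false as stated: $\pi_i(\TC(-)/p)$ carries the extra summand $\widetilde{\nu}^{i+1}$, and what the pro-HKR results of \cite{Morrow-HW, DundasMorrow} actually compute is $\TR$ and the \emph{relative} theories.

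The repair — and this is what the paper does — is to work relatively throughout, where the obstruction cancels. The groups $\widetilde{\nu}^{m}$ are rigid for henselian pairs of $\mathbb{F}_p$-algebras (Proposition~\ref{toyFprigidityTC}), so they contribute nothing to relative $K$ or $\TC$. After reducing to the case that $R$ is henselian along $I$, one proves the relative pro-statement $\{K_n(R,I^s;\mathbb{Z}/p^r\mathbb{Z})\}_s \isoto \{W_r\Omega^n_{(R,I^s),\mathrm{log}}\}_s$ by combining Theorem~\ref{mainthm} with the pro-constancy of $\{\TC^s(-)/p\}_s$ (Proposition~\ref{prop_pro_constant}), the HKR theorems of Hesselholt and Dundas--Morrow identifying $\{\TR^s_n(R,I^s;p)\}_s$ with relative de Rham--Witt groups, and the surjectivity of $R-F$ on relative de Rham--Witt groups for henselian ideals; one then splices this with the \emph{absolute} Geisser--Levine isomorphism $K_n(R;\mathbb{Z}/p^r\mathbb{Z})\simeq W_r\Omega^n_{R,\mathrm{log}}$ for the ind-smooth local ring $R$ itself (not with $\TC$ of $R$). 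Your instinct that the delicate point is extending pro-$\TC$ computations to the regular $F$-finite setting is reasonable, but the actual gap is structural: the absolute comparison $K/p\simeq \TC/p$ you rely on at each finite level does not hold.
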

Finally, we 
study the {continuity} question in algebraic $K$-theory for complete rings, considered by various
authors including \cite{GHlocal, dundas, Suslinlocal, Morrow-HW, Panin}. 
That is, when $R$ is a ring and $I$ an ideal, we study how close the map $K(R)
\to \varprojlim K(R/I^n)$ is to being an equivalence. 
Using results of
Dundas--Morrow \cite{DundasMorrow} on topological cyclic homology, we prove a general
continuity statement in $K$-theory. 
\begin{thm}[Continuity criterion for $K$-theory] 
Let $R$ be a noetherian ring and $I \subset R$ an ideal. Suppose $R$ is
$I$-adically complete and $R/p$ is $F$-finite. 
Then the map $K(R) \to \varprojlim K(R/I^n)$ is a $p$-adic equivalence. 
\end{thm}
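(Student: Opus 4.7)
The plan is to reduce continuity of $K$-theory to separate continuity statements for $\knf$ and for $\TC$, using the fiber sequence $\knf \to K \to \TC$ coming from the cyclotomic trace. For each $i \geq 1$, applying the cyclotomic trace to $R$ and to every $R/I^n$ and then taking the inverse limit in $n$ yields a map of fiber sequences of spectra: from $\knf(R)/p^i \to K(R)/p^i \to \TC(R)/p^i$ to $\varprojlim_n \knf(R/I^n)/p^i \to \varprojlim_n K(R/I^n)/p^i \to \varprojlim_n \TC(R/I^n)/p^i$. The bottom row remains a fiber sequence because sequential inverse limits of spectra preserve fiber sequences. It therefore suffices to prove that both outer vertical maps are equivalences for every $i \geq 1$; taking a further inverse limit in $i$ will then promote the resulting equivalence on $K(\,\cdot\,)/p^i$ to a $p$-adic equivalence on $K$-theory itself.

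For the right-hand vertical, the standing hypotheses ($R$ noetherian, $I$-adically complete, with $R/p$ being $F$-finite) are exactly those under which Dundas--Morrow \cite{DundasMorrow} establish continuity of $\TC$ with mod $p^i$ coefficients, so I would simply invoke their theorem as a black box.

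For the left-hand vertical, the key observation is that $I$-adic completeness of $R$ implies $I^n$-adic completeness for every $n$, so each $(R, I^n)$ is a henselian pair. Theorem~\ref{ourthm} then tells us that the individual map $\knf(R)/p^i \to \knf(R/I^n)/p^i$ is an equivalence for each $n \geq 1$. A two-out-of-three argument on the compatible commutative triangles $\knf(R)/p^i \to \knf(R/I^{n+1})/p^i \to \knf(R/I^n)/p^i$ then forces every transition map in the tower $\{\knf(R/I^n)/p^i\}_{n \geq 1}$ to be an equivalence, so its inverse limit is identified with $\knf(R)/p^i$.

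Given the two deep inputs already in hand, namely Theorem~\ref{ourthm} of this paper and the Dundas--Morrow continuity theorem for $\TC$, the rest of the argument is essentially formal, and no real obstacle remains. The only technical point worth checking is that sequential inverse limits commute with the fiber sequence structure in spectra, which is standard.
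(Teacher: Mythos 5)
Your proof is correct and follows essentially the same route as the paper: Theorem~\ref{mainthm} makes the tower $\{\knf(R/I^n)/p\}_n$ constant (each $(R,I^n)$ being henselian since $R$ is $I^n$-adically complete), which reduces continuity of $K/p$ to continuity of $\TC/p$, supplied by Dundas--Morrow. The only point to flag is that the Dundas--Morrow continuity theorem is stated for $\mathbb{Z}_{(p)}$-algebras and at the level of $\THH$, so your claim that the standing hypotheses are ``exactly'' theirs is slightly off; the paper inserts a short bridge (Lemma~\ref{modplemma} and Proposition~\ref{THHcont} to reduce the general noetherian case to the $p$-completion, and Remark~\ref{TCinvlim} to pass from $\THH$ to $\TC$), but this is a routine reduction rather than a gap in your argument.
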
 

In fact, an argument using Popescu's approximation theorem shows that this
continuity result is essentially equivalent to our main theorem on henselian
pairs, see Remark~\ref{henspairequivtocontinuity}.

In general, our methods do not control the \emph{negative}
$K$-theory.\footnote{In fact,
Gabber rigidity fails for negative $K$-theory, cf.~the discussion in
\cite[Ex.~8.5]{Wei91}, based on \cite{Rei87}.} 
In
addition, they are essentially limited to the affine case: that is, 
we do not treat henselian pairs of general schemes. 
For example, given a complete local ring $(R, \mathfrak{m})$ and a smooth
proper scheme $X \to \spec R$, our methods 
do not let us compare the $K$-theory of $X$ to its special fiber. Note that 
questions of realizing formal $K$-theory classes as algebraic ones are expected
to be very difficult, cf.~\cite{BEK}.

Theorem~\ref{ourthm} depends essentially on a finiteness property
of topological cyclic homology with mod $p$ coefficients. 
In characteristic zero, negative cyclic homology is not a finitary invariant: that is, it
does not commute with filtered colimits.  The main technical tool we use in this paper is the observation that the situation is better modulo
$p$. 
Topological cyclic homology of a ring $R$ is built from the topological
Hochschild homology $\THH(R)$ and its natural structure as a cyclotomic spectrum. 
We use the Nikolaus--Scholze
\cite{nikolaus-scholze} description of the homotopy theory $\CycSp$ of
cyclotomic spectra to observe (Theorem~\ref{TCcommutes}) that topological cyclic homology modulo $p$ commutes with filtered colimits.

\begin{thm}[$\TC/p$ is finitary]
The construction $R \mapsto \TC(R)/p$, from rings $R$ to spectra, commutes with
filtered colimits. 
\end{thm}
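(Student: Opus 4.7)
The plan is to invoke the Nikolaus--Scholze formalism \cite{nikolaus-scholze}. Since the functor $R\mapsto\THH(R)$, landing in $\CycSp$, commutes with filtered colimits (this is immediate from the cyclic bar construction, as filtered colimits in $\CycSp$ are computed on the underlying spectrum with $S^1$-action together with the underlying cyclotomic Frobenius), the theorem reduces to showing that $\TC(-)/p$ preserves filtered colimits on the full subcategory $\CycSp^{\mathrm{bb}}\subset\CycSp$ of bounded-below cyclotomic spectra.

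Next I would invoke the Nikolaus--Scholze description of $\TC$ at a prime: for $X\in\CycSp^{\mathrm{bb}}$,
\[
\TC(X;\bb Z_p)\simeq\operatorname{fib}\bigl(\varphi-\can\colon X^{hS^1}\To(X^{tC_p})^{hS^1}\bigr)_p^\wedge,
\]
and hence, using bounded-belowness to exchange mod-$p$ reduction with $p$-completion,
\[
\TC(X)/p\simeq\operatorname{fib}\bigl(\varphi-\can\colon X^{hS^1}/p\To(X^{tC_p})^{hS^1}/p\bigr).
\]
Since fibers commute with filtered colimits, one might hope to show that $X\mapsto X^{hS^1}/p$ and $X\mapsto(X^{tC_p})^{hS^1}/p$ each commute with filtered colimits individually. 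However this is \emph{false} in general: the negative homotopy groups of $X^{hS^1}$ involve an inverse limit over the Postnikov tower of $BS^1$ that picks up contributions from arbitrarily high positive homotopy groups of $X$, and this does not collapse merely on reducing mod $p$. So the argument has to take place inside the fiber.

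The clean reformulation I would aim for is: $\bb S/p$, regarded as an object of $\CycSp$ via the canonical cyclotomic structure on the sphere, is compact in the sense that $\operatorname{map}_{\CycSp}(\bb S/p,-)$, which computes $\TC(-)/p$, preserves filtered colimits of bounded-below cyclotomic spectra. The main obstacle is verifying exactly this. My approach would be to exploit Nikolaus--Scholze's presentation of $\CycSp$ as a lax equalizer, thereby reducing compactness of $\bb S/p$ in $\CycSp^{\mathrm{bb}}$ to a statement about mapping spectra in $\sp^{BS^1}$ constrained by both the $(-)^{hS^1}$ and the $(-)^{tC_p}$ legs. The crucial point the argument must use is that $\varphi-\can$ kills the non-finitary negative-degree tail common to $X^{hS^1}/p$ and $(X^{tC_p})^{hS^1}/p$: both legs receive the same infinite inverse-limit contribution from high positive homotopy of $X$, and the Frobenius-minus-canonical map is (in effect) an isomorphism on that piece, so that it cancels in the fiber and leaves a finitary answer. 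The role of the mod-$p$ hypothesis is that it is exactly this cancellation that fails integrally---where negative cyclic homology is notoriously non-finitary---but becomes tractable modulo $p$ because the associated Postnikov layers are $\bb F_p$-modules whose fixed-point and Tate spectral sequences admit uniform bounds. Making this cancellation precise, at the level of spectra rather than just spectral-sequence pages, is where I expect the technical work to lie.
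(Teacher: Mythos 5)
Your setup is right as far as it goes: the reduction to bounded-below cyclotomic spectra via $\THH$, the fiber formula $\TC(X)=\mathrm{fib}(\varphi-\can)$, and the correct observation that neither $X^{hS^1}/p$ nor $(X^{tC_p})^{hS^1}/p$ is individually finitary, so the colimit-preservation must come from a cancellation inside the fiber. But the proposal stops exactly where the theorem begins: ``making this cancellation precise \dots is where I expect the technical work to lie'' is an acknowledgement that the proof is missing, not a proof. Moreover, the mechanism you gesture at --- that $\varphi-\can$ is ``in effect an isomorphism'' on a common negative-degree tail, controlled by uniform bounds on fixed-point and Tate spectral sequences --- is not how the cancellation actually works, and I do not see how to run it as stated.

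The missing idea in the paper is the following. One first base-changes to modules over the trivial cyclotomic spectrum $H\mathbb{F}_p^{\mathrm{triv}}$ and computes that the cyclotomic Frobenius $\varphi_{H\mathbb{F}_p}:\TC^-(H\mathbb{F}_p^{\mathrm{triv}})\to\TP(H\mathbb{F}_p^{\mathrm{triv}})$ \emph{annihilates} the Bott class $x\in\pi_{-2}$ (Lemma~\ref{Frobvanish}). For an $H\mathbb{F}_p^{\mathrm{triv}}$-module $X$, the map $\varphi_X$ is $\varphi_{H\mathbb{F}_p}$-linear, so the composite $\Sigma^{-2}\TC^-(X)\xrightarrow{x}\TC^-(X)\xrightarrow{\varphi_X}\TP(X)$ is functorially null; combining with the cofiber sequences $\Sigma^{-2}X^{hS^1}\xrightarrow{x}X^{hS^1}\to X$ and $\Sigma^{-2}X^{hS^1}\xrightarrow{x\,\can}X^{tS^1}\to X_{hS^1}$, the map $\can-\varphi$ descends on cofibers of multiplication by $x$ to a map $X\to X_{hS^1}$ whose fiber is $\TC(X)$ (Proposition~\ref{functorialfibseq}). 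Both $X$ and $X_{hS^1}$ commute with all colimits, so $\TC$ does on $H\mathbb{F}_p^{\mathrm{triv}}$-modules. This is the precise form of the ``cancellation'': it is not that $\varphi-\can$ is invertible on a tail, but that $\varphi$ kills $x$ so that only the finite, colimit-friendly cofibers survive. Finally, one cannot simply replace $X$ by $X/p$ (which is not an $H\mathbb{F}_p^{\mathrm{triv}}$-module); the paper bridges this with a connectivity estimate ($\TC$ drops connectivity by at most one on $p$-complete bounded-below inputs, Lemma~\ref{connectivity}), which gives commutation with geometric realizations and hence an assembly equivalence $\TC(X)\otimes H\mathbb{F}_p\simeq\TC(X\otimes H\mathbb{F}_p^{\mathrm{triv}})$; a mod-$p$ equivalence of uniformly bounded-below spectra that is an $H\mathbb{F}_p$-homology equivalence is then an equivalence. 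These three ingredients --- the vanishing of $\varphi(x)$, the resulting fiber sequence $\TC(X)\to X\to X_{hS^1}$, and the connectivity/assembly argument --- are all absent from your proposal.
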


Theorem~\ref{ourthm} also relies on various tools used in the classical rigidity results, reformulated in a slightly different
form as the finiteness of certain functors, as well as the theory of
non-unital henselian rings. 
In addition, it relies heavily on the calculations of Geisser--Levine
\cite{GH} and Geisser--Hesselholt \cite{GH} of $p$-adic algebraic $K$-theory and
topological cyclic homology for smooth schemes in characteristic $p$. 
We do not know if it is possible to give a proof of our result 
without using all of these techniques. 
However, in many cases (e.g., $\mathbb{F}_p$-algebras) Theorem~\ref{ourthm}
can be proved without
the full strength of the finiteness and spectral machinery; see Remark~\ref{rem:fpcaseeasier}.

\subsection*{Conventions}
In this paper, we will use the language of $\infty$-categories
and higher algebra
\cite{HTT, HA}. On the one hand, the constructions of algebraic $K$-theory and of
the cyclotomic trace are of course much older than the theory of $\infty$-categories. Many of
our arguments use standard homotopical techniques that could be carried out
in a modern model category of spectra, and we  have tried to minimize the use of
newer technology in the exposition when possible. On the other hand, we rely crucially
on the Nikolaus--Scholze \cite{nikolaus-scholze} approach to cyclotomic spectra,
which is $\infty$-categorical in nature.  

All rings will be commutative unless otherwise specified. The category of commutative algebras over a ring $R$ is denoted by $\op{CAlg}_{/R}$.

We will let $\Sp$ denote the $\infty$-category of spectra, and write $\otimes$
for the smash product of spectra. The sphere spectrum will be denoted by $S^0\in\Sp$. 
In a stable $\infty$-category $\mathcal{C}$, we will write
$\mathrm{Hom}_{\mathcal{C}}(\cdot, \cdot) \in \Sp$ for the mapping spectrum between any
two objects of $\mathcal{C}$.
We will generally write $\varprojlim$, $\varinjlim$ for inverse and direct limits
in $\Sp$; in a model categorical approach, these would be typically called
\emph{homotopy} limits and colimits and written $\mathrm{holim},
\mathrm{hocolim}$. Given an integer $r$, we let $\Sp_{\geq r}$ (resp.~$\Sp_{\leq r}$) for the
subcategory of $X \in\Sp$ such that $\pi_i(X) = 0$ for $i < r$ (resp.~$i>r$). 
We let 
$\tau_{\geq r}, \tau_{\leq r}$ denote the associated truncation functors.

We let $K$ denote connective $K$-theory and $\mathbb{K}$ denote its
nonconnective analog; $\TC$ denotes topological cyclic homology. 
Given a ring $R$ and an ideal $I \subset R$, we let $K(R, I)$
(resp.~$\mathbb{K}(R, I)$) denote the relative $K$-theory (resp.~relative nonconnective
$K$-theory), defined  as $\mathrm{fib}(K(R) \to K(R/I))$ and
$\mathrm{fib}(\mathbb{K}(R) \to \mathbb{K}(R/I))$. We define $ \knf(R, I), \TC(R, I)$
similarly. Finally, following traditional practice in algebraic $K$-theory, we
will sometimes write $K_*(R; \mathbb{Z}/p^r \mathbb{Z})$ for the mod $p^r$
homotopy of the spectrum $K(R)$ (i.e., $\pi_*( K(R)/p^r)$) and similarly for
$\TC$. 

\subsection*{Acknowledgments}
We are grateful to Benjamin Antieau, Bhargav Bhatt, Lars Hesselholt, Thomas Nikolaus, and Peter
Scholze for helpful discussions. 
We thank the referees for many helpful comments on an earlier version of the
paper. 
The second author would like to thank the Universit\'e Paris 13, the Institut de
Math\'ematiques de Jussieu-Paris Rive Gauche, and the University of Copenhagen for hospitality
during which parts of this work were done. 
The first author was supported by Lars Hesselholt's Niels Bohr Professorship.
This work was done while the second author was a Clay Research Fellow.

\section{The finiteness property of $\TC$}
\label{section_finiteness}

Given a ring $R$, we can form both its Hochschild homology $\HH(R/\mathbb{Z})$
and its topological Hochschild homology spectrum $\THH(R)$.  The main structure on Hochschild homology is the circle action,
enabling one to form the \emph{negative cyclic homology} $\HC^-(R/\mathbb{Z}) =
\HH(R/\mathbb{Z})^{hS^1}$. However,
there is additional information contained in topological Hochschild homology, encoded in the structure of a \emph{cyclotomic spectrum}.
Formally, cyclotomic spectra form a symmetric monoidal, stable $\infty$-category $(\CycSp,
\otimes, \mathbf{1})$, and $\THH(R) \in \CycSp$.  
An object of $\CycSp$ is in particular a spectrum with a circle action but also
further information. 
This structure enables one to form the topological cyclic homology $\TC(R) =
\hom_{\CycSp}(\mathbf{1}, \THH(R))$. 
Introduced originally by B\"okstedt--Hsiang--Madsen \cite{BHM}, nowadays there are
several treatments 
of $\CycSp$ by various
authors, including Blumberg--Mandell \cite{BMcyc},
Barwick--Glasman \cite{BG}, Nikolaus--Scholze \cite{nikolaus-scholze}, and  Ayala--Mazel-Gee--Rozenblyum
\cite{ayala-mg-rozenblyum}. 
In particular, the paper \cite{nikolaus-scholze} shows that the structure of $\CycSp$
 admits a dramatic simplification in the bounded-below case.

The purpose of this section is to prove a structural property of $\TC$. 
The datum of a circle action is essentially infinitary in nature: for example,
forming $S^1$-homotopy fixed points is an infinite homotopy limit, and the
construction $R \mapsto \HC^-(R/\mathbb{Z})$ does not
commute with filtered homotopy colimits.
Classical presentations of the homotopy
theory of cyclotomic spectra and $\TC$ usually involve infinitary limits, such
as 
a homotopy limit over a fixed point tower. 
However, in this section, we prove the slightly surprising but fundamental  property that $\TC/p$ commutes with filtered
colimits. Our proof is based on the simplification to the theory
$\CycSp$ in the bounded-below case demonstrated by Nikolaus--Scholze
\cite{nikolaus-scholze}.  

\subsection{Reminders on cyclotomic spectra}\label{subsection_reminders}
We start by recalling some facts about the $\infty$-category of ($p$-complete) cyclotomic spectra, following \cite{nikolaus-scholze}. 

\begin{definition}[Nikolaus--Scholze \cite{nikolaus-scholze}] 
A {\em cyclotomic spectrum} is a tuple  $$ (X ,  \, \{\varphi_{X, p}: X  \to X^{tC_p}\}_{p = 2, 3, 5, \dots
}),$$ where $X \in \fun(BS^1, \Sp)$ is a spectrum with an $S^1$-action which is moreover equipped with a map $\varphi_p:X\to X^{tC_p}$ (the {\em cyclotomic Frobenius} at $p$), for each prime number $p$, which is required to be equivariant with respect to the $S^1$-action on $X$ and the $S^1\simeq S^1/C_p$-action on the Tate construction $X^{tC_p}$. By a standard abuse of notation we will occasionally denote a cyclotomic spectrum simply by the underlying spectrum $X$.

For the precise definition of the $\infty$-category $\CycSp$ of cyclotomic
spectra (whose objects are as above) as a lax equalizer, we refer the reader to
\cite[II.1]{nikolaus-scholze}. See \cite[IV.2]{nikolaus-scholze} for a treatment
of the symmetric monoidal structure. 
\end{definition} 

The $\infty$-category $\CycSp$ naturally has the structure of a presentably
symmetric monoidal $\infty$-category\footnote{Meaning, a presentable $\infty$-category with symmetric monoidal tensor product commuting with colimits in each variable separately; or in other words a commutative algebra object in $\operatorname{Pr}^L$ with respect to Lurie's tensor product.} and the tensor product recovers the smash
product on the level of underlying spectra with $S^1$-action. Moreover, the
forgetful functor $\CycSp\to \Sp$ reflects equivalences, is exact, and preserves
all colimits. These properties may be deduced from the general formalism of lax equalizers; see \cite[II.1]{nikolaus-scholze}.

\begin{definition}\label{definition_TC_via_NS}
Given $X \in \CycSp$, its \emph{topological cyclic homology} $\TC(X) $ is
defined as the mapping spectrum $\TC(X) = \hom_{\CycSp}(\mathbf{1}, X)$, where
$\mathbf{1} \in \CycSp$ is the unit (cf.~Definition \ref{cons_trivial}). We moreover write \[ \TC^-(X) = X^{hS^1}, \quad \TP(X) = X^{tS^1}\] for its {\em negative topological cyclic homology} and \emph{periodic topological cyclic homology}, the latter having been particularly studied by Hesselholt \cite{hesselholt-tp}. 
\end{definition} 

The theory of cyclotomic spectra  studied more classically using equivariant stable homotopy theory \cite{BMcyc} (building on ideas introduced by B\"okstedt--Hsiang--Madsen \cite{BHM}) agrees with that of Nikolaus--Scholze in the bounded-below case \cite[Thm.~II.6.9]{nikolaus-scholze}. It is the bounded-below case that is of interest to us, and it will be convenient to introduce the following notation. 

\begin{definition} 
Given $n \in \mathbb{Z}$, let 
$\Sp_{\geq n}$ denote the full subcategory of $\Sp$ consisting 
of those spectra $X$ that satisfy 
$\pi_i(X) = 0$ for $i < n$. 
Let
$\CycSp_{\geq n}$ denote the full subcategory
of $\CycSp$ consisting of those cyclotomic spectra $(X,
\{\varphi_{X,p}\}_{p = 2, 3, 5, \dots} )$ such that the underlying
spectrum $X$ belongs to $\Sp_{\geq n}$.
\end{definition}

In the case in which $X\in \CycSp$ is bounded below and $p$-complete, the theory simplifies in several ways. Firstly, the Tate constructions $X^{tC_q}$ vanish for all primes $q\neq p$ \cite[Lem.~I.2.9]{nikolaus-scholze}, and therefore we do not need to specify the maps $\varphi_{X, q}$: the only required data is that of the $S^1$-action and the $S^1\simeq S^1/C_p$-equivariant Frobenius map $\varphi_X = \varphi_{X,p}: X \to X^{tC_p}$. Secondly, there is a basic simplification to the
formula for $\TC(X)$, using the two maps
\[ \can_X, \varphi_X :  \TC^-(X) \rightrightarrows  \TP(X).\]
Here $\can_X$ is the canonical map from homotopy fixed points to the Tate
construction, while $\varphi_X$ arises from taking $S^1$-homotopy-fixed points
in the cyclotomic structure map $X \to X^{tC_p}$ and using a version of the Tate orbit lemma
to identify 
$X^{tS^1} \simeq (X^{tC_p})^{h(S^1/C_p)}$ since $X$ is bounded below and $p$-complete \cite[Lem.~II.4.2]{nikolaus-scholze}. Using these two maps one has the fundamental formula
\cite[Prop II.1.9]{nikolaus-scholze} 
\begin{equation} \label{keyformula} \TC(X) = \mathrm{fib}( \TC^-(X)
\xrightarrow{\can_X - \varphi_X} \TP(X)),\end{equation}
or equivalently $\TC(X)$ is the equalizer of $\mathrm{can}_X$,
$\varphi_X$. 

A basic observation which goes into proving the main result of this section is
that the above formula makes $\TC$ have an additional finiteness property that
theories such that $\TC^-$ and $\TP$ do not enjoy. Roughly speaking the idea is
the following: if in the definition above we were to replace $\can_X-\varphi_X$
with just $\can_X$, then the fiber would be $\Sigma X_{hS^1}$, and this certainly commutes with colimits.  We will see that $\varphi_X$ is close enough to vanishing modulo $p$ to deduce the same conclusion for $\TC(X)/p$.  
As another example of such finiteness phenomena, 
compare for instance \cite{AMN}, where it was shown that the topological
cyclic homology  of a smooth and proper dg category over a finite field 
is a perfect $H \mathbb{Z}_p$-module. 

\begin{example} 
A key source of examples of cyclotomic spectra arises as follows. 
If $R$ is a ring (or, more generally, a structured ring spectrum), then one can form its topological Hochschild homology $\THH(R)$ as an object of $\CycSp$, following \cite[\S III.2]{nikolaus-scholze}. As usual, we will write $\TC(R), \TC^-(R), \TP(R)$ in place of $\TC(\THH(R)), \TC^-(\THH(R)), \TP(\THH(R))$.
\end{example}

We will also use facts from the classical approach to topological cyclic
homology (cf.~\cite[\S6]{BMcyc}) in order to verify a connectivity statement below. In particular, given $X \in \CycSp$ which is $p$-complete, one extracts genuine fixed
point spectra $\TR^{r}(X;p):=X^{C_{p^{r-1}}}$ for each $r\ge1$, which are related by Restriction ($R$), Frobenius ($F$), and Verschiebung ($V$) maps:
\[ R, F: \TR^{r+1}(X;p) \to \TR^{r}(X;p) , \quad V: \TR^{r}(X;p) \to
\TR^{r+1}(X;p).\]
Setting $\TR(X;p):=\varprojlim_R\TR^r(X;p)$, then the topological cyclic
homology of $X$ is classically defined as \begin{equation}
\label{keyformulaclassical}\TC(X)=\mathrm{fib}(\TR(X;p)\xrightarrow{\mathrm{id}-F}\TR(X;p)).\end{equation}
For a comparison between this description of $\TC$ and that given in Definition
\ref{definition_TC_via_NS}, we refer to \cite[\S II.4]{nikolaus-scholze}
(see
also \cite[Rem.~II.1.3]{nikolaus-scholze} for the relationship between
$p$-cyclotomic spectra and cyclotomic spectra).

It is the classical approach to topological cyclic homology in which important structures such as the de
Rham-Witt complex appear \cite{Hesselholt}, and in which the sequence \eqref{isotropysep} below plays a fundamental role.  Using the classical approach we also obtain the following lemma.
In Remark~\ref{rem:TCconnectivity} below, we 
will also give  a proof purely based on the Nikolaus--Scholze approach. 

\begin{lemma}
\label{connectivity}
Suppose $X \in \CycSp_{\geq n}$ is $p$-complete. Then $\TC(X) \in \Sp_{\geq n-1}$. 
\end{lemma}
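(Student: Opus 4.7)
The plan is to use the classical description of topological cyclic homology recalled in \eqref{keyformulaclassical}, namely
\[
\TC(X) = \mathrm{fib}\bigl(\TR(X;p) \xrightarrow{\mathrm{id}-F} \TR(X;p)\bigr),
\]
but first to establish the slightly stronger intermediate statement that $\TR(X;p)\in\Sp_{\ge n}$. Working with the classical $\TR^r$ for a bounded-below $p$-complete cyclotomic spectrum is legitimate by the comparison \cite[Thm.~II.6.9]{nikolaus-scholze}.

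The first step is to show by induction on $r\ge 1$ that each $\TR^r(X;p)$ lies in $\Sp_{\ge n}$. The base case $r=1$ is trivial, since $\TR^1(X;p)=X$. For the inductive step I would invoke the classical fundamental cofiber sequence
\[
X_{hC_{p^{r-1}}} \longrightarrow \TR^r(X;p) \xrightarrow{R} \TR^{r-1}(X;p),
\]
whose fiber is the homotopy orbit spectrum for the underlying $C_{p^{r-1}}$-action and which arises from isotropy separation combined with the cyclotomic identification $\Phi^{C_p}X\simeq X$. Since $X_{hC_{p^{r-1}}}$ is a homotopy colimit of copies of $X$ it lies in $\Sp_{\ge n}$; the right-hand term lies in $\Sp_{\ge n}$ by the inductive hypothesis; hence so does the middle.

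The second step is to pass to $\TR(X;p)=\varprojlim_r\TR^r(X;p)$ without losing any connectivity. The Milnor short exact sequence
\[
0 \to {\varprojlim}^1\,\pi_{i+1}\TR^r(X;p) \to \pi_i\TR(X;p) \to \varprojlim\,\pi_i\TR^r(X;p) \to 0
\]
gives $\pi_i\TR(X;p)=0$ for $i<n-1$ immediately from Step~1. In the potentially dangerous case $i=n-1$, the right-hand term vanishes and the $\varprojlim^1$ term also vanishes: the fundamental cofiber sequence forces $R$ to be surjective on $\pi_n$, because $\pi_{n-1}X_{hC_{p^{r-1}}}=0$, so the tower $\{\pi_n\TR^r(X;p)\}_r$ is Mittag--Leffler. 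This proves $\TR(X;p)\in\Sp_{\ge n}$.

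The final step is formal: applying the long exact sequence of homotopy groups to the fiber sequence $\TC(X)\to\TR(X;p)\xrightarrow{\mathrm{id}-F}\TR(X;p)$ and using $\TR(X;p)\in\Sp_{\ge n}$ yields $\pi_j\TC(X)=0$ for $j<n-1$, as required. I expect the main obstacle to be the bookkeeping at the inverse-limit stage: a priori a $\varprojlim$ of spectra in $\Sp_{\ge n}$ only lies in $\Sp_{\ge n-1}$, and the extra degree of connectivity is recovered only by exploiting the surjectivity of the restriction maps on $\pi_n$. The final unavoidable loss of one degree reflects a potentially nonzero cokernel of $\mathrm{id}-F$ on $\pi_n\TR(X;p)$.
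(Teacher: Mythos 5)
Your proof is correct and follows essentially the same route as the paper: reduce via the classical formula \eqref{keyformulaclassical} to showing $\TR(X;p)\in\Sp_{\geq n}$, establish $\TR^r(X;p)\in\Sp_{\geq n}$ inductively from the cofiber sequence \eqref{isotropysep}, and control the $\varprojlim^1$ term in the Milnor sequence using surjectivity of $R$ on $\pi_n$. Your write-up merely makes explicit the Mittag--Leffler bookkeeping that the paper leaves implicit.
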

\begin{proof}
Using formula (\ref{keyformulaclassical}), it suffices to show that $\TR(X;p) \in \Sp_{\geq n}$; using the Milnor sequence, it is therefore enough to prove that $\TR^r(X;p) \in \Sp_{\geq n}$ and that the
Restriction
maps $R:\pi_n\TR^{r+1}(X;p) \to \pi_n\TR^{r}(X;p)$ are surjective for all
$r\ge1$. But this inductively follows from the basic cofiber sequence
(cf.~\cite[Th.~1.2]{Hesselholt})
\begin{equation} \label{isotropysep}
X_{hC_{p^r}} \to \TR^{r+1}(X;p) \stackrel{R}{\to} \TR^r(X;p)
\end{equation}
and the observation that $X_{hC_{p^r}}\in \Sp_{\geq n}$.
\end{proof}

\begin{corollary}
\label{geometricreal}
The functor $ \TC/p:\CycSp_{\geq 0} \to \Sp$,
$X \mapsto \TC(X)/p$ 
commutes with geometric realizations.
\end{corollary}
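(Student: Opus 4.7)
The strategy is to combine the connectivity bound of Lemma~\ref{connectivity} with a standard skeletal filtration argument. There are two main steps: a reduction to the $p$-complete case, and then a comparison of skeleta using the connectivity estimate.

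First I would reduce the assertion to showing that $\TC$ itself (not $\TC/p$) commutes with geometric realizations of simplicial objects in $\CycSp_{\geq 0}$ that are degreewise $p$-torsion. The functor $\TC=\hom_{\CycSp}(\mathbf{1},-)$ is exact, since a mapping spectrum preserves finite limits and these agree with finite colimits in a stable $\infty$-category. Hence $\TC(X)/p\simeq \TC(X/p)$, and $(-)/p$ commutes with all colimits in $\CycSp$ and in $\Sp$. So replacing a simplicial object $X_\bullet\colon \Delta^{\op}\to \CycSp_{\geq 0}$ by $X_\bullet/p$ lets me assume each $X_n$ is $p$-torsion (hence $p$-complete), whereupon $|X_\bullet|$ is automatically $p$-complete as well.

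Next I would introduce the skeletal filtration $\{|\mathrm{sk}_n X_\bullet|\}_{n\ge 0}$ of $|X_\bullet|$, formed in $\CycSp$. Since the forgetful functor $\CycSp\to\Sp$ preserves colimits and detects connectivity (by definition of $\CycSp_{\geq m}$), the standard cell-attachment picture transfers from $\Sp$: the cofiber $|X_\bullet|/|\mathrm{sk}_n X_\bullet|$ lies in $\CycSp_{\geq n+1}$, and it is still $p$-complete. Applying the exact functor $\TC$ gives a cofiber sequence in $\Sp$
\[\TC(|\mathrm{sk}_n X_\bullet|)\longrightarrow \TC(|X_\bullet|)\longrightarrow \TC(|X_\bullet|/|\mathrm{sk}_n X_\bullet|),\]
and Lemma~\ref{connectivity} places the last term in $\Sp_{\geq n}$. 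Consequently the map $\TC(|\mathrm{sk}_n X_\bullet|)\to \TC(|X_\bullet|)$ is an isomorphism on $\pi_i$ for all $i<n$.

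Finally, each $|\mathrm{sk}_n X_\bullet|$ is a finite colimit in $\CycSp$, so exactness of $\TC$ identifies $\TC(|\mathrm{sk}_n X_\bullet|)\simeq |\mathrm{sk}_n \TC(X_\bullet)|$. Passing to the filtered colimit over $n$ then produces
\[|\TC(X_\bullet)|\simeq \varinjlim_n |\mathrm{sk}_n \TC(X_\bullet)|\simeq \varinjlim_n \TC(|\mathrm{sk}_n X_\bullet|)\simeq \TC(|X_\bullet|),\]
the last equivalence being the connectivity estimate just proved. The only part one needs to be careful about is transporting the cell-attachment/connectivity description of the skeletal filtration from $\Sp$ to $\CycSp$, but this is automatic from the conservativity and colimit-preservation of the forgetful functor; in the absence of the reduction to $p$-complete objects, Lemma~\ref{connectivity} could not be invoked and the argument would collapse, which is what forces the use of mod $p$ coefficients.
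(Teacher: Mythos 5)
Your proof is correct and follows essentially the same route as the paper's: both arguments approximate the geometric realization by its skeleta (finite colimits, hence preserved by the exact functor $\TC$) and use the connectivity bound of Lemma~\ref{connectivity}, after reducing mod $p$ so that the lemma's $p$-completeness hypothesis applies. Your write-up merely makes explicit the reduction $\TC(X)/p\simeq\TC(X/p)$ and the comparison of skeleta that the paper states in one line.
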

\begin{proof} A geometric realization of connective spectra agrees with its $n^{th}$ partial geometric realization in degrees $<n$.  Since $X \mapsto \TC(X)/p=\TC(X/p)$ decreases connectivity at most by one by Lemma \ref{connectivity}, to show that $\TC/p$ commutes with geometric realizations it therefore suffices to show that it commutes with $n^{th}$ partial geometric realizations for all $n$.  But these are finite colimits, and are thus preserved by $\TC/p$ since the latter is an exact functor.\end{proof}

\subsection{Cocontinuity of $\TC/p$}\label{subsection_cocontinuity}

In this subsection we prove that $\TC/p$ commutes with all colimits:

\begin{theorem} 
\label{TCcommutes}
The functor $\TC/p: \CycSp_{\geq 0} \to \Sp$, commutes with all colimits. 
\end{theorem}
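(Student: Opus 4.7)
The plan is to reduce the problem to preservation of filtered colimits. As $\TC/p$ is an exact functor between stable $\infty$-categories (being the fiber of multiplication by $p$ on the exact functor $\TC$), it preserves all finite colimits; combined with preservation of filtered colimits, this yields preservation of all small colimits. So the substantive remaining task is the filtered case. Corollary~\ref{geometricreal} may serve as a convenient auxiliary: if one first reduces to a class of cyclotomic spectra expressible as geometric realizations of a nice generating class (e.g., $\THH$ of polynomial-type rings), one need only handle filtered colimits on that class.

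Next, I would apply the Nikolaus--Scholze formula \eqref{keyformula} after reduction modulo $p$. For $X \in \CycSp_{\geq 0}$, the spectrum $X/p$ is still bounded below and is $p$-power torsion, hence derived $p$-complete, so \eqref{keyformula} applies:
\[
\TC(X)/p \;\simeq\; \TC(X/p) \;\simeq\; \mathrm{fib}\bigl(\TC^-(X/p) \xrightarrow{\can - \varphi} \TP(X/p)\bigr).
\]
Since both the mod-$p$ reduction $X \mapsto X/p$ and the formation of fibers commute with filtered colimits, it suffices to show that the functor $Y \mapsto \mathrm{fib}\bigl(\TC^-(Y) \xrightarrow{\can - \varphi} \TP(Y)\bigr)$ preserves filtered colimits on bounded-below $p$-torsion cyclotomic spectra. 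Individually, neither $\TC^-$ nor $\TP$ enjoys this property, as both involve infinite $S^1$-direction homotopy limits; the desired finitary behavior must come from the specific combination $\can - \varphi$.

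The central input, following the paper's own hint, is that the cyclotomic Frobenius $\varphi$ is ``close to vanishing modulo $p$.'' Were $\varphi$ literally zero, one would have $\mathrm{fib}(\can) \simeq \Sigma Y_{hS^1}$, coming from the standard fiber sequence $\Sigma Y_{hS^1} \to Y^{hS^1} \xrightarrow{\can} Y^{tS^1}$, and the homotopy orbit functor $(-)_{hS^1}$ commutes with all colimits, finishing the argument. My plan is then to show the perturbation by $-\varphi$ does not spoil this: I would filter $X/p$ by its Postnikov tower and argue inductively, using the connectivity bound of Lemma~\ref{connectivity} (which limits the connectivity loss of $\TC/p$ to at most one) to reduce the comparison of $\mathrm{fib}(\can-\varphi)$ with $\mathrm{fib}(\can)$, in each fixed homotopical degree, to a computation involving only finitely many Postnikov layers. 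On such layers the Frobenius on a mod-$p$ Eilenberg--MacLane cyclotomic spectrum should be amenable to direct analysis; alternatively, one may hope to exhibit a natural cofiber sequence $A(X) \to \TC(X)/p \to B(X)$ whose outer terms are manifestly built from the cocontinuous functor $X \mapsto X_{hS^1}/p$. The main obstacle, and where I expect most of the work to concentrate, is making the ``vanishing mod $p$'' claim for $\varphi$ precise, presumably via a direct analysis of the Tate construction $(-)^{tC_p}$ modulo $p$ and the behavior of the Tate-valued Frobenius on bounded-below connective cyclotomic spectra.
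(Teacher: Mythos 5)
Your high-level outline matches the paper's: reduce to filtered colimits, use formula \eqref{keyformula}, observe that without $\varphi$ the fiber would be $\Sigma Y_{hS^1}$ (cocontinuous), and look for a precise sense in which $\varphi$ is negligible mod $p$. However, the plan stops exactly where the content of the theorem lies, and the route you sketch for closing it has real obstructions. First, the correct precise statement is \emph{not} that $\varphi$ vanishes mod $p$ (on $H\mathbb{F}_p^{\mathrm{triv}}$ the Frobenius is an isomorphism on $\pi_0$); it is that the induced map $\varphi^{hS^1}:\TC^-(H\mathbb{F}_p^{\mathrm{triv}})\to\TP(H\mathbb{F}_p^{\mathrm{triv}})$ annihilates the degree $-2$ generator $x$ (Lemma~\ref{Frobvanish}). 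This is only exploitable once $X$ carries an $H\mathbb{F}_p^{\mathrm{triv}}$-module structure \emph{in $\CycSp$}, so that $\varphi_X$ is $\varphi_{H\mathbb{F}_p}$-linear and the composite $\Sigma^{-2}\TC^-(X)\xrightarrow{x}\TC^-(X)\xrightarrow{\varphi_X}\TP(X)$ is functorially null; comparing the resulting square with the cofiber sequences for multiplication by $x$ yields the functorial fiber sequence $\TC(X)\to X\to X_{hS^1}$ (Proposition~\ref{functorialfibseq}), which is the cofiber sequence you ``hope to exhibit.''

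Second, for a general $X\in\CycSp_{\geq 0}$ the mod-$p$ reduction $X/p$ is \emph{not} an $H\mathbb{F}_p^{\mathrm{triv}}$-module in $\CycSp$, so one cannot simply apply the above to $X/p$; the paper bridges this gap with an assembly map $\TC(X)\otimes(-)\to\TC(X\otimes(-)^{\mathrm{triv}})$, shown to be an equivalence on $H\mathbb{F}_p$ by writing $H\mathbb{F}_p$ as a geometric realization of finite spectra and invoking Corollary~\ref{geometricreal} together with the connectivity bound of Lemma~\ref{connectivity}; one then concludes that $\TC(-)\otimes H\mathbb{F}_p$ is cocontinuous and upgrades an $H\mathbb{F}_p$-equivalence to an $S^0/p$-equivalence using uniform boundedness below. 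Your alternative of filtering $X/p$ by its Postnikov tower and analyzing the Frobenius on Eilenberg--MacLane layers is problematic: the Postnikov truncations of a cyclotomic spectrum do not obviously inherit cyclotomic structures with identifiable layers (the Tate construction does not preserve coconnectivity), which is precisely why the paper's Remark~\ref{rem:TCconnectivity} works instead with the Adams tower relative to $H\mathbb{Z}^{\mathrm{triv}}$, whose layers are genuine $H\mathbb{Z}^{\mathrm{triv}}$-modules in $\CycSp$. Without Lemma~\ref{Frobvanish}, Proposition~\ref{functorialfibseq}, and the assembly-map reduction (or substitutes for them), the proof is not complete.
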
 

Since $\TC/p : \CycSp \to \Sp$ is an exact functor between stable
$\infty$-categories, it automatically commutes with finite colimits; thus the
essence of the above theorem is that it commutes with filtered colimits when
restricted to $\CycSp_{\geq 0 }$.

\begin{remark} 
\label{TCinvlim}
There is also a dual assertion which is significantly easier. 
Suppose we have a  tower  $\dots \to X_3 \to
X_2 \to X_1$ in $\CycSp_{\geq 0}$. 
Then the inverse limit of the tower $\left\{X_i\right\}$ exists in $\CycSp$
(in fact, in $\CycSp_{\geq -1}$) and
is preserved by the forgetful functor $\CycSp \to \Sp$. 
Indeed, we have $(\varprojlim X_i)^{tC_p} \simeq \varprojlim X_i^{tC_p}$ because
the $X_i$'s are uniformly bounded-below, and taking $C_p$-homotopy orbits
behaves as a finite colimit in any range of degrees. 
Then we can appeal to the description of $\CycSp$ as a lax equalizer and
\cite[Prop. II.1.5]{nikolaus-scholze}. 
In particular, it follows that if $X = \varprojlim X_i$ on the level of underlying spectra, then $\TC(X) \simeq
\varprojlim \TC(X_i)$. 
\end{remark} 

The proof of Theorem \ref{TCcommutes} will proceed by reduction to the case of
cyclotomic spectra over the ``trivial'' cyclotomic spectrum $ H\bb
 F_p^\sub{triv}$. We therefore begin by recalling the definition of such
 trivial cyclotomic spectra, before studying $H \bb F_p^\sub{triv}$ and its modules in further detail.

\begin{definition}\label{cons_trivial}
We have a presentably symmetric monoidal $\infty$-category $\CycSp$, so it receives a unique symmetric monoidal,
cocontinuous functor
\[  \Sp \to \CycSp,\quad X \mapsto X^{\mathrm{triv}}  \]
whose right adjoint is the functor $\TC: \CycSp \to \Sp$.

More explicitly, this functor $X \mapsto X^{\mathrm{triv}}$ can be identified as follows: $X^\sub{triv}$ has underlying spectrum $X$ equipped with the
trivial $S^1$-action; there is therefore a resulting $S^1$-equivariant map $X \to X^{hC_p} =
F(BC_{p+}, X)$, and the cyclotomic Frobenius at $p$ for $X^\sub{triv}$ is the
composition $\varphi_{X,p}:X\to X^{hC_p} \to X^{tC_p}$. (Notation: when $X$ is a
$p$-complete spectrum for a particular prime number $p$, we will tend to drop
the $p$ from $\varphi_{X,p}$.) Compare the discussion on \cite[p.
126]{nikolaus-scholze}. 

For example, taking $X$ to be the sphere spectrum $S^0 \in
\Sp$ obtains the {\em cyclotomic sphere spectrum}, which is moreover the unit $\mathbf{1} \in \CycSp$.
\end{definition}

We will need some results about the cyclotomic spectrum
$H\mathbb{F}_p^{\mathrm{triv}}$. Firstly, recall that the Tate cohomology ring
$\pi_*(H\mathbb{F}_p^{tC_p})=\hat H^{-*}(C_p;\bb F_p)$ is 
\begin{itemize}
\item ($p>2$) the tensor product of an exterior algebra on a degree $-1$ class with a Laurent polynomial algebra on a degree $-2$ class;
\item ($p=2$) a Laurent polynomial algebra on a degree $-1$ class.
\end{itemize}
Calculating the $S^1$-homotopy fixed points and Tate construction of
$\mathbb{F}_p$, we find that
\[ \TC^-_*( H\mathbb{F}_p^{\mathrm{triv}}) = \mathbb{F}_p[x], 
\quad \quad
\TP_*( H\mathbb{F}_p^{\mathrm{triv}}) = \mathbb{F}_p[x^{\pm 1}], \quad \quad
\text{where}  \ |x|
= -2.
\]
The canonical map $\TC^-( H\mathbb{F}_p^{\mathrm{triv}}) \to \TP(
H\mathbb{F}_p^{\mathrm{triv}})$ carries $x$ to $x$ (i.e., it is the localization inverting $x$).

The cyclotomic Frobenius $\varphi_{H \bb F_p} = \varphi_{H
\mathbb{F}_p^{\mathrm{triv}}}: H\mathbb{F}_p \to
H\mathbb{F}_p^{tC_p}$ is an isomorphism on $\pi_0$. An essential point is
that the induced
Frobenius $\varphi_{H \mathbb{F}_p}^{hS^1}:\TC^-(
H\mathbb{F}_p^{\mathrm{triv}}) \to \TP(H \mathbb{F}_p^{\mathrm{triv}})$ (which we will abusively also denote by $\varphi_{H\mathbb{F}_p}$ to avoid clutter, when the source and target are clear) kills the class $x$, as we now check.

\begin{lemma} 
\label{Frobvanish}
The Frobenius map $\varphi_{H \mathbb{F}_p}:\TC^-_*(
H \mathbb{F}_p^{\mathrm{triv}}) \to \TP_*(H \mathbb{F}_p^{\mathrm{triv}})$ annihilates $x$. 
\end{lemma}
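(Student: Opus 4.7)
By Construction~\ref{cons_trivial}, the cyclotomic Frobenius on $H\mathbb{F}_p^{\mathrm{triv}}$ is the composite $H\mathbb{F}_p \xrightarrow{\eta} (H\mathbb{F}_p)^{hC_p} \xrightarrow{\can} (H\mathbb{F}_p)^{tC_p}$, where $\eta$ is the unit of adjunction for the trivial $C_p$-action. I will take $(S^1/C_p)$-homotopy fixed points of this factorization, invoking the iterated fixed-point identification $((H\mathbb{F}_p)^{hC_p})^{h(S^1/C_p)} \simeq (H\mathbb{F}_p)^{hS^1}$ and the Tate orbit lemma identification $((H\mathbb{F}_p)^{tC_p})^{h(S^1/C_p)} \simeq (H\mathbb{F}_p)^{tS^1}$ (valid since $H\mathbb{F}_p$ is bounded-below and $p$-complete, see \cite[Lem.~II.4.2]{nikolaus-scholze}), so as to write $\varphi_{H\mathbb{F}_p}^{hS^1}$ in a form where the effect on $x$ is directly computable.

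The key observation is that the quotient $S^1 \twoheadrightarrow S^1/C_p$ becomes the $p$-th power map once one uses the canonical identification $S^1/C_p \simeq S^1$, $[z] \mapsto z^p$. Consequently, the natural comparison map $(H\mathbb{F}_p)^{h(S^1/C_p)} \to (H\mathbb{F}_p)^{hS^1}$ (for $H\mathbb{F}_p$ with its trivial $S^1$-action) is pullback along the $p$-th power map $BS^1 \to B(S^1/C_p) \simeq BS^1$ of classifying spaces. On mod $p$ cohomology $H^2(BS^1; \mathbb{F}_p) = \mathbb{F}_p$, the $p$-th power map acts by multiplication by $p = 0$, so this pullback annihilates the degree-$(-2)$ generator $x$.

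Tracing $x$ through the factorization, we find that $\varphi^{hS^1}(x)$ factors through this $p$-th power pullback on $(H\mathbb{F}_p)^{h(S^1/C_p)}$, and hence vanishes. The main obstacle is carefully formalizing the commutative diagrams identifying $\varphi^{hS^1}$ with the $(S^1/C_p)$-homotopy-fixed-point composite of $\eta$ and $\can$, in particular checking compatibility with the iterated fixed-point identification and with the Tate orbit lemma equivalence. Once this is done, the conclusion is an immediate consequence of the cohomological vanishing $p \cdot x = 0$ in $\mathbb{F}_p$.
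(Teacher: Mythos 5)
Your proof is correct, but it takes a genuinely different route from the paper's. The paper does not use the factorization $\varphi_p = \can\circ\eta$ at all: it instead writes down the commutative square obtained by restricting $\varphi^{hS^1}\colon \TC^-(H\mathbb{F}_p^{\mathrm{triv}})\to\TP(H\mathbb{F}_p^{\mathrm{triv}})$ along $\ast\to BS^1$ down to $\varphi_p\colon H\mathbb{F}_p\to H\mathbb{F}_p^{tC_p}$, observes that the right vertical map is an isomorphism in degree $-2$ (because $x$ is invertible in $\TP_*$ and ring maps send units to units), and concludes from $\pi_{-2}(H\mathbb{F}_p)=0$. That argument is shorter and needs no identification of any self-map of $BS^1$ — only naturality of the Tate orbit lemma plus the multiplicative structure of $\TP_*$. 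Your argument instead exhibits $\varphi^{hS^1}$ as factoring through the inflation map $X^{h(S^1/C_p)}\to X^{hS^1}$, which under the cyclotomic identification $S^1/C_p\simeq S^1$ is pullback along the degree-$p$ self-map of $BS^1$ and hence multiplies $H^2(BS^1;\mathbb{F}_p)$ by $p=0$. This is the standard mechanism (it is the same phenomenon as $\varphi(v)=p\sigma^{-1}$ in the computation of $\TC(\mathbb{F}_p)$), and it buys you slightly more: it shows that $\varphi(x)$ is divisible by $p$ already at the integral level and does not require computing $\TP_*(H\mathbb{F}_p^{\mathrm{triv}})$. The price is exactly the bookkeeping you flag: you must check that the factorization of Construction~\ref{cons_trivial} is $S^1$-equivariant, that the iterated-fixed-point identification $(X^{hC_p})^{h(S^1/C_p)}\simeq X^{hS^1}$ is compatible with $\eta^{hS^1}$, and that the resulting self-map of $BS^1$ really is $B(z\mapsto z^p)$ rather than the identity (it is: the generating character of $S^1/C_p$ inflates to the $p$-th power character of $S^1$). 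Since those checks are standard and you have correctly identified them, the proof goes through.
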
 
\begin{proof} 
There is a commutative diagram
of spectra
\[ \xymatrix{
\TC^-(H \mathbb{F}_p^{\mathrm{triv}})  \ar[r]^-{\varphi_{H
\mathbb{F}_p}}  \ar[d]&  
\TP(H \mathbb{F}_p^{\mathrm{triv}}) \ar[d]  \\
H\mathbb{F}_p \ar[r]_{\varphi_{H \bb F_p}} &  H \mathbb{F}_p^{tC_p}
}\]
where the top row is obtained by taking $S^1$-homotopy fixed points on the bottom
row via the Tate orbit lemma, \cite[Lem.~II.4.2]{nikolaus-scholze}. The right vertical map
$\TP(H\bb F_p^\sub{triv})\to
H\mathbb{F}_p^{tC_p}$ is an isomorphism
in degree $-2$: indeed, the degree $-2$ generator is invertible in the source by the above calculation of $\TP(H \bb
F_p^\sub{triv})$, and therefore necessarily maps to a generator of $\pi_{-2}(H\mathbb{F}_p^{tC_p})$ by multiplicativity.  Going both ways around the diagram and noting that of course $\pi_{-2}(H\bb F_p)=0$, we find that
$\varphi_{H \mathbb{F}_p}$ must be zero on $\pi_{-2}(
\TC^-(H \mathbb{F}_p^{\mathrm{triv}}))$. 
\end{proof} 

The previous proof is modeled on the analysis of the
cyclotomic spectrum $\THH(\mathbb{F}_p)$  \cite[Sec.~IV-4]{nikolaus-scholze}.
In fact, noting that $\TC(\mathbb{F}_p)$ receives a map $H\mathbb{Z}_p \to \TC(\mathbb{F}_p)$
(the source is the connective cover of the target)
we obtain a map of cyclotomic spectra $H \mathbb{Z}_p^{\mathrm{triv}} \to \THH(\mathbb{F}_p)$, which can be used to recover Lemma \ref{Frobvanish} from the more precise assertions about the cyclotomic spectrum $\THH(\mathbb{F}_p)$.

We next need to recall some facts about circle actions on
Eilenberg--MacLane spectra. 
Cf.~\cite[Sec.~2.2]{Loday} for the classical analogs in the theory of cyclic
homology. 
\begin{lemma} 
\label{cofiberofx}
Let $X \in \md_{H \mathbb{Z}}(\Sp^{BS^1})$ be an $H \mathbb{Z}$-module spectrum
equipped with an $S^1$-action. Then: 
\begin{enumerate}
\item  
There is a functorial cofiber sequence
of $H\mathbb{Z}$-module spectra
\[ \Sigma^{-2} X^{hS^1} \stackrel{x}{\To} X^{hS^1} \To X,  \]
where the first map is given by multiplication by the generator $x \in
\pi_{-2}(H \mathbb{Z}^{hS^1}) = H^2( \mathbb{CP}^\infty; \mathbb{Z})$. 
\item
\label{hS1cofib}
There is a functorial cofiber sequence of $H \mathbb{Z}$-modules
\[ X \to  X_{hS^1} \to \Sigma^2  X_{hS^1} . \]
\item There is moreover a functorial cofiber sequence
\[ \Sigma^{-2} X^{hS^1} \xto{x \mathrm{can}_X} X^{tS^1} \To
X_{hS^1},  \]
where $\mathrm{can}_X: X^{hS^1} \to X^{tS^1}$ is the canonical map. 

\end{enumerate}

\end{lemma} 
\begin{proof} 
Assertion (1) is \cite[Lemma~IV.4.12]{nikolaus-scholze}, in view
of the cofiber sequence 
of $H\mathbb{Z}^{hS^1}$-modules $\Sigma^{-2} H\mathbb{Z}^{hS^1}
\xrightarrow{x} H\mathbb{Z}^{hS^1} \to H\mathbb{Z}$. 
For assertion (2), by \emph{loc.~cit.}, we have
functorial equivalences $X^{tS^1} \simeq X^{hS^1} \otimes_{H\mathbb{Z}^{hS^1}}
H\mathbb{Z}^{tS^1} = X^{hS^1}[1/x]$. 
The fiber sequences 
$\Sigma (H\mathbb{Z})_{hS^1} \to H\mathbb{Z}^{hS^1} \to H\mathbb{Z}^{tS^1}$
and 
$\Sigma X_{hS^1} \to X^{hS^1} \to X^{tS^1}$
make 
$(H\mathbb{Z})_{hS^1}$ into an $H\mathbb{Z}^{hS^1} $-module and yield a
natural equivalence 
$X_{hS^1} \simeq X^{hS^1} \otimes_{H\mathbb{Z}^{hS^1}} (H\mathbb{Z})_{hS^1}$. 
Using the fiber sequence of $H\mathbb{Z}^{hS^1}$-modules $H\mathbb{Z} \to
(H\mathbb{Z})_{hS^1} \xrightarrow{x} \Sigma^2 (H \mathbb{Z})_{hS^1}$, we obtain
the assertion (2). 
Assertion (3) follows from assertion (1) and  from the cofiber sequence $X^{hS^1}
\stackrel{\mathrm{can}_X}{\to} X^{tS^1} \to \Sigma^2 X_{hS^1}$, where we use 
that multiplication by $x$ is an equivalence on $X^{tS^1}$. 
\end{proof}

\begin{proposition} 
\label{functorialfibseq}
For $X \in \md_{H \mathbb{F}_p^{\mathrm{triv}}}( \CycSp)$, there exists a functorial fiber
sequence
\begin{equation} \TC(X) \To X \To X_{hS^1}.  \end{equation}
\end{proposition}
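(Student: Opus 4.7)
The plan is to manipulate the Nikolaus--Scholze formula \eqref{keyformula}, $\TC(X) = \mathrm{fib}(\can_X - \varphi_X)$, using the structure of $X$ as an $H\bb{F}_p^{\mathrm{triv}}$-module. The central observation is that $\TC^-(X)$ and $\TP(X)$ are modules over $\TC^-(H\bb{F}_p^{\mathrm{triv}}) = \bb{F}_p[x]$ and $\TP(H\bb{F}_p^{\mathrm{triv}}) = \bb{F}_p[x^{\pm 1}]$ respectively. Both $\can_X$ and $\varphi_X$ are $\bb{F}_p[x]$-linear, but via different algebra maps on the target: $\can_{H\bb{F}_p^{\mathrm{triv}}}$ carries $x$ to the invertible generator of $\bb{F}_p[x^{\pm 1}]$, while $\varphi_{H\bb{F}_p^{\mathrm{triv}}}$ kills $x$ by Lemma \ref{Frobvanish}.

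From this I would extract the key commutative square, functorial in $X$,
\[
\begin{array}{ccc}
\Sigma^{-2} X^{hS^1} & \xrightarrow{\,x\,} & X^{hS^1} \\
\downarrow & & \downarrow \\
\Sigma^{-2} X^{hS^1} & \xrightarrow{x\can_X} & X^{tS^1}
\end{array}
\]
whose left vertical map is the identity and whose right vertical map is $\can_X - \varphi_X$; commutativity reduces to the identity $(\can_X - \varphi_X) \circ x = x\can_X - 0 = x\can_X$. By the two parts of Lemma \ref{cofiberofx}, the cofibers of the two rows are $X$ and $X_{hS^1}$ respectively, so taking cofibers horizontally produces a commutative square
\[
\begin{array}{ccc}
X^{hS^1} & \xrightarrow{\can_X - \varphi_X} & X^{tS^1} \\
\downarrow & & \downarrow \\
X & \longrightarrow & X_{hS^1}
\end{array}
\]
in which both vertical maps have fiber $\Sigma^{-2} X^{hS^1}$. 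A $3 \times 3$ diagram chase (cofiber sequence of vertical fibers) then identifies the fiber of the top row with the fiber of the bottom row; by \eqref{keyformula} the former is $\TC(X)$, yielding the desired functorial fiber sequence $\TC(X) \to X \to X_{hS^1}$.

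The main obstacle is establishing the first commutative square with enough functoriality at the $\infty$-categorical level, since Lemma \ref{Frobvanish} was stated only on homotopy groups. The clean way to upgrade is to use that $\TC^-$ and $\TP$ are lax symmetric monoidal functors on $\CycSp$ and that $\can, \varphi$ are natural transformations of such; this makes $\can_X$ and $\varphi_X$ functorial maps in $\md_{\bb{F}_p[x]}(\Sp)$, with target $\TP(X)$ viewed as an $\bb{F}_p[x]$-module through $\can_{H\bb{F}_p^{\mathrm{triv}}}$ or $\varphi_{H\bb{F}_p^{\mathrm{triv}}}$ respectively. The nullhomotopy of $\varphi_{H\bb{F}_p^{\mathrm{triv}}}$ on the generator $x$ furnished by Lemma \ref{Frobvanish} then makes the square commute naturally, which is all that is needed.
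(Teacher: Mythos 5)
Your proposal is correct and is essentially the paper's own argument: the same module-theoretic observation that $\varphi_X$ is linear over $\varphi_{H\mathbb{F}_p}$ (which kills $x$ by Lemma~\ref{Frobvanish}) yields the functorial nullhomotopy of $\varphi_X \circ x$, and your key square is just the transpose of the paper's diagram \eqref{cyc2}, with the identification of fibers/cofibers via Lemma~\ref{cofiberofx} carried out identically. Your closing remark about upgrading the $\pi_*$-level vanishing to a functorial nullhomotopy via lax monoidality and base-change along $\varphi_{H\mathbb{F}_p}$ is precisely how the paper handles this point in diagram \eqref{cycdiag}.
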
 
\begin{proof} 
We show first that, for $X \in \md_{H \mathbb{F}_p^{\mathrm{triv}}}(\CycSp) $, 
the composition
\begin{equation} \label{th} \Sigma^{-2} \TC^-(X) \xTo{x} \TC^-(X)
\xrightarrow{\varphi_X} \TP(X)  \end{equation}
is functorially nullhomotopic. 
In fact, since
$X$ is an $H \mathbb{F}_p^{\mathrm{triv}}$-module, we find that
$\TC^-(X)$ is an $\TC^-( H \mathbb{F}_p^{\mathrm{triv}})$-module, $\TP(X)$ is a
$\TP(H \mathbb{F}_p^{\mathrm{triv}})$-module, and 
the map $\varphi_X$ is $\varphi_{H \bb F_p}$-linear.
We obtain a resulting functorial commutative diagram,
where the base-changes are along $\varphi_{ H \bb F_p}:
\TC^-(H \mathbb{F}_p^{\mathrm{triv}}) \to
\TP( H \mathbb{F}_p^{\mathrm{triv}})$,
\begin{equation} \label{cycdiag}
\xymatrix@C=1cm{
\Sigma^{-2} \TC^-(X) \ar[r]^x \ar[d] &  \TC^-(X) \ar[d]
\ar[rd]^{\varphi_X} \\
\Sigma^{-2} \TC^-(X) \otimes_{\TC^-(H \mathbb{F}_p^{\mathrm{triv}})}
\TP(H \mathbb{F}_p^{\mathrm{triv}})
\ar[r]_-{\varphi_{H \bb F_p}(x)} & \TC^-(X)
\otimes_{\TC^-(H\mathbb{F}_p^{\mathrm{triv}})}
\TP(H \mathbb{F}_p^{\mathrm{triv}}) \ar[r] & 
 \TP(X)  
} .\end{equation}
 However $\varphi_{H \mathbb{F}_p}(x)$ vanishes, by
Lemma~\ref{Frobvanish}, and so the composite 
\eqref{th} is functorially nullhomotopic as desired. 

It now follows that there is a functorial commutative diagram
\begin{equation} \label{cyc2} \xymatrix{
\Sigma^{-2} \TC^-(X) \ar[d]_{x} \ar[r]^{\mathrm{id}} &\Sigma^{-2}
\TC^-(X)\ar[d]^{x \mathrm{can}_X} \\
\TC^-(X) \ar[r]_{\mathrm{can}_X-\varphi_X} &  \TP(X)  
} \end{equation} 
in which the fiber of the bottom arrow is $\TC(X)$ by formula \eqref{keyformula}. Therefore $\TC(X)$ is also  the fiber of 
the natural map $X \to X_{hS^1}$ between the cofibers of the
vertical arrows in the above diagram, where we identify the cofibers using
Lemma~\ref{cofiberofx}.
\end{proof} 

See also \cite[Prop. IV.3.4]{nikolaus-scholze} for a closely related result for
$p$-cyclotomic spectra with Frobenius lifts. 

\begin{corollary} 
\label{Fpcommutecolimit}
The functor 
 $\md_{H \mathbb{F}_p^{\mathrm{triv}}}(\CycSp) \to \Sp$ given by 
$X \mapsto \TC(X)$ 
  commutes with
all colimits. 
That is, the unit of the presentably symmetric monoidal stable $\infty$-category
 $\md_{H \mathbb{F}_p^{\mathrm{triv}}}(\CycSp)$ is compact.
 \end{corollary}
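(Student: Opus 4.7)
The plan is to deduce the corollary directly from the functorial fiber sequence produced in Proposition \ref{functorialfibseq}, so that the hard work has already been done there. Writing $U: \md_{H\mathbb{F}_p^{\mathrm{triv}}}(\CycSp) \to \Sp$ for the composite of the forgetful functors to $\CycSp$ and then to $\Sp$, Proposition \ref{functorialfibseq} yields a functorial fiber sequence in $\Sp$
\[ \TC(X) \To U(X) \To U(X)_{hS^1} \]
natural in $X \in \md_{H\mathbb{F}_p^{\mathrm{triv}}}(\CycSp)$. So the task reduces to showing that the two right-hand functors commute with arbitrary colimits.

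The functor $U$ commutes with all colimits: the forgetful functor $\CycSp \to \Sp$ was recalled in Subsection \ref{subsection_reminders} to preserve colimits, and the forgetful functor from a module category over a commutative algebra object in a presentably symmetric monoidal stable $\infty$-category to the ambient $\infty$-category preserves colimits as well. The functor $(-)_{hS^1} = \mathrm{colim}_{BS^1}(-): \Sp^{BS^1} \to \Sp$ is by definition a colimit, hence commutes with all colimits. Therefore $X \mapsto U(X)_{hS^1}$ commutes with all colimits as a composition of cocontinuous functors. Since $\TC(X)$ is the fiber of a natural transformation between two cocontinuous functors into the stable $\infty$-category $\Sp$, and fibers in a stable $\infty$-category are (shifts of) cofibers, which commute with colimits, it follows that $X \mapsto \TC(X)$ commutes with all colimits.

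For the reformulation in terms of compactness, note that by the free-forgetful adjunction between $\md_{H\mathbb{F}_p^{\mathrm{triv}}}(\CycSp)$ and $\CycSp$, for any $X$ in the module category there is a natural equivalence
\[ \TC(X) = \hom_{\CycSp}(\mathbf{1}, X) \simeq \hom_{\md_{H\mathbb{F}_p^{\mathrm{triv}}}(\CycSp)}(H\mathbb{F}_p^{\mathrm{triv}}, X), \]
since $H\mathbb{F}_p^{\mathrm{triv}} = H\mathbb{F}_p^{\mathrm{triv}} \otimes \mathbf{1}$ is the free $H\mathbb{F}_p^{\mathrm{triv}}$-module on the cyclotomic unit. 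Thus the cocontinuity of $\TC$ on the module category is precisely the statement that mapping out of the unit preserves colimits; in a presentable stable $\infty$-category this is equivalent to the unit being compact, since exact functors automatically preserve finite colimits and so cocontinuity amounts to preservation of filtered colimits.

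The only subtle input is Proposition \ref{functorialfibseq} itself, whose proof rests on the vanishing of $\varphi_{H\mathbb{F}_p}(x)$ from Lemma \ref{Frobvanish}; once that is in hand, the corollary is essentially formal.
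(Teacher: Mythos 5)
Your proof is correct and takes essentially the same route as the paper, whose entire argument is the one-line observation that the corollary follows from Proposition~\ref{functorialfibseq} because homotopy orbits commute with all colimits. Your version simply spells out the cocontinuity of the forgetful functors and the compactness reformulation in more detail.
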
 
\begin{proof}
Since passage to homotopy orbits commutes with all colimits, this follows from Proposition \ref{functorialfibseq}.
\end{proof}

\begin{remark} 
\label{rem:TCconnectivity}
Let $X \in \CycSp_{ \geq 0}$ be $p$-complete. 
Using the functorial fiber sequence of Proposition~\ref{functorialfibseq}, we
can give an independent proof 
of Lemma~\ref{connectivity} (and thus of Corollary~\ref{geometricreal}) which is independent of the classical approach to
$\TC$ (i.e., based entirely on the formula \eqref{keyformula}). 
Namely, we argue that $\TC(X) \in \Sp_{\geq -1}$. 

First, suppose that $X$ is an $H \mathbb{F}_p^{\mathrm{triv}}$-module. In this
case, we find that $\TC(X) \in \Sp_{\geq -1}$ thanks to
Proposition~\ref{functorialfibseq}. More generally, if $X$ is a
$p$-complete $H \mathbb{Z}^{\mathrm{triv}}$-module, then we find that $X/p$ is an
$H\mathbb{F}_p^{\mathrm{triv}}$-module so that $\TC(X/p) \simeq \TC(X)/p$ belongs
to $\Sp_{\geq -1}$; by $p$-completeness we deduce that $\TC(X) \in \Sp_{\geq
-1}$.

In general, let $X \in \CycSp_{\geq 0}$ be an arbitrary $p$-complete object. 
Then, we consider the Adams tower for $X$ with respect to the commutative
algebra object $H\mathbb{Z}^{\mathrm{triv}} \in \CycSp$, i.e., the tower for
the totalization of the cosimplicial object $X \otimes
(H\mathbb{Z}^{\mathrm{triv}})^{\otimes \bullet + 1}$ (cf.~\cite[Sec.~2.1]{MNN17} for a reference in this generality). 
That is, we consider the tower $\left\{X_n\right\} \in \CycSp$ defined
inductively such that
$X_0 = X$ and $X_{n+1}$ is the fiber of the map $X_{n} \to X_n \otimes
H\mathbb{Z}^{\mathrm{triv}}$. 
One sees easily that $X_n \in \CycSp_{\geq n}$, and that $X_n$ is $p$-complete since $\pi_i(S^0)$ is finite for $i>0$.

Since $X_{n}/X_{n+1}$ is a $H\mathbb{Z}^{\mathrm{triv}}$-module, it follows from
the previous analysis that $\TC(X_n/X_{n+1}) \in \Sp_{\geq (n-1)}$ for each $n$. 
Inductively, 
it follows now that $\TC(X/X_{n+1}) = \TC(X_0/X_{n+1}) \in \Sp_{\geq -1}$ for
each $n$ and that the maps $\TC(X/X_{n+1}) \to \TC(X/X_{n})$ are surjective
on $\pi_{-1}$. 
Since $X \to X/X_{n+1}$ is an equivalence in degrees $\leq n$, it follows (as in \cite[Lem.~I.2.6]{nikolaus-scholze}) that
\[ X^{hS^1} \simeq \varprojlim (X/X_n )^{hS^1}, \quad 
X^{tS^1} \simeq \varprojlim (X/X_n)^{tS^1}
\]
and therefore by \eqref{keyformula}
\[ \TC(X) \simeq \varprojlim \TC(X/X_n).  \]
Using the Milnor exact sequence 
and the connectivity properties of $\TC(X/X_{n+1})$, 
one concludes that $\TC(X) \in \Sp_{\geq -1}$. 
\end{remark} 

We are now prepared to prove the main result of the subsection.

\begin{proof}[Proof of Theorem \ref{TCcommutes}]
Consider an $X \in \CycSp_{\geq 0}$ which is $p$-complete.  Since $\TC(X\otimes (-)^{\mathrm{triv}})$ is an exact functor from spectra to spectra with value $\TC(X)$ on $S^0$, we get a natural transformation
\[ \TC(X) \otimes (-) \to \TC(X \otimes (-)^{\mathrm{triv}}),\]
uniquely characterized by being an equivalence on $S^0$, hence on any finite spectrum.\footnote{This is a standard ``assembly map".  To construct it rigorously, note that left Kan extension of the restriction of the right hand side to finite spectra is uniquely characterized by its value on $S^0$, by the universal property of spectra among presentable stable $\infty$-categories, and therefore identifies with the left hand side.} By Corollary~\ref{geometricreal}, it is also an equivalence on any geometric realization of finite connective spectra, i.e.,
for any connective spectrum whose homology groups are finitely generated, e.g.,
$H \bb F_p$.
So we have shown that, for any $X \in
\CycSp_{\geq 0}$ which is $p$-complete, there is a natural equivalence
$$\TC(X) \otimes H\mathbb{F}_p \simeq \TC(X \otimes
H\mathbb{F}_p^{\mathrm{triv}}).$$

Replacing $X$ with $X/p$, the same conclusion follows for an arbitrary
$X \in \CycSp_{\geq 0}$.  Then from Corollary~\ref{Fpcommutecolimit} we deduce that the functor $X
\mapsto \TC(X) \otimes H\mathbb{F}_p$ commutes
with colimits in $\CycSp_{\geq 0}$. 
Hence, given a diagram $I\to\CycSp_{\geq
0}$, the induced map of spectra
\begin{equation} \label{TCcolim} \varinjlim_I \TC(X_i) \to \TC(X) \end{equation} 
becomes an equivalence after smashing with $H\mathbb{F}_p$. 
Since both sides are bounded below mod $p$ by Lemma \ref{connectivity}, it
follows that \eqref{TCcolim}
is an equivalence after smashing with $S^0/p$.
\end{proof}

The following consequence concerning the topological cyclic homology of rings is of particular interest. Let $\mathrm{Alg}(\Sp_{\geq 0})$ be the $\infty$-category of connective
associative ring spectra.

\begin{corollary} 
\label{TCring}
The functor $\mathrm{Alg}(\Sp_{\geq 0})\to\Sp$, $R \mapsto \TC(R)/p$ commutes with
sifted colimits. 
\end{corollary}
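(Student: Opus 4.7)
The plan is to factor the functor $R \mapsto \TC(R)/p$ as the composition
\[
\mathrm{Alg}(\Sp_{\geq 0}) \xrightarrow{\THH} \CycSp_{\geq 0} \xrightarrow{\TC(-)/p} \Sp,
\]
and verify that each factor preserves sifted colimits. The second factor preserves \emph{all} colimits by Theorem~\ref{TCcommutes}, so the entire problem reduces to showing that $\THH: \mathrm{Alg}(\Sp_{\geq 0}) \to \CycSp_{\geq 0}$ preserves sifted colimits.

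For this, I would argue that it suffices to check the statement on underlying spectra. Indeed, by Subsection~\ref{subsection_reminders}, the forgetful functor $\CycSp \to \Sp$ preserves (and reflects) colimits; moreover, the cyclotomic structure on $\THH(R)$ is functorial in $R$, and the $\infty$-category $\CycSp$ is a lax equalizer, so a diagram of cyclotomic spectra whose underlying diagram is a colimit is itself a colimit. Thus it is enough to show that the underlying spectrum $\THH(R)$, viewed as a functor in $R$, preserves sifted colimits.

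Now $\THH(R)$ is computed as the geometric realization of the cyclic bar construction $B^{\mathrm{cyc}}_\bullet(R)$ with $B^{\mathrm{cyc}}_n(R) = R^{\otimes(n+1)}$. Sifted colimits in $\mathrm{Alg}(\Sp_{\geq 0})$ are computed on underlying spectra, and the smash product on $\Sp_{\geq 0}$ preserves sifted colimits separately in each variable (hence jointly, since sifted colimits commute with finite products). Therefore, for each $n$, the functor $R \mapsto R^{\otimes(n+1)}$ preserves sifted colimits, so the simplicial object $B^{\mathrm{cyc}}_\bullet(R)$ preserves sifted colimits levelwise. Since geometric realization is itself a sifted colimit, the commutation of colimits then gives that $R \mapsto \THH(R)$ preserves sifted colimits on underlying spectra.

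Putting the two steps together, $R \mapsto \TC(R)/p$ preserves sifted colimits. The only delicate point that requires attention is ensuring that cyclotomic Frobenii assemble compatibly under sifted colimits of algebras; but this is automatic from the functoriality of $\THH$ valued in $\CycSp$ together with the fact that colimits in $\CycSp_{\geq 0}$ are computed underlyingly, which is the sole real input beyond Theorem~\ref{TCcommutes}.
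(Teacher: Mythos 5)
Your proof is correct and follows essentially the same route as the paper: factor $\TC(-)/p$ through $\THH:\mathrm{Alg}(\Sp_{\geq 0})\to\CycSp_{\geq 0}$, invoke Theorem~\ref{TCcommutes} for the second factor, and observe that $\THH$ preserves sifted colimits since the cyclic bar construction is built from tensor powers and geometric realizations, with colimits in $\CycSp$ detected on underlying spectra. The paper states this in one line; you have merely supplied the standard details.
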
 
\begin{proof} 
This follows from Theorem~\ref{TCcommutes} since $\TC(R) = \TC( \THH(R))$, where we note that
the topological Hochschild homology functor
$\THH: \mathrm{Alg}(\Sp_{\geq 0}) \to \CycSp_{\geq 0}$ commutes
with sifted colimits (as geometric realizations and tensor products do).
\end{proof}

Certain special cases of Corollary~\ref{TCring} appear in the literature (or
can be extracted from it). 

\begin{example} 
For commutative $\mathbb{F}_p$-algebras, Corollary~\ref{TCring} can be deduced
using the description of Hesselholt \cite{Hesselholt} of $\THH$ of smooth
$\mathbb{F}_p$-algebras in terms of the de Rham--Witt complex; we detail this in
subsection~\ref{dRWapproach} below.
\end{example}

\begin{example} 
In the case of square-zero extensions, 
the verification (called the \emph{$p$-limit axiom})  of
Corollary~\ref{TCring} plays an important role in
the proof that relative $K$-theory and $\TC$ with finite coefficients agree for nilimmersions of rings
in McCarthy \cite{mccarthy}. 
\end{example} 

\begin{example} 
Let $M$ be an $\mathbb{E}_1$-monoid in the $\infty$-category of spaces, so that
one can form the spherical monoid ring $\Sigma^\infty_+ M$. 
The formula for $\TC$ of spherical 
monoid rings \cite[Lemma IV.3.1]{nikolaus-scholze} (preceded by the formula for
spherical group rings appearing already in \cite[Eq. (0.3)]{BHM})
shows that for any such $R$, 
the cyclotomic structure map $\THH(R) \to \THH(R)^{tC_p}$ factors
$S^1$-equivariantly through
$\THH(R)^{hC_p}$, which is called a \emph{Frobenius lift} in
\cite{nikolaus-scholze}. For cyclotomic spectrum $X$ with a Frobenius lift, the construction  of
($p$-complete) $\TC$ simplifies: one has a fiber square 
\cite[Prop. IV-3.4]{nikolaus-scholze} relating $\TC(X)$ to $\Sigma
X_{hS^1}$ and two copies of $X$. It follows from this that the construction  
$M \mapsto \TC( \Sigma^\infty_+ M)/p$ commutes with filtered colimits in $M$. 
Since the free associative algebra is a spherical monoid ring, and since $\TC$
is already known to commute with geometric realizations, one can also deduce
Corollary~\ref{TCring} in this way.
\end{example} 
\subsection{Further finiteness of $\TC/p$}
Here we use the material of subsection \ref{subsection_cocontinuity} to deduce further finiteness properties of topological cyclic homology.

Firstly we show that, in any given range, $\TC/p$ can be approximated well by
functors finitely built from taking $S^1$-homotopy orbits. Let $\fun(\CycSp_{\geq 0}, \Sp)$ denote the $\infty$-category of functors $\CycSp_{\geq 0} \to \Sp$. 
For our purposes below, we will need this strengthening and not only
Theorem~\ref{TCcommutes}. 

\begin{proposition} 
\label{strong:finiteness}
For any given integer $n$, there exists a functor $F \in \fun_{}(\CycSp_{\geq 0}, \Sp)$ with the following properties:
\begin{enumerate}
\item $F$ belongs to the thick subcategory  of $\fun_{}(\CycSp_{\geq 0}, \Sp)$
generated by the functor $X \mapsto (X\otimes H\mathbb{F}_p^\sub{triv})_{hS^1}$;
\item There exists an equivalence $\tau_{\leq n} F(X) \simeq
\tau_{\leq n}(\TC(X)/p)$. \end{enumerate}
\end{proposition}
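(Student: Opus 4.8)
The plan is to obtain $F$ by a truncation of an ``$H\mathbb{F}_p^{\mathrm{triv}}$-resolution'' of the cyclotomic spectrum $X$, making crucial use of the functorial fiber sequence of Proposition~\ref{functorialfibseq} together with the connectivity bound of Lemma~\ref{connectivity} (equivalently Remark~\ref{rem:TCconnectivity}). The key observation is that, just as in the proof of Lemma~\ref{connectivity} via Remark~\ref{rem:TCconnectivity}, one can build $X$ out of $H\mathbb{Z}^{\mathrm{triv}}$-modules --- and after going mod $p$, out of $H\mathbb{F}_p^{\mathrm{triv}}$-modules --- using the Adams tower. For an $H\mathbb{F}_p^{\mathrm{triv}}$-module $Y$, Proposition~\ref{functorialfibseq} gives a functorial fiber sequence $\TC(Y) \to Y \to Y_{hS^1}$, so $\TC(Y)$ is functorially a two-step extension of $Y_{hS^1}$ and $\Sigma^{-1} Y$; since $Y = Y \otimes H\mathbb{F}_p^{\mathrm{triv}}$ here, and since $Y_{hS^1}$ and $\Sigma^{-1}Y$ are obtained from the functor $X \mapsto (X \otimes H\mathbb{F}_p^{\mathrm{triv}})_{hS^1}$ by finite colimit manipulations (note $Y \simeq \mathrm{cofib}(\Sigma^{-2} Y_{hS^1} \xrightarrow{x} Y_{hS^1})$ by Lemma~\ref{cofiberofx}), the functor $Y \mapsto \TC(Y)$ on $H\mathbb{F}_p^{\mathrm{triv}}$-modules lies in the thick subcategory generated by $X \mapsto (X \otimes H\mathbb{F}_p^{\mathrm{triv}})_{hS^1}$.

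Concretely, I would proceed as follows. First, fix $n$ and consider the mod $p$ Adams tower $\{X_m\}$ for $X$ with respect to $H\mathbb{F}_p^{\mathrm{triv}}$, exactly as in Remark~\ref{rem:TCconnectivity} but working with $H\mathbb{F}_p^{\mathrm{triv}}$ in place of $H\mathbb{Z}^{\mathrm{triv}}$ after smashing with $S^0/p$; so $X/p \simeq \varprojlim (X/p)/X_m$ with each $X_m/p \in \CycSp_{\geq m}$ and each layer $X_m/X_{m+1}$ an $H\mathbb{F}_p^{\mathrm{triv}}$-module. Second, set $F$ to be the functor $X \mapsto \TC\big((X/p)/X_{m_0}\big)$ for a suitably large $m_0 = m_0(n)$, say $m_0 = n+2$. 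Since $(X/p)/X_{m_0}$ is finitely built (as a functor of $X$) from the layers $X_i/X_{i+1}$ for $i < m_0$, and applying $\TC$ to each layer lands in the thick subcategory generated by $X \mapsto (X\otimes H\mathbb{F}_p^{\mathrm{triv}})_{hS^1}$ by the previous paragraph, property~(1) holds: $F$ lies in that thick subcategory, it being closed under the finite colimits and retracts used to assemble $(X/p)/X_{m_0}$ from finitely many layers. Third, for property~(2), note that $X_{m_0}/p \in \CycSp_{\geq m_0}$ together with Lemma~\ref{connectivity} gives $\TC(X_{m_0}/p) \in \Sp_{\geq m_0 - 1} = \Sp_{\geq n+1}$, so the fiber sequence $\TC(X_{m_0}/p) \to \TC(X/p) \to \TC\big((X/p)/X_{m_0}\big) = F(X)$ shows $F(X) \to \TC(X/p)$ induces an isomorphism on $\pi_i$ for $i \leq n$ and an injection on $\pi_{n+1}$; in particular $\tau_{\leq n} F \simeq \tau_{\leq n}(\TC(X)/p)$.

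The main obstacle I anticipate is the bookkeeping in the first step: verifying \emph{functorially in $X$} that $\TC$ applied to an $H\mathbb{F}_p^{\mathrm{triv}}$-module layer genuinely lies in the thick subcategory generated by the single functor $X \mapsto (X \otimes H\mathbb{F}_p^{\mathrm{triv}})_{hS^1}$ --- as opposed to the thick subcategory generated by $Y \mapsto Y_{hS^1}$ on the category of $H\mathbb{F}_p^{\mathrm{triv}}$-modules. One needs to check that the layers $X_i/X_{i+1}$, as functors of $X$, are themselves built from $X \mapsto X \otimes H\mathbb{F}_p^{\mathrm{triv}}$ by finite colimits (they are, being finite smash powers of $H\mathbb{F}_p^{\mathrm{triv}}$ smashed with $X$, tensored down), and that the base-change/forgetful compatibilities let one express $Y_{hS^1}$ for $Y = X_i/X_{i+1}$ in the required form. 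Granting the routine unwinding of the Adams tower and Lemma~\ref{cofiberofx}, everything else is formal manipulation with thick subcategories of functor categories and the connectivity estimate of Lemma~\ref{connectivity}.
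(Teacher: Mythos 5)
Your overall strategy is the paper's: approximate $\TC(X)/p$ in degrees $\leq n$ by $\TC$ of a functorial modification of $X$ that is finitely built from $H\mathbb{F}_p^{\mathrm{triv}}$-modules, then invoke Proposition~\ref{functorialfibseq} for property (1) and Lemma~\ref{connectivity} for property (2). The difference is in the choice of modification, and there your construction has a genuine gap --- precisely at the point you flag as "the main obstacle" and then dismiss as routine. The layers of the Adams tower are of the form $X\otimes \overline{E}^{\otimes i}\otimes E^{\mathrm{triv}}$ with $\overline{E}=\mathrm{fib}(S^0\to E)$, so after reducing everything to $H\mathbb{F}_p$-modules you are forced to confront $E\otimes H\mathbb{F}_p$ (e.g.\ $H\mathbb{F}_p\otimes H\mathbb{F}_p$, the dual Steenrod algebra, or $H\mathbb{Z}\otimes H\mathbb{F}_p$), which is an \emph{infinite} wedge of suspensions of $H\mathbb{F}_p$. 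Consequently $(X_i/X_{i+1}\otimes S^0/p)$ is an infinite wedge $\bigvee_j \Sigma^{d_j}(X\otimes H\mathbb{F}_p^{\mathrm{triv}})$, and a thick subcategory is closed only under finite colimits, shifts and retracts --- not infinite wedges. This is not a bookkeeping issue: evaluating your $F$ at the unit $\mathbf{1}\in\CycSp_{\geq 0}$, the $\mathbb{F}_p$-dimensions of the homotopy groups of $\TC$ of such a layer grow without bound, whereas every object of the thick subcategory generated by $X\mapsto (X\otimes H\mathbb{F}_p^{\mathrm{triv}})_{hS^1}$ has $\dim_{\mathbb{F}_p}\pi_i$ uniformly bounded at $\mathbf{1}$ (since this holds for the generator and is preserved by finite extensions and retracts). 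So your $F$ satisfies (2) but genuinely fails (1). A secondary issue: if you literally run the Adams tower against $H\mathbb{F}_p^{\mathrm{triv}}$ rather than $H\mathbb{Z}^{\mathrm{triv}}$, the connectivity claim $X_m/p\in\CycSp_{\geq m}$ is also false, because $\mathrm{fib}(S^0\to H\mathbb{F}_p)$ is only $0$-connective; Remark~\ref{rem:TCconnectivity} uses $H\mathbb{Z}^{\mathrm{triv}}$ for exactly this reason.

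The repair is to choose a coefficient spectrum that is \emph{genuinely finitely} built from $H\mathbb{F}_p$, and this is what the paper does: it takes $F(X)=\TC\bigl(X\otimes \tau_{\leq n+1}(S^0/p)^{\mathrm{triv}}\bigr)$. The truncated Moore spectrum $\tau_{\leq n+1}(S^0/p)$ has finitely many Postnikov layers, each an Eilenberg--MacLane spectrum of a finite abelian $p$-group and hence a finite extension of copies of $H\mathbb{F}_p$; therefore $X\otimes\tau_{\leq n+1}(S^0/p)^{\mathrm{triv}}$ is finitely and functorially built from shifts of $X\otimes H\mathbb{F}_p^{\mathrm{triv}}$, and your own argument via Proposition~\ref{functorialfibseq} and Lemma~\ref{cofiberofx} (that $\TC(Y)$ for $Y=X\otimes H\mathbb{F}_p^{\mathrm{triv}}$ lies in the thick subcategory generated by $Y_{hS^1}$) then gives (1). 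Property (2) follows exactly as in your third step, applying Lemma~\ref{connectivity} to the fiber $X\otimes\tau_{\geq n+2}(S^0/p)^{\mathrm{triv}}\in\CycSp_{\geq n+2}$.
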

\begin{proof}
We claim that for each $n$, the functor $X \mapsto \TC(X\otimes \tau_{\leq
n+1}(S^0/p)^\sub{triv})$ has the desired properties. Indeed, it  belongs to the thick subcategory 
generated by  $X \mapsto (X\otimes H\mathbb{F}_p^\sub{triv})_{hS^1}$ 
thanks to Proposition~\ref{functorialfibseq} and
assertion~(\ref{hS1cofib}) of Lemma~\ref{cofiberofx}, while the connectivity assertion follows from Lemma~\ref{connectivity}.
\end{proof}

The second finiteness result concerns the ``pro'' structure of topological
cyclic homology and will be used later in the paper. Given $X \in \CycSp_{\geq
0}$, the classical approach to topological cyclic homology (cf.~subsection \ref{subsection_reminders}) involves the spectra
\[ \TC^r(X;p) = \mathrm{fib}(\TR^r(X;p) \xto{R-F} \TR^{r-1}(X;p)) \]
for all $r\ge1$. The system $\left\{\TC^r(X;p)\right\}_{r \geq 1}$ naturally
forms a tower of spectra under the Restriction (or, equivalently, Frobenius) maps, and $\TC(X;p) = \varprojlim_r \TC^r(X;p)$. 
Note that $\TC^r(X;p) \in \Sp_{\geq -1}$ for all $r\ge1$ (by the proof of Lemma \ref{connectivity}), and that each functor $\CycSp_{\geq 0}\to\Sp$, $X
\mapsto \TC^r(X;p)$ commutes with all colimits (by induction using (\ref{isotropysep})).

Thus the failure of $\TC$ to commute with filtered colimits arises from the infinite
tower $\left\{\TC^r(-;p)\right\}_r$. 
We will now prove a  restatement of 
Theorem~\ref{TCcommutes} to the effect that this tower is pro-constant modulo
$p$. 
First we need a couple of general lemmas on inverse systems. 

\begin{lemma} 
\label{invlimmzero}
Let $\mathcal{C}$ be a category admitting countable coproducts, and consider a tower of functors $\mathcal{C}\rightarrow \operatorname{Ab}$, say $\ldots \rightarrow F_r\rightarrow F_{r-1}\rightarrow \ldots \rightarrow F_1$.  Suppose that:
\begin{enumerate}
\item For each $X\in\mathcal{C}$ we have $\varprojlim_r F_r(X) = \varprojlim^1_r F_r(X) = 0$.
\item For each $r\in\mathbb{N}$, the functor $F_r$ commutes with countable coproducts.
\end{enumerate}
Then the tower $\{F_r\}_{r \geq 1}$ is pro-zero, i.e.\ it is zero as
an object of $\operatorname{Pro}(\operatorname{Fun}(\mathcal{C},\operatorname{Ab}))$; or, equivalently, for all $r\in\mathbb{N}$ there is an $s>r$ such that the morphism $F_s\rightarrow F_r$ is $0$.
\end{lemma}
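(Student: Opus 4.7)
My plan is to argue by contradiction, converting a failure of the pro-zero conclusion into a contradiction with hypothesis~(1) by evaluating on a single well-chosen object $Y\in\mathcal{C}$.

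Suppose the tower is not pro-zero: then there exists an $r_0$ such that, for every $s>r_0$, the transition map $F_s\to F_{r_0}$ is nonzero. For each such $s$ I pick an object $X_s\in\mathcal{C}$ and an element $a_s\in F_s(X_s)$ whose image in $F_{r_0}(X_s)$ is nonzero; by factoring through intermediate levels, the image of $a_s$ in $F_r(X_s)$ is in fact nonzero for every $r_0\le r\le s$. I now form the countable coproduct $Y:=\coprod_{s>r_0}X_s$ in $\mathcal{C}$ and use hypothesis~(2) to identify $F_r(Y)$ with the direct sum $\bigoplus_{s>r_0}F_r(X_s)$; the crucial consequence is that every element of $F_r(Y)$ has finite support in the index $s$.

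Hypothesis~(1) tells us that the standard two-term complex computing the derived inverse limit,
\[ \prod_r F_r(Y)\To\prod_r F_r(Y),\quad (x_r)_r\mapsto (x_r-f(x_{r+1}))_r,\]
is in fact a bijection, where $f$ denotes the transition maps. I will exhibit a sequence $(y_r)_r$ which admits no preimage. Take $y_r=0$ for $r\le r_0$ and, for $r>r_0$, take $y_r\in F_r(Y)$ to be the image of $a_r\in F_r(X_r)$ in the $r$-th summand. If a preimage $(x_r)_r$ existed, then restricting to the $s$-th coordinate $x_{r,s}\in F_r(X_s)$ the equations would decouple: for $r\ne s$, $x_{r,s}=f(x_{r+1,s})$, and for $r=s$, $x_{s,s}=f(x_{s+1,s})+\bar{a}_s$, where $\bar{a}_s$ denotes the $s$-th component of $y_s$. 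For $r>s$ the sequence $(x_{r,s})_{r>s}$ is compatible under $f$ and, by $\varprojlim_r F_r(X_s)=0$ and cofinality, must vanish; this forces $x_{s,s}=\bar{a}_s$ and, iterating downwards, $x_{r,s}$ is the image of $\bar{a}_s$ in $F_r(X_s)$ for every $r_0\le r\le s$. In particular $x_{r_0,s}\ne 0$ for every $s>r_0$, contradicting the fact that $x_{r_0}$ lies in the direct sum and so has finite support.

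The main obstacle, in my view, is assembling the infinite family of ``bad witnesses'' $\{(X_s,a_s)\}_s$ into one object against which hypothesis~(1) can be applied diagonally. Hypothesis~(2) is used precisely to convert the coproduct in $\mathcal{C}$ into a direct sum of abelian groups, and it is the finite-support property of direct sums which makes the contradiction possible; had $F_r$ commuted only with finite coproducts, or had we landed in a product rather than a direct sum, the argument would collapse.
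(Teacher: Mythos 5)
Your proof is correct. The index bookkeeping checks out: for a fixed $s>r_0$ the equations $x_{r,s}-f(x_{r+1,s})=(y_r)_s$ decouple exactly as you say, the tail $(x_{r,s})_{r>s}$ is a compatible system and hence vanishes by $\varprojlim_r F_r(X_s)=0$ (here you quietly use hypothesis (1) at each individual $X_s$, not just at $Y$ — worth flagging), and then $x_{r_0,s}$ is forced to be the nonzero image of $a_s$, contradicting finite support in $\bigoplus_{s>r_0}F_{r_0}(X_s)$.

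The route is genuinely different from the paper's, though built on the same germ of an idea. The paper proceeds in two stages: first it shows the \emph{objectwise} statement that each tower $\{F_r(X)\}_r$ of abelian groups is pro-zero, by observing that an infinite direct sum of copies of this tower still has vanishing $\varprojlim^1$ and then invoking a theorem of Emmanouil (\cite[Cor.~6]{Emmanouil}) to conclude the tower is Mittag--Leffler, whence pro-zero since the limit also vanishes; second, it bootstraps to the functorial statement by taking a countable coproduct of witnesses $Y$ for which $F_s(Y)\to F_r(Y)$ is nonzero and deriving a contradiction with the objectwise statement. You collapse both stages into a single diagonal argument against the explicit two-term complex computing $\varprojlim$ and $\varprojlim^1$, evaluated on one coproduct of witnesses chosen along the diagonal $s=r$. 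What your approach buys is self-containedness: you never need the Mittag--Leffler/Emmanouil input, only the elementary description of $\varprojlim^1$ as a cokernel. What the paper's factorization buys is a reusable intermediate statement (objectwise pro-vanishing from $\varprojlim=\varprojlim^1=0$ plus stability under countable direct sums), which is conceptually cleaner if one wants to quote it elsewhere. Either way the hypotheses are used in the same essential places — (2) to turn the coproduct into a direct sum with its finite-support constraint, (1) to solve the inverse-limit equations.
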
 
\begin{proof} 
First we show the weaker claim that for all $X\in\mathcal{C}$, the tower
$\{F_r(X)\}_r$ of abelian groups is pro-zero.  Set $M_r = F_r(X)$ for brevity.
From the hypothesis we find that an infinite direct sum of copies of the tower
$\{M_r\}_r$ has vanishing $\varprojlim^1$; thus \cite[Cor.~6]{Emmanouil} implies
that $\{M_r\}_r$ is Mittag--Leffler, i.e., for each $r$, the descending sequence $\{\mathrm{im}(M_s \to M_r)\}_{s \geq r}$ of submodules of $M_r$ stabilizes.  If this stable submodule were nonzero for some $r$, we would deduce the existence of a nonzero element of $\varprojlim_s M_s$, contradicting the hypothesis.  Thus the stable value is $0$ for all $r$, which exactly means that $\{M_r\}_r$ is pro-zero.

Now, suppose the claim of the lemma does not hold, i.e., that $\{F_r\}_r$ is not pro-zero.  Then for each $r\in\mathbb{N}$ and $s>r$, we can find an $Y\in\mathcal{C}$ such that $F_s(Y)\rightarrow F_r(Y)$ is nonzero.  Let $X$ denote the coproduct over all pairs $s>r$ of a choice of such a $Y$.  Then for every $s>r$ the map $F_s(X)\rightarrow F_r(X)$ is nonzero, so we deduce that the tower $\{F_r(X)\}$ is not pro-zero, in contradiction to what was established above.
\end{proof}

\begin{lemma} 
\label{invlim02}
Let $\mathcal{C}$ be an $\infty$-category admitting countable coproducts, and consider a tower of functors $\mathcal{C}\rightarrow \operatorname{Sp}$, say $\ldots \rightarrow F_r\rightarrow F_{r-1}\ldots \rightarrow F_1$.  Suppose that:
\begin{enumerate}
\item For each $X\in\mathcal{C}$ we have $\varprojlim_r F_r(X) = 0$.
\item For each $r\in\mathbb{N}$, the functor $F_r$ commutes with countable coproducts.
\item The homotopy groups of each $F_r(X)$ are zero outside some fixed interval $[a,b]$, independent of $X$ and $r$.
\end{enumerate}
Then the tower $\{F_r\}_{r \geq 1}$ is pro-zero, i.e.\ it is zero as
an object of $\operatorname{Pro}(\operatorname{Fun}(\mathcal{C},\operatorname{Sp}))$.\end{lemma}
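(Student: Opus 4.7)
The plan is to reduce Lemma \ref{invlim02} to the abelian-group version (Lemma \ref{invlimmzero}) by passing to homotopy groups, and then to reassemble on the spectrum level via a Postnikov tower argument. The first step is that the Milnor exact sequence applied to $\varprojlim_r F_r(X) = 0$ yields $\varprojlim_r \pi_n F_r(X) = \varprojlim{}^1_r \pi_n F_r(X) = 0$ for every integer $n$ and every $X\in\mathcal{C}$. Since $\pi_n$ carries wedge sums of spectra to direct sums, each functor $\pi_n F_r : \mathcal{C}\to\operatorname{Ab}$ commutes with countable coproducts, so Lemma \ref{invlimmzero} applies and shows that the tower $\{\pi_n F_r\}_r$ is pro-zero in $\operatorname{Fun}(\mathcal{C}, \operatorname{Ab})$ for each $n\in[a,b]$. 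Applying the Eilenberg--MacLane functor $H(-)[n]$, which sends zero natural transformations to nullhomotopies, promotes this to pro-zeroness of $\{H\pi_n F_r[n]\}_r$ in $\operatorname{Fun}(\mathcal{C}, \Sp)$.

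The key auxiliary statement is a two-out-of-three property for pro-zeroness in fiber sequences of towers of functors $\mathcal{C}\to\Sp$: given a fiber sequence $G_\bullet \to F_\bullet \to H_\bullet$ with $\{G_r\}$ and $\{H_r\}$ both pro-zero, $\{F_r\}$ is pro-zero. To prove this, given $r$, I would choose $s_1 > r$ so that $G_{s_1}\to G_r$ is null, and then $s_2 > s_1$ so that $H_{s_2}\to H_{s_1}$ is null. The nullhomotopy of $H_{s_2}\to H_{s_1}$ lifts the tower map $F_{s_2}\to F_{s_1}$ through the fiber, producing a (non-canonical) map $\tilde\alpha: F_{s_2} \to G_{s_1}$. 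By functoriality of the fiber sequence in $r$, the tower map $F_{s_2}\to F_r$ is homotopic to the composition $F_{s_2} \xrightarrow{\tilde\alpha} G_{s_1} \to G_r \to F_r$, and this is null because the middle arrow $G_{s_1}\to G_r$ is.

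The conclusion then follows by induction on $b-a$. The base case $b<a$ is trivial since $F_r = 0$. In the inductive step, I apply two-out-of-three to the Postnikov fiber sequence of towers $H\pi_b F_r[b] \to F_r \to \tau_{\leq b-1} F_r$. The first tower is pro-zero by the discussion above. The third has homotopy concentrated in $[a, b-1]$; for $i\leq b-1$ one has $\pi_i(\tau_{\leq b-1} F_r) \cong \pi_i F_r$, so each remaining $\pi_i$-tower is pro-zero, and the inductive hypothesis yields that $\{\tau_{\leq b-1} F_r\}$ is pro-zero. Two-out-of-three then forces $\{F_r\}$ to be pro-zero.

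The main obstacle is establishing the two-out-of-three property, since the lift $\tilde\alpha$ produced from a nullhomotopy is non-canonical and does not fit into any tower-level diagram of lifts. The observation that resolves this is that canonicity is irrelevant: any single choice of $\tilde\alpha$ suffices, because composing it with an independently chosen null step in the outer tower $\{G_r\}$ already witnesses the nullity of the composite representing $F_{s_2}\to F_r$.
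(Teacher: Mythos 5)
Your proof is correct and follows essentially the same route as the paper: reduce to the abelian case via the Milnor sequence and Lemma~\ref{invlimmzero}, then climb the finite Postnikov tower. The only difference is that you spell out the ``d\'evissage'' step (the two-out-of-three property for pro-zero towers via a non-canonical lift through the fiber) that the paper's proof leaves implicit, and that argument is sound.
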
 
\begin{proof} 
The Milnor sequence and induction up the Postnikov tower show that (1) implies
$\varprojlim_r \pi_i F_r(X) = \varprojlim^1_r \pi_i F_r(X) = 0$ for all $i\in\mathbb{Z}$.  Since $\pi_i$ commutes with coproducts, we can apply the previous lemma to $\{\pi_i F_r\}_r$ and conclude that each tower $\{\pi_i F_r\}_r$ is pro-zero.  Thus each Postnikov section of $\{F_r\}_r$ is pro-zero, and therefore so is $\{F_r\}_r$ itself, by devissage up the (finite) Postnikov tower.
\end{proof} 

\begin{proposition} \label{prop_pro_constant}
Fix an integer $k$. Then the tower of objects of $\operatorname{Fun}(\CycSp_{\geq 0},\operatorname{Sp})$ given by $ \left\{ \tau_{\leq k}(\TC^r(-)/p)\right\}_{r}$ is pro-constant with value
$\tau_{\leq k}(\TC(-)/p)$. 
\end{proposition}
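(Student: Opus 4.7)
The plan is to apply Lemma \ref{invlim02} to a suitable truncated cofiber tower. Define a tower of functors $G_r \in \operatorname{Fun}(\CycSp_{\geq 0}, \Sp)$ by
\[ G_r(X) := \operatorname{cofib}\bigl(\TC(X)/p \to \TC^r(X)/p\bigr), \]
where the map is induced by the canonical projection $\TC(X) \to \TC^r(X)$ coming from $\TC(X) = \varprojlim_r \TC^r(X)$, and where the transition maps $G_{r+1} \to G_r$ come from the Restriction maps on the $\TC^r$. Since $\tau_{\leq k}$ preserves cofiber sequences (being a left adjoint), it suffices to show that $\{\tau_{\leq k} G_r\}_r$ is pro-zero in $\operatorname{Pro}(\operatorname{Fun}(\CycSp_{\geq 0}, \Sp))$: the cofiber sequence
\[ \tau_{\leq k}(\TC(-)/p) \to \tau_{\leq k}(\TC^r(-)/p) \to \tau_{\leq k} G_r \]
will then exhibit $\{\tau_{\leq k}(\TC^r(-)/p)\}_r$ as pro-equivalent to the constant pro-object $\tau_{\leq k}(\TC(-)/p)$.

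To apply Lemma \ref{invlim02} on $\mathcal{C} = \CycSp_{\geq 0}$, I verify its three hypotheses. For (2): $\TC^r$ commutes with all colimits (noted in the discussion preceding the proposition) and $\TC(-)/p$ commutes with filtered colimits by Theorem \ref{TCcommutes}; since cofibers commute with coproducts and $\tau_{\leq k}$ preserves colimits, $\tau_{\leq k} G_r$ commutes with countable coproducts. For (3): by Lemma \ref{connectivity}, $\TC(X)$ and $\TC^r(X)$ lie in $\Sp_{\geq -1}$, hence $G_r(X) \in \Sp_{\geq -2}$, and therefore $\tau_{\leq k} G_r(X) \in \Sp_{[-2, k]}$ uniformly in $X$ and $r$. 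For (1): since smashing with $S^0/p$ commutes with limits, $\varprojlim_r \TC^r(X)/p \simeq \TC(X)/p$, so $\varprojlim_r G_r(X) = 0$; the Milnor exact sequence then yields $\varprojlim_r \pi_i G_r(X) = \varprojlim^1_r \pi_i G_r(X) = 0$ for all $i$, from which a second Milnor argument (together with the uniform boundedness above) produces $\varprojlim_r \tau_{\leq k} G_r(X) = 0$.

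With the three hypotheses verified, Lemma \ref{invlim02} gives the desired pro-zero conclusion. The argument is essentially formal once Theorem \ref{TCcommutes}, Lemma \ref{connectivity}, and Lemma \ref{invlim02} are in hand; the main points of care are the interaction of the truncation $\tau_{\leq k}$ with the various limits and colimits in play, and the use of the uniform connectivity bound needed to invoke Lemma \ref{invlim02}. I do not foresee a serious technical obstacle.
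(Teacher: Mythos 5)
Your proposal is correct and follows essentially the same route as the paper: both form the tower $\tau_{\leq k}\mathrm{cofib}(\TC(-)/p \to \TC^r(-)/p)$ and apply Lemma~\ref{invlim02}, using Theorem~\ref{TCcommutes} for the coproduct hypothesis and Lemma~\ref{connectivity} for the uniform boundedness. Your write-up just spells out the verification of the three hypotheses in more detail than the paper does.
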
 
\begin{proof} 
Setting $F_{r}(X) \stackrel{\mathrm{def}}{=} \tau_{\leq k+1}\mathrm{cofib}( \TC(X)/p \to
\TC^r(X)/p)$, we will show that the tower $\{F_{r}(-)\}_r$ is pro-zero.
As $k$ varies, this shows that the map 
$\left\{\TC(-)/p\right\} \to \left\{\TC^r(-)/p\right\}_r$ is a pro-isomorphism
on each homotopy group in degrees $\leq k$ (in the pro-category of functors $\CycSp_{\geq 0} \to \mathrm{Ab}$), which
is enough to conclude. 
Certainly $\varprojlim_r F_{r}(X) = 0$, and $F_r$ commutes with arbitrary
coproducts by Theorem~\ref{TCcommutes}, so the desired claim follows from
Lemma~\ref{invlim02}. 
\end{proof}

\subsection{$\TC/p$ via de Rham--Witt}

\label{dRWapproach}
In the previous subsections, we proved that $\TC/p$ commutes with filtered
colimits on the $\infty$-category $\CycSp_{\geq 0}$. In practice, one is
usually interested
in $\TC$ of rings, i.e., $\TC$ of the cyclotomic spectra $\THH(R)$ for rings
$R$. 

The purpose of this subsection is twofold. The first is to review explicitly the apparatus
of $\THH, \TC$, and the de Rham--Witt complex in the case of smooth
$\mathbb{F}_p$-algebras, which we will need in the sequel. The
second is to use this formalism to 
present a more direct approach to proving that $\TC/p$ commutes with
filtered  colimits on the category of (discrete, commutative) $\bb
F_p$-algebras. While this is much weaker than what we
have already established (and will be insufficient for some of our
applications), it can be proved without the new approach to cyclotomic spectra of
\cite{nikolaus-scholze}, and is historically the first case which was known to some experts.

We begin with a review of the relevant algebraic concepts. 
Let $R$ be an $\mathbb{F}_p$-algebra (always assumed commutative).  We let $\Omega^n_R$ denote the $n$-forms of $R$ (over $\mathbb{F}_p$) and let $\Omega_R^{\ast}$ denote the algebraic de Rham complex of $R$. 

\begin{definition} 
\label{cartier}
The \emph{inverse Cartier operator} $C^{-1}: \Omega^n_R \to
H^n(\Omega^{\ast}_R) \subset \Omega^n_R/d
\Omega^{n-1}_R$ is the multiplicative operator uniquely characterized by the formulas
\( C^{-1}(a) = a^p, \quad  C^{-1}(db) = b^{p-1} db,   \)
for $a, b \in R$. See \cite{Cartier1957}, also \cite[Prop.~3.3.4]{BLM}, for a construction of this map. 
\end{definition} 
Note that the construction $C^{-1}$ is not well-defined as an operator $\Omega^n_R \to
\Omega^n_R$;
it is only well-defined modulo boundaries. It has image in the
cohomology of the de Rham complex; if we further assume that $R$ is ind-smooth,
i.e., can be written as a filtered colimit of smooth $\mathbb{F}_p$-algebras\footnote{Since smooth algebras are finitely presented, it follows that the category of ind-smooth algebras indeed identifies with $\operatorname{Ind}$ of the category of smooth algebras.}, then $C^{-1}$ provides a natural (Frobenius
semi-linear) isomorphism 
between $\Omega^n_R$ and $H^n( \Omega_R^{\ast})$ by the classical
\emph{Cartier isomorphism}, cf.~\cite[Th.~7.2]{Katz70}. 

\begin{definition}[The de Rham--Witt complex] 
\label{dRW1}
Let $R$ be an $\bb F_p$-algebra. For $r\ge 1$, we denote as usual by
\[W_r(R)\xto{d}W_r\Omega^1_R\xto{d}W_r\Omega^2_R\xto{d}\cdots\] the classical de
Rham--Witt complex (more precisely, differential graded algebras) of
Bloch--Deligne--Illusie \cite{illusie-derham-witt}; this is the usual de Rham complex $\Omega^\bullet_R$ in the case $r=1$. We recall that the individual de Rham--Witt groups are equipped with Restriction and Frobenius maps $R,F:W_r\Omega^n_R\to W_{r-1}\Omega^n_R$.
We let $W \Omega_R^{\ast}$ denote the inverse limit of the tower
$\left\{W_r \Omega^{\ast}_R\right\}$ under the Restriction maps. 
\end{definition}

The operator $F$ acts as a type of divided Frobenius. 
For example, one has a commutative diagram
\begin{equation} 
\xymatrix{
W\Omega_R^{\ast} \ar[d] \ar[r]^F & W \Omega_R^{\ast} \ar[d] \\
\Omega_R^{\ast} \ar[r]^-{C^{-1}} &  \Omega_R^{\ast}/d \Omega^{\ast-1}_R.
}
\end{equation} 
Here the vertical maps are induced by the natural projections to the first term of the inverse limit defining $W\Omega_R^{\ast}$.

In the approach to $\THH$ via equivariant stable homotopy theory, the 
de Rham--Witt complex plays a fundamental role thanks to the following result of
Hesselholt, a version of the classical Hochschild--Kostant--Rosenberg (HKR)
theorem for Hochschild homology. 

\begin{theorem}[Hesselholt \cite{Hesselholt}] 
Let $R$ be a smooth $\mathbb{F}_p$-algebra. Then, for each $s\ge1$, we have an 
isomorphism of graded rings
\begin{equation} \label{HHKR1}  \TR^s_*(R; p) \simeq W_s \Omega^{\ast}_R \otimes_{\mathbb{Z}/p^s}
\mathbb{Z}/p^s [ \sigma_s], \quad |\sigma_s| = 2.  \end{equation}
The Restriction maps
are determined by the Restriction maps on the de Rham--Witt complex and send 
$\sigma_s \mapsto p  \sigma_{s-1}$. 
Therefore, we have an isomorphism
\begin{equation}  \label{HHKR2} \TR_*(R; p) \simeq W \Omega^{\ast}_R,  \end{equation}
and the $F$ map on $\TR(R; p)$ induces the $F$ map on $W \Omega_R^{\ast}$. 
\end{theorem}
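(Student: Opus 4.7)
The plan is to prove (\ref{HHKR1}) by induction on $s$ using the isotropy separation cofiber sequence (\ref{isotropysep}) applied to $X = \THH(R)$, anchored at $s = 1$ by B\"okstedt periodicity combined with the classical Hochschild--Kostant--Rosenberg theorem, and completed by matching the inductively obtained structure with Illusie's universal property of the de Rham--Witt complex.

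For the base case $s=1$, we have $\TR^1(R;p) = \THH(R)$, so the assertion reduces to showing $\pi_* \THH(R) \simeq \Omega_R^{\ast}[\sigma_1]$ with $|\sigma_1|=2$. This combines B\"okstedt's computation $\pi_* \THH(\bb F_p) \simeq \bb F_p[\sigma_1]$ with a base-change argument: for smooth $R/\bb F_p$, flatness identifies $\THH(R)$ with $\HH(R/\bb F_p) \otimes_{\bb F_p} \THH(\bb F_p)$, and HKR gives $\pi_*\HH(R/\bb F_p) \simeq \Omega_R^{\ast}$. If convenient, one may reduce to $R = \bb F_p[x_1,\ldots,x_n]$ first, since both sides of (\ref{HHKR1}) satisfy \'etale base change.

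For the inductive step, the cofiber sequence (\ref{isotropysep}) reads
\[
\THH(R)_{hC_{p^{s-1}}} \to \TR^s(R;p) \xrightarrow{R} \TR^{s-1}(R;p),
\]
and I would compute the leftmost term from the $S^1$-action on $\pi_* \THH(R) \simeq \Omega_R^{\ast}[\sigma_1]$: Connes' $B$-operator acts as the de Rham differential on forms and kills $\sigma_1$. In characteristic $p$ the resulting homotopy orbit spectral sequence has an $E_2$-page controlled by the cohomology of the de Rham complex $\Omega_R^{\ast}$, which by the Cartier isomorphism (Definition~\ref{cartier}) is Frobenius semi-linearly identified with $\Omega_R^{\ast}$ itself; this is precisely the structural input that forces the Witt forms $W_s \Omega_R^n$ to appear at each stage, inductively from the hypothesis applied to $\TR^{s-1}(R;p)$.

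The main obstacle is to assemble the inductively computed pieces, together with the Restriction, Frobenius, Verschiebung, and $S^1$-differential, into a canonical identification with $W_s \Omega_R^{\ast} \otimes_{\bb Z/p^s} \bb Z/p^s[\sigma_s]$. Here I would invoke Illusie's universal property: $\{W_s \Omega_R^{\ast}\}_s$ is the initial Witt complex over $R$, and the tower $\{\TR^s_*(R;p)\}_s$, after separating out the polynomial generator $\sigma_s$ in each degree, carries a natural Witt complex structure built from the $\THH$ formalism (differential from the circle action, Frobenius and Restriction from the genuine fixed point maps). The universal property produces a comparison map, and I would check it is an isomorphism by an inductive rank count made possible by the $E_2$-degeneration above, with smoothness entering through Cartier. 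Tracking the multiplicative structure across (\ref{isotropysep}) identifies the periodicity generator's behavior under Restriction as $\sigma_s \mapsto p\sigma_{s-1}$, and passing to the inverse limit over $R$ yields (\ref{HHKR2}) along with the asserted compatibility of the Frobenius.
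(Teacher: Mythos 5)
The paper does not prove this theorem: it is quoted as an external input, cited to Hesselholt \cite{Hesselholt} (specifically \cite[Thm.~B]{Hesselholt} is invoked later in the text), so there is no internal proof to compare against. Your outline is, in broad strokes, the strategy of Hesselholt's actual published proof: equip the tower $\{\TR^s_*(R;p)\}_s$ with the structure of a Witt complex, obtain a comparison map from $W_s\Omega^*_R$ by Illusie's universal property, anchor at $s=1$ via B\"okstedt periodicity plus HKR and base change, and induct on $s$ using the sequence \eqref{isotropysep}, reducing to polynomial algebras by \'etale base change. So the architecture is right.

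However, the sketch defers exactly the steps that constitute the real content. First, the $E^2$-page of the homotopy orbit spectral sequence for $\THH(R)_{hC_{p^{s-1}}}$ is $H_*(C_{p^{s-1}};\pi_*\THH(R))$ with \emph{trivial} coefficients in $\Omega^*_R[\sigma_1]$ -- it is not ``controlled by the cohomology of the de Rham complex.'' De Rham cohomology and the Cartier isomorphism enter only after running the $d^2$-differential (Connes' $B$-operator), and the assertion that the spectral sequence then degenerates and that the abutment matches the associated graded of the standard ($V$, $dV$) filtration on $W_s\Omega^*_R$ is precisely the hard computation; saying the Cartier isomorphism ``forces the Witt forms to appear'' is not an argument. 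Second, $\THH(R)_{hC_{p^{s-1}}}$ is not a ring spectrum, so ``tracking the multiplicative structure across \eqref{isotropysep}'' and the identification $R(\sigma_s)=p\sigma_{s-1}$ require the genuine equivariant multiplicative structure (the norm/restriction formalism), not just the cofiber sequence of spectra. Third, the ``inductive rank count'' presupposes Illusie's computation of the length of $\mathrm{gr}$ of the standard filtration of $W_s\Omega^n_R$, which should be stated as an input. As it stands the proposal is a correct roadmap to Hesselholt's theorem rather than a proof of it.
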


We can now prove the finiteness property of $\TC/p$ for $\mathbb{F}_p$-algebras. 
While the result is not logically necessary now, the diagram \eqref{thisdiag}
will play a role in the sequel.

\begin{proposition}
The functor $\op{CAlg}_{/\bb F_p}\to\Sp$, $R\mapsto \TC(R)/p$ commutes with filtered colimits.
\end{proposition}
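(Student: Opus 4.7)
The plan is to combine Hesselholt's HKR-type identification \eqref{HHKR1}--\eqref{HHKR2} of $\TR$ with the de Rham--Witt complex, together with the classical fiber sequence \eqref{keyformulaclassical} for $\TC$, and then reduce the case of a general $\mathbb{F}_p$-algebra to the smooth case via simplicial resolutions.

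First, I would reduce to smooth $\mathbb{F}_p$-algebras. Any commutative $\mathbb{F}_p$-algebra $R$ admits a canonical simplicial resolution $P_\bullet \to R$ by free polynomial algebras, which are smooth. The functor $\THH: \op{Alg}_{/\mathbb{F}_p} \to \CycSp_{\geq 0}$ preserves geometric realizations, and each functor $R \mapsto \TC^r(R;p)/p$ preserves all colimits (by induction on $r$ via the isotropy separation cofiber sequence \eqref{isotropysep}). Combining these with a uniform lower bound on the connectivity of $\TC^r/p$ allows one to control the passage to geometric realizations degree by degree, so it suffices to prove the assertion for filtered colimits of smooth $\mathbb{F}_p$-algebras.

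For smooth $R$, formula \eqref{HHKR1} gives $\TR^s_*(R;p) \cong W_s\Omega_R^{\ast}\otimes_{\mathbb{Z}/p^s}\mathbb{Z}/p^s[\sigma_s]$ with $|\sigma_s|=2$ and Restriction map sending $\sigma_s \mapsto p\sigma_{s-1}$, which vanishes modulo $p$. Consequently, in any fixed range of degrees the tower $\{\TR^s(R;p)/p\}_s$ is pro-isomorphic to a tower involving only finitely many de Rham--Witt groups $W_s\Omega_R^n/p$, so the infinite inverse limit in \eqref{keyformulaclassical} collapses to a finite limit in each degree. Assembling this yields a diagram \eqref{thisdiag} expressing $\tau_{\leq k}\TC(R)/p$ as a finite limit of complexes built from $W_s\Omega_R^n/p$ under the action of $F - \mathrm{id}$. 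Since the individual functors $R \mapsto W_s\Omega_R^n$ preserve filtered colimits of smooth $\mathbb{F}_p$-algebras (inherited from the corresponding property of Witt vectors and K\"ahler differentials, and from Illusie's universal characterization of the de Rham--Witt complex), this finite limit preserves filtered colimits, giving the claim in each range of degrees and hence in general.

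The main obstacle is the infinite inverse limit $\TR(R;p) = \varprojlim_s \TR^s(R;p)$, since filtered colimits do not commute with infinite limits. The essential input that overcomes this is the vanishing of $p\sigma_{s-1}$ modulo $p$, which forces pro-constancy of the $\TR^s/p$-tower in any fixed range of degrees; this is precisely what converts the problematic limit into a finite computation. A secondary technical point is that one must carry out the reduction to smooth algebras carefully in the Postnikov tower, using the connectivity of $\TC^r/p$ so that the realization of the resolution $P_\bullet \to R$ commutes with the passage to any fixed truncation.
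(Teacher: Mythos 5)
Your first paragraph (reduction to ind-smooth algebras via functorial simplicial resolutions, using connectivity to commute $\TC/p$ past geometric realizations) matches the intended argument and is fine. The gap is in the second and third paragraphs, at the claim that the tower $\{\TR^s(R;p)/p\}_s$ is pro-constant in any fixed range of degrees so that the inverse limit in \eqref{keyformulaclassical} ``collapses to a finite limit.'' The observation that $R(\sigma_s)=p\sigma_{s-1}\equiv 0\pmod p$ only kills the summands $W_s\Omega^{n-2i}_R\cdot\sigma_s^i$ with $i\ge 1$ in \eqref{HHKR1}; it says nothing about the $i=0$ summand, i.e.\ the tower $\{W_s\Omega^n_R/p\}_s$ under Restriction. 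That tower is not pro-constant: already for $R=\mathbb{F}_p[t]$ and $n=0$, the class of $V^{s}[t]$ is a nonzero element of $\ker\bigl(W(R)/p\to W_{s}(R)/p\bigr)$ for every $s$ (if $V^{s}[t]$ were divisible by $p=VF$ then $V^{s-1}[t]\in\mathrm{im}(F)$, forcing $dV^{s-1}[t]$ and hence $d[t]=F^{s-1}dV^{s-1}[t]$ to be divisible by $p$, which fails already in $\Omega^1_{\mathbb{F}_p[t]}$), so no finite stage of the tower factors through the limit. Consequently $\pi_n(\TR(R;p)/p)\cong W\Omega^n_R/p$ is a genuinely infinite inverse limit, and $W\Omega^n_{-}/p$ does \emph{not} commute with filtered colimits. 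If your collapse claim were correct it would show that $\TR(-;p)/p$ itself is finitary, which is false; so the argument cannot be repaired at that step.

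What the proof actually requires is to exploit the operator $F-1$ before taking any limits over degrees: after establishing $\pi_*(\TR(R;p)/p)\cong W\Omega^*_R/p$ (using $p$-torsion-freeness of $W\Omega^*_R$ in the ind-smooth case), one must show that the kernel and cokernel of $F-1\colon W\Omega^n_R/p\to W\Omega^n_R/p$ coincide with those of $C^{-1}-1\colon\Omega^n_R\to\Omega^n_R/d\Omega^{n-1}_R$, i.e.\ that the square \eqref{thisdiag} is both cartesian and cocartesian; the bottom row involves only $\Omega^n_R$ and therefore commutes with filtered colimits. Establishing bicartesianness is the substantive content and is entirely absent from your proposal: it uses that $\ker(W\Omega^n_R\to\Omega^n_R)$ is spanned by the images of $V$ and $dV$, the identities $(F-1)V=-V$ and $(F-1)d\sum_{i>0}V^i=d$ for surjectivity on those kernels, and, for injectivity, the characterization of $\mathrm{im}(F)$ as the elements $x$ with $dx$ divisible by $p$ together with the topological nilpotence of $V$ (so that $1-V$ is invertible). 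None of this follows formally from the HKR theorem or from the vanishing of $p\sigma_{s-1}$ modulo $p$.
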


\begin{proof}
Let $R=\op{colim}_iR_i$ be a filtered colimit of $\bb F_p$-algebras. By functorially picking simplicial resolutions of all the terms by ind-smooth $\bb F_p$-algebras, and recalling that $\TC/p$ commutes with geometric realizations of rings, we reduce to the case in which all the $R_i$ (hence also $R$ itself) are ind-smooth over $\bb F_p$.

Next we use Hesselholt's HKR theorem. Since each $\TR^s(-;p)$
commutes with filtered colimits (induction on (\ref{keyformulaclassical})), as
does $W_s\Omega^n_{-}$, the formula \eqref{HHKR1} remains valid for ind-smooth $\bb
F_p$-algebras. By taking the inverse limit over $s$,  we deduce that
$\pi_*(\TR(R;p)) \simeq W\Omega_R^*$ for any ind-smooth $\bb F_p$-algebra $R$,
i.e., \eqref{HHKR2} is valid in the ind-smooth case as well.
Note here that each $W\Omega_R^n$ is $p$-torsion-free, by a basic property of
the de Rham--Witt complex for smooth $\mathbb{F}_p$-algebras \cite[Cor.~I.3.6]{illusie-derham-witt} which can easily be extended to the ind-smooth case as it
derives from the stronger assertion that each Restriction map $W_s \Omega_R^{\ast} \to
W_{s-1} \Omega_R^{\ast}$ annihilates every element $x \in W_s \Omega_R^{\ast}$
with $px  = 0$.
As a result, we also conclude that 
$\pi_* ( \TR(R; p)/p) \simeq W \Omega_R^{\ast}/p$.  

Taking fixed points for the Frobenius, 
we find that $\pi_n(\TC(R)/p)$ fits into an exact sequence
\[W\Omega^{n+1}_R/p\xto{F-1}W\Omega^{n+1}_R/p\To \pi_n(\TC(R)/p)\To
W\Omega^{n}_R/p\xto{F-1}W\Omega^{n}_R/p,\] and similarly for each $R_i$. To complete the proof it is enough to show, for each $n\ge0$, that the kernel and cokernel of $F-1:W\Omega^{n}_-/p\to W\Omega^{n}_-/p$ commutes with our filtered colimit $R=\op{colim}_iR_i$; we stress that $W\Omega^{n}_-/p$ does not commute with filtered colimits.

For any ind-smooth $\bb F_p$-algebra $R$, consider the commutative diagram
 \begin{equation} \label{thisdiag}\xymatrix{
W\Omega^{n}_R/p\ar[r]^{F-1}\ar[d] & W\Omega^{n}_R/p\ar[d]\\
\Omega^n_R\ar[r]_{C^{-1}-1}&\Omega^n_R/d\Omega^{n-1}_R
}. \end{equation} 
Here $C^{-1}$ is the inverse Cartier operator. 
To complete the proof we will show that the two horizontal arrows have isomorphic kernels (resp.\ cokernels), i.e., the square is both cartesian and cocartesian.

To see this, it suffices to show that the map induced between the kernels of
the vertical maps (which are surjective) is an isomorphism. 
The kernel of the map $W \Omega^n_R \to \Omega^n_R$ is generated by the images
of $V, dV$ by \cite[Proposition
I.3.18]{illusie-derham-witt}.  Note that the citation is for smooth
$\mathbb{F}_p$-algebras, but the statement clearly passes to an ind-smooth
algebra as the terms $W_s \Omega^n_{-}$ commute with filtered colimits and $W
\Omega^n_{-} = \varprojlim W_s \Omega^n_{-}$. 
It follows that 
the kernel of the first vertical map in \eqref{thisdiag} is spanned by the
images of $V, dV$ while the kernel of the second vertical map is spanned by the
images of $V, d$. 

Thus, we need to show that the map
\begin{equation}
\label{mapFminusone}
F - 1: \mathrm{im}\left(W \Omega^n_{R}/p\oplus W \Omega^{n-1}_{R}/p\xto{V+dV} W \Omega^n_{R}/p\right) \to 
\mathrm{im}\left(W \Omega^n_{R}/p\oplus W \Omega^{n-1}_{R}/p \xto{V+d} W \Omega^n_{R}/p\right)
\end{equation} 
 is an isomorphism. 

First, we show that $F-1$ is surjective. 
We record the following identities in $W \Omega_R^n/p$,
\[(F-1)V=-V,\qquad (F-1)d\sum_{i>0}V^i=d;\]
using it, we see immediately that $F - 1$ is surjective in \eqref{mapFminusone}. 

To see injectivity, 
we will use an important property of the de Rham--Witt complex: the image of $F$ on
$W \Omega_R^n$ consists precisely of those elements $w$ with $dw$ divisible by
$p$, thanks to \cite[(I.3.21.1.5)]{illusie-derham-witt}. Again, the reference is stated in
the smooth case, but it also passes to the ind-smooth case;\footnote{Since the
passage to the ind-smooth case is not completely immediate, we provide the
argument. Firstly, the quasi-isomorphisms $W_s \Omega_R^\bullet/p\simeq
\Omega_R^{\bullet}$ for $s\ge1$ in the smooth case
\cite[Corol.~I.3.15]{illusie-derham-witt} extend at once to the ind-smooth case,
and taking the limit then shows $W \Omega_R^\bullet/p\simeq \Omega_R^\bullet$
(to take the limit recall again that the $p$-torsion in $W_s\Omega^n_R$ is
annihilated by $R:W_s\Omega^n_R\to W_{s-1}\Omega^n_R$). Secondly, the identity
$\ker(d \colon \Omega^n_R\to\Omega^{n+1}_R)=\mathrm{Im}( F \colon
W_2\Omega^n_R\to \Omega^n_R)$ also extends at once from the smooth
\cite[Prop.~I.3.21]{illusie-derham-witt} to the ind-smooth case. Therefore,
given $w\in W\Omega^n_R$ satisfying $dw\in pW\Omega^n_R$, we may write $w=Fx+y$
for some $x,y\in W\Omega^n_R$ such that $y$ vanishes in $\Omega^n_R$. But then
$dy=dw-pFdx\in pW\Omega^n_R$, so $y$ defines a class in $H^n(W\Omega^\bullet/p)$
which vanishes in $H^n(\Omega^\bullet_R)$; by the first claim, we deduce that
the initial class was zero, in other words that $y=dz+pz'$ for some $z\in
W\Omega^{n-1}_R$, $z'\in W\Omega^n_R$. So $w= Fx+FdVz+FVz'$, as required.} for more on this
point cf.~\cite[Sec.~2]{BLM}. 

Suppose $x \in W \Omega^n_R$
can be written as $x = Vy + dVz$ and $(F -1 )x $ is divisible by $p$; we show
that $x $ is divisible by $p$ in $ W \Omega^n_R$. This will hold if $y, z \in
\mathrm{im}(F)$. 
First, $-dx \equiv d((F-1) x)$ modulo $p$, so $dx$ is divisible by $p$. 
It follows that 
$d Vy$, and hence $dy = FdVy$ is divisible by $p$, so that $y$ belongs to the
image of $F$. In particular, modulo $p$, we have $x \equiv dV z$ and 
$(F-1) d Vz  = d (z - Vz)$ is divisible by $p$. 
Therefore, $z - Vz$ belongs to the image of $F$. Since the operator $V$ is
topologically nilpotent on $W \Omega^n_R$, we find that $(1 - V)$ is invertible
and $z \in \mathrm{im}(F)$. 
Therefore, $x$ is divisible by $p$ as desired. 
\end{proof}

An alternate approach to the 
de Rham--Witt complex is developed in \cite{BLM} based on the theory of
\emph{strict Dieudonn\'e complexes}, which are essentially a linear version
of Witt complexes (at least in the case of algebras over $\mathbb{F}_p$). 

A \emph{saturated} Dieudonn\'e complex 
$(X^{\ast}, d, F)$ is a  $p$-torsion-free cochain complex
equipped with an operator $F: X^{\ast} \to X^{\ast}$ such that $dF = pFd$, the
opeartor $F: X^n \to X^n$ is injective, and the image of $F$ consists
precisely of those $x \in X^n$ such that $p $ divides $dx$; this implies that one
can define uniquely an operator $V$ such that $FV = VF = p$.  
A saturated Dieudonn\'e complex $(X^{\ast}, d, F)$ is called \emph{strict} if 
$X^{\ast}$ is in addition complete for the filtration defined by
$\{\mathrm{im}(V^n, d V^n)\}$. The de Rham--Witt complex of an ind-smooth
algebra is a strict Dieudonn\'e complex, and the functor $A \mapsto W
\Omega_A^{\ast}$ commutes with filtered colimits from ind-smooth
$\mathbb{F}_p$-algebras to the category of strict Dieudonn\'e complexes (where
colimits in the latter involve a completion process). 
Thus, the result can
be deduced once one knows: 

\begin{proposition} 
\label{strictDieudonne}
The functors
from the category of strict Dieudonn\'e complexes to graded abelian groups
given by 
$$(X^{\ast}, d, F)\mapsto \mathrm{ker}(F-1: X^{\ast}/p \to X^{\ast}/p), \quad 
\mathrm{coker}( F-1 : X^{\ast}/p \to X^{\ast}/p)
$$
commute with filtered colimits. \end{proposition}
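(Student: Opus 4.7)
The plan is to mimic the preceding proof via the analogue of diagram \eqref{thisdiag} for an arbitrary strict Dieudonn\'e complex $(X^*, d, F)$. Set $\overline{X}^n := X^n/(VX^n + dVX^{n-1})$. Since $p = VF$ gives $pX^n \subseteq VX^n$, the quotient $\overline{X}^n$ factors through $X^n/p$. Moreover, the filtration $\mathrm{Fil}^k X^n := V^k X^n + dV^k X^{n-1}$ with respect to which a strict Dieudonn\'e complex is complete refines $VX^n + dVX^{n-1} = \mathrm{Fil}^1$, so completion has no effect on $\overline{X}^n$. Consequently, for any filtered system $\{X_i^*\}$ of strict Dieudonn\'e complexes with colimit $X^*$ (the completion of the naive colimit), the natural map $\mathrm{colim}_i \overline{X^n_i} \to \overline{X}^n$ is an isomorphism, and the same holds for $\overline{X}^n/d\overline{X}^{n-1}$.

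Next, I would verify that $F$ descends to a map $\overline{F} \colon \overline{X}^n \to \overline{X}^n/d\overline{X}^{n-1}$. Indeed, $F(Vy) = py \equiv 0 \pmod p$ and $F(dVy) = dy \in dX^{n-1}$ (the latter via $FdV = d$, which follows from $dF = pFd$), so $F$ sends $VX^n + dVX^{n-1}$ into $dX^{n-1} + pX^n$, which vanishes in $\overline{X}^n/d\overline{X}^{n-1}$. One obtains a functorial commutative square with surjective vertical maps
\[ \xymatrix{ X^n/p \ar[r]^{F-1} \ar[d] & X^n/p \ar[d] \\ \overline{X}^n \ar[r]^-{\overline{F}-1} & \overline{X}^n/d\overline{X}^{n-1}. } \]
The crux of the argument is to show this square is bicartesian, equivalently, that the map induced on the kernels of the vertical projections,
\[ F - 1 \colon (VX^n + dVX^{n-1})/p \longrightarrow (VX^n + dX^{n-1})/p, \]
is an isomorphism. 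Once this is established, a snake-lemma argument identifies $\mathrm{ker}$ and $\mathrm{coker}$ of $F-1 \colon X^n/p \to X^n/p$ with those of $\overline{F}-1 \colon \overline{X}^n \to \overline{X}^n/d\overline{X}^{n-1}$; since the latter two groups commute with filtered colimits and $\mathrm{ker}$, $\mathrm{coker}$ of maps of abelian groups do so as well, the proposition follows.

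The verification of this isomorphism is the main obstacle, but it is a direct transcription of the argument following \eqref{mapFminusone}, using only the axioms of a strict Dieudonn\'e complex. Surjectivity uses $(F-1)V \equiv -V \pmod p$ and $(F-1)(d\sum_{i \geq 1} V^i y) = dy$, where the infinite sum converges by completeness. For injectivity, suppose $x = Vy + dVz$ with $(F-1)x \in pX^n$; then $dx = dVy \in pX^n$, so $dy = F(dVy) \in pX^n$, forcing $y \in \mathrm{im}(F) = \{w : p \mid dw\}$ and hence $Vy = VFy' = py' \in pX^n$. Modulo $p$ one then has $(F-1)(dVz) = d(z - Vz) \equiv 0$, whence $z - Vz = Fw$ for some $w$; iterating $z = Vz + Fw$ and using topological nilpotence of $V$ produces the convergent expression $z = Fw + p \sum_{j \geq 0} V^j w$, which gives $Vz = p(w + V \sum V^j w) \in pX^n$, hence $dVz \in pX^n$ and finally $x \in pX^n$. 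Each step relies only on the defining axioms of a strict Dieudonn\'e complex, so they carry over verbatim from the de Rham--Witt setting.
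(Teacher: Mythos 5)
Your proof is correct and is exactly the argument the paper has in mind: the paper's own "proof" is the one-line remark that the statement follows "in an entirely analogous manner" to the de Rham--Witt case, and you have carried out that analogy faithfully, with $\overline{X}^n = X^n/(VX^n+dVX^{n-1})$ playing the role of $\Omega^n_R$ and the bicartesian square replacing \eqref{thisdiag}. The verifications (descent of $F$, surjectivity via $(F-1)V\equiv -V$ and $(F-1)d\sum V^i = d$, injectivity via the characterization of $\mathrm{im}(F)$ and topological nilpotence of $V$) all go through as you state.
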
 
Proposition~\ref{strictDieudonne} can be proved in an entirely analogous manner
as above.

\section{Henselian rings and pairs}

\subsection{Nonunital rings}
In this section we will study various categories of non-unital rings. 

\begin{definition} 
Given a commutative base ring $R$, a \emph{nonunital $R$-algebra}
is an $R$-module $I$ equipped with a multiplication map $I \otimes_R I \to I$
which is commutative and associative. We denote the category of nonunital $R$-algebras by $\nur$.
By contrast, when we say
``$R$-algebra,'' we assume the existence of a unit.  
\end{definition} 

Given an $R$-algebra $S$, any ideal $I \subset S$ is a nonunital $R$-algebra. 
Conversely, given a nonunital $R$-algebra $J$, we can adjoin to it a unit and thus form the $R$-algebra $S = R
\ltimes J$, in which $J$ is embedded as an ideal. This latter construction establishes an equivalence between the category of nonunital $R$-algebras and the category of augmented $R$-algebras. 

Note that the category $\nur$ has all limits and colimits, and that the forgetful
functor to sets preserves all limits and sifted colimits. We next describe the free objects in
$\nur$.
\begin{example} 
Let $R[x_1, \dots, x_n]^+$ denote the ideal $(x_1, \dots, x_n) \subset
R[x_1, \dots, x_n]$ in the polynomial ring $R[x_1, \dots, x_n]$.  
Then $R[x_1, \dots, x_n]^+$ is the free object of $\nur$ on $n$ generators.
\end{example} 

\begin{example} 
We say that a sequence $I' \to I \to \overline{I}$ in $\nur$ is a \emph{short
exact sequence}
if it is a short exact sequence of underlying abelian groups. 
That is, $I \to \overline{I}$ is surjective, and we can recover $I'$ as the
pullback $0 \times_{\overline{I}} I$ in the category $\nur$. 
\end{example}

Next we define the notion of a local nonunital $R$-algebra.

\begin{definition} 
A non-unital $R$-algebra $I$ 
is \emph{local} if the following equivalent conditions
hold: 
\begin{enumerate}
\item  For any $x \in I$, there exists $y \in I$ (necessarily unique) such that $x + y + xy = 0$.
\item Whenever $I$ embeds as an ideal in an $R$-algebra $S$, then $I$ is contained
in the Jacobson radical of $S$. 
\item $I \subset R \ltimes I$ is contained in the Jacobson radical of $R \ltimes
I$. 
\end{enumerate}
The equivalence of the above conditions follows because, given an ideal $I
\subset S$ in a commutative ring $S$, then $I$ belongs to the Jacobson radical of $S$ if and only if $1 + I$
consists of units of $S$. 

We let $\nuloc \subset \nur$ denote the full subcategory of nonunital local 
$R$-algebras. 
\end{definition} 

Since the element $y$ in condition (1) is unique, the subcategory $\nuloc$
is closed under limits and sifted colimits in $\nur$, both of which are computed at the level of underlying sets.

To describe all colimits, we need to localize further by observing that the inclusion $\nuloc \subset \nur$ admits a left adjoint. Namely, given a nonunital $R$-algebra $I$, we can build a local nonunital $R$-algebra $I[(1+I)^{-1}]$ as follows: the elements formally written $``1+x"$ for $x\in I$ form a commutative monoid under multiplication, and this monoid acts on $I$ by multiplication.  This makes $I$ into a non-unital algebra over the monoid ring $R[1+I]$, and we can set
\begin{equation} \label{loccons} I[(1+I)^{-1}] := I\otimes_{R[1+I]}
R[(1+I)^{gp}], \end{equation}
where $(1+I)^{gp}$ is the group completion.

Equivalently, if $I$ is embedded as an ideal in an $R$-algebra $S$, then we can form the localization $S[(1+I)^{-1}]$ in the usual sense of commutative algebra, and then realize $I[(1+I)^{-1}]$ as the kernel of the augmentation $S[(1+I)^{-1}]\rightarrow S/I$.  The equivalence of this description with the previous one follows from the exactness of localizations.

In any case, this construction $\nur\to \nuloc$, $I \mapsto I[(1 + I)^{-1}]$ is the desired left adjoint.
To compute a colimit in $\nuloc$, one computes the colimit in $\nur$ and
then applies this left adjoint. 
\begin{example} 
Let $R[x_1, \dots, x_n]^+_{(x_1, \dots, x_n)}\in\nuloc$ be the image of $R[x_1, \dots, x_n]^+\in\nur$ under the left adjoint explained above. In other words, $R[x_1, \dots, x_n]^+_{(x_1, \dots, x_n)}$ is the ideal $(x_1,
\dots, x_n)$ of the localization $R[x_1, \dots, x_n]_{1 + (x_1, \dots, x_n)}$ of
the polynomial ring $R[x_1, \dots, x_n]$ at its 
multiplicative subset $1 + (x_1, \dots, x_n)$.

It follows that 
$R[x_1, \dots, x_n]^+_{(x_1, \dots, x_n)}$  is a local nonunital $R$-algebra,
and it is the free object on $n$ generators. 
\end{example} 

\begin{remark} 
Stated more formally, the preceding discussion shows that the inclusion $\nuloc \subset \nur$ is the right adjoint of a
\emph{localization} functor on the category $\nur$. Specifically, $\nuloc$
consists of those objects in $I \in \nur$ which are orthogonal \cite[Sec.
5.4]{Borceux1} to the map
$R[x]^+ \to R[x]^+_{1 + (x)}$, in the sense that such that any map $R[x]^+ \to I$ extends
uniquely over $R[x]^+_{1 + (x)}$.  Moreover, one sees easily from the
construction of the  localization \eqref{loccons} that it is independent of the ground ring $R$.

Analogous statements will be true when we restrict further to the subcategory of henselian nonunital rings; see the proof of Lemma \ref{lemma_nuh}.
\end{remark} 

We now make the following definition following the discussion in \cite{gabber}.
This is surely known to experts, but for the convenience of the reader we spell
out some details. 
\begin{definition} \label{def_hens_nu}
A nonunital $R$-algebra $I$ is \emph{henselian} if 
for every  $n \geq 1$ and every polynomial $g(x) \in I[x]$, the equation
\begin{equation}\label{henseq} x(1 +
x)^{n-1} + g(x)  = 0 \end{equation} has a solution in $I$. 
We let $\nuh \subset \nur$ denote the full subcategory of henselian
nonunital $R$-algebras. 
\end{definition} 

\begin{remark} 

We note that if $I$ is a henselian nonunital $R$-algebra then:
\begin{enumerate}
\item Considering the equation $x + xy + y = 0$ for $y \in I$, we find that $I$
is local, i.e., $\nuh \subset \nuloc$.
\item The root of the equation \eqref{henseq} is necessarily unique: denoting
the equation by $f(x)=0$ for simplicity, if $\alpha,
\alpha' \in I$ are both roots then we have $f'(\alpha) \in 1 + I$ and  
so the Taylor expansion shows that 
$$0 = f(\alpha) - f(\alpha')   \in  \alpha - \alpha' + I(\alpha - \alpha'),$$ whence 
$\alpha = \alpha'$ since $I$ is local.
\end{enumerate}

\end{remark} 

Since the solution of \eqref{henseq} is unique if it exists, the category of henselian nonunital rings has all limits and sifted colimits, both of which are computed at the level of underlying sets. 
Moreover, if $I \twoheadrightarrow I'$ is a surjection in $\nur$
and $I \in \nuh$, then $I' \in \nuh$ too. 
Finally, the condition that a nonunital $R$-algebra be henselian does
not depend on the base ring $R$, i.e., we might as well take $R = \mathbb{Z}$ in
the definition.

Now we prove the existence of a left adjoint $\nur \to \nuh$, which will be
called \emph{henselization}.\footnote{It follows from this that the category of henselian
nonunital rings is the category of algebras over a \emph{Lawvere theory}, cf.
Remark~\ref{remark_Lawvere}. The free algebras in this theory are the
henselizations of polynomial rings, Example~\ref{freeobjs}.}
We will see below in Corollary \ref{corollary_hens_ind_of_base} that the henselization does not depend on the base ring $R$.

\begin{lemma} \label{lemma_nuh}
The category $\nuh$ is presentable and 
the inclusion $\nuh \subset \nur$ admits a left adjoint.
\end{lemma}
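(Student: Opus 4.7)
The plan is to realize $\nuh$ as the full subcategory of objects in $\nur$ that are right-orthogonal to a small set of morphisms, and then invoke the standard theorem that such a subcategory of a presentable category is reflective and presentable. Recall that $\nur$ is itself presentable: it is the category of algebras for the finitary monad on $\mathrm{Set}$ whose free algebra on a set $X$ is the nonunital polynomial ring $R[X]^+$.

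For each pair of integers $n \geq 1$ and $k \geq 0$, I introduce the tautological morphism $\iota_{n,k} : A_{n,k} \to B_{n,k}$ in $\nur$, where
\[
A_{n,k} := R[y_0, \ldots, y_k]^+, \qquad
B_{n,k} := R[y_0, \ldots, y_k, x]^+ \,/\, \bigl( x(1+x)^{n-1} + \sum_{i=0}^{k} y_i x^i \bigr).
\]
The denominator is the nonunital ideal generated by the displayed element, which does lie in the nonunital polynomial ring since every monomial involves at least one generator. Let $\ell : R[x]^+ \to R[x]^+_{1+(x)}$ be the localization morphism from the remark preceding the lemma, and set $S := \{\ell\} \cup \{\iota_{n,k}\}_{n \geq 1,\, k \geq 0}$, a small set of morphisms of $\nur$.

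The key claim is that an object $I \in \nur$ belongs to $\nuh$ if and only if, for every $s : A \to B$ in $S$, the restriction map $\hom_{\nur}(B, I) \to \hom_{\nur}(A, I)$ is a bijection. Orthogonality to $\ell$ is exactly the condition that $I$ be local, by the remark preceding the lemma. Granting that $I$ is local, a morphism $A_{n,k} \to I$ corresponds to a tuple $(a_0, \ldots, a_k) \in I^{k+1}$; an extension along $\iota_{n,k}$ corresponds to a root $\alpha \in I$ of the equation $\alpha(1+\alpha)^{n-1} + \sum_i a_i \alpha^i = 0$. Existence of such an $\alpha$ for all tuples is the defining henselian property, and uniqueness of $\alpha$ is the Taylor-expansion argument recorded immediately after Definition~\ref{def_hens_nu}.

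The lemma then follows from the general fact that, in a presentable category, the full subcategory of objects right-orthogonal to a small set of morphisms is accessible, reflective, and therefore itself presentable (see, e.g., \cite[Prop.~5.5.4.15]{HTT}). The main subtlety is the upgrade from existence to bijection in the orthogonality characterization: one needs $I$ to be local before the existence condition for henselian solutions can be promoted to a bijection on $\hom$-sets, which is why the localization morphism $\ell$ must be included in $S$ alongside the root-adjunction morphisms $\iota_{n,k}$.
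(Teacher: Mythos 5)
Your proof is correct and follows essentially the same route as the paper: both realize $\nuh$ as the orthogonality class of a small set of root-adjunction morphisms inside the presentable category $\nur$ and then invoke the standard theorem that such an orthogonality class is reflective and presentable. The only difference is that you add the localization morphism $\ell$ to your set $S$, and your stated reason for doing so is not quite right — locality is not an extra condition that must be imposed separately, since it already follows from the surjectivity half of orthogonality to the $\iota_{n,k}$ alone (take $n=1$ and $g(x)=y+yx$ in the henselian equation, as noted after Definition~\ref{def_hens_nu}) — but including $\ell$ is harmless and the argument goes through.
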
 
\begin{proof} 
It follows from the above discussion that $\nuh$ is the orthogonal \cite[Sec.~5.4]{Borceux1} of $\nur$
with respect to the maps
\[ f_{n,t}: R[x_0, \dots, x_t]^+  \to 
R[x_0, \dots, x_t, y]^+/( y(1 + y)^{n-1} + x_0 + x_1 y + \dots + x_t y^t ),\]
for $n, t \geq 1$. 
That is, a given object $X \in \nur$ belongs to $\nuh$ if and only
\begin{equation} \label{hommap} 
\hom_{\nur}(
R[x_0, \dots, x_t, y]^+/( y(1 + y)^{n-1} + x_0 + x_1 y + \dots + x_t y^t ), X)
\xto{\cdot\circ f_{n,t}}
\hom_{\nur}(R[x_0, \dots, x_t]^+ , X)
\end{equation}
is an isomorphism for all $n,t\ge1$. It now follows formally that $\nuh$ is presentable and the
desired left adjoint exists \cite[Cor. 5.4.8]{Borceux1}. 

Alternatively, one can appeal to the theory of henselian pairs and define the
left adjoint directly by taking the henselization of the pair $(R \ltimes I, I)$; see the forthcoming Construction \ref{construction_henselization}.
\end{proof}

\begin{example} 
\label{freeobjs}
Let $R\left\{x_1, \dots, x_n\right\}^+\in \nuh$ denote the 
henselization of $R[x_1, \dots, x_n]^+ \in \nur$. 
By construction, $R\left\{x_1, \dots, x_n\right\}^+ \in \nuh$ is the free object on $n$
generators.\end{example} 

\begin{remark}\label{remark_Lawvere}
The categories $\nur$, $\nuloc$ and $\nuh$ are all examples of models for a {\em
Lawvere} or {\em algebraic theory} \cite[Ch.~3]{ARloc} \cite[Ch.~3]{Borceux}.

Let $\mathcal{C}$ be a category satisfying the following conditions:

\begin{itemize}
\item[(Law)] $\mathcal C$ has all limits and colimits, and is equipped with a
functor $U \colon \mathcal C\to \mathrm{Sets}$ which is conservative, preserves
sifted colimits, and admits a left adjoint $F \colon \mathrm{Sets}\to\mathcal C$.
\end{itemize}
For example, $\mathcal C$ might be the category of rings, nonunital rings,
groups, etc.  In this case, one takes for $U$ the forgetful functor taking the
underlying set, and its right adjoint $F$ is the free ring, nonunital ring,
group, etc.~on the given set. Denoting by $\{1\}$ a one-point set, the element
$F(\{1\})$ is therefore both compact projective (i.e.,
$\hom_{\mathcal{C}}(F(\{1\}), \cdot)$ commutes with sifted colimits) and a
strong generator (i.e., $\hom_{\mathcal{C}}(F(\{1\}), \cdot)$ is faithful and
conservative). The {\em free object of $\mathcal C$ on $n$ generators} is by
definition $\bigsqcup_nF(\{1\})\simeq F(\{1,\dots,n\})$. By the monadicity
theorem, $\mathcal{C}$ is monadic over $\mathrm{Sets}$, via a monad that
preserves sifted colimits. 

Let $\mathcal C'\subset \mathcal C$ be a full subcategory, and assume that
$\mathcal C'$ is closed under limits and sifted colimits (in $\mathcal C$) and that the inclusion $\mathcal C'\subset\mathcal C$ is a right adjoint. Then clearly $\mathcal C'$ also satisfies conditions (Law), with $U'$ given by the restriction of $U$ to $\mathcal C'$. Moreover, the free objects of $\mathcal C'$ are given by applying the left adjoint $\mathcal C\to \mathcal C'$ to the free objects of $\mathcal C$. In practice, this is 
how the ``free objects'' in the above categories are constructed. 

According to general results of Lawvere theory \cite[Thm.~3.9.1]{Borceux} (in fact, we do not use the
results in this paragraph, but the point of view may be helpful), the full
subcategory  $\mathcal{C}_{\Sigma} \subset \mathcal{C}$ of compact projective objects of
$\mathcal{C}$ is the idempotent completion of the full subcategory consisting of
the free objects $\left\{\bigsqcup_{n} F(\{1\}) : n \geq 0\right\}$. Furthermore, $\mathcal{C}$ can be identified with the category of presheaves on $\mathcal{C}_\Sigma$ which commute with finite products. In particular, objects of $\mathcal{C}$ can be identified with sets equipped with various ``operations'' arising from maps between the free objects satisfying various relations. 
\end{remark}

\subsection{Henselian pairs}
Following Gabber \cite{gabber}, we now discuss the connection between henselian nonunital rings and the more familiar notion of henselian pairs \cite[Tag
09XD]{stacks-project} or \cite[Ch.~XI]{Raynaud-henselian}. 
We will thus deduce that the constructions of the previous
subsection do not depend on the base ring $R$.

\begin{definition} 
\label{def:henspair}
A \emph{pair} is the data $(S, I)$ where $S$ is a commutative ring and $I \subset S$ is
an ideal. The collection of pairs forms a category in the obvious
manner. 

The pair $(S, I)$ is said to be \emph{henselian} if the following equivalent
(cf.~\cite[Tag 09XD]{stacks-project} for the equivalence) conditions hold: 
\begin{enumerate}
\item  
Given a polynomial $f(x) \in S[x]$ and a root $\overline{\alpha} \in S/I$ of $\overline{f} \in (S/I)[x]$ with
$\overline{f}'(\alpha)$ being a unit of $S/I$, then $\overline \alpha$ lifts to a root $\alpha \in S$ of $f$.
Note that the lifted root $\alpha \in S$ is necessarily unique by the same argument as we gave after Definition \ref{def_hens_nu}.
\item The ideal $I$ is contained in the Jacobson radical of $S$, and the same
condition as (1) holds for \emph{monic} polynomials  $f(x) \in S[x]$. 
\item Given any commutative diagram
\begin{equation} \label{lifthens}  \xymatrix{
A \ar[d] \ar[r] &  S \ar[d]  \\
B \ar[r] \ar@{-->}[ru] &  S/I\
}\end{equation}
with $A \to B$ \'etale, there exists a lift as in the dotted arrow. 

\end{enumerate}
We may also say that the surjective map $S \to S/I$ is a henselian pair, if there is no risk
of confusion. 

If $S$ is local with maximal ideal $\frak m$, then $S$ is said
to be a \emph{henselian local ring} if $(S, \frak m)$ is a henselian pair. 
\end{definition}

\begin{remark}[Uniqueness in the lifting property]
\label{uniquenessinliftingproperty}
Let $(S, I)$ be a henselian pair and consider a diagram as in 
\eqref{lifthens} with $A \to B$ \'etale. Then the lifting is unique. 
Indeed, given two liftings $f_1, f_2: B \to S$ in \eqref{lifthens}, we have a commutative diagram
\[ \xymatrix{
B \otimes_A B \ar[d]^m  \ar[r]^{f_1 \otimes f_2} &  S \ar[d]  \\
B  \ar[r] &  S/I,
}\]
for $m \colon B \otimes_A B \to B$ the multiplication map. 
Since $m$ is also \'etale (in fact, the projection on a direct factor), 
the lifting property again shows that $f_1 \otimes f_2$ factors through $m$, so
$f_1 = f_2$. 
\end{remark} 

\begin{remark}[{Cf.~\cite[Tag 09XD]{stacks-project}}] 
\label{subideal:henspair}
Let $(R, I)$ be a henselian pair and let $J \subset I$ be a subideal. Then $(R,
J)$ remains a henselian pair. 
\end{remark} 

  We also record for future reference the following property of henselian pairs
with respect to smooth morphisms. 

\begin{theorem}[{Elkik \cite[Sec.~II]{elkik}}] 
\label{elkikthm}
Let $(S, I)$ be a henselian pair. Then $S \to S/I$ has the right lifting
property with respect to smooth maps. That is, any diagram as in
\eqref{lifthens} with $A \to B$ smooth (rather than \'etale) admits a lift as in the dotted arrow.
\end{theorem}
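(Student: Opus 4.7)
The plan is to reduce Elkik's theorem to condition (3) of Definition~\ref{def:henspair} via the Zariski-local structure of smooth morphisms, exploiting that $I$ lies in the Jacobson radical of $S$.

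First, by base-changing the given commutative square along $A \to S$, I may replace $A \to B$ by the smooth $S$-algebra $S \to B' := B \otimes_A S$ and reduce to the following: given a section $\bar\sigma: B' \to S/I$ of the structure map $B' \to S/I$, construct a lift $\sigma: B' \to S$. Set $J := \ker(\bar\sigma)$, so that $V(J) \subset \Spec B'$ is homeomorphic to $\Spec S/I$.

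Second, I would apply the Zariski-local structure of smooth morphisms (cf.\ Stacks Project Tag~054L): for each prime $\mathfrak{q} \in \Spec B'$ there exist $g \in B' \setminus \mathfrak{q}$ and an \'etale $S$-algebra map $S[t_1,\ldots,t_n] \to B'_g$. A key observation is that $I$ lies in the Jacobson radical of $S$ by the henselian hypothesis, so any $g \in B'$ with $\bar\sigma(g) \in (S/I)^\times$ lifts to a unit in $S$ along any putative $\sigma$; consequently, if we build a lift $B'_g \to S$ of $\bar\sigma|_{B'_g}$, composition with $B' \to B'_g$ yields a lift $B' \to S$. Applying this around each point of $V(J)$ and using quasi-compactness of $\Spec S/I$, a standard covering/gluing argument reduces to the case where $B'$ itself is \'etale over a polynomial ring $S[t_1,\ldots,t_n]$. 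Then, lifting $\bar t_i := \bar\sigma(t_i)$ arbitrarily to $t_i \in S$ produces a square
\begin{equation*}
\xymatrix{
S[t_1,\ldots,t_n] \ar[d] \ar[r] & S \ar[d] \\
B' \ar[r]_{\bar\sigma} & S/I
}
\end{equation*}
with \'etale left vertical arrow, to which condition (3) of Definition~\ref{def:henspair} applies to give $\sigma: B' \to S$.

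The main obstacle is the reduction to a single Zariski localization on which $B'$ becomes \'etale over affine space; this is awkward when the relative dimension of $B'/S$ varies across $V(J)$. A cleaner alternative, following Elkik, bypasses global gluing by working directly with a standard smooth presentation $B'_g \cong S[x_1,\ldots,x_n,y_1,\ldots,y_c]/(f_1,\ldots,f_c)$ with invertible Jacobian $h := \det(\partial f_i/\partial y_j)$: lift the $\bar\sigma(x_i)$ arbitrarily to $a_i \in S$, observe that $C := S[y,1/h]/(f_1(a,y),\ldots,f_c(a,y))$ is standard \'etale over $S$ and inherits a section $C \to S/I$ from $\bar\sigma$, and apply condition (3) to $S \to C$ to produce $b_j \in S$ with $f(a,b) = 0$, yielding the desired $\sigma$. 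This amounts to a multivariate Hensel lemma for systems with nonsingular Jacobian, which is the technical heart of the argument.
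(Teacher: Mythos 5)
The paper offers no proof of this statement --- it is quoted from Elkik --- so your attempt has to stand on its own, and it does not quite: the step you flag as ``the main obstacle'' is a genuine gap, and the ``cleaner alternative'' does not bypass it. Both versions of your argument need a \emph{single} $g \in B'$ with $\bar\sigma(g) \in (S/I)^\times$ such that $B'_g$ is \'etale over a polynomial ring (resp.\ standard smooth); only then does $\bar\sigma$ factor through $B'_g$, so that a lift $B'_g \to S$ can be precomposed with $B' \to B'_g$. The local structure theorem produces such $g$ only pointwise along $V(J)$, and quasi-compactness yields finitely many $g_1,\dots,g_m$ with $(\bar\sigma(g_1),\dots,\bar\sigma(g_m)) = S/I$ --- but there is no ``standard covering/gluing argument'' from here: lifts built over different charts $B'_{g_i}$ all land in the same $S$ and need not agree, and localizing $S$ at lifts of the $\bar\sigma(g_i)$ destroys the henselian hypothesis. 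A single standard-smooth chart containing all of $V(J)$ need not exist (already $J/J^2$ need not be free over $S/I$, and the relative dimension may jump), so the reduction fails as stated; your second approach still opens with ``$B'_g \cong S[x,y]/(f)$'' and inherits the same problem.

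The standard repairs are exactly what the sketch omits. Elkik's route avoids charts entirely: one fixes a single global presentation $B' = S[x_1,\dots,x_n]/(f_1,\dots,f_m)$, forms the globally defined ideal $H(a) \subseteq S$ generated by the relevant minors evaluated at a lift $a$ of $\bar\sigma(x)$, proves a Tougeron-type approximation lemma requiring only $f(a) \equiv 0 \bmod H(a)^2 I$, and then uses $I \subseteq \mathrm{Jac}(S)$ to force $H(a) = S$ when $B'$ is smooth; the invertible-Jacobian case you do handle (correctly, via the standard \'etale algebra $C$ and condition (3) of Definition~\ref{def:henspair}) is the easy special case of that lemma, not a reduction target one can reach by localizing. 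An alternative elementary route: after lifting idempotents to make the relative dimension constant and tensoring $B'$ with a symmetric algebra on a finite projective $S$-module so that $J/J^2$ becomes free, choose $h_1,\dots,h_d \in J$ trivializing the conormal module and pass to $C = B'/(h_1,\dots,h_d)$; then $C$ is \'etale over $S$ at every point of $V(J_C)$, and --- this is the point your argument is missing --- the non-\'etale locus of $C/S$ is $V(\mathfrak{h})$ for an ideal $\mathfrak{h}$ with $\mathfrak{h} + J_C = C$, so there \emph{is} a single $g \in \mathfrak{h}$ with $\bar\sigma(g) = 1$ and $C_g$ \'etale, to which the \'etale lifting property applies. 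Either way, the local-to-global step is the actual content of the theorem, not a routine covering argument.
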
 

We crucially need the following observation of Gabber that the condition that $(S,
I)$ be henselian depends only on $I$ as a nonunital ring, cf.~also
\cite[Prop.~XI.1]{Raynaud-henselian}. 

\begin{proposition}[{\cite[Prop.~1]{gabber}}] 
\label{checkhensnonunital}
Let $(S, I)$ be  a pair. Then $(S, I)$ is a henselian pair 
if and only if $I$ is henselian as a nonunital ring. 
\end{proposition}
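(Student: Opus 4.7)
The plan is to prove the two implications separately, with the reverse direction being the substantive part.

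For the forward direction, assume $(S,I)$ is a henselian pair. Given $g(x) \in I[x]$ and $n \geq 1$, I would apply condition~(1) of Definition~\ref{def:henspair} to the polynomial $f(x) := x(1+x)^{n-1} + g(x)$, whose reduction mod $I$ is $x(1+x)^{n-1}$: this has $0$ as a simple root (derivative $1$). The resulting lift $\alpha \in S$ satisfies $\alpha \equiv 0 \pmod I$, so $\alpha \in I$ provides the required solution.

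For the reverse direction, assume $I$ is henselian as a nonunital ring. First, the local property of $I$ (which follows immediately from the henselian-nonunital hypothesis) forces $I \subseteq \operatorname{Jac}(S)$, because the relation $(1+x)(1+y) = 1$ holding in $\mathbb Z \ltimes I$ holds verbatim in $S$. I would then verify condition~(2) of Definition~\ref{def:henspair}: given a monic $f(x) \in S[x]$ with a simple root $\overline\alpha$ mod $I$, I translate by any lift $\alpha_0 \in S$ of $\overline\alpha$ and divide by the unit $f'(\alpha_0) \in S^{\times}$ to reduce to the normal form
\[ f(x) \;=\; c + x + x^2 h(x), \qquad c \in I,\quad h(x) \in S[x] \text{ of some degree } d. \]

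The heart of the argument is the substitution $\epsilon := -c/(1+y)$, where $y$ is a new variable intended to lie in $I$. A direct computation gives the identity
\[ (1+y)^{d+2}\, f(\epsilon) \;=\; c\,\bigl[\, y\,(1+y)^{d+1} + c\, Q(y) \,\bigr] \qquad \text{in } S[y,(1+y)^{-1}], \]
where $Q(y) := (1+y)^d\, h\bigl(-c/(1+y)\bigr)$ is genuinely a polynomial in $y$ with coefficients in $S$ (no negative powers of $1+y$ survive, since $\deg h \leq d$). Because $c \in I$, the perturbation $c\,Q(y)$ lies in $I[y]$, so the equation $y(1+y)^{(d+2)-1} + c\,Q(y) = 0$ is exactly of the form appearing in Definition~\ref{def_hens_nu}. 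Applying the henselian-nonunital hypothesis (with $n = d+2$ and $g = cQ$) produces $y \in I$, and then $\epsilon = -c/(1+y) \in I$ is a root of $f$, since $1+y$ is a unit in $S$. The uniqueness of the full lift $\alpha_0 + \epsilon$ follows from the standard Taylor-expansion argument using $I \subseteq \mathrm{Jac}(S)$.

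The main obstacle is finding the reduction to Gabber's normal form. Naive rescalings $x \mapsto ux$ do not suffice, because the coefficients of the higher-order piece $x^2 h(x)$ are arbitrary elements of $S$ and cannot be coerced into $I$ by linear changes of variable. The substitution $\epsilon = -c/(1+y)$ succeeds precisely because it extracts a single common factor of $c \in I$ from the entire higher-order perturbation, automatically placing it in $I[y]$ and making the henselian-nonunital property directly applicable.
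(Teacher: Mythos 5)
Your proof is correct. The paper does not reproduce an argument for this proposition at all --- it simply cites Gabber's Proposition~1 --- and what you have written is essentially Gabber's original proof: the forward direction by applying the lifting property to $x(1+x)^{n-1}+g(x)$, and the reverse direction via the substitution $\epsilon=-c/(1+y)$, which converts the normalized equation $c+x+x^2h(x)=0$ into one of the shape $y(1+y)^{n-1}+g(y)=0$ with $g\in I[y]$. The computation $(1+y)^{d+2}f(\epsilon)=c\bigl[y(1+y)^{d+1}+cQ(y)\bigr]$ checks out, and your preliminary steps (locality of $I$ forcing $I\subseteq\operatorname{Jac}(S)$, hence $f'(\alpha_0)\in S^\times$) are exactly what is needed to justify the normalization.
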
 

\begin{corollary} 
Let $(S, I)$ be a pair.  Then $(S, I)$ is a
henselian pair if and only if $(\mathbb{Z} \ltimes I, I)$ is a
henselian pair. 
\end{corollary}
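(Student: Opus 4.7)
The plan is to observe that this corollary is essentially an immediate application of Proposition~\ref{checkhensnonunital}, which asserts that the henselianness of a pair $(S,I)$ depends only on $I$ viewed as a nonunital ring, and not on the ambient ring $S$ in which $I$ sits as an ideal.

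First I would verify that $(\mathbb{Z}\ltimes I, I)$ is in fact a well-defined pair. This is clear from the construction of $\mathbb{Z}\ltimes I$: its underlying abelian group is $\mathbb{Z}\oplus I$, multiplication is given by $(n,a)(m,b)=(nm,\,nb+ma+ab)$, and $I\cong \{0\}\oplus I$ is an ideal whose associated quotient is $\mathbb{Z}$. In particular, $I$ has the same nonunital ring structure whether regarded as an ideal of $S$ or as an ideal of $\mathbb{Z}\ltimes I$ (in both cases the multiplication and addition are just the intrinsic operations on $I$).

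Now I would apply Proposition~\ref{checkhensnonunital} twice. The first application gives that $(S,I)$ is henselian if and only if $I$ is henselian as a nonunital ring. The second application, to the pair $(\mathbb{Z}\ltimes I, I)$, gives that $(\mathbb{Z}\ltimes I, I)$ is henselian if and only if $I$ is henselian as a nonunital ring. Since the condition on the right-hand side of each equivalence is the same intrinsic property of $I$ (in particular, Definition~\ref{def_hens_nu} makes no reference to the base ring, as noted after that definition), the two statements are equivalent, which is exactly what is to be proved.

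There is no real obstacle here; the content of the corollary lies entirely in Proposition~\ref{checkhensnonunital}. The one small point worth mentioning is that the reader may wish to see the base-ring independence of the nonunital henselian condition spelled out: a polynomial equation of the form $x(1+x)^{n-1}+g(x)=0$ with $g(x)\in I[x]$ only involves the addition and multiplication on $I$ itself, so the existence of a solution in $I$ does not depend on $S$ or on $\mathbb{Z}\ltimes I$.
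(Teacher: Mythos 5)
Your proof is correct and is exactly the argument the paper intends: the corollary is stated as an immediate consequence of Proposition~\ref{checkhensnonunital}, applied to both pairs, using that the henselian condition of Definition~\ref{def_hens_nu} is intrinsic to $I$ as a nonunital ring. Nothing further is needed.
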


We recall the following basic construction (see \cite[Tag
0EM7]{stacks-project}) in the theory of henselian pairs. 
\begin{cons}\label{construction_henselization}
Given a pair $(S, I)$, there is a pair $(S^h, I^h)$ and a map $(S, I) \to (S^h,
I^h)$, called the \emph{henselization} of the original pair, with the following properties.
\begin{enumerate}
\item $(S^h, I^h)$ is a henselian pair and is the initial henselian pair
receiving a map from $(S, I)$. That is, the construction $(S, I) \mapsto
(S^h, I^h)$ is the left adjoint 
to the forgetful functor from henselian pairs to pairs. 
\item $S^h$ is a filtered colimit of \'etale $S$-algebras and $I^h = I S^h =
I \otimes_S S^h$.  
\item The map $S/I \to S^h/I^h$ is an isomorphism. 
\end{enumerate}
 
\end{cons}

\begin{remark} 
\label{hensfact}
Let $(S, I)$ be a pair. Suppose given a factorization $S \to \widetilde{S} \to
S/I$ such that
$\widetilde{S}$ is a filtered colimit of \'etale $S$-algebras and such that
$\widetilde{S} \to S/I$ has the right lifting property with respect to \'etale
morphisms (i.e., the surjection $\widetilde{S} \to S/I$ is a henselian pair). Then $(\widetilde{S}, I \widetilde{S})$ is the henselization of $(S,
I)$. To see this, 
consider a henselian pair $(A, J)$ and a map $(S, I) \to (A, J)$. Considering
the commutative diagram
\[ \xymatrix{
S \ar[d] \ar[r] & \widetilde{S} \ar@{-->}[ld] \ar[r] &  S/I \ar[d]  \\
A \ar[rr] & & A/J
},\]
the ind-\'etaleness of $S \to \widetilde{S}$ implies the existence of a
\emph{unique}
dotted arrow $\widetilde{S} \to A$ making the diagram commute
(cf.~Remark~\ref{uniquenessinliftingproperty}). This verifies the
universal property of the henselization. 

In particular, one can construct $(S^h, I^h)$ by appealing to Quillen's small object argument 
\cite[Th. 2.1.14]{Hovey} to factor $S \to S/I$ as the composite of a filtered colimit of
\'etale morphisms and a morphism that has the right lifting property with
respect to \'etale morphisms; the universal property 
of the henselization then constructs it as a functor. Note that in \eqref{lifthens}, it suffices to take
$A, B$ finitely generated over $\mathbb{Z}$, by the structure theory for \'etale
morphisms.
\end{remark}

We now review the relation between henselizations of pairs and
henselizations of nonunital rings. Most of this is implicit in \cite{gabber}, but we spell out the details.

\begin{definition} 
\label{milnorsquare}
A \emph{Milnor square} of commutative rings is a diagram
\begin{equation}  \xymatrix{
S \ar[d] \ar[r]&  T \ar[d]  \\
S'  \ar[r] &  T'
} \label{milnsquare} \end{equation} 
such that the vertical arrows are surjective and such that the diagram is both
cartesian and cocartesian in the category of commutative rings. 
It follows in particular that 
if $I \subset S$, $J \subset T$
are the respective kernels of the vertical maps $S \to S'$, $T \to T'$, then $f|_I
: I \isoto J$ establishes an isomorphism of the ideals $I$, $J$. 
\end{definition} 

\begin{lemma} 
\label{milnsquarehens} 
Consider a Milnor square as in \eqref{milnsquare} with respect to the ideals $I
\subset S$, $J \subset T$. 
Then the henselizations $S^h$, $T^h$ of $S$, $T$ along $I$, $J$ fit into a Milnor square 
\[ \xymatrix{
S^h \ar[d]  \ar[r] &  T^h \ar[d]  \\
S' \ar[r] &  T'
}.\]
Furthermore, $T^h \simeq S^h \otimes_S T$. 
\end{lemma}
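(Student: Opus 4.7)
My strategy is to construct $S^h$ out of $T^h$ via the Milnor square. Specifically, define $\widetilde{S} := T^h \times_{T'} S'$; the claim is that this is the henselization of $(S, I)$ along the kernel $\widetilde{I} := \ker(\widetilde{S} \to S')$. The key input from the preceding theory is that henselization of nonunital rings is intrinsic to the nonunital ring structure (Proposition~\ref{checkhensnonunital} together with Corollary~\ref{corollary_hens_ind_of_base}). Since the Milnor square provides an isomorphism $I \isoto J$ of nonunital rings, we obtain an isomorphism $I^h \cong J^h$. The projection $\widetilde{S} \to T^h$ restricts to an isomorphism $\widetilde{I} \cong J^h$, so the nonunital ring $\widetilde{I}$ is henselian, whence $(\widetilde{S}, \widetilde{I})$ is a henselian pair by Proposition~\ref{checkhensnonunital}.

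It then remains, using Remark~\ref{hensfact}, to verify that $S \to \widetilde{S}$ is a filtered colimit of \'etale morphisms. This is the main obstacle and relies on the classical patching equivalence for \'etale algebras along a Milnor square: the category of \'etale $S$-algebras is equivalent to the category of triples (\'etale $T$-algebra, \'etale $S'$-algebra, compatible isomorphism of base changes to $T'$). Writing $T^h$ as a filtered colimit of \'etale $T$-algebras $T_\alpha$ each equipped with a compatible $T'$-retraction, and pairing each $T_\alpha$ with the constant $S'$-side via the identity, one obtains \'etale $S$-algebras $S_\alpha$ satisfying $S_\alpha / I S_\alpha = S'$ and $S_\alpha \otimes_S T = T_\alpha$, whose filtered colimit recovers $\widetilde{S}$. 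Applying Remark~\ref{hensfact}, we conclude $\widetilde{S} = S^h$, so that the Milnor square for the henselizations is established with $S^h = T^h \times_{T'} S'$.

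Finally, the identification $T^h \simeq S^h \otimes_S T$ falls out of the same patching: since $S_\alpha \otimes_S T = T_\alpha$ at each finite stage, passing to the filtered colimit yields $S^h \otimes_S T = \mathrm{colim}\, T_\alpha = T^h$. The principal nontrivial ingredient throughout is the Milnor patching equivalence for \'etale algebras, which, while classical, requires careful bookkeeping to identify which \'etale $T$-algebras $T_\alpha$ assemble into $T^h$ and how they glue with the trivial $S'$-algebra on the other side of the square.
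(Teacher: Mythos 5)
Your construction runs in the opposite direction from the paper's, and it does work, but it is the harder direction. The paper starts from $S^h$ and produces $T^h$: since $S\to S^h$ is flat, tensoring the exact sequence $0\to S\to S'\oplus T\to T'\to 0$ with $S^h$ shows that the base-changed square is again Milnor; the corners $S'\otimes_S S^h$ and $T'\otimes_S S^h$ collapse to $S'$ and $T'$ because $S^h/I^h\cong S/I$; and then $T\to T\otimes_S S^h\to T'$ is ind-\'etale followed by a henselian surjection (the latter by Proposition~\ref{checkhensnonunital}, exactly as in your first paragraph), so Remark~\ref{hensfact} identifies $T\otimes_S S^h$ with $T^h$. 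This only uses that flat base change preserves Milnor squares, which is elementary. Your route instead builds $S^h$ as $T^h\times_{T'}S'$ and must show $S\to T^h\times_{T'}S'$ is ind-\'etale, which forces you to invoke the descent/patching equivalence for \'etale algebras along a Milnor square (Ferrand-type gluing along the pushout $\Spec S=\Spec S'\amalg_{\Spec T'}\Spec T$). That is a true but genuinely nontrivial theorem that the paper never needs, so your proof trades a one-line flatness argument for a substantial external input.

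Two points need tightening for your argument to close. First, a compatible retraction $T_\alpha\to T'$ is not enough to glue $T_\alpha$ against the trivial $S'$-algebra: the patching datum requires an isomorphism $T_\alpha\otimes_T T'\cong S'\otimes_{S'}T'=T'$, whereas a section of an \'etale algebra only splits off $T'$ as a clopen factor of $T_\alpha\otimes_T T'$. You must use the standard presentation of the henselization as the filtered colimit over \'etale $T$-algebras $T_\alpha$ equipped with an isomorphism $T_\alpha/JT_\alpha\cong T/J$ (not merely a retraction); with that choice the identity gluing datum makes sense, $S_\alpha:=T_\alpha\times_{T'}S'$ is \'etale over $S$ with $S_\alpha\otimes_S T\cong T_\alpha$ and $S_\alpha/IS_\alpha\cong S'$, and passing to the (filtered) colimit gives both $\widetilde S=S^h$ via Remark~\ref{hensfact} and the identification $S^h\otimes_S T\cong T^h$. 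Second, the patching theorem itself should be cited rather than waved at as "classical bookkeeping," since it is the entire content of the hard step.
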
 
\begin{proof} 
Since $S^h$ is flat over $S$, we can base-change \eqref{milnsquare} along $S \to
S^h$ to obtain a
new Milnor square
\[  \xymatrix{
S^h \ar[d]  \ar[r] & T \otimes_S S^h    \ar[d]  \\
S' \otimes_S S^h \ar[r] &  T' \otimes_S S^h
}.  \]
Note that $S' \otimes_S S^h  \simeq S'$ by the properties of the henselization, and similarly for $T$. Therefore the bottom arrow can be rewritten as $S' \to T'$. 
Since this is a Milnor square and the left vertical arrow is a henselian pair,
so is the right vertical arrow thanks to Proposition~\ref{checkhensnonunital}.
We have a factorization  $T \to T \otimes_S S^h \to  T'$ as the composite of an
ind-\'etale map and a map having the right lifting property with respect to
\'etale maps. Thus, the result follows in view of Remark~\ref{hensfact}. 
\end{proof} 

\begin{corollary}\label{corollary_hens_ind_of_base}
 Let $(S, I)$ be a pair and let $(S^h, I^h)$ be its henselization. Let $R$
be any ring mapping to $S$. Then $I^h$
is the henselization of $I$ as an object of $\nur$. 
In particular, the henselization of $I$ as an object of $\nur$ does not
depend on the base ring $R$. 
\end{corollary}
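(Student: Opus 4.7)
My plan is to realize $I^h$ as the henselization of $I$ in $\nur$ by means of a Milnor square comparing the pair $(S, I)$ with the unitalization $(R \ltimes I, I)$. First I would write down the square
\[
\xymatrix{
R \ltimes I \ar[d] \ar[r] & S \ar[d] \\
R \ar[r] & S/I
}
\]
whose top arrow sends $(r, i) \mapsto r_S + i$ (with $r_S$ the image of $r$ in $S$). A short check shows this is a Milnor square in the sense of Definition \ref{milnorsquare}: the vertical maps are surjections with common kernel $I$, on which the top horizontal map restricts to the identity; the square is cartesian because $(r, s) \in R \times_{S/I} S$ forces $s - r_S \in I$; and it is cocartesian because $S \otimes_{R \ltimes I} R = S/IS = S/I$.

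Next, Lemma \ref{milnsquarehens} applies directly to yield the Milnor square of pair henselizations
\[
\xymatrix{
(R \ltimes I)^h \ar[d] \ar[r] & S^h \ar[d] \\
R \ar[r] & S/I
}
\]
from which one reads off the identification $\ker((R \ltimes I)^h \to R) \isoto \ker(S^h \to S/I) = I^h$ of nonunital $R$-algebras.

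The main step, and the only one requiring more than routine bookkeeping, is to check that $\ker((R \ltimes I)^h \to R)$ satisfies the universal property of the henselization of $I$ in $\nur$. That it belongs to $\nuh$ is immediate from Proposition \ref{checkhensnonunital}. For the universal property, given $J \in \nuh$, a map $I \to J$ in $\nur$ corresponds via unitalization to a map of pairs $(R \ltimes I, I) \to (R \ltimes J, J)$ in augmented $R$-algebras; by Proposition \ref{checkhensnonunital} again the target is a henselian pair, so the universal property of the pair henselization produces a unique augmented $R$-algebra extension $(R \ltimes I)^h \to R \ltimes J$, whose restriction to kernels is the required unique map $I^h \to J$ in $\nur$ extending the original.

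The independence from $R$ is then a formal consequence: the object $I^h$, constructed from the pair $(S, I)$ via $(S^h, I^h)$, is defined without reference to $R$, so for every ring map $R \to S$ it is canonically the henselization of $I$ in $\nur$. Taking $R = \mathbb{Z}$ shows in particular that the henselization of $I$ in the category of nonunital rings is the same. The main subtlety I anticipate is ensuring that the pair henselization interacts well with the unitalization functor $I \mapsto R \ltimes I$, and this is exactly what Lemma \ref{milnsquarehens}, applied to the Milnor square above, is engineered to deliver.
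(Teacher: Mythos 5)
Your proof is correct and follows essentially the same route as the paper: both hinge on the Milnor square with corners $R\ltimes I$, $S$, $R$, $S/I$ and on Lemma \ref{milnsquarehens}. The only difference is bookkeeping — the paper starts from the nonunital henselization $I^{nuh}$ and cites the equivalence between $\nur$ and augmented $R$-algebras to identify $(R\ltimes I^{nuh}, I^{nuh})$ with the pair henselization of $(R\ltimes I, I)$, whereas you start from the pair henselization and verify that universal property explicitly; the content is the same.
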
 
\begin{proof} 
To distinguish the possible henselizations which a priori might not coincide, we
temporarily write $I^{nuh}$ to denote the henselization of $I$ as an object of
$\nur$. The equivalence between $\nur$ and augmented $R$-algebras easily implies
that $(R\ltimes I^{nuh},I^{nuh})$ is the henselization of the pair $(R \ltimes I, I)$. Applying Proposition~\ref{milnsquarehens} to the Milnor square
\[ \xymatrix{
R\ltimes I \ar[d]  \ar[r] &  S \ar[d]  \\
R \ar[r] &  S/I
}\]
then reveals that the square
\[ \xymatrix{
R\ltimes I^{nuh} \ar[d]  \ar[r] &  S^h \ar[d]  \\
R \ar[r] &  S^h/I^h
}\]
is also Milnor, i.e., that $I^{nuh}\isoto I^h$, as desired.
\end{proof}

Finally we recall Gabber's result that free henselian nonunital $\mathbb Q$-algebras are colimits of such $\mathbb Z$-algebras. For each $N>0$, denote by \[[N]: \mathbb{Z}\left\{x_1,
\dots, x_n\right\}^+ \to \mathbb{Z}\left\{x_1,
\dots, x_n\right\}^+\] the endomorphism in $\nuzh_{\mathbb Z}$ which sends $x_i \mapsto Nx_i$.

\begin{corollary}[{\cite[Prop.~3]{gabber}}] 
\label{rationalhens}
Let $n > 0$. The filtered colimit of the endomorphisms $[N]$ on $\mathbb{Z}\left\{x_1,
\dots, x_n\right\}^+$, indexed over natural numbers $N$ ordered by divisibility, is
$\mathbb{Q}\left\{x_1, \dots, x_n\right\}^+$. 
\end{corollary}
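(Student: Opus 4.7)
Let $A := \mathbb{Z}\{x_1, \dots, x_n\}^+$ and write $C$ for the filtered colimit in question, computed in $\nuh$ (which agrees with the colimit in $\nur$, since henselianity is detected by finite data). Choosing a cofinal sequence, we may regard $C$ as the directed union of the images $\iota_k : A \to C$ along a chain of transitions $A \xrightarrow{[N_1]} A \xrightarrow{[N_2]} \cdots$ whose cumulative products $N_1 N_2 \cdots N_k$ run through all positive integers. My plan is to verify that $C$ satisfies the universal property of $\mathbb{Q}\{x_1, \dots, x_n\}^+$ among henselian nonunital $\mathbb{Q}$-algebras and is itself a $\mathbb{Q}$-algebra, whence the identification follows by Yoneda together with Corollary \ref{corollary_hens_ind_of_base}.

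Given any $D \in \nuh$, the universal properties of the filtered colimit and of $A$ identify $\hom_{\nuh}(C, D)$ with the set of compatible systems $(b^{(k)})_{k \geq 0} \in (D^n)^{\mathbb{N}}$ satisfying $b^{(k)} = N_{k+1}\, b^{(k+1)}$. If $D$ is a $\mathbb{Q}$-algebra, hence uniquely divisible, the whole sequence is determined by $b^{(0)}$, so $\hom_{\nuh}(C, D) \cong D^n$. Combined with the claim that $C$ is itself a $\mathbb{Q}$-algebra (to be established next), this says precisely that $C$ corepresents the underlying-$n$-tuple functor on henselian nonunital $\mathbb{Q}$-algebras, so $C \cong \mathbb{Q}\{x_1, \dots, x_n\}^+$.

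To verify $C$ is a $\mathbb{Q}$-algebra, I must show the underlying abelian group is uniquely divisible. Torsion-freeness of $A$ is immediate: $\mathbb{Z} \ltimes A = \mathbb{Z}[x_1, \dots, x_n]^h_{(x_1,\dots,x_n)}$ is a filtered colimit of \'etale, hence flat, $\mathbb{Z}[x_1,\dots,x_n]$-algebras. Propagating to $C$: if $Nc = 0$ with $c = \iota_k(a)$, then $\iota_k(Na) = 0$, so there is $k' \geq k$ with $[M](Na) = N[M](a) = 0$ in $A$ where $M := N_{k+1} \cdots N_{k'}$; torsion-freeness forces $[M](a) = 0$, hence $c = \iota_{k'}([M](a)) = 0$. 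For divisibility, the key lemma (below) is that $[N](A) \subseteq NA$ for every $N \geq 1$. Granting it, for $c = \iota_k(a)$ and $N \geq 1$ choose $k'$ so that $N$ divides $M := N_{k+1} \cdots N_{k'}$; factoring $[M] = [N] \circ [M/N]$ gives $[M](a) \in [N](A) \subseteq NA$, say $[M](a) = Na'$, whence $c = \iota_{k'}([M](a)) = N\iota_{k'}(a')$.

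The main obstacle is the key lemma $[N](A) \subseteq NA$, which I would prove by reducing modulo $N$. Since $[N]$ restricts to the identity on $\mathbb{Z}$, it induces an augmentation-preserving $\mathbb{Z}/N$-algebra endomorphism $\overline{[N]}$ of $(\mathbb{Z} \ltimes A)/N$, and the claim becomes that $\overline{[N]}$ vanishes on $A/NA$. By compatibility of henselization with the base change $\mathbb{Z} \to \mathbb{Z}/N$, the reduction $(\mathbb{Z} \ltimes A)/N$ identifies with $\mathbb{Z}/N \ltimes (A/NA) \cong \mathbb{Z}/N[x_1, \dots, x_n]^h_{(x_1,\dots,x_n)}$. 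The endomorphism $\overline{[N]}$ sends each $x_i$ to $\overline{Nx_i} = 0$; but by the freeness of $A/NA$ as the henselian nonunital $\mathbb{Z}/N$-algebra on generators $x_1, \dots, x_n$ (Example \ref{freeobjs} together with Corollary \ref{corollary_hens_ind_of_base}), any augmentation-preserving $\mathbb{Z}/N$-algebra endomorphism of $\mathbb{Z}/N \ltimes A/NA$ is determined by its values on the $x_i$. Hence $\overline{[N]}$ must coincide with the composite of augmentation and unit $\mathrm{id}_{\mathbb{Z}/N} \oplus 0$, which annihilates $A/NA$, as required.
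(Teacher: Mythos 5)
Your proof is correct, but it takes a genuinely different route from the paper's. The paper's argument is essentially two lines and purely categorical: the henselization functor $\mathrm{Ring}^{\mathrm{nu}}_R \to \mathrm{Ring}^{\mathrm{nu,h}}_R$ is a left adjoint and hence commutes with filtered colimits; the colimit of the maps $[N]$ on the polynomial objects $\mathbb{Z}[x_1,\dots,x_n]^+$ is visibly $\mathbb{Q}[x_1,\dots,x_n]^+$ (in degree $d$ the map $[N]$ is multiplication by $N^d$); and base-independence of henselization (Corollary \ref{corollary_hens_ind_of_base}) then identifies the henselization of $\mathbb{Q}[x_1,\dots,x_n]^+$ with $\mathbb{Q}\{x_1,\dots,x_n\}^+$. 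You instead work inside the colimit $C$ itself: you prove it is uniquely divisible --- torsion-freeness from flatness of $\mathbb{Z}[x_1,\dots,x_n]^h_{(x_1,\dots,x_n)}$ over $\mathbb{Z}$, divisibility from the key lemma $[N](A)\subseteq NA$ --- and then match universal properties. Your key lemma, proved by a clean reduction modulo $N$ plus freeness of $A/NA$, is a stronger quantitative statement than the corollary itself requires, and may be of independent use. The one step you assert without justification is the identification $(\mathbb{Z}\ltimes A)/N \cong (\mathbb{Z}/N)\ltimes (A/NA)$ with $A/NA$ the free henselian nonunital $\mathbb{Z}/N$-algebra on $n$ generators; this is true, e.g.\ by Remark \ref{hensfact} (the base change of the ind-\'etale map $\mathbb{Z}[x_1,\dots,x_n]\to \mathbb{Z}[x_1,\dots,x_n]^h_{(x_1,\dots,x_n)}$ along reduction mod $N$ is ind-\'etale, and a quotient of a henselian pair is henselian), or by observing that $I\mapsto I/NI$ is left adjoint to the inclusion of henselian nonunital $\mathbb{Z}/N$-algebras into henselian nonunital $\mathbb{Z}$-algebras and composing adjunctions --- but it deserves a sentence. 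In exchange for being longer, your argument avoids invoking cocontinuity of henselization and is closer in spirit to Gabber's original proof of \cite[Prop.~3]{gabber}.
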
 
\begin{proof} 
Henselization commutes with filtered colimits, and the filtered colimit of
the analogous maps $[N]: \mathbb{Z}[x_1, \dots, x_n]^+$ is given by
$\mathbb{Q}[x_1, \dots, x_n]^+$. We now use Corollary \ref{corollary_hens_ind_of_base} that henselization does
not depend on the base ring. 
\end{proof} 

\label{sec:kanext}

\section{The main rigidity result}
In this section, we prove the main result of the paper (Theorem~\ref{mainthm}
below), which states that the relative $K$-theory and relative topological cyclic homology of a henselian pair agree after profinite completion.

Our proof will rely on several steps: a direct verification for smooth algebras in equal characteristic
(where it is a corollary of the deep calculations of Geisser--Levine \cite{GL} of
$p$-adic $K$-theory and of Geisser--Hesselholt \cite{GH} of topological cyclic
homology), a finiteness property of $K$-theory and topological cyclic homology (expressed in the language of
``pseudocoherent functors'' below), and an imitation of the main steps of
the proof of 
Gabber rigidity \cite{gabber}.

\subsection{Generalities on pseudocoherence}
In this subsection, we 
describe a basic finiteness property (called \emph{pseudocoherence}) for spectrum-valued functors
that will be necessary in the proof of the main theorem. Later, we will show
that $K$-theory and $\TC$ satisfy this property. 

\begin{definition} 
Let $\mathcal{C}$ be a small category and $F: \mathcal{C} \to \mathrm{Ab}$ a functor.
We say that $F$ is \emph{finitely generated} if there exist finitely many
objects
$X_1, \dots, X_n \in \mathcal{C}$ and a surjection 
of functors
\begin{equation} \label{fingenfun} \bigoplus_{i=1}^t \mathbb{Z}[\hom_{\mathcal{C}}(X_i, \cdot)]
\twoheadrightarrow F. \end{equation}
\end{definition} 

We now want to introduce analogous concepts when $\mathrm{Ab}$ is replaced by
the $\infty$-category $\Sp$ of spectra. 
We consider the $\infty$-category $\fun(\mathcal{C}, \Sp)$ of
functors from $\mathcal{C}$ to spectra, and recall that
$\fun(\mathcal{C}, \Sp)$ is a 
presentable, stable $\infty$-category in which limits and colimits are computed
targetwise. A family of compact generators of $\fun(\mathcal{C}, \Sp)$ is given by the
corepresentable functors $\Sigma^\infty_+ \mathrm{Hom}_{\mathcal{C}}(C, \cdot)$, for $C \in
\mathcal{C}$.

\begin{definition} 
Let $\mathcal{C}$ be a small category and $F: \mathcal{C} \to \Sp$ a functor.
\begin{enumerate}
\item  
We say that $F$ is \emph{perfect} if $F$ belongs to the thick subcategory of $\fun(\mathcal C,\Sp)$ generated by the functors $\Sigma^\infty_+ \hom_{\mathcal{C}}(C, \cdot)$ for $C
\in \mathcal{C}$ (or equivalently is  a compact object of $\fun(\mathcal{C},
\Sp)$, by a thick subcategory argument).
\item
We say that $F$ is \emph{pseudocoherent} if, for each $n \in \mathbb{Z}$, there exists a
perfect functor $F'$ and  a map $F' \to F$ such that $\tau_{\le n}F'(C)\to\tau_{\le n}F(C)$ is an equivalence for all $C\in \mathcal C$.
\end{enumerate}
\end{definition}

In the setting of structured ring spectra, the analog of a perfect functor is a
perfect module and the analog of a pseudocoherent functor is an almost perfect
module. We refer to \cite[7.2.4]{HA} for a detailed account of the theory in
that setting; therefore, in our setting of functors, we will only sketch the
proofs of the basic properties.
Note in the next result that pseudocoherent functors are automatically bounded
below, so the initial hypothesis is no loss of generality. 
\begin{lemma} 
\label{pscohcompact}
Let $d \in \mathbb{Z}$. The following conditions on a functor $F \in \fun(\mathcal{C}, \Sp_{\geq d})
\subset \fun(\mathcal{C}, \Sp)$ are equivalent:
\begin{enumerate}
\item $F$ is pseudocoherent.
\item For each $n \in \mathbb{Z}$, the functor $\tau_{\leq n} F$ is a compact object of the
$\infty$-category $\fun(\mathcal{C}, \Sp_{\leq n})$. 
\item There exists a sequence $G_{d-1} = 0 \to G_d \to G_{d+1} \to G_{d+2} \to \dots $ such that 
$G_i/G_{i-1}$ is a finite direct sum of functors of the form $\Sigma^i
\Sigma^\infty_+ \hom_{\mathcal{C}}(C, \cdot)$ for $C \in \mathcal{C}$ and such that
$\varinjlim_i G_i \simeq F$. 
\end{enumerate}
\end{lemma}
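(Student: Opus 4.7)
The plan is to establish the implications $(3) \Rightarrow (1) \Rightarrow (2) \Rightarrow (3)$, with the cellular construction in $(2) \Rightarrow (3)$ being the only substantive step; this is directly analogous to the characterization of almost perfect modules in \cite[\S 7.2.4]{HA}.

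For $(3) \Rightarrow (1)$, note that each $G_i$ is perfect, being a finite iterated cofiber of shifted corepresentables. Since $G_{i+1}/G_i$ is a finite sum of $\Sigma^{i+1} \Sigma^\infty_+ \hom_{\mathcal C}(C,-)$, hence $(i+1)$-connective, the map $G_i \to G_{i+1}$ is an equivalence on $\tau_{\leq i}$. Passing to the colimit, $G_i \to F$ is an equivalence on $\tau_{\leq i}$, so $F$ is pseudocoherent with witness $G_i$ at level $i$.

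For $(1) \Rightarrow (2)$, observe that $\tau_{\leq n} : \fun(\mathcal C,\Sp) \to \fun(\mathcal C,\Sp_{\leq n})$ is left adjoint to the inclusion; the inclusion preserves filtered colimits since $\Sp_{\leq n} \subset \Sp$ does, so $\tau_{\leq n}$ preserves compact objects. A perfect functor is compact, and if $F' \to F$ is a pseudocoherence witness for $F$ at level $n$, then $\tau_{\leq n}F \simeq \tau_{\leq n}F'$ is compact in $\fun(\mathcal C,\Sp_{\leq n})$.

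The main work is $(2) \Rightarrow (3)$. I would build the tower $G_{-1} = 0 \to G_0 \to G_1 \to \cdots$ together with compatible maps $G_n \to F$ by induction, arranging that $\mathrm{fib}(G_n \to F) \in \fun(\mathcal C, \Sp_{\geq n})$ and that $G_n/G_{n-1}$ is a finite sum of $\Sigma^n \Sigma^\infty_+ \hom_{\mathcal C}(C,-)$. The base case holds since $F \in \Sp_{\geq 0}$. For the inductive step, set $K = \pi_{n-1}\, \mathrm{fib}(G_{n-1} \to F)$, viewed as a presheaf of abelian groups on $\mathcal C$. The key subclaim is that $K$ is \emph{finitely generated} as a presheaf, i.e.\ admits a surjection from a finite direct sum of $\mathbb Z[\hom_{\mathcal C}(C_i,-)]$. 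Given such generators $k_i \in K(C_i)$, each lifts (by adjunction for the Yoneda functor) to a map $\Sigma^{n-1} \Sigma^\infty_+ \hom_{\mathcal C}(C_i, -) \to \mathrm{fib}(G_{n-1} \to F)$; summing gives $H \to \mathrm{fib}(G_{n-1} \to F) \to G_{n-1}$, null on composition to $F$. I then define $G_n := \mathrm{cofib}(H \to G_{n-1})$ and extend to $F$ using the null-homotopy; one checks $\mathrm{fib}(G_n \to F) \simeq \mathrm{cofib}(H \to \mathrm{fib}(G_{n-1} \to F))$, which lies in $\Sp_{\geq n}$ since $H \to \mathrm{fib}(G_{n-1} \to F)$ hits all of $K$ on $\pi_{n-1}$.

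The hard part is the subclaim that $K$ is finitely generated as a presheaf, and this is precisely where hypothesis $(2)$ enters. Since $G_{n-1}$ is perfect and $\tau_{\leq n-1} F$ is compact in $\fun(\mathcal C, \Sp_{\leq n-1})$, the truncated fiber $\tau_{\leq n-1}\,\mathrm{fib}(G_{n-1} \to F)$ (a finite limit in $\fun(\mathcal C, \Sp_{\leq n-1})$ of compact objects) is compact. By the inductive hypothesis this truncated fiber is the Eilenberg--MacLane presheaf $\Sigma^{n-1} K$. The general principle that a compact object of $\fun(\mathcal C, \Sp_{\leq n-1})$ whose underlying presheaf is concentrated in a single degree must be finitely generated there — proved by a standard thick-subcategory argument, using that the compact generators of $\fun(\mathcal C, \Sp_{\leq n-1})$ are built from the $\tau_{\leq n-1}(\Sigma^i \Sigma^\infty_+ \hom_{\mathcal C}(C,-))$ — then yields finite generation of $K$, completing the induction.
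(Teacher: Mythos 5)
Your overall strategy coincides with the paper's: $(3)\Rightarrow(1)$ and $(1)\Rightarrow(2)$ are handled the same way, and $(2)\Rightarrow(3)$ is the same inductive cell-attachment, hinging on finite generation of $\pi_{n-1}$ of the fiber at each stage. However, your justification of that key subclaim has a real flaw. You assert that $\tau_{\leq n-1}\,\mathrm{fib}(G_{n-1}\to F)$ is compact because it is ``a finite limit in $\fun(\mathcal C,\Sp_{\leq n-1})$ of compact objects.'' Two problems: (a) $\fun(\mathcal C,\Sp_{\leq n-1})$ is not stable, and in a non-stable category finite limits of compact objects need not be compact (already in spaces, $\Omega S^2$ is not compact); (b) $\tau_{\leq n-1}$ does not commute with fibers — the fiber of $\tau_{\leq n-1}G_{n-1}\to\tau_{\leq n-1}F$ has $\pi_{n-1}$ equal to $\ker(\pi_{n-1}G_{n-1}\to\pi_{n-1}F)$, which omits the contribution of $\mathrm{coker}(\pi_n G_{n-1}\to\pi_n F)$ to $K=\pi_{n-1}\,\mathrm{fib}(G_{n-1}\to F)$. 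So as written the step does not establish what you need.

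The fix is to run the induction on the \emph{cofiber} rather than the fiber, which is what the paper does. Set $F_n=\mathrm{cofib}(G_{n-1}\to F)=\Sigma\,\mathrm{fib}(G_{n-1}\to F)$; then $\tau_{\leq m}F_n$ is compact for every $m$ because $\tau_{\leq m}$ is a left adjoint and hence carries the defining pushout to a pushout of compact objects in $\fun(\mathcal C,\Sp_{\leq m})$ (compacts are closed under finite colimits in any $\infty$-category). Since $F_n$ is $n$-connective, $\tau_{\leq n}F_n$ is the Eilenberg--MacLane presheaf $\Sigma^n HK$, and its compactness in $\fun(\mathcal C,\Sp_{\leq n})$ forces $K$ to be finitely generated (write $K$ as the filtered colimit of its finitely generated subfunctors and use compactness to see it is a retract of one of them — this is more direct than the thick-subcategory argument you sketch). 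With that substitution your induction goes through and matches the paper's proof.
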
 

\begin{proof} 
Without loss of generality, we can assume $d =0 $ by shifting. 
For (1) $\Rightarrow$ (2), suppose that $F\in \fun(\mathcal{C}, \Sp)$ (we do not
need $F$ connective) is pseudocoherent and let $n\in\mathbb Z$. Then by definition there is a perfect functor $F'$ and a map $F' \to
F$ inducing an equivalence $\tau_{\leq n} F' \simeq \tau_{\leq n} F$. Since
$\tau_{\leq n}: \fun(\mathcal{C}, \Sp) \to \fun(\mathcal{C}, 
\Sp_{\leq n})$ preserves compact objects (as its right adjoint preserves filtered
colimits), it follows that $\tau_{\leq n} F  \in \fun(\mathcal{C}, 
\Sp_{\leq n})$ is compact. 

For (2) $\Rightarrow$ (3), suppose that $F \in \fun(\mathcal{C},
\Sp_{\geq 0})$ and $\tau_{\leq n} F  \in \fun(\mathcal{C}, 
\Sp_{\leq n})$ is compact for each $n$.  Then, by assumption $\tau_{\leq 0} F  \in \fun(\mathcal{C}, \mathrm{Ab})
\subset \fun(\mathcal{C}, \Sp_{\leq 0})$ is
a compact object and thus $\pi_0 F$ is a finitely generated functor. 
We can thus find a functor $G \in \fun(\mathcal{C}, \Sp_{\geq 0})$ which is a
direct sum of $\Sigma^\infty_+
\hom_{\mathcal{C}}(X, \cdot)$, for finitely many
$X\in\mathcal C$, and a map $G \to F$ which is a surjection on
$\pi_0$. Let $F_1 $ be the resulting cofiber and observe that $F_1$
has the same property as $F$ (namely $\tau_{\leq n} F_1 \in \fun(\mathcal{C}, 
\Sp_{\leq n})$ is compact for each $n$, since compact objects are closed under pushouts) and that $F_1 $ is connected.  
Continuing in this way, we find a sequence of functors $F \to F_1 \to F_2 \to
\dots $ such that 
$F_n$ is concentrated in degrees $\geq n$ and such that the cofiber of each $F_n \to F_{n+1}$ is
a finite direct sum of shifts of representables.  We obtain the desired sequence $G_n$ by setting $G_n=\operatorname{Fib}(F\to F_{n+1})$. 

For (3) $\Rightarrow$ (1), note that each $G_n$ is perfect by induction on $n$,
and that $G_{n+1}\rightarrow F$ is an equivalence on $\tau_{\leq n}$.
\end{proof} 
Before stating some more properties, we introduce a further generalization (for a category relative
to a subcategory) that will also be necessary in the sequel.

\begin{definition} 
Let $\mathcal{D} \subset \mathcal{C}$ be a small full subcategory of a (possibly
large, but locally small) category $\mathcal{C}$. Let $F: \mathcal{C} \to \Sp$ be a functor. 
\begin{enumerate}
\item  
We say that $F$ is \emph{$\mathcal{D}$-perfect} if $F$ belongs to the thick subcategory of $\fun(\mathcal C,\Sp)$ generated by the functors $\Sigma^\infty_+ \hom_{\mathcal{C}}(D, \cdot)$ for $D
\in \mathcal{D}$.

\item
We say that 
a functor $F: \mathcal{C} \to \Sp$
is called \emph{$\mathcal{D}$-pseudocoherent} if for each $n \in \mathbb{Z}$,
there exists a 
$\mathcal{D}$-perfect functor $F'$ and a map $F' \to F$ such that $\tau_{\leq n}
F'(C) \to \tau_{\leq n} F(C)$ is an equivalence for all $C \in \mathcal{C}$. 
\item A functor $F_0: \mathcal{C} \to \mathrm{Ab}$ 
is called \emph{$\mathcal{D}$-finitely generated} if 
there is a surjection as in \eqref{fingenfun} with $X_i \in \mathcal{D}$ for
each $i$. 
\end{enumerate}
\end{definition} 

Next, we recall a basic construction that lets us reduce
$\mathcal{D}$-pseudocoherence to (unrelative) pseudocoherence. 
\begin{cons}
Let $\mathcal{D} \subset \mathcal{C}$ be a full subcategory. 
Given a functor $G: \mathcal{D} \to \Sp$, we can form the \emph{left Kan
extension} $\lan(G):\mathcal{C} \to \Sp$ (cf. \cite[Sec.~4.3]{HTT}). 
By definition, for $C \in \mathcal{C}$, $\lan(G)(C)$ is the colimit
$$ \lan(G)(C) = \varinjlim_{D \to C, D \in \mathcal{D}} G(D)  \in \Sp.$$
The construction $F \mapsto \lan(F)$ is the left adjoint of the forgetful
functor $\fun(\mathcal{C}, \Sp) \to \fun(\mathcal{D}, \Sp)$.  A functor 
$F \in \fun(\mathcal{C}, \Sp)$ is said to be 
\emph{left Kan extended} from $\mathcal{D}$ if the natural map $\lan(
F|_{\mathcal{D}}) \to F$ is an equivalence in $\fun(\mathcal{C}, \Sp)$. 
The objects of $\fun(\mathcal{C}, \Sp)$ which are left Kan extended from
$\mathcal{D}$ form a subcategory equivalent to $\fun(\mathcal{D}, \Sp)$ (via
restriction). 
\end{cons}

\begin{lemma} 
\label{lkanpscoh}
A functor $F: \mathcal{C} \to \Sp$ is $\mathcal{D}$-pseudocoherent if and only if
it is left Kan extended from $\mathcal{D}$ and $F|_{\mathcal{D}}: \mathcal{D}
\to \Sp$ is
pseudocoherent.  
\end{lemma}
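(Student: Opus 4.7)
The plan is to exploit the adjunction $\lan \dashv \mathrm{res}$ between $\fun(\mathcal{D},\Sp)$ and $\fun(\mathcal{C},\Sp)$, relying on two preliminary observations. First, every $\mathcal{D}$-perfect functor $F'$ is automatically left Kan extended from $\mathcal{D}$: the representables $\Sigma^\infty_+ \hom_{\mathcal{C}}(D,\cdot)$ for $D \in \mathcal{D}$ are left Kan extended (by fullness of $\mathcal{D} \subset \mathcal{C}$ and Yoneda, the restriction $\Sigma^\infty_+ \hom_{\mathcal{C}}(D,\cdot)|_{\mathcal{D}} = \Sigma^\infty_+ \hom_{\mathcal{D}}(D,\cdot)$ has the correct universal property), and the class of left-Kan-extended functors is closed in $\fun(\mathcal{C},\Sp)$ under all colimits (both $\lan$ and $\mathrm{res}$ are colimit-preserving) as well as under retracts and shifts, hence under the thick subcategory they generate. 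Second, $\lan$ is exact (being a left adjoint it preserves cofibers, and in a stable setting this gives exactness) and preserves pointwise connectivity: if $H: \mathcal{D} \to \Sp$ takes values pointwise in $\Sp_{\ge n+1}$, then so does $\lan(H)$, since $\lan(H)(C)$ is a colimit of values of $H$ and $\Sp_{\ge n+1}$ is closed under arbitrary colimits.

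For the forward direction, suppose $F$ is $\mathcal{D}$-pseudocoherent with approximating $\mathcal{D}$-perfect functors $F'_n \to F$, each an equivalence on $\tau_{\le n}$. Restricting to $\mathcal{D}$ and applying $\lan$, then using naturality of the counit, the map $F'_n \to F$ factors as
\[
F'_n \simeq \lan(F'_n|_{\mathcal{D}}) \longrightarrow \lan(F|_{\mathcal{D}}) \longrightarrow F,
\]
where the first equivalence uses that $F'_n$ is left Kan extended. The cofiber of $F'_n|_{\mathcal{D}} \to F|_{\mathcal{D}}$ is pointwise in $\Sp_{\ge n+1}$, so by the preliminary connectivity observation the middle map is a pointwise equivalence on $\tau_{\le n}$. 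Combined with the composite being an equivalence on $\tau_{\le n}$, two-out-of-three forces the counit $\lan(F|_{\mathcal{D}}) \to F$ to be an equivalence on $\tau_{\le n}$ for every $n$, and hence an equivalence. Thus $F$ is left Kan extended. Pseudocoherence of $F|_{\mathcal{D}}$ is then immediate: restriction to $\mathcal{D}$ sends the representable $\Sigma^\infty_+ \hom_{\mathcal{C}}(D,\cdot)$ to $\Sigma^\infty_+ \hom_{\mathcal{D}}(D,\cdot)$ (by fullness), so $F'_n|_{\mathcal{D}}$ is perfect in $\fun(\mathcal{D},\Sp)$, and $F'_n|_{\mathcal{D}} \to F|_{\mathcal{D}}$ is an equivalence on $\tau_{\le n}$.

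For the backward direction, assume $F \simeq \lan(F|_{\mathcal{D}})$ with $F|_{\mathcal{D}}$ pseudocoherent, and fix $n$. Choose a perfect $G_n \to F|_{\mathcal{D}}$ which is an equivalence on $\tau_{\le n}$, and apply $\lan$ to obtain $\lan(G_n) \to \lan(F|_{\mathcal{D}}) \simeq F$. The functor $\lan(G_n)$ is $\mathcal{D}$-perfect, since $\lan$ sends the representable $\Sigma^\infty_+ \hom_{\mathcal{D}}(D,\cdot)$ to $\Sigma^\infty_+ \hom_{\mathcal{C}}(D,\cdot)$ and is exact; and the map $\lan(G_n) \to F$ is a pointwise equivalence on $\tau_{\le n}$ by the same connectivity argument. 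This exhibits $F$ as $\mathcal{D}$-pseudocoherent.

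The main technical hinge is the preservation of connectivity under $\lan$; without it, passing $\tau_{\le n}$-approximations back and forth between $\mathcal{C}$ and $\mathcal{D}$ would fail. Everything else is a formal consequence of the adjunction, the fullness of $\mathcal{D} \subset \mathcal{C}$, and a thick-subcategory argument identifying $\mathcal{D}$-perfect functors with left-Kan-extended perfect functors.
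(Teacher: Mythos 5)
Your proof is correct and follows essentially the same route as the paper's: both show that $\mathcal{D}$-perfect functors are left Kan extended by a thick subcategory argument, and both handle the pseudocoherent case by the fact that truncation interacts well with left Kan extension (the paper phrases this as the localizations $\Sp \to \tau_{\leq n}\Sp$ forming a conservative family of colimit-preserving functors; you phrase it as $\lan$ preserving pointwise connectivity). The only nitpick is an off-by-one: a cofiber lying pointwise in $\Sp_{\geq n+1}$ forces an isomorphism on $\pi_i$ only for $i \leq n-1$ together with a surjection on $\pi_n$, so in the backward direction you should start from $G_{n+1}$ rather than $G_n$, while in the forward direction your two-out-of-three step already absorbs the discrepancy.
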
 
\begin{proof} 
The functor $\Sigma^\infty_+ \hom_{\mathcal{C}}(D, \cdot) \in
\fun(\mathcal{C}, \Sp)$ for $D \in \mathcal{D}$ is left Kan extended from
$\mathcal{D}$, so a thick subcategory argument shows that any
$\mathcal{D}$-perfect functor is left Kan extended from $\mathcal{D}$.  To get
the same for any $\mathcal{D}$-pseudocoherent functor, note that the
truncations $\Sp\rightarrow \Sp_{\leq n}$ are a conservative family of
colimit-preserving functors, so a functor to $\Sp$ is left Kan extended if and
only if its image in each $\Sp_{\leq n}$ is.

Conversely, suppose that $F$ is left Kan extended from $\mathcal{D}$ and $F|_{\mathcal{D}}: \mathcal{D}
\to \Sp$ is pseudocoherent.  Then $\tau_{\leq n}F\mid_\mathcal{D}\simeq \tau_{\leq n}G\mid_\mathcal{D}$ for some $G:\mathcal{C}\rightarrow\Sp$ in the thick subcategory generated by the $\Sigma^\infty_+ \hom_{\mathcal{C}}(D, \cdot)$, and we deduce $\tau_{\leq n}F\simeq \tau_{\leq n}G$ by left Kan extension.
\end{proof}

\begin{proposition} 
\label{pscohfilt2}
Let $\mathcal{D} \subset \mathcal{C}$ be a small full subcategory. 
Let $F: \mathcal{C} \to \Sp_{\geq d}$ be a functor for some $d \in \mathbb{Z}$. 
Then the following are equivalent: 
\begin{enumerate}
\item $F$ is $\mathcal{D}$-pseudocoherent.  
\item 
There exists a sequence $G_{d-1} = 0 \to G_d \to G_{d+1} \to G_{d+2} \to \dots $ such that 
$G_i/G_{i-1}$ is a finite direct sum of functors of the form $\Sigma^i
\Sigma^\infty_+ \hom_{\mathcal{C}}(D, \cdot)$ for $D \in \mathcal{D}$ and such that
$\varinjlim_i G_i \simeq F$. 
\end{enumerate}
\end{proposition}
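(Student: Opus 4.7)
The plan is to reduce the relative statement to the absolute case (Lemma~\ref{pscohcompact}) via the left Kan extension criterion of Lemma~\ref{lkanpscoh}, applied to the restriction $F|_{\mathcal{D}}$.

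For the easier direction (2) $\Rightarrow$ (1), I would observe that each $G_i$ in such a filtration lies in the thick subcategory of $\fun(\mathcal{C},\Sp)$ generated by the functors $\Sigma^\infty_+\hom_{\mathcal{C}}(D,\cdot)$ for $D \in \mathcal{D}$, by induction on $i$ using the given cofiber sequences; in particular each $G_i$ is $\mathcal{D}$-perfect. For any fixed $n$, the $i$-connectivity of $G_i/G_{i-1}$ implies that $\tau_{\leq n}(G_{n+1})\to \tau_{\leq n}(G_j)$ is an equivalence for all $j\geq n+1$, and since truncation commutes with filtered colimits, $\tau_{\leq n}(G_{n+1}) \to \tau_{\leq n}(F)$ is an equivalence. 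Thus $G_{n+1}$ witnesses $\mathcal{D}$-pseudocoherence of $F$ in degrees $\leq n$.

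For (1) $\Rightarrow$ (2), Lemma~\ref{lkanpscoh} gives that $F$ is left Kan extended from $\mathcal{D}$ and that $F|_{\mathcal{D}}: \mathcal{D}\to \Sp_{\geq 0}$ is pseudocoherent. Applying Lemma~\ref{pscohcompact} to $F|_{\mathcal{D}}$ produces a filtration $0 = H_{-1}\to H_0\to H_1\to\cdots$ in $\fun(\mathcal{D},\Sp)$ whose associated graded at step $i$ is a finite direct sum of functors $\Sigma^i\Sigma^\infty_+\hom_{\mathcal{D}}(D,\cdot)$ and satisfies $\varinjlim_i H_i \simeq F|_{\mathcal{D}}$. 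I would then set $G_i := \lan(H_i)$. Since $\lan: \fun(\mathcal{D},\Sp)\to \fun(\mathcal{C},\Sp)$ is a left adjoint it preserves all colimits (in particular cofibers and finite direct sums), and by the Yoneda lemma it sends $\Sigma^\infty_+\hom_{\mathcal{D}}(D,\cdot)$ to $\Sigma^\infty_+\hom_{\mathcal{C}}(D,\cdot)$ for $D\in \mathcal{D}$. These two facts together yield the filtration $\{G_i\}$ with the required associated graded, and $\varinjlim_i G_i \simeq \lan(\varinjlim_i H_i) \simeq \lan(F|_{\mathcal{D}}) \simeq F$, the last equivalence holding precisely because $F$ is left Kan extended from $\mathcal{D}$.

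There is no real obstacle; the argument is entirely formal once Lemmas~\ref{lkanpscoh} and~\ref{pscohcompact} are available. The only points worth attention are the behavior of left Kan extension on the Yoneda generators (which is definitional) and the interaction of the connectivity bookkeeping with filtered colimits in the direction (2) $\Rightarrow$ (1), both of which are routine.
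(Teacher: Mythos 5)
Your proof is correct and follows exactly the paper's route: the paper's entire proof is "Combine Lemmas~\ref{lkanpscoh} and \ref{pscohcompact}," and you have simply spelled out that combination, including the standard facts that left Kan extension preserves colimits and carries $\Sigma^\infty_+\hom_{\mathcal{D}}(D,\cdot)$ to $\Sigma^\infty_+\hom_{\mathcal{C}}(D,\cdot)$.
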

\begin{proof} 
Combine Lemmas~\ref{lkanpscoh} and \ref{pscohcompact}. 
\end{proof}

\begin{proposition} 
\label{general:pscoh}
Let $\mathcal C$ be a category and $\mathcal{D} \subset \mathcal{C}$ a
small full subcategory.
\begin{enumerate} 
\item  
The subcategory of $\fun(\mathcal{C}, \Sp)$ spanned by the $\mathcal{D}$-pseudocoherent
functors is thick.
\item Let $d \in \mathbb{Z}$,  let $K$ be a simplicial set, and let $f: K \to
\fun(\mathcal{C}, \Sp_{\geq d})$ be a
$K$-indexed diagram of functors $\mathcal{C}
\to \Sp_{\geq d}$. Suppose that
the $n$-skeleton 
$\mathrm{sk}_n K$ is a finite simplicial set for every $n\in\mathbb Z$ and
that for each vertex $k_0 \in K_0$, the functor $f(k_0) \in \fun(\mathcal{C}, \Sp)$ is $\mathcal{D}$-pseudocoherent. 
Then the functor $\varinjlim_K f \in \fun(\mathcal{C}, \Sp)$ is
$\mathcal{D}$-pseudocoherent. 
\item 
Let $d \in \mathbb{Z}$, and let $F_\bullet:\Delta^\sub{op}\to \fun(\mathcal{C}, \Sp_{\geq d})$ be a simplicial object in the category of functors $\mathcal{C}
\to \Sp_{\geq d}$. Suppose that $F_i$ is $\mathcal{D}$-pseudocoherent for every $i \geq 0$. Then the
geometric realization $|F_\bullet|: \mathcal{C} \to \Sp$ is $\mathcal{D}$-pseudocoherent. 
\item Let $d\in \bb Z$, and let $F\in \fun(\mathcal{C}, \Sp)$ be a
$\mathcal{D}$-pseudocoherent functor such that $\pi_nF=0$ for all $n<d$. Then $\pi_d F: \mathcal{C} \to \mathrm{Ab}$ is a
 $\mathcal{D}$-finitely generated functor. 
 \item If $\mathcal{C}$ has finite coproducts and $\mathcal{D}$ is closed
 under them, then the subcategory of
 $\mathcal{D}$-pseudocoherent functors in $\fun(\mathcal{C}, \Sp)$ is closed under smash
 products. 
\end{enumerate}
\end{proposition}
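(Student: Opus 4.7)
The overall plan is to use the two equivalent descriptions of pseudocoherence already established: Lemma~\ref{pscohcompact}, according to which a connective functor $F$ is pseudocoherent exactly when each truncation $\tau_{\leq n} F$ is a compact object of $\fun(\mathcal{C}, \Sp_{\leq n})$; and Proposition~\ref{pscohfilt2}, according to which such an $F$ is $\mathcal{D}$-pseudocoherent exactly when it admits a sequential filtration whose subquotients are finite direct sums of $\Sigma^i \Sigma^\infty_+ \hom_{\mathcal{C}}(D, -)$ for $D \in \mathcal{D}$. Any pseudocoherent functor is automatically bounded below (take $n \ll 0$ in the definition), so we may always shift to the connective case when invoking either characterization.

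For part (1), I will use the compactness criterion: the $\mathcal D$-perfect functors visibly form a thick subcategory, and closure of the pseudocoherent ones under cofibers and retracts follows from the fact that $\tau_{\leq n}$, being a left adjoint, preserves colimits, together with the closure of compact objects under finite colimits and retracts. For part (4), I will use the filtration description: shift $F$ so that $d=0$; then $\pi_0 F = \pi_0 G_1$, which is a quotient of a finite direct sum of representables on objects of $\mathcal D$, exhibiting $\pi_0 F$ as $\mathcal D$-finitely generated.

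For parts (2) and (3), the strategy is to approximate the colimit by finite subcolimits, each pseudocoherent by (1), and to use the connectivity hypothesis to show that the approximations are eventually equivalences in any given range of degrees. For (2), I will filter $K$ by its skeleta: each $\varinjlim_{\mathrm{sk}_n K} f$ is a finite colimit of pseudocoherent functors, hence pseudocoherent, and the cofiber of $\varinjlim_{\mathrm{sk}_n K} f \to \varinjlim_{\mathrm{sk}_{n+1} K} f$ decomposes as a finite direct sum, indexed by the nondegenerate $(n+1)$-simplices of $K$, of suspensions $\Sigma^{n+1}$ of values of $f$ at vertices, hence is $(d+n+1)$-connective. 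Part (3) is analogous using the tower of partial geometric realizations $|F_\bullet|_{\leq n}$, each a finite colimit of $F_0, \dots, F_n$, whose successive cofibers are $\Sigma^n$-suspensions of the normalized $n$-chains and hence $(d+n)$-connective.

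For part (5), I will exploit the filtration description of Proposition~\ref{pscohfilt2} directly. Given filtrations $\{G_i^F\}$ and $\{G_j^G\}$ of $F$ and $G$, the diagonal filtration $H_n := \varinjlim_{i+j \leq n} G_i^F \otimes G_j^G$ exhausts $F \otimes G$, with successive quotients
\[ H_n/H_{n-1} \simeq \bigoplus_{i+j=n} (G_i^F/G_{i-1}^F) \otimes (G_j^G/G_{j-1}^G). \]
Since $\mathcal{D}$ is closed under coproducts and the pointwise smash product satisfies $\Sigma^\infty_+ \hom_{\mathcal{C}}(D_1, -) \otimes \Sigma^\infty_+ \hom_{\mathcal{C}}(D_2, -) \simeq \Sigma^\infty_+ \hom_{\mathcal{C}}(D_1 \sqcup D_2, -)$ (using that $\hom_{\mathcal C}(-, C)$ sends coproducts to products of sets), each such quotient is itself a finite direct sum of $\Sigma^n \Sigma^\infty_+ \hom_{\mathcal{C}}(D, -)$ for $D \in \mathcal{D}$, verifying the filtration criterion. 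The main obstacle is the careful bookkeeping of connectivity and cell count in parts (2), (3), and (5): one must identify the cofiber at each stage of the skeletal, partial-realization, or diagonal filtration and confirm that it meets the form and connectivity demanded by Proposition~\ref{pscohfilt2}, in particular that the diagonal filtration's subquotients decompose correctly into the expected direct sums.
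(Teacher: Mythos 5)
Your argument is correct, and for parts (1)--(4) it is essentially the paper's own route: the paper disposes of these by ``standard compactness arguments'' via Lemmas~\ref{pscohcompact} and \ref{lkanpscoh}, noting exactly your key point that after applying $\tau_{\leq n}$ the colimits in (2) and (3) become finite; your identification of the skeletal and latching cofibers just makes this explicit, and your proof of (4) is the same computation that already appears inside the proof of Lemma~\ref{pscohcompact}. For part (5) you genuinely diverge. The paper first shows that $\mathcal{D}$-\emph{perfect} functors are closed under smash products, using the same observation $\Sigma^\infty_+\hom_{\mathcal{C}}(D_1,\cdot)\otimes\Sigma^\infty_+\hom_{\mathcal{C}}(D_2,\cdot)\simeq \Sigma^\infty_+\hom_{\mathcal{C}}(D_1\sqcup D_2,\cdot)$ together with a thick subcategory argument in each variable, and then deduces the pseudocoherent case; that last deduction implicitly requires approximating $F\otimes G$ in any range by $F'\otimes G'$ with $F',G'$ perfect, using that both factors are bounded below. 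Your diagonal filtration $H_n=\varinjlim_{i+j\le n}G^F_i\otimes G^G_j$ packages that approximation step explicitly and verifies the criterion of Proposition~\ref{pscohfilt2} directly, giving a self-contained cell structure on $F\otimes G$ at the cost of the bookkeeping you acknowledge. The one point worth stating explicitly in your version of (5) (and in (4)) is the normalization: Proposition~\ref{pscohfilt2} is formulated for functors into $\Sp_{\geq 0}$, so one should first shift $F$ and $G$, which is harmless since pseudocoherent functors are automatically bounded below.
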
 
\begin{proof} 
Thanks to Lemma~\ref{lkanpscoh}, 
 we may assume without loss of generality that $\mathcal{C} =
\mathcal{D}$, since the property of being left Kan extended is preserved under
all colimits. We may also take $d = 0$. 
 Claim (1) follows from 
Lemma~\ref{pscohcompact} because
compact objects (of any $\infty$-category) are closed under finite colimits and
retracts. 
For (2), 
it suffices by Lemma~\ref{pscohcompact} to show that
$\tau_{\leq n} ( \varinjlim_K f )$ is compact as an object of $\fun(\mathcal{C},
\sp_{\leq n})$ for each $n$. 
But $\tau_{\leq n} f = \tau_{\leq n}  ( \varinjlim_{\mathrm{sk}^{n+1} K } f)$
and
$\varinjlim_{\mathrm{sk}^{n+1} K } f$ is a finite colimit of pseudocoherent
functors, hence pseudocoherent itself by (1); this implies the desired compactness
assertion. 
Claim (3) is handled similarly, because
$\tau_{\leq n} ( |F_\bullet|) \simeq \tau_{\leq n} ( \varinjlim_{\Delta_{\leq
n+1} }F)$ can be computed as a truncated geometric realization, and this is a
finite colimit, cf.~\cite[Lemma~1.2.4.17]{HA}. 
Claim (4) follows from the filtration Lemma~\ref{pscohcompact}. 
For (5), it suffices to observe that the smash products of functors of the form
$\{\Sigma^\infty_+ \hom_{\mathcal{C}}(X, \cdot)\}_{X \in \mathcal{D}}$ are still of this form under the
assumption that $\mathcal{C}$ has finite coproducts and $\mathcal{D} \subset
\mathcal{C}$ is closed under them. It then follows
by a thick subcategory argument that the subcategory of $\mathcal{D}$-perfect functors is
closed under smash products, which implies the analogous assertion for
$\mathcal{D}$-pseudocoherent functors. 
\end{proof} 

In order to analyze algebraic $K$-theory below, it will be useful to use 
both the $K$-theory space and spectrum simultaneously. Therefore we first prove
a useful tool
(Proposition~\ref{suspensionpscoh}) that for functors $F$
taking values in connected spectra, pseudocoherence of $F$ is equivalent to
that of $\Sigma^\infty \Omega^\infty F$. 
To prove it, we need the
following general result from Goodwillie calculus. Compare \cite[Cor.
1.3]{AhKu}, for instance. 
The tower $\left\{P_n(\Sigma^\infty \Omega^\infty)\right\}$ is a special case
of the Goodwillie tower of an arbitrary functor introduced in \cite{GooIII}. 

\begin{proposition} 
There is a natural tower $\left\{P_n=P_n ( \Sigma^\infty
\Omega^\infty)\right\}_{n \geq 0}$ of functors $\Sp \to \Sp$
receiving a map from $\Sigma^\infty \Omega^\infty$, 
\[ 
\Sigma^\infty \Omega^\infty X \to \{ \dots \to P_n
(X) \to P_{n-1}(X) \to \dots \to 
P_1(X) \}
\]
such that: 
\begin{enumerate}
\item $P_1(X) \simeq X$ and the fiber of  
$P_n (X) \to P_{n-1} (X) $ is naturally equivalent to $(X^{\otimes n})_{h \Sigma_n}$. 
\item If $X \in \Sp_{\geq 1}$, the map 
$\Sigma^\infty \Omega^\infty X  \to \varprojlim_n P_n (X)$ is an equivalence and
the connectivity of 
$\Sigma^\infty \Omega^\infty X \to P_n(X)$ tends to $\infty$ with $n$.
\end{enumerate}
\label{goodwilliefilt}
\end{proposition}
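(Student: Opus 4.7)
The plan is to invoke Goodwillie's calculus of functors \cite{GooIII} applied to $F = \Sigma^\infty \Omega^\infty : \Sp \to \Sp$, and simply set $P_n := P_n F$, the universal $n$-excisive approximation to $F$. By construction, this produces a natural tower
\[ F \to \cdots \to P_n F \to P_{n-1}F \to \cdots \to P_1 F \]
under $F$, so the existence of the tower and the comparison maps is automatic. It then remains to identify the first stage, the layers, and to verify convergence.

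For the linearization, I would compute $P_1 F(X) \simeq \mathrm{colim}_m \Omega^m F(\Sigma^m X) = \mathrm{colim}_m \Omega^m \Sigma^\infty \Omega^\infty \Sigma^m X$ directly from the Goodwillie formula. For a connective $X$, the Freudenthal suspension theorem (applied to the unit $\Sigma^m X \to \Sigma^\infty \Omega^\infty \Sigma^m X$, which is an equivalence in a range of degrees growing linearly in $m$) shows that this colimit is $X$. For general $X$ one can take this as the definition of $P_1$, since convergence is only needed on $\Sp_{\geq 1}$.

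The heart of the argument is the identification of the layers. By general Goodwillie calculus, $\mathrm{fib}(P_n F \to P_{n-1} F)(X) \simeq (\partial_n F \otimes X^{\otimes n})_{h\Sigma_n}$, where $\partial_n F$ denotes the $n$-th derivative spectrum with its $\Sigma_n$-action. I would invoke the classical calculation $\partial_n(\Sigma^\infty \Omega^\infty) \simeq S^0$ with trivial $\Sigma_n$-action; this can be read off from the cross-effect analysis of $\Sigma^\infty \Omega^\infty$, using that on a finite wedge the functor splits stably in a way that exposes the symmetric smash powers. Plugging this in gives the claimed formula $(X^{\otimes n})_{h\Sigma_n}$.

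For convergence on $\Sp_{\geq 1}$, I would appeal to Goodwillie's convergence theorem: since $\Sigma^\infty \Omega^\infty$ is analytic on connective spectra (a consequence of Blakers--Massey applied to $\Omega^\infty$), the connectivity of $F(X) \to P_n F(X)$ grows linearly in $n$ whenever $X$ is connected, yielding both $F(X) \simeq \varprojlim_n P_n F(X)$ and the connectivity assertion. The main obstacle in this plan is the calculation of the derivatives of $\Sigma^\infty \Omega^\infty$; this is classical but requires some care, and is the step that is most natural to extract from the literature (e.g.\ \cite{AhKu} in the form cited) rather than redo.
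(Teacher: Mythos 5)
Your proposal is correct and is exactly the route the paper takes: the paper does not prove this proposition but cites Goodwillie's calculus \cite{GooIII} and Ahearn--Kuhn \cite{AhKu}, and your outline (Goodwillie tower of $\Sigma^\infty\Omega^\infty$, linearization via Freudenthal, layers via $\partial_n(\Sigma^\infty\Omega^\infty)\simeq S^0$ with trivial $\Sigma_n$-action, convergence via analyticity) is precisely what those references establish. One cosmetic slip: the natural highly connected comparison map is the counit $\Sigma^\infty\Omega^\infty \Sigma^m X \to \Sigma^m X$, not a unit in the other direction, though this does not affect the computation of the colimit.
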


We can now prove the following tool for pseudocoherence. 

\begin{proposition} 
\label{suspensionpscoh}
Let $\mathcal{C}$ be a category with finite coproducts and 
let $\mathcal{D} \subset \mathcal{C}$ be a small subcategory closed under
finite coproducts. 
Let
$F: \mathcal{C}
\to \Sp_{\geq 1}$ a functor. Then the following are equivalent: 
\begin{enumerate}
\item  
$F$ is $\mathcal{D}$-pseudocoherent. 
\item
$\Sigma^\infty \Omega^\infty F$ is $\mathcal{D}$-pseudocoherent. 
\item 
$\Sigma^\infty_+ \Omega^\infty F$ is $\mathcal{D}$-pseudocoherent.
\end{enumerate}
\end{proposition}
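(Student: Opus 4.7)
The equivalence (2) $\Leftrightarrow$ (3) is immediate from the splitting $\Sigma^\infty_+ Y \simeq \Sigma^\infty Y \vee S^0$ for a pointed space $Y$: the constant functor $S^0 \simeq \Sigma^\infty_+\hom_{\mathcal{C}}(\emptyset_{\mathcal{C}},\cdot)$ (where $\emptyset_{\mathcal{C}}$ is the empty coproduct, which lies in $\mathcal{D}$ by hypothesis) is $\mathcal{D}$-perfect, so the two functors in question differ by a $\mathcal{D}$-perfect summand, and closure of pseudocoherence under extensions (Proposition \ref{general:pscoh}(1)) gives the equivalence.

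For (1) $\Rightarrow$ (2), I invoke the Goodwillie tower of Proposition \ref{goodwilliefilt}. By Proposition \ref{general:pscoh}(5) the smash power $F^{\otimes n}$ is $\mathcal{D}$-pseudocoherent, and by Proposition \ref{general:pscoh}(2) applied to the nerve $B\Sigma_n$ (whose skeleta are finite) so is $(F^{\otimes n})_{h\Sigma_n}$. Induction along the tower via closure under extensions then shows each $P_n F$ is $\mathcal{D}$-pseudocoherent. Since $F$ is $1$-connective, the layer $(F^{\otimes n})_{h\Sigma_n}$ is $n$-connective, so $\Sigma^\infty \Omega^\infty F \to P_n F$ becomes arbitrarily highly connected as $n \to \infty$; a truncation-and-approximation argument (for each $m$, find $n$ with $\tau_{\leq m}\Sigma^\infty\Omega^\infty F \simeq \tau_{\leq m} P_n F$) establishes pseudocoherence of $\Sigma^\infty\Omega^\infty F$.

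The converse (2) $\Rightarrow$ (1) is the main obstacle. I would construct $F$ as a sequential colimit $\varinjlim_k G_k$ of $\mathcal{D}$-perfect functors fitting the criterion of Lemma \ref{pscohcompact}(3), with $G_k \to F$ inducing an isomorphism on $\pi_j$ for $j \leq k$. Since $F$ is $1$-connective one may take $G_0 = 0$, and the base case reduces to $\mathcal{D}$-finite generation of $\pi_1 F$, which follows from the Hurewicz identification $\pi_1 F \simeq \pi_1 \Sigma^\infty\Omega^\infty F$ together with Proposition \ref{general:pscoh}(4). For the inductive step at $k \geq 1$, set $F^{(k+1)} := \mathrm{cofib}(G_k \to F)$, which is $(k+1)$-connective, and apply $\Omega^\infty$ to the fiber sequence $G_k \to F \to F^{(k+1)}$ of spectra; the resulting fiber sequence of pointed spaces has $k$-connected base $\Omega^\infty F^{(k+1)}$ and $0$-connected fiber $\Omega^\infty G_k$, so Ganea's theorem identifies the fiber of the comparison map $\Omega^\infty F/\Omega^\infty G_k \to \Omega^\infty F^{(k+1)}$ as the $(k+1)$-connected join $\Omega^\infty G_k * \Omega \Omega^\infty F^{(k+1)}$. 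Applying $\Sigma^\infty$ and invoking Hurewicz one then obtains a canonical isomorphism
\[
\pi_{k+1} F^{(k+1)} \;\simeq\; \pi_{k+1}\,\mathrm{cofib}\bigl(\Sigma^\infty\Omega^\infty G_k \to \Sigma^\infty\Omega^\infty F\bigr).
\]
This cofiber is $\mathcal{D}$-pseudocoherent, since $\Sigma^\infty\Omega^\infty F$ is so by hypothesis and $\Sigma^\infty\Omega^\infty G_k$ is so by the already-established direction (1) $\Rightarrow$ (2) applied to the $\mathcal{D}$-perfect $G_k$; Proposition \ref{general:pscoh}(4) then yields the desired $\mathcal{D}$-finite generation of $\pi_{k+1} F^{(k+1)}$, closing the induction.

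The principal difficulty is thus the inductive step of (2) $\Rightarrow$ (1): extracting $\mathcal{D}$-finite generation of the Postnikov obstructions of $F$ from the hypothesis on $\Sigma^\infty\Omega^\infty F$ alone. Its resolution rests on the interplay between the Hurewicz isomorphism for connective spectra (which turns the unstable obstruction into a stable one) and the Ganea/Blakers-Massey connectivity estimate (which lets one compute the stable obstruction from a cofiber involving only $\Sigma^\infty\Omega^\infty F$ and $\Sigma^\infty\Omega^\infty G_k$).
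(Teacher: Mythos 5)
Your proof is correct, and while (2)$\Leftrightarrow$(3) and (1)$\Rightarrow$(2) coincide with the paper's argument (splitting off the corepresentable constant functor $S^0$; the Goodwillie tower with $n$-connective layers $(F^{\otimes n})_{h\Sigma_n}$), your treatment of the converse (2)$\Rightarrow$(1) is genuinely different. The paper's route is formal: the adjunction $(\Sigma^\infty,\Omega^\infty)$ between pointed spaces and connective spectra is monadic, so $F$ is the geometric realization of the bar resolution $(\Sigma^\infty\Omega^\infty)^{\bullet+1}F$; each term is $\mathcal{D}$-pseudocoherent by iterating the forward direction starting from the hypothesis, and Proposition~\ref{general:pscoh}(3) (closure of pseudocoherence under geometric realizations of uniformly bounded-below diagrams) concludes. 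You instead build the filtration of Proposition~\ref{pscohfilt2} by hand, attaching cells one Postnikov degree at a time, and the substance of your argument is the extraction of $\mathcal{D}$-finite generation of each obstruction group $\pi_{k+1}F^{(k+1)}$ from the stable hypothesis: the Ganea/Blakers--Massey estimate shows $\Omega^\infty F/\Omega^\infty G_k\to\Omega^\infty F^{(k+1)}$ is $(k+2)$-connected, and Hurewicz then identifies $\pi_{k+1}F^{(k+1)}$ with the bottom homotopy group of $\mathrm{cofib}(\Sigma^\infty\Omega^\infty G_k\to\Sigma^\infty\Omega^\infty F)$, to which Proposition~\ref{general:pscoh}(4) applies; your base case via $\pi_1 F\cong\pi_1\Sigma^\infty\Omega^\infty F$ is also fine since $\pi_1$ of an infinite loop space is already abelian. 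Both arguments feed the forward direction into the converse (the paper applies it to the iterates $(\Sigma^\infty\Omega^\infty)^kF$, you to the perfect approximations $G_k$, which are indeed $1$-connective as fibers of maps from $F$ to $(k+1)$-connective cofibers). What the paper's approach buys is brevity and the avoidance of any connectivity estimates beyond boundedness below, at the price of invoking $\infty$-categorical monadicity; yours stays within classical unstable homotopy theory and makes transparent exactly which stable data detects each unstable Postnikov obstruction, at the price of the Ganea bookkeeping.
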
 
\begin{proof} 
The equivalence of (2) and (3) follows because $\Sigma^\infty, \Sigma^\infty_+$
differ by the constant functor at $S^0$, which is corepresentable since
$\mathcal{C}$ has an initial object in $\mathcal{D}$.

Suppose $F$ is $\mathcal{D}$-pseudocoherent. 
Using Proposition~\ref{goodwilliefilt}, 
we conclude that $\Sigma^\infty \Omega^\infty F$ is the homotopy limit of a
tower whose associated graded is given by $F, (F^{\otimes 2})_{h
\Sigma_2}, (F^{\otimes 3})_{h \Sigma_3 }, \dots $. 
Since $F$ takes values in $\Sp_{\geq 1}$, the connectivity of the associated graded terms
tends to $\infty$, so this tower stabilizes in any given finite range. As $F$
is $\mathcal{D}$-pseudocoherent, so are all the graded terms in
view of Proposition~\ref{general:pscoh}. It follows that $\Sigma^\infty \Omega^\infty F$ is
$\mathcal{D}$-pseudocoherent. 

Conversely, suppose $\Sigma^\infty \Omega^\infty F$ is
$\mathcal{D}$-pseudocoherent. 
First, recall that the adjunction $(\Sigma^\infty, \Omega^\infty)$
between the $\infty$-categories of pointed spaces and connective spectra is monadic since
$\Omega^\infty|_{\Sp_{\geq 0}}$ commutes with sifted colimits \cite[Prop.~1.4.3.9]{HA} and in view of the $\infty$-categorical monadicity theorem
\cite[Sec.~4.7.3]{HA}; 
alternatively, this follows explicitly from delooping
machinery going back to \cite{Maygeom}.  
As a result of monadicity, for any $X \in \Sp_{\geq 0}$, one has a natural simplicial spectrum
(the bar resolution) $(\Sigma^\infty \Omega^\infty)^{\bullet + 1} X$ whose
geometric realization is equivalent to $X$.  
Therefore, in our case, we can
resolve $F$ via the iterates $(\Sigma^\infty \Omega^\infty)^k F$ for $k \geq 1$.
By the previous direction and our assumption, each of these is
$\mathcal{D}$-pseudocoherent.
Thus we have written $F$ as a geometric realization of
$\mathcal{D}$-pseudocoherent
functors $\mathcal{C} \to \Sp_{\geq 1}$,
so $F$ itself is $\mathcal{D}$-pseudocoherent. 
\end{proof}

Finally, we observe that for functors into connective spectra, pseudocoherence
can be tested after smashing with $H\mathbb{Z}$. 
\begin{proposition} 
\label{HZpscoh}
Let $\mathcal{D} \subset \mathcal{C}$ be a small subcategory.
Let $F: \mathcal{C} \to \Sp$ be a functor. If $F$ is
$\mathcal{D}$-pseudocoherent, then $H\mathbb{Z}
\otimes F$ is $\mathcal{D}$-pseudocoherent. Conversely, if $F: \mathcal{C} \to \Sp$ is
uniformly bounded-below and $H \mathbb{Z} \otimes F$ is
$\mathcal{D}$-pseudocoherent, so is
$F$. 
\end{proposition}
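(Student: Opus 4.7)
The plan is to prove the forward direction by reducing to the (well-known) fact that $H\mathbb{Z}$ is pseudocoherent as a spectrum, and to prove the converse by a standard Hurewicz-style inductive cell attachment.

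For the forward direction, I would first verify that $H\mathbb{Z}$ itself is pseudocoherent in $\Sp$: for each integer $k$ there exists a connective perfect spectrum $Y_k$ and a map $Y_k \to H\mathbb{Z}$ whose cofiber lies in $\Sp_{\geq k+1}$. One builds $Y_k$ inductively, starting from the unit $S \to H\mathbb{Z}$ and attaching finitely many cells to kill $\pi_n$ of the successive cofibers; this is possible because each $\pi_n S$ is a finitely generated abelian group and $\pi_* H\mathbb{Z}$ is concentrated in degree $0$. Next, given a $\mathcal{D}$-pseudocoherent $F$ and a target level $k$, I would pick a $\mathcal{D}$-perfect approximation $F' \to F$ that is a $\tau_{\leq k}$-equivalence (with $F' \in \Sp_{\geq -N}$ for some $N$ depending on $F'$), and then a perfect approximation $Y \to H\mathbb{Z}$ that is a $\tau_{\leq k+N}$-equivalence. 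The composite $Y \otimes F' \to H\mathbb{Z} \otimes F' \to H\mathbb{Z} \otimes F$ is then a $\tau_{\leq k}$-equivalence: the first cofiber equals $\mathrm{cofib}(Y \to H\mathbb{Z}) \otimes F' \in \Sp_{\geq (k+N+1)+(-N)} = \Sp_{\geq k+1}$, and the second equals $H\mathbb{Z} \otimes \mathrm{cofib}(F' \to F) \in \Sp_{\geq k+1}$ since $H\mathbb{Z}$ is connective. Moreover, $Y \otimes F'$ is $\mathcal{D}$-perfect by a thick-subcategory argument: for any $\mathcal{D}$-perfect $G$, the full subcategory of $Z \in \Sp^{\omega}$ such that $Z \otimes G$ is $\mathcal{D}$-perfect is thick and contains $S$, hence is all of $\Sp^{\omega}$.

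For the converse, suppose $F$ takes values uniformly in $\Sp_{\geq d}$ and $H\mathbb{Z} \otimes F$ is $\mathcal{D}$-pseudocoherent. The plan is to build a cell filtration of $F$ as in Proposition~\ref{pscohfilt2}(3). By the Hurewicz theorem $\pi_d F \cong \pi_d(H\mathbb{Z} \otimes F)$, and this functor is $\mathcal{D}$-finitely generated by Proposition~\ref{general:pscoh}(4). I would lift a surjection from a finite sum of representables to a map
\[ G_0 \;=\; \bigoplus_{i=1}^t \Sigma^d \Sigma^{\infty}_+ \hom_{\mathcal{C}}(D_i, -) \;\longrightarrow\; F \]
which is surjective on $\pi_d$, so that $F_1 := \mathrm{cofib}(G_0 \to F)$ takes values in $\Sp_{\geq d+1}$. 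The key observation is that $H\mathbb{Z} \otimes F_1$ is again $\mathcal{D}$-pseudocoherent: since $G_0$ is $\mathcal{D}$-perfect hence $\mathcal{D}$-pseudocoherent, the forward direction already gives that $H\mathbb{Z} \otimes G_0$ is $\mathcal{D}$-pseudocoherent, and thus so is the cofiber $H\mathbb{Z} \otimes F_1$ by Proposition~\ref{general:pscoh}(1). Iterating this construction produces a sequence $0 = G'_{-1} \to G'_0 \to G'_1 \to \cdots$ with compatible maps to $F$ whose successive quotients are finite direct sums of shifted representables $\Sigma^{d+n} \Sigma^{\infty}_+ \hom_{\mathcal{C}}(D, -)$ with $D \in \mathcal{D}$, and whose colimit is $F$ because the residual terms $F_{n+1}$ become increasingly connective. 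Proposition~\ref{pscohfilt2} then concludes that $F$ is $\mathcal{D}$-pseudocoherent.

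The main technical subtlety I expect lies in the forward direction: since $F$ is not assumed uniformly bounded below, the connectivity $-N$ of any chosen $\mathcal{D}$-perfect approximation $F'$ depends on $F'$ itself, and the perfect approximation $Y \to H\mathbb{Z}$ must therefore be chosen \emph{after} $F'$ with connectivity large enough to compensate. Once this interdependence is recognized, the bookkeeping is clean. The converse, by contrast, is a parametrized version of the classical statement that a bounded-below spectrum with almost perfect $H\mathbb{Z}$-homology is almost perfect.
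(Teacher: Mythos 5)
Your proof is correct, and the forward direction is essentially the paper's own argument: the paper says only that ``we can approximate $H\mathbb{Z}$ in any range by a finite spectrum,'' which is exactly your observation that $H\mathbb{Z}$ is pseudocoherent in $\Sp$; your careful ordering of quantifiers (choose the $\mathcal{D}$-perfect approximation $F'$ first, then choose the finite approximation $Y \to H\mathbb{Z}$ with connectivity depending on the lower bound of $F'$) is the right way to make that one-liner precise. Incidentally, the ``subtlety'' you flag is even milder than you suggest: a $\mathcal{D}$-pseudocoherent functor is automatically uniformly bounded below (as the paper remarks before Lemma~\ref{pscohcompact}), though your argument does not need this.

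For the converse you take a genuinely different route. The paper's proof is a short thick-subcategory argument on the \emph{spectrum} variable: since $\pi_*S^0$ is degreewise finitely generated, each truncation $\tau_{\leq n}S^0$ lies in the thick subcategory of $\Sp$ generated by $H\mathbb{Z}$, so $\tau_{\leq n}S^0 \otimes F$ is $\mathcal{D}$-pseudocoherent whenever $H\mathbb{Z}\otimes F$ is; and because $F$ is uniformly bounded below, $F \to \tau_{\leq n}S^0\otimes F$ is an equivalence in any prescribed range for $n$ large. You instead run the classical Hurewicz/cell-attachment induction, rebuilding the filtration of Proposition~\ref{pscohfilt2}(3) by hand: Hurewicz identifies $\pi_d F$ with $\pi_d(H\mathbb{Z}\otimes F)$, Proposition~\ref{general:pscoh}(4) makes it $\mathcal{D}$-finitely generated, the surjection lifts to a map from a sum of shifted corepresentables by the free--forget adjunction, and the forward direction plus thickness of pseudocoherent functors propagates the hypothesis to the cofiber. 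Every step checks out. The paper's argument is shorter and dualizes the roles (it approximates the sphere by $H\mathbb{Z}$-cells rather than approximating $F$ by corepresentable cells); yours is longer but makes the mechanism transparent and mirrors the standard proof that a bounded-below spectrum with almost perfect integral homology is almost perfect, which is a reasonable trade.
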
 
\begin{proof} 
The first direction follows because we can approximate $H \mathbb{Z}$ in any
range by a finite spectrum. The second direction follows because a thick subcategory
argument now implies that if $H \mathbb{Z} \otimes F$ is
$\mathcal{D}$-pseudocoherent, so is
$\tau_{\leq n} S^0 \otimes F$ for each $n$, and we can approximate $F$ in any
given range by $\tau_{\leq n} S^0 \otimes F$. 
\end{proof}

\subsection{An axiomatic rigidity argument}\label{subsection_axiomatic}
In this subsection, we present an axiomatic form, in the language of finitely
generated functors, of the argument used by Gabber
\cite{gabber} to deduce rigidity for henselian pairs from the case of
henselizations of smooth points on varieties. 
We will use the notion of pseudocoherence in the case 
where $\mathcal{C}  = \nuh$ and $\mathcal{D} = (\nuh)_{\Sigma}$ is the category of compact projective objects 
obtained by idempotent completing the subcategory
$\{R\left\{x_1, \dots, x_n\right\}^+: n \geq 0\}$ of $ \nuh$.\begin{definition} 
\begin{enumerate}
\item  
A functor $\nuh \to \mathrm{Ab}$ is \emph{projectively finitely generated} 
if it is $\mathcal{D}$-finitely generated for $\mathcal{D} = (\nuh)_{\Sigma}$.
\item
A functor $\nuh \to \Sp$ is \emph{projectively pseudocoherent} if it
is $\mathcal{D}$-pseudocoherent for $\mathcal{D} = (\nuh)_\Sigma$. 
\end{enumerate}

\end{definition} 

\begin{example} 
\label{ex:pseudocoh}
For each $n$, 
the functor $I \mapsto \Sigma^\infty_+ I^n$ (where $I^n$ here denotes the
cartesian product of $n$ copies of $I$) is projectively pseudocoherent. Indeed, it is the
suspension spectrum of the corepresentable associated to $
R\{x_1,\dots,x_n\}^+ \in (\nuh)_\Sigma$. 
\end{example} 

\begin{example} 
A projectively pseudocoherent functor $\nuh \to \Sp$ commutes with filtered
colimits. In fact, the corepresentable functors $I \mapsto \Sigma^\infty_+ I^n$
clearly do, and the result for arbitrary projectively pseudocoherent functors
follows 
because the class of functors $\nuh \to \Sp$ which commute with filtered
colimits is stable under all colimits. 
See also Corollary~\ref{kanextcrit} below. 
\end{example} 

\label{Moved this around and rephrased language slightly}
We now give the following technical result for 
functors into abelian groups, which is simply a slight reformulation of the approach taken by Gabber
\cite{gabber}. \begin{lemma} \label{gabberlem}
Let $F_0: \nuzh \to \mathrm{Ab}$ be a projectively finitely generated functor. Suppose that: 
\begin{enumerate}
\item  
$F_0(I) = 0$ if $I \in \nuzh$ is annihilated by an integer $N > 0$. 
\item
$F_0(I) =
0$ if $I \in \nuzh$ is a nonunital $\mathbb{Q}$-algebra.
\item 
Given a short exact sequence $I' \to I \twoheadrightarrow \overline{I}$ in
$\nuzh$, the sequence $F_0(I') \to F_0(I) \to F_0(\overline{I}) \to 0$ of
abelian groups is exact.
\item $F_0$ commutes with filtered colimits. 
\end{enumerate}
 Then $F_0 = 0$. 
\end{lemma}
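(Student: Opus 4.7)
The plan is to follow the classical Gabber-style rigidity argument: first use projective finite generation to reduce to killing finitely many universal classes, then invoke Corollary~\ref{rationalhens} to produce a single integer $N$ such that each class dies after multiplication by $[N]$, and finally exploit the short exact sequence $NI\To I\To I/NI$ together with hypotheses (1) and (3) to spread the vanishing to every $I\in\nuzh$.

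More concretely, by Yoneda the hypothesis of projective finite generation yields finitely many classes $\alpha_i\in F_0(P_i)$, where $P_i=\mathbb{Z}\{x_1,\dots,x_{n_i}\}^+$, such that every element of $F_0(I)$, for every $I\in\nuzh$, is a $\mathbb{Z}$-linear combination of pushforwards $F_0(f)(\alpha_i)$ along ring maps $f:P_i\to I$. By Corollary~\ref{rationalhens}, $\mathbb{Q}\{x_1,\dots,x_{n_i}\}^+$ is the filtered colimit of $P_i$ under the multiplication-by-$N$ endomorphisms $[N]$. Hypothesis (2) says this colimit is killed by $F_0$, and hypothesis (4) says $F_0$ commutes with it, so there exists $N_i$ with $F_0([N_i])(\alpha_i)=0$; setting $N=\prod_iN_i$ gives $F_0([N])(\alpha_i)=0$ for every $i$.

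Now fix an arbitrary $I\in\nuzh$. The short exact sequence of nonunital rings
\[NI\To I\To I/NI\]
lives in $\nuzh$: an ideal of a henselian nonunital ring is henselian (any solution in $I$ to an equation \eqref{henseq} with coefficients in $NI$ must itself lie in $NI$, using that $NI$ is stable under multiplication by $1+I$), and quotients of henselian nonunital rings are clearly henselian. Since $I/NI$ is $N$-torsion, hypothesis (1) gives $F_0(I/NI)=0$, so by hypothesis (3) the map $F_0(NI)\twoheadrightarrow F_0(I)$ is surjective. Given $\beta\in F_0(I)$, lift it to $\tilde\beta=\sum_iF_0(g_i)(\alpha_i)\in F_0(NI)$ for some ring maps $g_i:P_i\to NI$. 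The crucial algebraic point is that $NI\subset I$ coincides, as an abelian subgroup, with $N\cdot I$, so we may choose $h_{ij}\in I$ with $g_i(x_j)=Nh_{ij}$ and invoke the universal property of $P_i$ to produce a ring map $h_i:P_i\to I$ sending $x_j\mapsto h_{ij}$. Then the two ring maps $\iota\circ g_i$ and $h_i\circ[N]$ from $P_i$ to $I$ agree on generators, hence coincide, whence
\[F_0(\iota\circ g_i)(\alpha_i)=F_0(h_i)\bigl(F_0([N])(\alpha_i)\bigr)=0.\]
Summing over $i$ gives $\beta=F_0(\iota)(\tilde\beta)=0$, finishing the proof.

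The main obstacle is really the careful bookkeeping supporting the above three-line argument: verifying that $NI$ belongs to $\nuzh$ so hypothesis (3) applies, and noting that elements of the abelian subgroup $NI\subset I$ are literally of the form $Ny$, which combined with the universal property of the free henselian nonunital ring $P_i$ is exactly what allows any $g_i:P_i\to NI$ to be factored as $\iota\circ h_i\circ[N]$. Once this factorization is available, together with the single $N$ produced by the application of Corollary~\ref{rationalhens}, the conclusion is immediate.
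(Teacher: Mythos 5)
Your proof is correct and follows essentially the same route as the paper's: produce a single $N$ killing the finitely many universal classes via Corollary~\ref{rationalhens} and hypotheses (2) and (4), factor any map into $NI$ through $[N]$ using the freeness of $\mathbb{Z}\{x_1,\dots,x_{n_i}\}^+$, and conclude from the short exact sequence $NI \to I \to I/NI$ with hypotheses (1) and (3). Your explicit check that $NI$ lies in $\nuzh$ (so that hypothesis (3) applies) is a point the paper leaves implicit, and is a worthwhile addition.
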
 
\begin{proof}
We will prove the following claim: there exists $N\ge1$ such that, for any $I \in \nuzh$, the inclusion map $i_N:NI\to I$ induces the zero map $0=(i_N)_*: F_0(NI) \to F_0(I)$.
Once this is done, the statement of the lemma will follow by applying (1) and (3) to the short exact sequence $NI\to I\to I/NI$.
 
As in Corollary~\ref{rationalhens}, let $[N]: \mathbb{Z}\left\{x_1, \dots, x_n\right\}^+ \to
\mathbb{Z}\left\{x_1, \dots, x_n\right\}^+$ be the operator that multiplies each
$x_i$ by $N$, so that the colimit of the $[N]$, indexed over $N\ge1$ ordered by divisibility, is $\mathbb{Q}\left\{x_1, \dots, x_n\right\}^+$. 
By assumption $F_0$ commutes with filtered colimits
and annihilates any
nonunital henselian 
$\mathbb{Q}$-algebra; so, given any element 
$u \in F_0(\mathbb{Z}\left\{x_1, \dots, x_n\right\}^+)$, it follows that there
exists $N > 0$ such that $[N]_*(u) = 0$. 

Now we use that $F_0$ is projectively finitely generated. This means that there exists a surjection 
\[ \bigoplus_{i=1}^t \mathbb{Z}[\mathrm{Hom}_{\nuzh}(\mathbb{Z}\left\{x_1,
\dots, x_{n_i}\right\}^+, \cdot)] \to F_0\] of functors for some $n_1,\dots,n_t\ge1$. For each $i=1,\dots,t$, let $g_i\in F_0(\mathbb{Z}\left\{x_1, \dots, x_{n_i}\right\}^+)$ be the image of $\mathrm{id}\in \mathrm{Hom}_{\nuzh}(\mathbb{Z}\left\{x_1,
\dots, x_{n_i}\right\}^+, \mathbb{Z}\left\{x_1,
\dots, x_{n_i}\right\}^+)$. Then the above surjection concretely means the following: given any $I\in \nuzh$ and any element $y\in F_0(I)$, then $y$ is a finite sum of elements of $F_0(I)$ obtained by pushing forward the $g_i$ along various maps $\phi:\mathbb{Z}\left\{x_1,
\dots, x_{n_i}\right\}^+ \to I$. 

By the second paragraph, there exists an integer $N > 0$ such that $[N]_*(g_i)=0$ for all the generators $g_i$. From this we can easily complete the proof of the claim, as follows. Given any $I \in \nuzh$, every map $\phi: \mathbb{Z}\{ x_1, \dots, x_{n_i}\}^+ \to  N I 
$ has the property that the composite
$i_N \circ \phi: \mathbb{Z}\left\{x_1, \dots, x_{n_i}\right\}^+ \to
I$ factors through $[N]$, i.e., one has
a commutative diagram
in $\nuzh$
\[ \xymatrix{
\mathbb{Z}\left\{x_1, \dots, x_n\right\}^+ \ar[r]_-{\phi}\ar[d]^{[N]} &  NI
\ar[d]^{i_N}  \\
\mathbb{Z}\left\{x_1, \dots, x_n\right\}^+ \ar[r] &  I
}.\]
By choice of $N$ it follows that the composite
\[ \mathbb{Z}[\mathrm{Hom}_{\nuzh}( \mathbb{Z}\left\{x_1, \dots,
x_{n_i}\right\}^+, NI)] \to 
F_0(NI) \to F_0(I)
\]
is zero.  But we have shown that every element of $F_0(NI)$ is a finite sum of 
elements of the form $\phi_*(g_i)$ for various maps $\phi$. Since $g_i
\in F_0( \mathbb{Z}\left\{x_1, \dots, x_{n_i}\right\}^+)$ is annihilated by $[N]$,
it follows that $(i_N)_*: F_0(NI) \to F_0(I)$ is zero, as claimed.
\end{proof}

The following is the main technical step used in the proof of our rigidity
result. 

\begin{proposition}[Axiomatic rigidity argument] 
Let $F: \nuzh \to \Sp$ be a projectively pseudocoherent functor. Suppose that: 
\begin{enumerate}
\item For each prime field $R$ and each $n$, we have $F( R\left\{x_1, \dots, x_n\right\}^+) =
0$. 
\item Given a short exact sequence $I' \to I \twoheadrightarrow \overline{I}$ in
$\nuzh$, the sequence $F(I') \to F(I) \to F(\overline{I})$ is a fiber sequence
of spectra. 
\item If $I \in \nuzh$ is nilpotent, then $F(I) = 0$. 
\end{enumerate}
Then $F = 0$.\label{pseudoargument}
\end{proposition} 
\begin{proof} Let $R$ be a prime field  and
restrict $F$ to $\nuh$ to obtain a functor $F_R: \nuh \to \Sp$. 
We observe first that $F_R$ 
is projectively pseudocoherent  (since the building blocks $I
\mapsto \Sigma^\infty_+ I^n$ for
projectively pseudocoherent functors $\nuh \to \Sp$ are independent of the base ring $R$). 
In particular, $F_R$ is left Kan extended from the subcategory
$(\nuh)_{\Sigma}$. By assumption, $F_R$ annihilates $(\nuh)_\Sigma$, so that
$F_R = 0$ on $\nuh$. 

It follows that $F$ annihilates 
any nonunital henselian algebra over a field. 
Moreover, if $I \in \nuzh$ is such that there exists $N \in \mathbb{Z}_{>0}$
with $NI = 0$, it follows that $F(I) = 0$: in fact, since $F$ preserves finite
products we reduce to the case where
$N  = p^r$ for some $r$, and then use the short exact sequence
$pI \to I \to I/p I$ where $pI$ is nilpotent. 
The resulting fiber sequence then shows that $F(I) \simeq F(I/pI) = 0$.

Suppose $F$ is not the zero functor. 
Let $d$ be minimal such that $\pi_d F \neq 0$. Then $\pi_d F$ is a
projectively finitely
generated functor $\nuzh \to \mathrm{Ab}$ which commutes with filtered
colimits; furthermore, $\pi_d F$ annihilates any $I \in \nuzh$ which is either
of bounded torsion or a $\mathbb{Q}$-vector space. Using 
Lemma~\ref{gabberlem} below, we find that $\pi_d F = 0$, a contradiction. 
\end{proof}

This completes the proofs of the main results from the present subsection. 
For the convenience of the reader, we recall that  the process
of Kan extension which appeared in our notion of projective pseudocoherence has an alternative description via simplicial resolutions
(which will not be used in the sequel). 
\begin{cons}
\label{kanextsifted}
Let $\mathcal{E}$ be an $\infty$-category that has sifted colimits, fix a base ring $R$ and a functor
\[ F: \nuh \to \mathcal{E},  \] and
consider the following condition on $F$:

\begin{itemize}
\item[(Kan)] $F$ is left Kan extended from the subcatgory $(\nuh)_\Sigma \subset \nuh$. 
\end{itemize}

Suppose $F$ satisfies (Kan). Then one can compute $F$ from its values on $(\nuh)_\Sigma$ as
follows. First, $F$ commutes with filtered colimits, and hence its value is
determined on all free objects in $\nuh$.  Next, if $I$ is an arbitrary
nonunital henselian $R$-algebra, then there exists  an augmented simplicial
object $I_\bullet:\Delta^\sub{op}_+ \to \nuh$ such that $I_{-1} = I$,
each $I_i$ is a free algebra in 
$\nuh$, and 
$|I_\bullet| \simeq I_{-1} = I$, i.e., $I_\bullet$ is a simplicial resolution
of $I$ by free objects; then the value of $F$ on $I$ is given by $F(I) = |F(I_\bullet)|$. 

The construction of such simplicial 
resolutions (and the independence of choices) is a general homotopical technique
going back to Quillen \cite{QuillenHA}, originally developed to build the cotangent
complex via simplicial commutative rings. Using the results of
\cite{QuillenHA} (which work in particular for any category satisfying (Law)
from Remark \ref{remark_Lawvere}, by \cite[Rmk.~1, pg.~II.4.2]{QuillenHA}), we
can make the category $\fun( \Delta^\sub{op}, \nur)$ of simplicial  nonunital henselian $R$-algebras into a model category where the weak equivalences and fibrations are those of underlying simplicial sets.  The resolution $I_\bullet$  of $I$ is then obtained as a cofibrant replacement in this model category of the constant simplicial object $I$.
This can also be 
phrased
using the language of nonabelian derived $\infty$-categories
\cite[Sec.~5.5.8]{HTT}. 

We record these observations for the future in the following corollary 
which does not mention simplicial non-unital henselian rings. 

\begin{corollary} 
\label{kanextcrit}
Let $\mathcal{E}$ be an $\infty$-category with sifted colimits, and let $F: \nuh \to \mathcal{E}$ be a functor. Then the following are equivalent: 
\begin{enumerate}
\item $F$ is left Kan extended from $(\nuh)_{\Sigma} \subset \nuh$. 
\item Whenever $I_\bullet$ is a simplicial object in $\nuh$ whose geometric
realization is equivalent to $I_{-1} \in \nuh$ (in particular, the homotopy type
of the underlying simplicial set of $I_\bullet$ is discrete), then the map 
$|F(I_\bullet)| \to F(I_{-1})$ is an equivalence.\footnote{Since the category
of nonunital henselian rings is the category of algebras over a monad on the
category of sets, there is always a canonical resolution of any object by free
objects; it would suffice to consider these resolutions.} 
Furthermore, $F$ commutes with filtered colimits. 
\end{enumerate}
\end{corollary}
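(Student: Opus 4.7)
My plan is to prove both directions by passing through the framework of Lawvere theories and Quillen's simplicial resolutions by free objects. The relevant background facts are: the objects of $(\nuh)_\Sigma$ are compact projective in $\nuh$ by definition; $(\nuh)_\Sigma$ is closed under finite coproducts (the coproduct of the free nonunital henselian algebras on $m$ and $n$ generators being the free algebra on $m+n$ generators); and the forgetful functor $\nuh \to \mathrm{Sets}$ preserves sifted colimits, as a general feature of categories satisfying (Law).

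For the direction $(1) \Rightarrow (2)$, I would appeal to the standard principle that left Kan extension from a subcategory of compact projectives closed under finite coproducts yields functors preserving sifted colimits. Indeed, the pointwise formula for $F$ combined with compact projectivity of the objects of $(\nuh)_\Sigma$ implies that $F$ preserves sifted colimits in $\nuh$; the two clauses of (2) are then special cases, since filtered colimits are sifted, and a simplicial object $I_\bullet$ whose underlying simplicial set has discrete homotopy type $I_{-1}$ has colimit $I_{-1}$ in $\nuh$ (by preservation of sifted colimits under $\nuh \to \mathrm{Sets}$).

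For $(2) \Rightarrow (1)$, I would compare $F$ directly to the left Kan extension $F^{\mathrm{kan}}$ of $F|_{(\nuh)_\Sigma}$ along the inclusion, which satisfies (2) by the forward direction. Given $I \in \nuh$, choose a simplicial resolution $I_\bullet$ with $I_{-1} = I$, each $I_n$ a (possibly infinitely generated) free object of $\nuh$, and underlying simplicial set having discrete homotopy type equivalent to $I$; such a resolution is a cofibrant replacement in Quillen's transferred model structure on simplicial objects in $\nuh$, where weak equivalences and fibrations are detected on the underlying simplicial sets \cite{QuillenHA}. Each infinitely generated free object of $\nuh$ is a filtered colimit of objects of $(\nuh)_\Sigma$, so since both $F$ and $F^{\mathrm{kan}}$ commute with filtered colimits and agree on $(\nuh)_\Sigma$, one has $F(I_n) \simeq F^{\mathrm{kan}}(I_n)$ for every $n$. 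Applying (2) to both $F$ and $F^{\mathrm{kan}}$ then yields
\[
F(I) \simeq |F(I_\bullet)| \simeq |F^{\mathrm{kan}}(I_\bullet)| \simeq F^{\mathrm{kan}}(I),
\]
so that $F$ coincides with its left Kan extension from $(\nuh)_\Sigma$.

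The main obstacle is constructing the simplicial resolution $I_\bullet$ with its required discrete-homotopy-type property on the underlying simplicial set; this is Quillen's theorem in the setting of (Law)-categories \cite{QuillenHA}, and once it is granted the rest of the argument is a formal manipulation, essentially reflecting the universal property of $\nuh$ as the $0$-truncated objects in the nonabelian derived $\infty$-category $\mathcal{P}_\Sigma((\nuh)_\Sigma)$.
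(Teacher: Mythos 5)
Your proposal is correct and follows essentially the same route the paper intends: the corollary is recorded as a summary of Construction~\ref{kanextsifted}, whose content is precisely that a functor satisfying (Kan) is determined on free objects via filtered colimits and computes via Quillen-style simplicial resolutions by free objects, with the converse obtained by comparing $F$ to the left Kan extension of its restriction to $(\nuh)_\Sigma$ along such a resolution. Your only implicit step — that the left Kan extension from $(\nuh)_\Sigma$ sends simplicial objects with discrete realization to their realizations, which really uses the identification of this Kan extension with the restriction of the sifted-colimit-preserving extension to $\mathcal{P}_\Sigma((\nuh)_\Sigma)$ — is exactly the point your closing sentence acknowledges, so the argument is complete.
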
 
\end{cons}

\subsection{Pseudocoherence of $K, \TC$}
In this subsection, we show that $K$-theory and $\TC$ satisfy the
projective pseudocoherence property studied in subsection \ref{subsection_axiomatic}.
Given a space $X$, we write $C_*(X; \mathbb{Z}) = H\mathbb{Z}\otimes \Sigma^\infty_+ X$ for the singular chains on $X$
with $\mathbb{Z}$-coefficients, viewed as a (generalized) Eilenberg--MacLane spectrum.

\begin{proposition} 
\label{glpscoh}
The functor 
$ \nuzh
\to \Sp$
given by 
$I \mapsto C_*( BGL_\infty(\mathbb{Z} \ltimes I); \mathbb{Z})$  is
projectively
pseudocoherent. 
\end{proposition}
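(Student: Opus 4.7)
The plan is to exploit the split short exact sequence
\[1\to 1+M_n(I)\to GL_n(\mathbb{Z}\ltimes I)\to GL_n(\mathbb{Z})\to 1\]
(coming from the augmentation $\mathbb{Z}\ltimes I\twoheadrightarrow\mathbb{Z}$) in two steps: first handle each fixed $n$ by isolating the $I$-dependence of $C_*(BGL_n(\mathbb{Z}\ltimes I);\mathbb{Z})$ into a single pseudocoherent piece, then pass to $n=\infty$ by means of uniform homological stability.

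Fix $n$ and set $U_n(I):=1+M_n(I)$, which is a genuine subgroup of $GL_n(\mathbb{Z}\ltimes I)$ because $I$ is local and hence lies in the Jacobson radical of $\mathbb{Z}\ltimes I$. As a functor from $\nuzh$ to pointed sets, $U_n$ is corepresented by the free object $\mathbb{Z}\{x_1,\ldots,x_{n^2}\}^+$, so the bar construction exhibits $M(I):=C_*(BU_n(I);\mathbb{Z})$ as the geometric realization of a simplicial spectrum whose $k$-th level is $H\mathbb{Z}\otimes\Sigma^\infty_+I^{kn^2}$. Example~\ref{ex:pseudocoh} and Proposition~\ref{HZpscoh} show that each such level is projectively pseudocoherent, and Proposition~\ref{general:pscoh}(3) then yields projective pseudocoherence of $M$.

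Next, the above split sequence realizes $BGL_n(\mathbb{Z}\ltimes I)$ as the Borel construction $(BU_n(I))_{hGL_n(\mathbb{Z})}$ (with $GL_n(\mathbb{Z})$ acting on $U_n(I)$ by conjugation), whence
\[C_*(BGL_n(\mathbb{Z}\ltimes I);\mathbb{Z})\simeq M(I)\otimes^{\mathbb{L}}_{H\mathbb{Z}[GL_n(\mathbb{Z})]}H\mathbb{Z}.\]
By Borel--Serre, $GL_n(\mathbb{Z})$ contains a torsion-free finite-index subgroup of type $F$, and hence is of type $\mathrm{FP}_\infty$ over $\mathbb{Z}$: the trivial module $\mathbb{Z}$ admits a resolution $P_\bullet\to\mathbb{Z}$ by finitely generated free $\mathbb{Z}[GL_n(\mathbb{Z})]$-modules. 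Substituting this resolution into the display presents $C_*(BGL_n(\mathbb{Z}\ltimes I);\mathbb{Z})$ as the totalization of a chain complex (equivalently, via Dold--Kan, the geometric realization of a simplicial spectrum) each of whose entries is a finite direct sum of copies of $M$. Proposition~\ref{general:pscoh}(3) thus yields projective pseudocoherence of $I\mapsto C_*(BGL_n(\mathbb{Z}\ltimes I);\mathbb{Z})$ for each fixed~$n$.

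Finally, to pass from finite $n$ to $n=\infty$, I will invoke homological stability in the form of van der Kallen: for rings $R$ of Bass stable rank at most $s$, the map $H_d(BGL_n(R);\mathbb{Z})\to H_d(BGL_{n+1}(R);\mathbb{Z})$ is an isomorphism for all $n\geq n_0(d,s)$. The critical observation is uniformity in~$I$: every $I\in\nuzh$ is local, so $I$ lies in the Jacobson radical of $\mathbb{Z}\ltimes I$, whence $\mathbb{Z}\ltimes I$ has the same Bass stable rank as $\mathbb{Z}$, namely~$2$. Thus a single function $n_0(d)$ works uniformly across $\nuzh$, so $\tau_{\leq d}C_*(BGL_\infty(\mathbb{Z}\ltimes I);\mathbb{Z})\simeq\tau_{\leq d}C_*(BGL_{n_0(d)}(\mathbb{Z}\ltimes I);\mathbb{Z})$, and projective pseudocoherence of the latter implies the same for the former. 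The main obstacle is precisely this uniformity of the stability range: once the Jacobson-radical observation reduces the Bass stable rank to~$2$, the classical stability theorems apply, but without this observation one would need a substitute stabilization mechanism.
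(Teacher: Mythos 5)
Your proof is correct and follows essentially the same route as the paper's: isolate the $I$-dependence in $C_*(BGL_n(1+M_n(I));\mathbb{Z})$ via the bar construction on the corepresentable functor $I\mapsto I^{n^2}$, absorb the $GL_n(\mathbb{Z})$-homotopy orbits using Borel--Serre finiteness, and pass to $GL_\infty$ by van der Kallen stability with the key uniformity coming from $I\subseteq\operatorname{Jac}(\mathbb{Z}\ltimes I)$. The only (harmless) variations are that you use the $FP_\infty$ property of $GL_n(\mathbb{Z})$ and a finitely generated free resolution of $\mathbb{Z}$ over $\mathbb{Z}[GL_n(\mathbb{Z})]$ where the paper factors the homotopy orbits through a finite-index subgroup $N$ with finite $BN$ and the finite quotient $G/N$, and that you justify the uniform stability range via the Bass stable rank rather than the dimension of the maximal spectrum.
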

\begin{proof} 
First, by the homological stability results of Maazen and van der Kallen
\cite{vdK}, we can approximate $C_*(BGL_\infty(\mathbb{Z} \ltimes I);
\mathbb{Z})$ by 
$C_*(BGL_n(\mathbb{Z} \ltimes I);
\mathbb{Z})$ in any given finite range. It is crucial for us that this stability range is independent of
the choice of $I$: namely, the stability range depends on the Krull dimension of
the maximal spectrum of $\mathbb{Z} \ltimes I$, but since $I$ is contained
in the Jacobson radical this is the maximal spectrum of $\mathbb{Z}$. 

Thus, it suffices to show that for any $n$, the functor $I \mapsto
C_*(BGL_n(\mathbb{Z} \ltimes I); \mathbb{Z})$ is projectively pseudocoherent. We use the short
exact sequence of groups
\[ 1 \to GL_n(1 + I) \to GL_n( \mathbb{Z} \ltimes I) \to GL_n(\mathbb{Z}) \to 1,  \]
where $GL_n(1 + I)$ is, by definition, the kernel of the second map in the above
sequence. It follows that 
there is a $GL_n(\mathbb{Z})$-action on 
$C_*(BGL_n(1 + I);
\mathbb{Z})$ and we have
\begin{equation} \label{glnZorbit} C_*(BGL_n(\mathbb{Z} \ltimes I); \mathbb{Z})
\simeq C_*(BGL_n(1 + I);
\mathbb{Z})_{h GL_n(\mathbb{Z})}.  \end{equation}

We next argue that the functor 
$I \mapsto C_*(BGL_n(1 + I); \mathbb{Z})$ is projectively pseudocoherent. 
Note that as a set, $GL_n(1 + I)$ is naturally isomorphic to the
cartesian product  $I^{n^2}$ since $I$ is contained in
the radical of $\mathbb{Z} \ltimes I$. 
Now we observe that using the classical bar construction and the
isomorphism (of sets) 
$GL_n(1 + I) \simeq I^{n^2}$, that 
$C_*(BGL_n(1 + I); \mathbb{Z})$ is a geometric realization of
functors of the form $I \mapsto \mathbb{Z}[I^{n^2i}]$ for $i \geq 0$. Since 
the functor
$I \mapsto \mathbb{Z}[I^{n^2i}] 
 \simeq H \mathbb{Z} \otimes \Sigma^\infty_+
 \hom_{\nuzh}(\mathbb{Z}\left\{x_1, \dots, x_{n^2i}\right\}^+, I)$ is clearly 
projectively pseudocoherent, it follows that 
$I \mapsto C_*(BGL_n(1 +I); \mathbb{Z})$ is projectively pseudocoherent as desired. 

Finally, the group $GL_n(\mathbb{Z})$ admits a finite index normal subgroup $N \leq
GL_n(\mathbb{Z})$ such that $BN$ has the homotopy type of a finite CW complex,
by the existence of the Borel-Serre compactification (see \cite{Serresurvey}
for a survey). Since $(-)_{hGL_n(\mathbb{Z})} \simeq ((-)_{hN})_{h(G/N)}$, we conclude by two applications of Proposition \ref{general:pscoh} that taking $GL_n(\mathbb{Z})$-homotopy orbits preserves projective
pseudocoherence. Therefore, 
we conclude in view of the previous paragraph and \eqref{glnZorbit}. 
\end{proof} 

The following lemma is well-known:

\begin{lemma} 
\label{K0invhenspair}
Let $(R, I)$ be a henselian pair. Then $K_0(R) \simeq K_0(R/I)$. 
\end{lemma}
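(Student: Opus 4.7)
The plan is to prove the isomorphism by showing that the base-change functor $P \mapsto P/IP$ induces a bijection between isomorphism classes of finitely generated projective $R$-modules and isomorphism classes of finitely generated projective $R/I$-modules. Since $K_0(R)$ is the Grothendieck group of such isomorphism classes under direct sum and this functor preserves direct sums, this bijection will immediately yield the claimed isomorphism.

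For surjectivity on isomorphism classes, the key point is that idempotents lift from $M_n(R/I)$ to $M_n(R)$. Any f.g.\ projective $R/I$-module has the form $\bar e (R/I)^n$ for some idempotent $\bar e \in M_n(R/I)$, and such a lifting produces a f.g.\ projective $R$-module $eR^n$ whose reduction is the given module. To carry out the lifting I will apply Elkik's Theorem~\ref{elkikthm}: the affine $\mathbb{Z}$-scheme classifying idempotents in $M_n$, cut out in $\mathbb{A}^{n^2}$ by the equations $X^2 = X$, decomposes as a disjoint union of loci of fixed rank, and each rank-$k$ component is the homogeneous space $GL_n/(GL_k\times GL_{n-k})$, in particular smooth over $\mathbb{Z}$. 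Hence the idempotent scheme is smooth, and Elkik applied to the henselian pair $(R, I)$ produces the desired lift. Alternatively, one could reduce to the commutative Hensel property for $x^2 - x$ (whose derivative $2\bar e - 1$ is a unit, since its square is $1$) by passing to the finite commutative $R$-subalgebra $R[e_0] \subset M_n(R)$ generated by a set-theoretic lift $e_0$ of $\bar e$. I expect the idempotent-lifting step to be the main technical input.

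For injectivity on isomorphism classes, I will use Nakayama's lemma, available since $I$ is contained in the Jacobson radical of $R$ by Definition~\ref{def:henspair}(2). Given f.g.\ projective $R$-modules $P$ and $Q$ with $P/IP \cong Q/IQ$, I lift a chosen isomorphism $\bar\phi$ to a morphism $\phi: P \to Q$ using projectivity of $P$. Nakayama makes $\phi$ surjective, and projectivity of $Q$ splits it, yielding $P \cong Q \oplus \ker\phi$. Since $\ker\phi$ is a direct summand of the finitely generated module $P$ it is itself finitely generated, and the vanishing $\ker\phi/I\ker\phi = 0$ together with Nakayama forces $\ker\phi = 0$, whence $P \cong Q$. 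Combining the two steps yields the required bijection on isomorphism classes, and hence $K_0(R) \simeq K_0(R/I)$.
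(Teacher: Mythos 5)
Your proof is correct and follows essentially the same route as the paper's: a bijection on isomorphism classes of finitely generated projective modules, with surjectivity obtained by lifting idempotents and injectivity by lifting an isomorphism and applying Nakayama (the paper's ``necessarily an isomorphism''). The only difference is that where the paper cites the Stacks project for the idempotent-lifting step, you derive it from Elkik's Theorem~\ref{elkikthm} via smoothness of the scheme of idempotents, which works; note, however, that your parenthetical alternative via $R[e_0]$ is shakier, since $e_0^2-e_0$ lies in $IM_n(R)\cap R[e_0]$, which need not equal $IR[e_0]$, so the commutative Hensel property for $(R[e_0],IR[e_0])$ does not apply verbatim.
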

\begin{proof} 
In fact, we claim that isomorphism classes of finitely generated projective $R$
modules and isomorphism classes of finitely generated projective $R/I$-modules
agree. For one direction, if $M, N$ are finitely generated projective $R$-modules, then any
isomorphism $M/IM \simeq N/IM$ can be lifted to a map $M \to N$, which is
necessarily an isomorphism by Nakayama's lemma applied to the kernel; we use
here that  $I$ is contained in the
Jacobson radical of $R$. For the other direction, by lifting idempotents any
projective $R/I$-module lifts to $R$, cf. \cite[Tag 0D49]{stacks-project}. 
\end{proof} 

\begin{proposition} 
\label{Kpscoh}
The functor $\nuzh \to \Sp$ given by $I \mapsto K( \mathbb{Z} \ltimes I)$
is projectively pseudocoherent.
\end{proposition}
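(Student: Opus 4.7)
My plan is to reduce, via the Quillen plus construction and the pseudocoherence toolkit established above, to Proposition~\ref{glpscoh}. The skeleton is: split off the $\pi_0$ using $K_0$-rigidity for henselian pairs, then apply Propositions~\ref{suspensionpscoh} and~\ref{HZpscoh} to pass from the connective cover of $K$ through its infinite loop space $BGL_\infty^+$ and finally to the integral chains on $BGL_\infty$.

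For the first step, since $I \in \nuzh$ is nonunital henselian, the pair $(\mathbb{Z} \ltimes I, I)$ is a henselian pair by Proposition~\ref{checkhensnonunital}, and hence Lemma~\ref{K0invhenspair} gives a natural isomorphism $K_0(\mathbb{Z} \ltimes I) \isoto K_0(\mathbb{Z}) = \mathbb{Z}$. Consequently $I \mapsto H K_0(\mathbb{Z} \ltimes I)$ is the constant functor at $H\mathbb{Z}$. The constant functor at $S^0$ is corepresentable by the zero object $0 \in \nuzh$ (the zero ring, which is both initial and terminal), and since $H\mathbb{Z}$ has finitely generated homotopy in each range it is the colimit of a finite-stage perfect approximation, so the constant functor at $H\mathbb{Z}$ is projectively pseudocoherent. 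Applying the thickness in Proposition~\ref{general:pscoh}(1) to the fiber sequence
\[ \tau_{\geq 1} K(\mathbb{Z} \ltimes -) \to K(\mathbb{Z} \ltimes -) \to HK_0(\mathbb{Z} \ltimes -) \]
reduces the problem to showing $\tau_{\geq 1} K(\mathbb{Z} \ltimes -)$ is projectively pseudocoherent.

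For the second step, the functor $\tau_{\geq 1} K(\mathbb{Z} \ltimes -)$ takes values in $\Sp_{\geq 1}$ with infinite loop space $\Omega^\infty \tau_{\geq 1} K(\mathbb{Z} \ltimes I) \simeq BGL_\infty(\mathbb{Z} \ltimes I)^+$. I then invoke Proposition~\ref{suspensionpscoh}, which applies because $(\nuzh)_\Sigma$ is closed under finite coproducts (the coproduct of two free objects $\mathbb{Z}\{x_1,\dots,x_m\}^+$ and $\mathbb{Z}\{y_1,\dots,y_n\}^+$ is $\mathbb{Z}\{x_1,\dots,x_m,y_1,\dots,y_n\}^+$), reducing the question to pseudocoherence of $\Sigma^\infty_+ BGL_\infty(\mathbb{Z} \ltimes -)^+$. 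Since the plus construction is a homology equivalence I identify
\[ H\mathbb{Z} \otimes \Sigma^\infty_+ BGL_\infty(\mathbb{Z} \ltimes -)^+ \simeq C_*(BGL_\infty(\mathbb{Z} \ltimes -); \mathbb{Z}), \]
which is projectively pseudocoherent by Proposition~\ref{glpscoh}. Both functors being uniformly connective, Proposition~\ref{HZpscoh} delivers pseudocoherence of $\Sigma^\infty_+ BGL_\infty(\mathbb{Z} \ltimes -)^+$ and thus completes the proof.

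The main technical content has already been absorbed into Proposition~\ref{glpscoh} (which relied on the crucially $I$-independent homological stability range of Maazen and van der Kallen); what remains here is essentially bookkeeping with the formalism, with henselian-pair $K_0$-rigidity being the only genuinely new ingredient needed to split off the bothersome $\pi_0$. The one point to be slightly careful about is verifying that the various connectivity hypotheses of Propositions~\ref{suspensionpscoh} and~\ref{HZpscoh} (values in $\Sp_{\geq 1}$ and uniform boundedness below, respectively) are met, but in each case they are immediate from the connectivity of $K$-theory together with the $K_0$-rigidity established at the outset.
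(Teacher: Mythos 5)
Your proof is correct and follows essentially the same route as the paper: split off $K_0$ via Lemma~\ref{K0invhenspair}, then use Propositions~\ref{suspensionpscoh} and~\ref{HZpscoh} to reduce $\tau_{\geq 1}K(\mathbb{Z}\ltimes -)$ to the chains functor $C_*(BGL_\infty(\mathbb{Z}\ltimes -);\mathbb{Z})$ of Proposition~\ref{glpscoh}. The only differences are expository (you make explicit the identification $\Omega^\infty\tau_{\geq 1}K \simeq BGL_\infty^+$, the homology equivalence of the plus construction, and the closure of $(\nuzh)_\Sigma$ under coproducts), all of which the paper leaves implicit.
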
 
\begin{proof} 
Since $I$ is henselian, we have $K_0( \mathbb{Z} \ltimes I) = \mathbb{Z}$ by Lemma \ref{K0invhenspair}; since the constant functor $\mathbb{Z}$ is projectively pseudocoherent, it remains to see 
that the functor $I \mapsto \tau_{\geq 1} K(\mathbb{Z} \ltimes I)$ is
projectively 
pseudocoherent. 
By Propositions  \ref{suspensionpscoh} and \ref{HZpscoh} and the
plus-construction description of $K$-theory (cf.~\cite[IV.1]{KBook} for an
account), it suffices to check that
the functor \[I \mapsto H \mathbb{Z} \otimes \Sigma^\infty_+ \Omega^\infty
\tau_{\geq 1} K(\mathbb{Z} \ltimes I) \simeq C_*(BGL_\infty(\mathbb{Z} \ltimes I);
\mathbb{Z})\] is projectively pseudocoherent, which follows from 
Proposition~\ref{glpscoh} above. 
\end{proof}

\begin{remark} 
In particular, Proposition \ref{Kpscoh} shows that the functor $\nuzh \to \Sp$,
$I \mapsto K( \mathbb{Z} \ltimes I)$ commutes with simplicial resolutions in
view of Construction~\ref{kanextsifted}.  The same argument shows
that this also holds in the larger category of nonunital local rings, even
without the henselian condition.  This observation, in various forms, plays an important role in the study of the local structure of $K$-theory. Compare, for instance \cite[Lemma
I.2.2]{Goodrel} for the nilpotent case and \cite[Ch.~III, Prop.~1.4.2]{dgm}
for a more general assertion  for radical pairs. 
\end{remark} 

We next carry out analogous arguments for topological cyclic homology. 
This is considerably simpler and does not rely on tools such as homological
stability; we will instead use the finiteness properties of cyclotomic spectra from Section \ref{section_finiteness}.
 
\begin{lemma} 
The Eilenberg--MacLane functor $\nuzh \to \Sp$ given by $I \mapsto HI$ is projectively pseudocoherent. 
\label{EMpscoh}
\end{lemma}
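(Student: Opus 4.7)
The plan is to reduce the pseudocoherence of $I \mapsto HI$ to that of the suspension spectrum of the classifying space $BI = K(I,1)$, where $I$ is viewed as a discrete abelian group. The key observation is the natural equivalence $\Omega^\infty(\Sigma HI) \simeq BI$, which lets me apply Proposition \ref{suspensionpscoh}(3) to the shifted functor $F = \Sigma HI \in \fun(\nuzh, \Sp_{\geq 1})$: pseudocoherence of $\Sigma HI$ (and hence of $HI$, since the autoequivalence $\Omega$ preserves pseudocoherence) is reduced to pseudocoherence of $\Sigma^\infty_+ \Omega^\infty F = \Sigma^\infty_+ BI$.

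To establish pseudocoherence of $I \mapsto \Sigma^\infty_+ BI$, I would use the standard simplicial bar model for the classifying space of an abelian group: $BI$ is the geometric realization of the simplicial set with set of $n$-simplices $I^n$ and face maps encoding the addition on $I$. This identification is natural in the abelian group $I$ (and hence in $I \in \nuzh$ via the forgetful functor to abelian groups), giving an equivalence $\Sigma^\infty_+ BI \simeq |\Sigma^\infty_+ I^\bullet|$ of functors in $\fun(\nuzh, \Sp_{\geq 0})$. Each level $I \mapsto \Sigma^\infty_+ I^n$ is the corepresentable functor at the free object $\mathbb{Z}\{x_1, \dots, x_n\}^+ \in (\nuzh)_\Sigma$, hence is projectively pseudocoherent by Example \ref{ex:pseudocoh}. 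Proposition \ref{general:pscoh}(3) (closure of pseudocoherent functors into $\Sp_{\geq 0}$ under geometric realizations of simplicial objects) then implies that $\Sigma^\infty_+ BI$ is projectively pseudocoherent, completing the reduction.

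The most substantive step is the reduction via Proposition \ref{suspensionpscoh}: it is this result that allows passage from the Eilenberg--MacLane spectrum $HI$, which is not visibly built from corepresentables, to the suspension spectrum $\Sigma^\infty_+ BI$, which manifestly is a geometric realization of such. The remaining ingredients—the functoriality of the bar model for $BI$ and the closure of pseudocoherent functors under geometric realization—are routine, though one must be careful that the identification of $BI$ with $|I^\bullet|$ is applied naturally in $I \in \nuzh$ so as to produce an actual equivalence in $\fun(\nuzh, \Sp)$.
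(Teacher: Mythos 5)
Your proof is correct and is essentially the paper's own argument: shift to $\Sigma HI$, apply Proposition~\ref{suspensionpscoh} to reduce to $\Omega^\infty(\Sigma HI)\simeq BI$, and then use the bar construction to exhibit the result as a geometric realization of the corepresentables $I\mapsto \Sigma^\infty_+ I^n$. The only cosmetic difference is that the paper additionally smashes with $H\mathbb{Z}$ (via Proposition~\ref{HZpscoh}) so as to work with $C_*(BI;\mathbb{Z})$ and $\mathbb{Z}[I^n]$, a step your version correctly shows is unnecessary.
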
 

Lemma~\ref{EMpscoh} is a direct consequence of the following result, in the
context of abelian groups. 

\begin{lemma} 
\label{Abpscoh}
Let $\mathrm{Ab}$ be the category of abelian groups. 
The functor $\mathrm{Ab} \to \Sp$ given by $A \mapsto HA$ is
$\mathrm{Latt}$-pseudocoherent for $\mathrm{Latt} \subset \mathrm{Ab}$ the
subcategory of finitely generated free abelian groups. 
\end{lemma} 
\begin{proof}

We can write functorially $HA = \varinjlim_n \Sigma^{-n}\Sigma^\infty  K(A, n)$
for $K(A, n)$ the $n$th Eilenberg--MacLane space for $A$. 
This colimit stabilizes in any ranges of degrees by the Freudenthal suspension
theorem, so it suffices to show that the functor 
$A \mapsto \Sigma^\infty K(A, n)$ is projectively pseudocoherent. But this
follows from the iterated bar construction which gives a functorial model for
$K(A, n)$ as the colimit of an $n$-fold simplicial space each of whose terms is
a product of copies of $A$. 
\end{proof}

 In the setting of $H\mathbb{Z}$-modules rather than
spectra, 
Lemma~\ref{Abpscoh}
is an unpublished result of Deligne, which states that any abelian group $A$ has a
functorial resolution by free abelian groups all of whose terms are finite direct sums of the
form $\mathbb{Z}[A^n]$. One can deduce Deligne's result from the
above using the finiteness of the stable homotopy groups of spheres, and vice
versa; compare Proposition \ref{HZpscoh}. 
For more discussion and a presentation of essentially the same argument in more classical terms of homological algebra, cf.~\cite[Appendix to
Lec.~IV]{Condensed}. 

\begin{lemma} 
\label{THHps}
The functor $\nuzh \to \Sp$ given by $I \mapsto \THH( \mathbb{Z} \ltimes I)$ is
projectively pseudocoherent.
\end{lemma}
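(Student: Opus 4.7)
My plan is to realize $\THH(\mathbb{Z}\ltimes I)$ as the geometric realization of its cyclic bar construction $B^{\mathrm{cyc}}_\bullet(\mathbb{Z}\ltimes I)$, whose $n$-th level is $(\mathbb{Z}\ltimes I)^{\otimes_{\mathbb{S}}(n+1)}$, and then invoke Proposition~\ref{general:pscoh}(3). Since each level is connective (in particular uniformly bounded below), this reduces the statement to showing that the functor $I \mapsto (\mathbb{Z}\ltimes I)^{\otimes_{\mathbb{S}}(n+1)}$ is projectively pseudocoherent for every $n \geq 0$.

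For this I would use the functorial (in $I$) additive splitting $\mathbb{Z}\ltimes I \simeq H\mathbb{Z} \oplus HI$ of underlying spectra. Distributing the smash product across this direct sum decomposes each level into a finite direct sum of functors of the form $I \mapsto H\mathbb{Z}^{\otimes_{\mathbb{S}} k} \otimes_{\mathbb{S}} HI^{\otimes_{\mathbb{S}} m}$ with $k + m = n + 1$. By Proposition~\ref{general:pscoh}(1) it is then enough to treat each such summand separately.

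The factor $I \mapsto HI^{\otimes_{\mathbb{S}} m}$ is projectively pseudocoherent by Lemma~\ref{EMpscoh} combined with $m-1$ applications of Proposition~\ref{general:pscoh}(5); the latter applies because $\nuzh$ has finite coproducts and $(\nuh)_{\Sigma}$ is closed under them, since the coproduct of two free henselian nonunital rings is again free. For the constant factor $H\mathbb{Z}^{\otimes_{\mathbb{S}} k}$: this spectrum is almost perfect (being a smash power of the pseudocoherent spectrum $H\mathbb{Z}$), so in any finite range the constant functor at it can be approximated by a constant functor at a finite spectrum, which is perfect (it lies in the thick subcategory generated by the corepresentable of the initial object $0 \in \nuzh$, whose associated suspension spectrum is $\Sigma^\infty_+ \hom_{\nuzh}(0,\cdot) \simeq \mathbb{S}$). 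One more application of Proposition~\ref{general:pscoh}(5) then assembles the two factors into a projectively pseudocoherent summand.

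The main technical point is the pseudocoherence of the constant factor $H\mathbb{Z}^{\otimes_{\mathbb{S}} k}$ as a spectrum, which rests on $H\mathbb{Z}$ admitting finite-spectrum approximations in any finite range together with the fact that smash powers of almost perfect connective spectra are almost perfect; everything else is essentially formal from Proposition~\ref{general:pscoh}. An alternative route, avoiding these smash powers of $H\mathbb{Z}$, would be to first establish projective pseudocoherence of the relative Hochschild homology functor $I \mapsto \HH(\mathbb{Z}\ltimes I/\mathbb{Z}) \simeq H\mathbb{Z} \otimes \THH(\mathbb{Z}\ltimes I)$ using the $H\mathbb{Z}$-relative cyclic bar construction (where each level involves only $HI$ factors tensored over $H\mathbb{Z}$), and then to descend via Proposition~\ref{HZpscoh}.
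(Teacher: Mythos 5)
Your proof is correct and follows essentially the same route as the paper: resolve $\THH(\mathbb{Z}\ltimes I)$ by the cyclic bar construction, observe via Lemma~\ref{EMpscoh} and the closure properties of Proposition~\ref{general:pscoh} that each level is projectively pseudocoherent, and conclude by closure under geometric realizations. The only difference is that you explicitly expand $(H(\mathbb{Z}\ltimes I))^{\otimes k}$ using the splitting $H\mathbb{Z}\oplus HI$ and justify the pseudocoherence of the constant factor $H\mathbb{Z}^{\otimes k}$, a detail the paper's one-line proof leaves implicit.
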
 
\begin{proof} 
Using the cyclic bar construction for $\THH$, this follows because all the terms
$(H(\mathbb{Z} \ltimes I))^{\otimes k}$ (of which $\THH$ is a geometric
realization) are projectively pseudocoherent in view of Lemma~\ref{EMpscoh}
and Proposition~\ref{general:pscoh}.
\end{proof}

\begin{proposition} 
\label{TCpscoh}
The functor $\nuzh \to \Sp$ given by $I \mapsto \TC( \mathbb{Z} \ltimes I)/p$ 
is projectively pseudocoherent. 
\end{proposition}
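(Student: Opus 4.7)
The plan is to combine the approximation result Proposition~\ref{strong:finiteness} with the closure properties of projectively pseudocoherent functors established in Proposition~\ref{general:pscoh}. Since projective pseudocoherence is defined range by range, I will verify it one truncation at a time.

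Fix $n \ge 0$. Proposition~\ref{strong:finiteness}, and its proof, produces a functor $F_n \in \fun(\CycSp_{\ge 0},\Sp)$ together with a natural map $F_n \to \TC(-)/p$ (explicitly $F_n(X) = \TC(X \otimes \tau_{\le n+1}(S^0/p)^{\mathrm{triv}})$, whence the comparison map comes from $\tau_{\le n+1}(S^0/p) \to S^0/p$) such that this map is an equivalence on $\tau_{\le n}$, and such that $F_n$ lies in the thick subcategory of $\fun(\CycSp_{\ge 0},\Sp)$ generated by $X \mapsto (X\otimes H\bb F_p^{\mathrm{triv}})_{hS^1}$. Since $\TC(\bb Z\ltimes I)/p$ is uniformly bounded below (by Lemma~\ref{connectivity}, as $\THH(\bb Z\ltimes I) \in \CycSp_{\ge 0}$), producing for each $n$ a perfect functor mapping to $F_n(\THH(\bb Z\ltimes -))$ and agreeing with it on $\tau_{\le n}$ will, after composition with $F_n \to \TC/p$, witness the projective pseudocoherence of $I \mapsto \TC(\bb Z\ltimes I)/p$. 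By the thick subcategory closure of projectively pseudocoherent functors (Proposition~\ref{general:pscoh}(1)) this reduces to showing that the single functor
\[\Phi: I \mapsto (\THH(\bb Z\ltimes I)\otimes H\bb F_p^{\mathrm{triv}})_{hS^1}\]
is projectively pseudocoherent.

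For $\Phi$, I would start from Lemma~\ref{THHps}, which gives projective pseudocoherence of $I \mapsto \THH(\bb Z\ltimes I)$ as a functor to $\Sp$, and apply two preservation properties. First, since $H\bb F_p^{\mathrm{triv}}$ has trivial underlying $S^1$-action, $\Phi(I)$ is the $S^1$-orbit spectrum of $\THH(\bb Z\ltimes I)\otimes H\bb F_p$ with the $\THH$-action, and smashing with $H\bb F_p$ preserves projective pseudocoherence: write $H\bb F_p \simeq H\bb Z \otimes S^0/p$, note that smashing with the finite spectrum $S^0/p$ preserves pseudocoherence by the thick subcategory property, and then apply Proposition~\ref{HZpscoh}. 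Second, the passage to $S^1$-orbits is handled by the bar resolution $(-)_{hS^1} \simeq |(-) \otimes (\Sigma^\infty_+ S^1)^{\otimes \bullet}|$: each simplicial term is obtained by smashing with the finite spectrum $(\Sigma^\infty_+ S^1)^{\otimes k} \simeq (S^0\vee S^1)^{\otimes k}$, preserving pseudocoherence, and the resulting simplicial object is uniformly in $\Sp_{\ge 0}$, so its geometric realization is projectively pseudocoherent by Proposition~\ref{general:pscoh}(3).

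The main delicate point will be threading the uniform bounded-below condition through the bar resolution computing $(-)_{hS^1}$, since that is what allows us to convert an a priori infinite colimit into something controlled by Proposition~\ref{general:pscoh}(3); but because we begin in $\CycSp_{\ge 0}$ and only smash with connective spectra along the way, the hypothesis holds at every step. Beyond this, the argument is a formal consequence of Lemma~\ref{THHps}, Proposition~\ref{strong:finiteness}, and the thick subcategory and smash-product closure properties of projectively pseudocoherent functors, with no new computational input required.
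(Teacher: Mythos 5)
Your proof is correct and is essentially the paper's argument with the details made explicit: the paper likewise reduces via Proposition~\ref{strong:finiteness} and Lemma~\ref{THHps} to the projective pseudocoherence of $I \mapsto (\THH(\mathbb{Z}\ltimes I)\otimes H\mathbb{F}_p^{\mathrm{triv}})_{hS^1}$, which is handled exactly by the smash-product and geometric-realization closure properties you invoke. (The only quibble is that the natural comparison map runs $\TC(-)/p \to F_n$, being induced by $S^0/p \to \tau_{\le n+1}(S^0/p)$, not the other way around; this is harmless, since pseudocoherence depends only on the equivalence class of each truncation, cf.\ Lemma~\ref{pscohcompact}.)
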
 
\begin{proof} 
This now follows 
from Lemma~\ref{THHps} and Proposition~\ref{strong:finiteness}. Indeed, for
any of the functors $F$ in the statement of the latter, we have that $I \mapsto F(
\THH(\mathbb{Z} \ltimes I))$ is projectively pseudocoherent (note that taking
$S^1$-homotopy orbits preserves pseudocoherence thanks to
Lemma~\ref{general:pscoh}, as the skeleta of $BS^1$ are finite complexes), and then we can approximate
$\TC/p$ in any range. 
\end{proof} 

\subsection{Equal characteristic case}
\label{subsec:equalchar}

We next prove a special case of the rigidity result in equal characteristic. 
We begin by reviewing results of Geisser--Levine \cite{GL} and
Geisser--Hesselholt \cite{GH} in a formulation that will be convenient for us.
Compare also \cite{Geissersurvey} for a survey treatment. 
Our main result here (Proposition~\ref{rigidity:henspaircharp}) is a special
case of the rigidity statement in the case of a smooth henselian pair over a
perfect field of characteristic $p$. 
We keep the notation and terminology of the introduction. In particular, we will
use the cyclotomic trace $K(R) \to \TC(R)$ for a ring $R$, and denote by
$\knf(R)$ the homotopy fiber of this map. 
Recall also 
(Definition~\ref{cartier}) the inverse Cartier operator on differential forms. 

\begin{definition} 
For an $\mathbb{F}_p$-algebra $R$, we let 
$\nu^n(R) = \ker( 1 - C^{-1}: \Omega^n_R \to \Omega^n_R / d \Omega^{n-1}_R)$. We
will also write this as $\Omega^n_{R, \mathrm{log}}$. 
We also let $\widetilde{\nu^n}(R)$ be the cokernel of $1 - C^{-1}$. 
\end{definition} 

The construction $\nu^n$ thus defines a sheaf for the \'etale topology.
To see this, we observe that it is a kernel of a map between two objects, both
of which are quasi-coherent over the Frobenius twist by the next lemma. By contrast,
$\widetilde{\nu^n}$ vanishes locally in the \'etale topology since $1 -
C^{-1}$ is surjective locally in the \'etale topology.  
Moreover, $\nu^0$ is the constant sheaf $\mathbb{F}_p$ and $\nu^n(R) \subset
\Omega^n_R$ is the subgroup generated \'etale locally by differential forms $d
\log x_1 \wedge \dots \wedge d \log x_n$ for $x_1, \dots, x_n$ units (cf.
\cite[Thm.~2.4.2]{illusie-derham-witt} for the smooth case, and \cite[Cor.~4.2]{Morrow-HW} in general).

\begin{lemma} 
Let $R \to S$ be an \'etale map of $\mathbb{F}_p$-algebras. 
Let $R^{(1)} \rightarrow R, S^{(1)} \rightarrow S$ be the Frobenius twists of $R$ and $S$.\footnote{Thus $R^{(1)}$ consists of formal expressions of the form ``$a^p$", and the map to $R$ is ``$a^p$" $\mapsto a^p$.  More straightforwardly, we can set $R^{(1)}=R$ and take the map $R^{(1)}=R\rightarrow R$ to be the Frobenius.}
Then: 
\begin{enumerate}
\item The map $\Omega^n_R \otimes_R S \to \Omega^n_S$ is an isomorphism.  
\item The 
de Rham differentials $d: \Omega^{n-1}_R \to \Omega^n_R, \Omega^{n-1}_S \to
\Omega^n_S$ are respectively $R^{(1)}, S^{(1)}$-linear so that the quotients 
$\Omega^n_R/d \Omega_R^{n-1} , \Omega^n_S/d \Omega_S^{n-1} $ inherit the
structure of $R^{(1)}, S^{(1)}$-modules respectively.
The 
map $\Omega^n_R/d \Omega_R^{n-1} \otimes_{R^{(1)}} S^{(1)} \to
\Omega^n_S/d\Omega^{n-1}_S$ is an isomorphism. 
\end{enumerate}
\label{qcohomega}
\end{lemma}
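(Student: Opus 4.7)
The plan is to dispatch both parts using the standard characterization of étale morphisms via differentials and the fact that, in characteristic $p$, an étale morphism is ``compatible with Frobenius'' in a precise sense.

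For (1), I would invoke the fundamental exact sequence $\Omega^1_R \otimes_R S \to \Omega^1_S \to \Omega^1_{S/R} \to 0$ together with the fact that for étale $R \to S$ the relative cotangent complex vanishes (so in particular $\Omega^1_{S/R} = 0$ and the displayed sequence is short exact, with the left arrow injective). This gives the case $n=1$. For general $n$, use that exterior powers commute with base change, so $\Omega^n_S = \bigwedge^n_S \Omega^1_S \simeq \bigwedge^n_S(\Omega^1_R \otimes_R S) \simeq (\bigwedge^n_R \Omega^1_R) \otimes_R S = \Omega^n_R \otimes_R S$.

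For (2), I would first verify the $R^{(1)}$-linearity of $d$ by the one-line computation that in characteristic $p$ one has $d(a^p \omega) = p a^{p-1} (da) \wedge \omega + a^p \, d\omega = a^p \, d\omega$ for $a \in R$ and $\omega \in \Omega^{n-1}_R$; the analogous identity on $S$ makes the morphism in question well-defined. The key structural input I then want to use is that for an étale morphism $R \to S$ of $\mathbb{F}_p$-algebras the relative Frobenius is an isomorphism, i.e.\ the square
\[
\xymatrix{
R^{(1)} \ar[r] \ar[d]_{F} & S^{(1)} \ar[d]^{F} \\
R \ar[r] & S
}
\]
is cocartesian; equivalently, $R \otimes_{R^{(1)}} S^{(1)} \xrightarrow{\simeq} S$. (This is a standard fact: étale morphisms are characterized by the vanishing of the cotangent complex, which in characteristic $p$ forces the relative Frobenius to be an equivalence.) Combined with part (1), this identifies $\Omega^m_R \otimes_{R^{(1)}} S^{(1)} \simeq \Omega^m_R \otimes_R S \simeq \Omega^m_S$ compatibly with $d$.

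Having both inputs in hand, I would tensor the right exact sequence of $R^{(1)}$-modules
\[
\Omega^{n-1}_R \xrightarrow{d} \Omega^n_R \to \Omega^n_R/d\Omega^{n-1}_R \to 0
\]
with $S^{(1)}$ over $R^{(1)}$. Since étale morphisms are flat (hence so is their Frobenius twist), tensoring preserves exactness, and under the identifications of the previous paragraph the resulting sequence becomes $\Omega^{n-1}_S \xrightarrow{d} \Omega^n_S \to \Omega^n_R/d\Omega^{n-1}_R \otimes_{R^{(1)}} S^{(1)} \to 0$, whose third term is therefore canonically $\Omega^n_S/d\Omega^{n-1}_S$, as desired.

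The only step requiring any real input is the cocartesianness of the Frobenius square for étale maps; the rest is formal manipulation, so I do not anticipate a genuine obstacle.
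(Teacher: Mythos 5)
Your proposal is correct and follows essentially the same route as the paper, whose entire proof is that part (1) is standard and that the cocartesianness of the Frobenius square $R^{(1)}, S^{(1)}, R, S$ (i.e., the relative Frobenius of an \'etale map being an isomorphism) implies part (2); you have simply filled in the base-change and right-exactness manipulations that the paper leaves implicit. (Minor remark: flatness is not needed to tensor a right exact sequence, and your parenthetical justification of the cocartesian square via the cotangent complex is the only point that is stated loosely, but the fact itself is standard and the paper also asserts it without proof.)
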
 
\begin{proof} 
Part (1) is standard. Since $R \to S$ is \'etale, the natural square linking
$R^{(1)}, R, S^{(1)}$ and $S$ is cocartesian (cf.~\cite[Tag
0EBS]{stacks-project}), which now implies
(2). 
\end{proof}

We will need this definition in 
light of the following fundamental results about the structure of $K$-theory
and $\TC$ for ind-smooth $\mathbb{F}_p$-algebras. 
\begin{theorem}
Let $R$ be an ind-smooth $\mathbb{F}_p$-algebra.  Then, for each $n \geq 0$, one has:
\begin{enumerate}
\item (Geisser--Levine \cite{GL}) There is a natural map
$\pi_n ( K(R)/p) \rightarrow \nu^n(R)$, which is an isomorphism if $R$ is local. 
\item (Geisser--Hesselholt \cite{GH}) There is a functorial exact sequence
$0 \to \widetilde{\nu}^{n+1}(R) \to \pi_n ( \TC(R)/p) \to  {\nu}^{n}(R) \to 0$.
Furthermore, under these identifications, the composite 
of $\pi_n  ( K(R)/p) \to \pi_n (\TC(R)/p) \to \nu^n(R) $ (where the first map
arises from the cyclotomic trace) is the map of (1).
\item If $R$ is local, we have a functorial identification $\pi_n ( \knf(R)/p) \simeq \widetilde{\nu}^{n+2}(R)$.
\end{enumerate}
\label{GHGL}
\end{theorem}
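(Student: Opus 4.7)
The plan is to treat parts (1) and (2) as citations to the deep calculations of Geisser--Levine \cite{GL} and Geisser--Hesselholt \cite{GH}, whose structure I would briefly recapitulate, and then derive part (3) formally from them by chasing the long exact sequence associated to the fiber sequence $\knf(R)/p \to K(R)/p \to \TC(R)/p$.

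For part (1), following Geisser--Levine, I would construct the map via the $d\log$ symbol map. Namely, for units $x_1, \dots, x_n \in (\mathbb{Z} \ltimes I)^\times$ one sends the Milnor symbol $\{x_1, \dots, x_n\}$ to $d\log x_1 \wedge \dots \wedge d\log x_n \in \nu^n(R)$; one checks this respects the Steinberg relations modulo $p$ (using that $d\log(1-x) + d\log(x) = d\log(-x(1-x)\cdot\ldots)$ and the characteristic $p$ identity $C^{-1}(d\log x) = d\log x$). Composing with the comparison map from Quillen $K$-theory to Milnor $K$-theory gives the required natural map. The fact that it is an isomorphism for local ind-smooth $R$ is the main content of \cite{GL}.

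For part (2), I would lean on the formalism recalled in Subsection~\ref{dRWapproach}. Namely, Hesselholt's HKR computation gives $\pi_*(\TR(R;p)/p) \simeq W\Omega_R^*/p$, and then $\pi_n(\TC(R)/p)$ sits in the exact sequence
\[
W\Omega_R^{n+1}/p \xrightarrow{F-1} W\Omega_R^{n+1}/p \to \pi_n(\TC(R)/p) \to \ker(F-1 \text{ on } W\Omega_R^n/p) \to 0.
\]
Using the cartesian/cocartesian square \eqref{thisdiag} that compares $F-1$ on $W\Omega^*_R/p$ with $C^{-1} - 1$ on $\Omega^*_R$ (which was the crux of the proof just given in Subsection~\ref{dRWapproach}), the kernel and cokernel of $F-1$ are identified with $\nu^n(R)$ and $\widetilde{\nu}^{n+1}(R)$ respectively. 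This yields the asserted short exact sequence. Identifying the composite of the cyclotomic trace with the projection $\pi_n(\TC(R)/p) \twoheadrightarrow \nu^n(R)$ as the Geisser--Levine symbol map is a compatibility at the level of the trace on $d\log$ classes and is carried out in \cite{GH}; the upshot is that the trace is compatible with $d\log$ under both identifications, so the two constructions of $\nu^n(R)$-valued symbols agree.

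For part (3), with $R$ local the map $\pi_n(K(R)/p) \to \pi_n(\TC(R)/p)$ is, by (1) and (2), the injection $\nu^n(R) \hookrightarrow \pi_n(\TC(R)/p)$, and similarly in every degree. Hence the long exact sequence for $\knf(R)/p$ collapses into short exact sequences, giving
\[
\pi_n(\knf(R)/p) \simeq \op{coker}\!\bigl(\pi_{n+1}(K(R)/p) \to \pi_{n+1}(\TC(R)/p)\bigr) \simeq \widetilde{\nu}^{n+2}(R),
\]
the last isomorphism by the short exact sequence of (2) in degree $n+1$. The only genuinely nontrivial step is the compatibility of the cyclotomic trace with $d\log$ inside part (2); the rest of the argument is either citation or a formal long exact sequence chase.
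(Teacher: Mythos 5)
Your argument is essentially correct, but for part (2) it takes a genuinely different (though anticipated) route from the paper. The paper proves the exact sequence and the trace compatibility simultaneously via \'etale descent: it invokes the Geisser--Hesselholt theorem that the \'etale homotopy sheaves of $\TC/p$ and $K/p$ on smooth schemes agree via the cyclotomic trace (hence both are $\nu^n$), then computes $H^0_{\mathrm{et}}$ and $H^1_{\mathrm{et}}$ of $\nu^n$ on affines from the resolution $0 \to \nu^n \to \Omega^n \xrightarrow{1-C^{-1}} \Omega^n/d\Omega^{n-1} \to 0$, whose outer terms are quasi-coherent (Lemma~\ref{qcohomega}) and so have vanishing higher cohomology; the descent spectral sequence then delivers both the short exact sequence and the identification of the composite with the Geisser--Levine map in one stroke. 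You instead run the de Rham--Witt/HKR computation through the bicartesian square \eqref{thisdiag} --- a route the paper explicitly notes also yields the exact sequence --- and you outsource the trace/dlog compatibility to \cite{GH}. That is legitimate (the paper itself elsewhere cites \cite{GH} for exactly this compatibility), but be aware that this compatibility is precisely the ``stronger second claim'' the paper's \'etale argument is designed to capture, so your proof of (2) is a citation at its one genuinely nontrivial point. Part (3) is the same formal long-exact-sequence collapse as in the paper. Two small corrections: in (1) the units should be those of $R$, not of $\mathbb{Z}\ltimes I$, and there is no ``comparison map from Quillen $K$-theory to Milnor $K$-theory'' for general rings --- the natural map $\pi_n(K(R)/p)\to\nu^n(R)$ comes from the Zariski sheafification statement of \cite{GL} (the sheafification of $\pi_n(K(-)/p)$ on smooth schemes is $\nu^n$), after first reducing by filtered colimits to $R$ essentially of finite type, a reduction you should record before citing \cite{GL} and \cite{GH}.
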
 
\begin{proof} 
First we note that it suffices to prove this theorem in the case where $R$ is essentially of finite type over $\mathbb{F}_p$, by extending using filtered colimits.
Then the first assertion follows from \cite[Thm.~8.3]{GL}, 
which shows that on smooth varieties over $\mathbb{F}_p$, the Zariski
sheafification of  
the presheaf
$\pi_n(K(\cdot)/p)$
is given by $\nu^n$.

The exact sequence describing $\pi_n( \TC(R)/p)$ actually follows from
\eqref{thisdiag}, though we will give a slightly different proof of the
stronger second claim. 
Namely, we use the results of Geisser--Hesselholt \cite[Thm.~4.2.6]{GH}
that on smooth quasi-compact quasi-separated schemes over $\mathbb{F}_p$,
the homotopy group sheaves in the \emph{\'etale} topology of $\TC/p$ identify with those of $K/p$ via the cyclotomic trace, and hence identify with the $\nu^n$; and moreover one has an \'etale descent spectral
sequence starting from the \'etale cohomology of $\nu^n$ and converging to
$\pi_*( \TC/p)$. 
In the \'etale topology, there is a short exact sequence of sheaves $0 \to \nu^n \to
\Omega^n \xrightarrow{1 - C^{-1}} \Omega^n/d \Omega^{n-1} \to 0$, and the
second and third terms are quasi-coherent (either over the structure sheaf or
its Frobenius twist by Lemma~\ref{qcohomega}), so have no higher cohomology on affines.  
It follows that $H^0( (\spec R)_{et}, \nu^n) = \nu^n(R), H^1( (\spec R)_{et}, \nu^n) =
\widetilde{\nu}^n(R)$, and the \'etale descent spectral sequence thus implies
the second claim.  The third claim follows from the first two. 
\end{proof}

\begin{lemma} 
\label{solveeq:hens}
Let $(R, I)$ be a henselian pair. Let $s \in I$ and
let $n \geq  1$. 
Then the equation $x - s x^n = 1$  can be solved in $R$. 
\end{lemma}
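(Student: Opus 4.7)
The plan is to apply the definition of henselian pair directly to a well-chosen polynomial. Set $f(x) = x - sx^n - 1 \in R[x]$. Since $s \in I$, the reduction $\bar{f} \in (R/I)[x]$ is simply $\bar{f}(x) = x - 1$, which has $\bar\alpha = 1$ as a root in $R/I$.

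Next I would check the derivative condition: we compute $f'(x) = 1 - nsx^{n-1}$, so $\bar{f}'(\bar\alpha) = \bar{f}'(1) = 1 \in R/I$, which is certainly a unit. By the definition of henselian pair (the very first definition given in the introduction), the root $\bar\alpha = 1$ lifts to a root $\alpha \in R$ of $f$, i.e., an element satisfying $\alpha - s\alpha^n = 1$, as required.

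There is no real obstacle here; the lemma is essentially a verbatim unpacking of the henselian lifting property in a case tailored to what will be needed in the arguments that follow (presumably to produce explicit invertible elements of the form $1 + (\text{something in } I)$ that will be used in subsequent finiteness or rigidity computations). The only thing to double-check is the edge case $n = 1$, where $f(x) = (1-s)x - 1$ is linear; but $\bar{f}'(1) = 1$ still holds, so the same argument applies and recovers the familiar fact that $1 - s$ is a unit in $R$ whenever $s$ lies in the (necessarily Jacobson-radical) ideal $I$ of a henselian pair.
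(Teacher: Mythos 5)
Your proof is correct and is exactly the paper's argument: the paper simply notes that $x=1$ is a simple root of the equation modulo $I$ and invokes the definition of a henselian pair, which is what you spell out in detail (including the derivative check $\bar f'(1)=1$). No issues.
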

\begin{proof} 
The equation has a simple root in $R/I$ (namely, $x = 1$), which
therefore admits a lift to $R$ by definition of henselian.
\end{proof}

\begin{proposition} 
\label{toyFprigidityTC}
Let $(R, I)$ be a henselian pair of $\mathbb{F}_p$-algebras.
Then for each $n$, the map $\nu^n(R) \to \nu^n(R/I)$ is surjective and the map
$\widetilde{\nu}^{n}(R) \to \widetilde{\nu}^n(R/I)$ is an isomorphism. 
\end{proposition}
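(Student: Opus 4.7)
The plan is to reduce both assertions to a single statement: for every $s \in I$ and every $\omega \in \Omega^n_R$, there exists $\tau \in I\Omega^n_R$ satisfying $(1-C^{-1})\tau \equiv s\omega \pmod{d\Omega^{n-1}_R}$. First I will compute the kernel of the canonical surjection $\Omega^n_R/d\Omega^{n-1}_R \twoheadrightarrow \Omega^n_{R/I}/d\Omega^{n-1}_{R/I}$. Using the identity $ds\wedge\eta = d(s\eta) - s\,d\eta$ for $s \in I$ to absorb $dI\wedge\Omega^{n-1}_R$ into $d(I\Omega^{n-1}_R) + I\Omega^n_R$, this kernel is seen to equal the image of $I\Omega^n_R$.

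Granted this key surjectivity, both statements in the proposition follow. For the isomorphism $\widetilde\nu^n(R) \isoto \widetilde\nu^n(R/I)$, surjectivity is immediate from right-exactness of $\Omega^n$, while injectivity amounts to showing that the image of $I\Omega^n_R$ in $\widetilde\nu^n(R)$ vanishes, i.e.\ $I\Omega^n_R \subset (1-C^{-1})\Omega^n_R + d\Omega^{n-1}_R$, which is precisely the key surjectivity. For the surjectivity of $\nu^n(R) \to \nu^n(R/I)$, I start with $\bar\omega \in \nu^n(R/I)$, lift it arbitrarily to $\omega \in \Omega^n_R$, observe that $(1-C^{-1})\omega$ lies in the image of $I\Omega^n_R$ in $\Omega^n_R/d\Omega^{n-1}_R$, apply the key surjectivity to express it as $(1-C^{-1})\tau \pmod{d\Omega^{n-1}_R}$ for some $\tau \in I\Omega^n_R$, and conclude that $\omega - \tau \in \nu^n(R)$ is the desired lift of $\bar\omega$. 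The crucial point is that $\tau \in I\Omega^n_R$, so that $\omega - \tau$ still reduces to $\bar\omega$ modulo $I$.

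For the key surjectivity itself, the approach is a henselian ansatz. By additivity it suffices to treat $\omega = a\,db_1\wedge\cdots\wedge db_n$ with $a, b_i \in R$. Try $\tau = g\,db_1\wedge\cdots\wedge db_n$ for some $g \in R$ to be determined; using that $C^{-1}$ is multiplicative with $C^{-1}(g) = g^p$ and $C^{-1}(db_i) = b_i^{p-1}db_i$, one computes
\[
(1-C^{-1})\tau \equiv (g - cg^p)\,db_1\wedge\cdots\wedge db_n \pmod{d\Omega^{n-1}_R}, \quad c := (b_1\cdots b_n)^{p-1}.
\]
The task reduces to solving $g - cg^p = sa$ with $g \in I$. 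Substituting $g = sa\cdot h$ transforms this into $h - (cs^{p-1}a^{p-1})h^p = 1$, which is precisely of the form $x - tx^p = 1$ with $t \in I$ (using $p \geq 2$) addressed by Lemma~\ref{solveeq:hens}; hence such an $h \in R$ exists, and the resulting $g = sah$ automatically lies in $I$. (When $n = 0$ one has instead $g - g^p = sa$, solvable directly by the henselian property applied to $x^p - x + sa$, whose root $x \equiv 0 \pmod I$ has unit derivative $-1$.)

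The main obstacle is the insistence that the lift $\tau$ live inside $I\Omega^n_R$, not merely in $\Omega^n_R$: without this control, the argument for surjectivity of $\nu^n(R) \to \nu^n(R/I)$ collapses. A naive formal geometric series $\tau = -\sum_{k\geq 0} s^{p^k}(C^{-1})^k(\omega)$ provides a solution in the nilpotent or $I$-adically complete setting but need not converge for a general henselian pair; the key insight is that the ansatz $\tau = g\,db_1\wedge\cdots\wedge db_n$ collapses the would-be infinite-sum problem into a single polynomial equation whose simple root modulo $I$ can be lifted using only the henselian property.
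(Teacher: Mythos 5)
Your proposal is correct and follows essentially the same route as the paper: both reduce to monomial generators of the relative forms, exploit the multiplicativity of $C^{-1}$ to turn the lifting problem into a single polynomial equation $u - tu^p = 1$ with $t \in I$, and solve it via Lemma~\ref{solveeq:hens}. The only cosmetic differences are that you first absorb $dI\wedge\Omega^{n-1}_R$ into $I\Omega^n_R$ modulo exact forms before applying the ansatz, whereas the paper treats generators $a\,dx_1\cdots dx_n$ with any one constituent in $I$ uniformly, and you phrase the snake-lemma bookkeeping by hand rather than via the explicit diagram of kernels $V_0 \to V_1$.
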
 
\begin{proof}
We use the commutative diagram
\[ \xymatrix{
V_0 \ar[d] \ar[r] &  V_1 \ar[d]  \\
\Omega^n_R \ar[r]^-{1 - C^{-1}} \ar[d]  &  \Omega^n_R/ d \Omega^{n-1}_R  \ar[d]  \\
\Omega^n_{R/I} \ar[r]^-{1 - C^{-1}} &  \Omega^n_{R/I}/ d \Omega^{n-1}_{R/I}
}\]
where we define $V_0, V_1$ to be the kernels of the surjective vertical maps
$\Omega^n_R \to 
\Omega^n_{R/I}$ and 
$\Omega^n_R/ d \Omega^{n-1}_R  \to \Omega^n_{R/I}/ d \Omega^{n-1}_{R/I}$. 
We will show that the map $V_0 \to V_1$ is surjective. This easily implies 
the desired conclusions about the maps $\nu^n(R) \to \nu^{n}(R/I)$ and
$\widetilde{\nu}^n(R) \to \widetilde{\nu}^n(R/I)$ thanks to the snake lemma. 

Consider a differential form 
$\omega = a dx_1 dx_2 \dots dx_n \in \Omega^n_R$ such that one of $\left\{a, x_1, \dots,
x_n\right\}$ belongs 
to $I$ (here $n = 0$ is allowed). The image of such a class in 
$\Omega^n_R/d \Omega_R^{n-1}$ belongs to $V_1$, and $V_1$ is generated by such
classes. 
For $u \in R$, 
we have $$(1 - C^{-1}) ( u  \omega)  = (u - u^p a^{p-1} x_1^{p-1} \dots x_n^{p-1})
\omega.$$ Since $(R, I)$ is a henselian pair and $a^{p-1} x_1^{p-1} \dots
x_n^{p-1}\in I$, we can
choose 
$u \in R$ such that $u - u^p a^{p-1} x_1^{p-1} \dots x_n^{p-1} = 1$,
using Lemma~\ref{solveeq:hens}.
The class $u \omega \in \Omega^n_R$ belongs to $V_0$ and has image given by
$\omega$; 
so $\omega$ is in the image of $V_0 \to V_1$, as desired.
\end{proof} 

\begin{proposition} 
\label{rigidity:henspaircharp}
Let $(R, \mathfrak{m})$ be an ind-smooth henselian local $\mathbb{F}_p$-algebra with residue field
$k$. 
Then the map $\knf(R) \to \knf(k)$ becomes an
equivalence modulo $p$. 
\end{proposition}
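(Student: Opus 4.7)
The plan is to chase the computation of $\pi_*(\knf/p)$ provided by Theorem \ref{GHGL}(3) through the comparison map. Since the identifications there are functorial, everything reduces to showing that $\widetilde{\nu}^{n+2}(R) \to \widetilde{\nu}^{n+2}(k)$ is an isomorphism for all $n \geq 0$.

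First, I would observe that both $R$ and its residue field $k$ are ind-smooth local $\mathbb{F}_p$-algebras. For $R$ this is given; for $k$, note that $\mathbb{F}_p$ is perfect, so every field extension $k/\mathbb{F}_p$ is separable and hence ind-smooth (it is the filtered colimit of its finitely generated subfields, each of which is separably generated over $\mathbb{F}_p$ and therefore smooth). This puts us squarely in the hypotheses of Theorem \ref{GHGL}(3), so we have functorial identifications
\[
\pi_n(\knf(R)/p) \simeq \widetilde{\nu}^{n+2}(R), \qquad \pi_n(\knf(k)/p) \simeq \widetilde{\nu}^{n+2}(k),
\]
for each $n \geq 0$, intertwining the map $\knf(R)/p \to \knf(k)/p$ with the natural map on $\widetilde{\nu}$.

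Next I would invoke Proposition \ref{toyFprigidityTC} with $I = \mathfrak{m}$: since $(R, \mathfrak{m})$ is a henselian pair of $\mathbb{F}_p$-algebras, the map $\widetilde{\nu}^{m}(R) \to \widetilde{\nu}^m(k)$ is an isomorphism for every $m \geq 0$. Combined with the previous step, this shows that $\pi_n(\knf(R)/p) \to \pi_n(\knf(k)/p)$ is an isomorphism for all $n \geq 0$.

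Finally I would need to address negative degrees: since $\knf(R)/p$ and $\knf(k)/p$ sit in the fiber of a map of connective spectra $K/p \to \TC/p$, and $\TC/p$ is $(-1)$-connective by Lemma \ref{connectivity}, $\knf(R)/p$ and $\knf(k)/p$ are concentrated in degrees $\geq 0$ (using that $K_0$ agrees with $K_0$ of the residue field by Lemma \ref{K0invhenspair}, which handles degree $0$ separately, or by observing that both sides vanish below degree $0$ modulo $p$). Therefore the map is an isomorphism on all homotopy groups, hence an equivalence. The only substantive point is the application of Theorem \ref{GHGL} to $k$, which requires the ind-smoothness observation above; everything else is purely formal.
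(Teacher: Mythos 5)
Your main line of argument is exactly the paper's: reduce, via the functorial identification of Theorem \ref{GHGL}(3), to showing that $\widetilde{\nu}^{m}(R)\to\widetilde{\nu}^{m}(k)$ is an isomorphism, and then quote Proposition \ref{toyFprigidityTC} for the henselian pair $(R,\mathfrak m)$. Your observation that $k$ is itself ind-smooth over $\mathbb{F}_p$ (because $\mathbb{F}_p$ is perfect, so $k$ is a filtered colimit of separably generated, hence smooth, subextensions) is correct and supplies the justification, left implicit in the paper, for applying Theorem \ref{GHGL} to the residue field.

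The final paragraph, however, rests on a false claim. The fiber of a map from a connective spectrum to a $(-1)$-connective spectrum is only $(-2)$-connective, not connective, and $\knf(R)/p$ is genuinely nonzero in degrees $-1$ and $-2$ in general: the long exact sequence of the fiber together with the split exact sequence of Theorem \ref{GHGL}(2) gives $\pi_{-1}(\knf(R)/p)\simeq\widetilde{\nu}^{1}(R)$ and $\pi_{-2}(\knf(R)/p)\simeq\widetilde{\nu}^{0}(R)$, and already $\widetilde{\nu}^{0}(\mathbb{F}_p)=\mathrm{coker}(a\mapsto a-a^p)=\mathbb{F}_p\neq 0$. Likewise ``both sides vanish below degree $0$ modulo $p$'' is false for $\TC$, since $\pi_{-1}(\TC(R)/p)\simeq\widetilde{\nu}^{0}(R)$. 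Fortunately the error is harmless: because $\pi_m(K(R)/p)\to\pi_m(\TC(R)/p)$ is injective in every degree (split injective for $m\ge0$, trivially for $m<0$), the long exact sequence identifies $\pi_n(\knf(R)/p)$ with $\mathrm{coker}\bigl(\pi_{n+1}(K(R)/p)\to\pi_{n+1}(\TC(R)/p)\bigr)\simeq\widetilde{\nu}^{n+2}(R)$ for all $n\ge-2$ (with the convention $\nu^{-1}=0$), and both sides vanish below degree $-2$. Proposition \ref{toyFprigidityTC} then covers the two negative degrees by the same token. With that correction your proof is complete and coincides with the paper's.
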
 
\begin{proof} 
By Theorem~\ref{GHGL}, it suffices to show that the map 
\( \widetilde{\nu}^n(R) \to \widetilde{\nu}^n(k)   \)
is an isomorphism. 
This follows from Proposition~\ref{toyFprigidityTC}. 
\end{proof}

\subsection{Proof of the main result}

In this subsection, we prove the main result of this paper, Theorem~\ref{mainthm}.
Our goal is to show that the construction $R \mapsto \knf(R)$ with mod $p$
coefficients is invariant under taking the quotient by a henselian ideal. 

A key ingredient in the proof will be to use results of
Geisser--Hesselholt \cite{GHexc} about excision in $K$-theory and topological
cyclic homology to relate the setup of a general henselian pair to the 
case $(\mathbb{Z} \ltimes I, I)$, where we can appeal to the finiteness results of the
previous subsection.

We start by discussing these excision results.  For this, we need to invoke the
non-connective variant $\mathbb{K}^{\operatorname{inv}}$ of
$K^{\operatorname{inv}}$, defined as the fiber of the cyclotomic trace
$\mathbb{K}\rightarrow \TC$ from non-connective $K$-theory.  Recall also the standard notation $K(R,I) = \operatorname{fib}(K(R)\rightarrow K(R/I))$, and similarly for any other functor on rings.

The following was proved in the rational case in \cite{cortinas}, with finite
coefficients in \cite{GHexc}, integrally under some assumptions
in \cite{DKexcint}, and in full generality in \cite{land-tamme}.
Here commutativity is not necessary. 

\begin{theorem}[Corti\~nas; Geisser--Hesselholt; Dundas--Kittang; Land--Tamme] 
\label{DKGHC}
Suppose $R$ is a unital associative ring, $I\subset R$ a two-sided ideal, and
$f:R\rightarrow S$ a homomorphism such that $f$ restricts to an isomorphism
from $I$ to a two-sided ideal $J$ of $S$ (so one has a Milnor square, cf.~ 
Definition~\ref{milnorsquare}).  Then the induced map
$$\mathbb{K}^{\operatorname{inv}}(R,I)\rightarrow \mathbb{K}^{\operatorname{inv}}(S,J)$$
is an equivalence.
\end{theorem}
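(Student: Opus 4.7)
This is a classical excision theorem whose full proofs appear in the cited references; my plan is to sketch the strategy I would follow in modern language, rather than attempt to carry out the substantial details. My first step is to observe that the statement is equivalent to the assertion that $\mathbb{K}^{\mathrm{inv}}(R,I)$ depends only on $I$ viewed as a non-unital ring, independently of the chosen unital ambient ring $R$. In particular, for the universal Milnor square $(\mathbb{Z} \ltimes I, I) \to (R, I)$, the induced map on $\mathbb{K}^{\mathrm{inv}}(-, I)$ should be an equivalence, and it suffices to establish this comparison in that universal situation.

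The main approach I would take is the Land--Tamme derived-tensor-product strategy. Given $f : R \to S$, I would form the two-sided bar construction $T = B(S, R, S) = S \otimes^{\mathbb{L}}_R S$, regarded as an $\mathbb{E}_1$-ring spectrum with canonical unit maps $S \rightrightarrows T$ and a multiplication fold $\mu : T \to S$. In the Milnor-square situation, since $f$ restricts to an isomorphism $I \xrightarrow{\sim} J$, the map $\pi_0 \mu$ is an isomorphism and the fiber of $\mu$ is homotopically nilpotent in the sense required for Theorem~\ref{DGMthm} applied to connective $\mathbb{E}_1$-ring spectra; this yields an equivalence $\mathbb{K}^{\mathrm{inv}}(T) \xrightarrow{\sim} \mathbb{K}^{\mathrm{inv}}(S)$.

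Dually, I would form the smaller bar construction $T' = S \otimes^{\mathbb{L}}_R (R/I)$, whose $\pi_0$ is $S/J$ with nilpotent fiber, giving $\mathbb{K}^{\mathrm{inv}}(T') \xrightarrow{\sim} \mathbb{K}^{\mathrm{inv}}(S/J)$ by the same DGM argument. Using the Milnor-square identity $R = S \times_{S/J} R/I$, I would assemble a commutative diagram relating $\mathbb{K}^{\mathrm{inv}}$ of $R$, $T'$, $T$, $S$, and $R/I$, from which the desired identification of fibers $\mathbb{K}^{\mathrm{inv}}(R, I) \simeq \mathbb{K}^{\mathrm{inv}}(S, J)$ would follow by a diagram chase.

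The main obstacle will be verifying that the fibers of these bar-construction maps are genuinely ``nilpotent enough'' for DGM to apply at the level of $\mathbb{E}_1$-ring spectra; this requires a careful connectivity and filtration argument on the two-sided bar construction, together with the stronger form of DGM indicated in the footnote to Theorem~\ref{DGMthm} (for connective $\mathbb{E}_1$-ring spectra with nilpotent kernel on $\pi_0$). An alternative route, closer to the original Geisser--Hesselholt proof, would be to establish $\TC$-excision for Milnor squares directly via a filtration of the relative $\THH$-term whose associated graded involves cyclotomic tensor powers of $I$, then combine with Suslin's theorem on excision for $K$-theory with finite coefficients, handling the rational case separately through Corti\~nas' identification of relative rational $K$-theory with cyclic homology.
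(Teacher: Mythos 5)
The first thing to say is that the paper does not prove Theorem~\ref{DKGHC} at all: it imports the result as a black box from Corti\~nas (rational case), Geisser--Hesselholt (finite coefficients) and Dundas--Kittang (integral statement), and the surrounding text only reformulates it as saying that $\mathbb{K}^{\operatorname{inv}}(R,I)$ depends only on $I$ as a non-unital ring. So there is no in-paper argument to compare against; what you have written is a strategy for reproving a cited input.

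As a strategy, your Land--Tamme-style route is genuinely different from the cited proofs and does work: both $\mathbb{K}$ and $\TC$ are localizing invariants, so the Land--Tamme theorem on $K$-theory of pullbacks produces a cartesian square involving $\mathbb{K}^{\operatorname{inv}}(R)$, $\mathbb{K}^{\operatorname{inv}}(S)$, $\mathbb{K}^{\operatorname{inv}}(R/I)$ and $\mathbb{K}^{\operatorname{inv}}$ of an $\mathbb{E}_1$-ring with underlying spectrum $R/I\otimes^{\mathbb{L}}_R S$; since in the Milnor-square situation that ring maps to $S/J$ by an isomorphism on $\pi_0$ with $1$-connective fiber, Theorem~\ref{DGMthm} in its $\mathbb{E}_1$-form finishes the argument. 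Two caveats. First, your specific claim that the fold map $\mu\colon S\otimes^{\mathbb{L}}_R S\to S$ is an isomorphism on $\pi_0$ is false in general (already for $R=\mathbb{Z}$, $S=\mathbb{Z}[x]$, $I=0$); the DGM input must be applied to $R/I\otimes^{\mathbb{L}}_R S\to S/J$, where $\pi_0$ of the source is $S/IS=S/J$ because the higher Tor terms sit in positive degrees. Second, the entire difficulty of this route is concentrated in producing the $\mathbb{E}_1$-ring structure on $R/I\otimes^{\mathbb{L}}_R S$ together with the statement that every localizing invariant sends the resulting square of ring spectra to a cartesian square; this is the main theorem of Land--Tamme, not a routine ``connectivity and filtration argument,'' and your sketch defers exactly this point. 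Your closing paragraph correctly describes the shape of the original proofs (Corti\~nas rationally via cyclic homology, Geisser--Hesselholt with finite coefficients via bi-relative $\TC$, assembled integrally by Dundas--Kittang), which is the route the paper actually relies on.
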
 

This can be read as saying that $\mathbb{K}^{\operatorname{inv}}(R,I)$ ``only
depends on $I$."  To make this precise, one can define
$\mathbb{K}^{\operatorname{inv}}(I) =
\mathbb{K}^{\operatorname{inv}}(\mathbb{Z}\ltimes I,I)$ for any non-unital
ring $I$.  Clearly the new $\mathbb{K}^{\operatorname{inv}}$
restricts to the old one on unital rings; moreover, Theorem  \ref{DKGHC} implies 
that $\mathbb{K}^{\operatorname{inv}}$ sends short exact sequences
$I'\rightarrow I\twoheadrightarrow I''$ of non-unital rings to fiber sequences
of spectra.  In particular, $\mathbb{K}^{\operatorname{inv}}(I) \simeq
\mathbb{K}^{\operatorname{inv}}(R,I)$ whenever $I$ embeds as a two-sided ideal
in $R$, in view of Theorem~\ref{DKGHC} applied to the map $(\mathbb{Z} \oplus I, I) \to (R, I)$ of pairs of
rings with an ideal.

For our purposes, 
we will need the analog of Theorem~\ref{DKGHC} for connective $K$-theory, in
the context of non-unital henselian (commutative) rings.   We can make the
switch thanks to the following proposition, whose proof uses another excision
theorem due to Bass-Milnor-Swan:

\begin{proposition} 
Suppose $(R,I)$ is a henselian pair, and $f:R\rightarrow S$ is a ring homomorphism such that $f$ restricts to an isomorphism from $I$ to an ideal $J$ of $S$.  Then the square of spectra
\begin{equation} \label{Ksquare} \xymatrix{
K(R, I) \ar[d]  \ar[r] &  K(S, J) \ar[d]  \\
\mathbb{K}(R, I) \ar[r] &  \mathbb{K}(S, J)
}\end{equation}
is cartesian. 
\end{proposition}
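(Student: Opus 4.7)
The plan is to recognize that the failure of the square to be cartesian is governed purely by negative $K$-theory, and then to reduce to a classical excision result.

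First, I would observe that $(S,J)$ is also a henselian pair: since $f|_I: I \xrightarrow{\sim} J$ is an isomorphism of nonunital rings and being henselian is a property of the underlying nonunital ring by Gabber's Proposition~\ref{checkhensnonunital}, the henselian property transports from $(R,I)$ to $(S,J)$. Applying Lemma~\ref{K0invhenspair} to both pairs, both $K_0(R) \to K_0(R/I)$ and $K_0(S) \to K_0(S/J)$ are isomorphisms. Consequently $K(R,I)$ and $K(S,J)$ are connective, since the only potentially nonvanishing negative homotopy group of $\mathrm{fib}(K(R) \to K(R/I))$ is $\pi_{-1}$, which is the cokernel of the relevant $K_0$-map.

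Next, since $K \to \mathbb{K}$ induces an isomorphism on $\pi_n$ for $n \geq 0$ by construction, the five-lemma applied to the long exact sequences of the relative fibers shows that $K(R,I) \to \mathbb{K}(R,I)$ is likewise an isomorphism on $\pi_n$ for $n \geq 0$. Combined with the connectivity of $K(R,I)$, this identifies $K(R,I)$ with the connective cover $\tau_{\geq 0} \mathbb{K}(R,I)$, so that
\[
\mathrm{fib}(K(R,I) \to \mathbb{K}(R,I)) \simeq \Sigma^{-1} \tau_{\leq -1} \mathbb{K}(R,I),
\]
with the analogous identification for $(S,J)$. Therefore the original square is cartesian if and only if the induced map between these vertical fibers is an equivalence, which in turn is equivalent to $\mathbb{K}_n(R,I) \to \mathbb{K}_n(S,J)$ being an isomorphism for every $n < 0$.

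Finally, the hypothesis $f|_I: I \xrightarrow{\sim} J$ implies that the square with vertices $R, S, R/I, S/J$ is a Milnor square in the sense of Definition~\ref{milnorsquare} (cartesian is clear; cocartesian because the pushout $R/I \otimes_R S$ is naturally $S/J$). For such Milnor squares, the required excision in negative $K$-theory follows by combining the classical Bass-Milnor-Swan excision for $K_0$ with Bass's delooping construction, which expresses $\mathbb{K}_{-n}(R)$ in terms of $K_0$ of Laurent polynomial extensions; since Milnor squares are preserved under $R \mapsto R[t^{\pm 1}]$, excision for $K_0$ propagates to every negative degree. The main obstacle is this last step, namely the careful invocation of Bass-Milnor-Swan excision and the compatibility of the delooping machinery with the Milnor square structure; the rest of the argument is formal manipulation of the long exact sequences relating $K$ and $\mathbb{K}$.
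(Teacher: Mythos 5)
Your proof is correct and follows essentially the same route as the paper's: both arguments combine the fact that $K\to\mathbb{K}$ is an equivalence in nonnegative degrees, the henselian-pair control of $K_0$ (Lemma~\ref{K0invhenspair}) to make the relative terms connective, and Bass's excision theorem for Milnor squares in nonpositive degrees. The only difference is cosmetic: you compare the vertical fibers and reduce to excision in strictly negative degrees, whereas the paper compares the horizontal fibers $F\to\mathbb{F}$ and concludes via a connectivity argument (using in addition the surjectivity of $K_1(R)\to K_1(R/I)$ and the degree-$1$ part of Bass's theorem).
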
 
\begin{proof} 
Let $F$ be the fiber of the top horizontal map and let $\mathbb{F}$ be the fiber
of the bottom horizontal map. Note that the maps $K(R) \to \mathbb{K}(R), K(R/I)
\to \mathbb{K}(R/I)$, etc.\ are equivalences in degrees $ \geq 0$; thus, taking
fibers, the maps $K(R, I) \to \mathbb{K}(R, I)$ and $K(S, J) \to \mathbb{K}(S,
J)$ are equivalences in degrees $\geq 0$. Taking fibers again, we find that the
map $F \to \mathbb{F}$ is an equivalence in degrees $\geq 0$.

On the other hand, $\mathbb{F}$ is concentrated in degrees $\geq 0$ (even
$\geq 1$) by the
excision theorem of Bass and Bass--Heller--Swan, \cite[Thm.~XII.8.3]{Bass}.  Thus it suffices to show that $F$ is also concentrated in degrees $\geq 0$.

But since $(R, I)$ and $(S, J)$ are henselian pairs, it follows by
Lemma~\ref{K0invhenspair} that the maps
$K_0(R) \to K_0(R/I), K_0(S) \to K_0(S/J)$ are isomorphisms and the maps
$K_1(R) \to K_1(R/I), K_1(S) \to K_1(S/J)$ (which are the abelianizations of
$GL_\infty(R) \to GL_\infty(R/I), GL_\infty(S) \to GL_\infty(S/J)$)  are surjections; thus $K(R, I)$ and
$K(S, J)$ are concentrated in degrees $\geq 1$, so that $F$ is concentrated in
degrees $\geq 0$, as desired.
\end{proof}

\begin{corollary} 
\label{replaceexcision}
Suppose $(R,I)$ is a henselian pair, and $f:R\rightarrow S$ is a ring homomorphism which restricts to an isomorphism from $I$ to an ideal $J\subset S$. Then the map $\knf(R, I) \to \knf(S, J)$ is  an equivalence. 
\end{corollary}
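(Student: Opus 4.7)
The plan is to deduce the excision statement for $\knf$ from the known excision for the non-connective invariant $\mathbb{K}^{\operatorname{inv}}$ (Theorem~\ref{DKGHC}) by pasting two cartesian squares.

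First, I would invoke the preceding proposition: under the assumption that $(R,I)$ is a henselian pair and $I \xrightarrow{\sim} J$, the comparison square
$$\xymatrix{K(R,I) \ar[r]\ar[d] & K(S,J)\ar[d] \\ \mathbb{K}(R,I) \ar[r] & \mathbb{K}(S,J)}$$
is cartesian. This already takes care of the difference between connective and nonconnective $K$-theory in our setting.

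Next, I would consider the commutative square
$$\xymatrix{\mathbb{K}(R,I) \ar[r]\ar[d] & \mathbb{K}(S,J)\ar[d] \\ \TC(R,I) \ar[r] & \TC(S,J)}$$
Since the horizontal fibers of this square are $\mathbb{K}^{\operatorname{inv}}(R,I)$ and $\mathbb{K}^{\operatorname{inv}}(S,J)$ respectively (these being the definitions of $\mathbb{K}^{\operatorname{inv}}$ of the respective pairs, noting that the Milnor square hypothesis of Theorem~\ref{DKGHC} is satisfied), and since Theorem~\ref{DKGHC} asserts that the induced map between these fibers is an equivalence, this second square is also cartesian.

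Finally, pasting these two cartesian squares vertically yields that the outer square
$$\xymatrix{K(R,I) \ar[r]\ar[d] & K(S,J)\ar[d] \\ \TC(R,I) \ar[r] & \TC(S,J)}$$
is cartesian. But the horizontal fibers of this outer square are by definition $\knf(R,I)$ and $\knf(S,J)$, so the induced map $\knf(R,I) \to \knf(S,J)$ is an equivalence, as desired. There is no real obstacle here: the entire argument is a formal manipulation once the nonconnective excision theorem and the connective/nonconnective comparison have been established.
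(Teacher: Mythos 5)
Your proposal is correct and is essentially the paper's own argument: the paper likewise combines the cartesian square \eqref{Ksquare} from the preceding proposition with Theorem~\ref{DKGHC}, and your write-up just makes the two-square pasting explicit. (Minor terminological point: the fibers you identify with $\mathbb{K}^{\operatorname{inv}}(R,I)$ and $\mathbb{K}^{\operatorname{inv}}(S,J)$ are the fibers of the \emph{vertical} maps $\mathbb{K}(-,-)\to\TC(-,-)$, which is of course equivalent for testing cartesianness.)
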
 
\begin{proof} 
Combining with the homotopy cartesian square \eqref{Ksquare} between relative
connective and nonconnective $K$-theory, we find that the result now follows from
Theorem~\ref{DKGHC}.  
\end{proof} 

Again, this can be interpreted as saying that $K^{\operatorname{inv}}$ makes sense for non-unital henselian rings.  We can now state and prove the main result of this paper.

\begin{theorem} 
\label{mainthm}
Let $(R, I)$ be a henselian pair. Then for any prime number $p$,  the map
$\knf(R) \to \knf(R/I)$ becomes an equivalence modulo $p$. 
 Equivalently,
the map 
$K(R, I) \to \TC(R, I)$ becomes an equivalence modulo $p$.
\end{theorem}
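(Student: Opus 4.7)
The plan is to apply the axiomatic rigidity principle (Proposition~\ref{pseudoargument}) to the functor $F \colon \nuzh \to \Sp$ defined by $F(I) = \knf(\mathbb{Z} \ltimes I, I)/p$. Granting $F = 0$, the theorem follows because the excision result Corollary~\ref{replaceexcision}, applied to the inclusion $\mathbb{Z} \ltimes I \hookrightarrow R$ (which restricts to the identity on $I$), identifies $\knf(R, I)/p \simeq F(I)$ for any henselian pair $(R, I)$.

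Verifying the hypotheses of Proposition~\ref{pseudoargument} for $F$ requires four checks. For projective pseudocoherence, Propositions~\ref{Kpscoh} and \ref{TCpscoh} give this for the absolute functors $I \mapsto K(\mathbb{Z} \ltimes I)/p$ and $I \mapsto \TC(\mathbb{Z} \ltimes I)/p$; the canonical retraction $\mathbb{Z} \ltimes I \to \mathbb{Z}$ splits off the constant contribution, exhibiting $K(\mathbb{Z} \ltimes I, I)/p$ and $\TC(\mathbb{Z} \ltimes I, I)/p$ as direct summands of pseudocoherent functors, and $F$ is the fiber of the natural transformation between them. For a short exact sequence $I' \to I \twoheadrightarrow \bar{I}$ in $\nuzh$, the required fiber sequence $F(I') \to F(I) \to F(\bar{I})$ comes from the standard relative fiber sequence for the ideal pair $(\mathbb{Z} \ltimes I, I')$ together with an excision identification $\knf(\mathbb{Z} \ltimes I, I')/p \simeq \knf(\mathbb{Z} \ltimes I', I')/p$ via the inclusion $\mathbb{Z} \ltimes I' \hookrightarrow \mathbb{Z} \ltimes I$. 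For nilpotent $I$, $F(I) = 0$ is immediate from Dundas--Goodwillie--McCarthy (Theorem~\ref{DGMthm}) applied to $\mathbb{Z} \ltimes I \twoheadrightarrow \mathbb{Z}$.

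The fourth check is vanishing on the free objects $k\{x_1, \dots, x_n\}^+$ for $k$ a prime field. For $k = \mathbb{F}_p$, excision identifies $F(\mathbb{F}_p\{x_1, \dots, x_n\}^+)$ with the relative $\knf/p$ of the ind-smooth henselian local ring $\mathbb{F}_p\{x_1, \dots, x_n\}$ (the henselization of the polynomial ring at the origin) with respect to its maximal ideal, which vanishes by Proposition~\ref{rigidity:henspaircharp}. For $k = \mathbb{Q}$, excision instead replaces $\mathbb{Z} \ltimes I$ by the $\mathbb{Q}$-algebra $\mathbb{Q} \ltimes I$; on this ring $\TC/p$ vanishes identically because $\THH(\mathbb{Q} \ltimes I)$ inherits a $\mathbb{Q}$-module structure (so $\THH/p = 0$ and hence $\TC/p = \TC(\THH/p) = 0$), while $K(\mathbb{Q} \ltimes I, I)/p = 0$ by Gabber's classical theorem (Theorem~\ref{gabberthm}), since $p$ is invertible. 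Combining these two vanishings gives $F(\mathbb{Q}\{x_1,\dots,x_n\}^+) = 0$.

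The deepest input is the equal-characteristic rigidity Proposition~\ref{rigidity:henspaircharp}, which rests on the Geisser--Levine and Geisser--Hesselholt computations of mod $p$ $K$-theory and $\TC$ of smooth $\mathbb{F}_p$-algebras in terms of logarithmic differentials. Everything else is a formal interplay of the pseudocoherence machinery of subsections 4.1--4.3, the classical excision results of Corti\~nas, Geisser--Hesselholt, and Dundas--Kittang (via Corollary~\ref{replaceexcision}), and the nilpotent invariance of Dundas--Goodwillie--McCarthy. I expect no substantive technical obstacle beyond the bookkeeping of excision replacements at each step, the vanishing on $\mathbb{Q}$-algebras being the only spot where a slightly nonobvious input (the $\mathbb{Q}$-linearity of $\THH$) must be invoked.
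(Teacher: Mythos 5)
Your proposal is correct and follows essentially the same route as the paper: the same functor $F(I) = \knf(\mathbb{Z}\ltimes I, I)/p$, the same reduction via Corollary~\ref{replaceexcision}, the same verification of the hypotheses of Proposition~\ref{pseudoargument} using Propositions~\ref{Kpscoh}, \ref{TCpscoh}, Theorem~\ref{DGMthm}, Gabber rigidity, and Proposition~\ref{rigidity:henspaircharp}. The only (harmless) omission is that the prime fields also include $\mathbb{F}_\ell$ for $\ell \neq p$; these are handled exactly as in your $\mathbb{Q}$ case, since $p$ is invertible so Gabber's theorem applies to $K/p$ and $\THH(\mathbb{F}_\ell\ltimes I)/p = 0$ kills $\TC/p$.
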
 

\begin{proof} 
By Corollary~\ref{replaceexcision}, it suffices to consider the case where
$(R, I) = (\mathbb{Z} \ltimes I, I)$ where $I$ is a nonunital henselian ring. 
We now consider the functor 
\[ F: \nuzh \to \Sp, \quad 
F(I) = \knf(\mathbb{Z} \ltimes I, I)/p.
\]
By Propositions~\ref{Kpscoh} and \ref{TCpscoh}, $F$ is a projectively
pseudocoherent functor.  Implicit here is the statement
(due to Quillen) that the
$K$-groups of $\mathbb{Z}$ are finitely generated, as are the mod $p$ homotopy
groups of $\TC(\mathbb{Z})$ (both of which follow from evaluating the projectively
pseudocoherent functors $I \mapsto K(\mathbb{Z} \ltimes I), \TC(\mathbb{Z}
\ltimes I)/p$ at $I = 0$). 

The assertion of the theorem is that $F=0$.  We will show this by checking that $F$ satisfies the hypotheses of
Proposition~\ref{pseudoargument}.

First observe that $F$ sends a short exact sequence $I' \to I
\twoheadrightarrow \overline{I}$ of nonunital henselian rings to a fiber sequence of spectra. 
To see this, 
we consider the diagram
\[ \xymatrix{ F(I') \ar[d]^{\mathrm{id}}   \ar[r] & F(I) \ar[d]  \ar[r] &  F(\overline{I})
\ar[d]   \\
F(I') \ar[r] &  \knf( \mathbb{Z} \ltimes I)/p \ar[r] &  \knf( \mathbb{Z} \ltimes 
\overline{I})/p . }\]
Note that the right-hand square is homotopy cartesian, so the top row is a
fiber sequence if and only if the bottom row is a fiber sequence. But the bottom
row is a fiber sequence by Corollary~\ref{replaceexcision} applied to the
homomorphism $(\mathbb{Z}\ltimes I',I')\rightarrow (\mathbb{Z}\ltimes I, I')$.

Next, we claim that if $I'  \in
\nuzh$ is nilpotent, then $F(I') = 0$. This follows from  
Theorem~\ref{DGMthm}
which shows that $\knf( \mathbb{Z} \ltimes I', I') = 0$. 
To complete the proof, i.e., to verify the conditions of
Proposition~\ref{pseudoargument}, we need to check that 
\[ F( k\left\{x_1, \dots, x_n\right\}^+) = 0,  \]
whenever $k$ is a prime field. 
We write $k\left\{x_1, \dots, x_n\right\}$ for the henselization of the
polynomial ring $k[x_1, \dots, x_n]$ at $(x_1, \dots, x_n)$, so
$k\left\{x_1, \dots, x_n\right\}^+ \subset k\left\{x_1, \dots, x_n\right\}$ is the maximal ideal. 
Using Corollary~\ref{replaceexcision}, it suffices to show that
the map $\knf(k\left\{x_1, \dots, x_n\right\}) \to \knf(k)$ is an equivalence
modulo $p$. 
There are two cases for this. 
If $\mathrm{char}(k) \neq p$, this follows from Gabber rigidity. 
If $\mathrm{char}(k) = p$, 
this follows from Proposition~\ref{rigidity:henspaircharp}. 
\end{proof} 

\begin{remark}
Recall that Gabber's proof \cite{gabber} of rigidity is cleanly separated into two halves: the
first half reduces the general case to the case of henselizations of smooth
algebras over a field at a rational point, and the second half proves that case.
In our proof above, we only need to invoke the second half of Gabber's work,
which is also covered by Gillet--Thomason \cite{GT}.

However, our entire line of reasoning is in some sense modeled on the first half
of Gabber's proof. (The exceptions are Proposition \ref{rigidity:henspaircharp},
which is the new characteristic $p$ ingredient, and the commutation of $\TC/p$
with filtered colimits, which is a necessary technical statement.)  So we are
not really avoiding the first half of Gabber's proof, just explicating it in a
modified context.  Let us note in particular that the projective
pseudocoherence of $K$-theory is our replacement for Suslin's ``method of
universal homotopies" from \cite{Suslinlocal}, which has been crucial to many of the known
rigidity results in $K$-theory.
\end{remark}

\begin{remark} 
\label{rem:fpcaseeasier}
While the 
the full strength of pseudocoherence appears to be necessary to obtain results
for $\mathbb{Z}$-algebras, the result for henselian pairs of $\mathbb{F}_p$-algebras 
follows from Proposition~\ref{rigidity:henspaircharp} and the observation that
$K, \TC/p$ commutes with filtered colimits of $\mathbb{F}_p$-algebras. 
One can then extend the result to $\mathbb{Z}_p$-algebras using the $p$-adic
continuity statement of \cite{GHlocal} (and a similar filtered colimit
argument). Thus, for $\mathbb{Z}_p$-algebras at least, one can prove the main
result purely using the classical approach to $\TC$ and ingredients predating
\cite{nikolaus-scholze}.  
\end{remark} 

\begin{remark} 
Theorem~\ref{mainthm} is false integrally; that is, the integral
Dundas--Goodwillie--McCarthy theorem does not hold for henselian pairs. 
As an example, 
let $(R, \mathfrak{m})$ be a henselian local $\mathbb{F}_p$-algebra with
residue field $\mathbb{F}_p$. 
Then $\TC(R), \TC(\mathbb{F}_p)$ are $p$-complete spectra, so 
$\TC_1(R, \mathfrak{m})$ is a derived $p$-adically complete abelian group. 
However, $K_1(R, \mathfrak{m}) =\mathrm{ker}(R^{\times} \to
\mathbb{F}_p^{\times})$ which will essentially never be derived $p$-complete. 
For instance, consider the power series ring $R_0 =  \mathbb{F}_p[[x]]$ and let
$R = (R_0)_{\mathrm{perf}}$ be the perfection of $R_0$. 
We have a surjection $R \to \mathbb{F}_p$, whose kernel is the henselian ideal
$I = \bigcup (x^{1/p^n})$. 
In this case, $\mathrm{ker}(R^{\times} \to \mathbb{F}_p^{\times})$ is a
nonzero
$\mathbb{Z}[1/p]$-module, which in particular is not derived $p$-complete. 
\end{remark} 

\section{Continuity and pro statements in algebraic $K$-theory}
In this section we consider various applications to the continuity problem in algebraic $K$-theory and to the related problem of describing the pro $K$-theory of formal schemes.
In particular we show that, under mild hypotheses, algebraic $K$-theory with finite coefficients is continuous for complete noetherian rings (Theorem~\ref{Kcont}). 
We also show that algebraic $K$-theory satisfies a ``derived'' form of $p$-adic
continuity for rings that are henselian along $p$ (Theorem~\ref{derivedpadiccont}), extending
results of Geisser--Hesselholt \cite{GHlocal}. Finally, we prove a pro version of
the Geisser--Levine \cite{GL} theorem on the $p$-adic $K$-theory of regular local
$\mathbb{F}_p$-algebras to describe (under mild hypotheses) the pro abelian groups 
$\{K_*(A/I^s; \mathbb{Z}/p^r\mathbb{Z})\}_s$ when $A$ is a regular local $\mathbb{F}_p$-algebra and $I \subset A$ is any ideal, extending results of Morrow \cite{Morrow-HW}.

\subsection{Continuity and $K$-theory}

In this and the next subsection, we consider the following classical continuity
question in $K$-theory.

\begin{question} 
Let $R$ be a ring and $I$ be an ideal. 
How close is the map \begin{equation} \label{contmap}K(R) \To \varprojlim_s
K(R/I^s) \end{equation} to being an equivalence?
\end{question} 

In order for this question to be reasonable, we should assume that $R$ is
$I$-adically complete, or at least that $(R, I)$ forms a henselian pair. This question has been considered by various authors, and the above map has notably been shown to be an equivalence modulo $p$ in the following cases, which we order historically:
\begin{enumerate}
\item $R$ a complete discrete valuation ring of mixed characteristic $(0,q)$, $q\neq p$, with $I$ being the maximal ideal (Suslin \cite{Suslinlocal}).
\item $R$ a complete discrete valuation of mixed characteristic $(0,p)$, with $I$ being the maximal ideal (Panin \cite{Panin});
\item $(R,I)$ a henselian pair and $p$ invertible in $R$ (Gabber rigidity \cite{gabber}; this subsumes case (1)).
\item $R$ a complete discrete valuation ring of equal characteristic $p$ with
perfect residue field, with $I$ being the maximal ideal, in degrees $\leq 4$
(Dundas \cite{Dundascont}).
\item $R=A[[x_1,\dots,x_n]]$ and $I=(x_1,\dots,x_n)$, where $A$ is any $F$-finite, regular, local $\bb F_p$-algebra (Geisser--Hesselholt \cite{GHcomplete}; this subsumes case (4)).
\item $R$ any ring of finite stable rank in which $p$ is a non-zero-divisor and which is henselian along $I=pR$ (Geisser--Hesselholt \cite{GHlocal})
\item $R$ any $F$-finite, regular, local $\bb F_p$-algebra which is complete with respect to an ideal $I\subseteq R$ such that $R/I$ is ``generalised normal crossings'' (Morrow \cite{Morrow-HW}; this subsumes case (5)).
\end{enumerate}

In order to apply our earlier results to this question, we observe that if $(R, I)$ is a henselian pair  then Theorem~\ref{mainthm} implies that 
the map \eqref{contmap} becomes an equivalence modulo $p$ if and only if the
corresponding map $\TC(R) \to \varprojlim_s \TC(R/I^s)$ becomes an equivalence
modulo $p$. 
This latter question is often more tractable and has been carefully
studied in the recent work of Dundas--Morrow \cite{DundasMorrow}, who show that such
continuity for topological cyclic homology holds quite generally under the assumption of
$F$-finiteness. 

\begin{definition} 
\label{Ffinite}
An $\mathbb{F}_p$-algebra $R$ is said to be \emph{$F$-finite} if the absolute Frobenius
map $R \to R$ is finite, in other words if $R$ is a finitely generated module over its subring of $p^\sub{th}$-powers.
\end{definition}

Under $F$-finiteness, many additional finiteness properties follow; we refer to \cite{DundasMorrow} for more details. For
example, if $R$ is an $F$-finite, noetherian $\bb F_p$-algebra, then the homotopy groups  $\pi_nL_{R/\mathbb{F}_p}$ of the cotangent complex are finitely generated $R$-modules for all $n$ \cite[Cor. 3.8]{DundasMorrow}; in particular, the (algebraic) module of K\"ahler differentials $\Omega^1_{\bb F_p[[t]]/\bb F_p}$ is a free $\bb F_p[[t]]$-module of rank one, whereas the analogous construction for a characteristic zero field is much harder to control and is not $t$-adically separated.

In this subsection, we simply combine the Dundas--Morrow results on topological cyclic homology with
Theorem~\ref{mainthm} to give a general answer to the above continuity question in $K$-theory. Since the results of \cite{DundasMorrow} are stated only for $\mathbb{Z}_{(p)}$-algebras, we begin with
a brief detour.

\begin{lemma} \label{modplemma0}
Let $R \to R'$ be a map of commutative rings such that the map $R
\lotimes_{\mathbb{Z}} \mathbb{F}_p \to R' \lotimes_{\mathbb{Z}} \mathbb{F}_p$ is
an equivalence. Then $\THH(R) \to \THH(R')$ is a mod $p$ equivalence.
\end{lemma}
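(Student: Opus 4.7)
The plan is to establish the natural equivalence
\[
\THH(R)/p \;\simeq\; \THH(R\lotimes_{\bb Z}\bb F_p) \otimes_{\THH(\bb F_p)} \THH(\bb Z)/p,
\]
valid for any commutative ring $R$, whose right-hand side manifestly depends on $R$ only through the derived reduction $R\lotimes_{\bb Z}\bb F_p$. Granted this identity, the hypothesis that $R\lotimes_{\bb Z}\bb F_p \to R'\lotimes_{\bb Z}\bb F_p$ is an equivalence immediately forces $\THH(R)/p \simeq \THH(R')/p$, which is the lemma.

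The identity rests on two standard ingredients. First, since $\THH(R)$ is naturally a module over $\THH(\bb Z)$ (via the unit map $\bb Z \to R$), associativity of the relative tensor product gives
\[
\THH(R)/p \;=\; \THH(R)\otimes_{S^0} S^0/p \;\simeq\; \THH(R)\otimes_{\THH(\bb Z)} \THH(\bb Z)/p,
\]
using that $\THH(\bb Z)/p \simeq \THH(\bb Z)\otimes_{S^0} S^0/p$. Second, I would invoke the symmetric monoidal structure on $\THH$ viewed as a functor from connective commutative ring spectra to cyclotomic spectra---equivalently, the Loday-type identification $\THH(R) \simeq R\otimes S^1$ computed in commutative ring spectra, where tensoring by a space is a left adjoint and hence preserves all colimits. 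This implies that $\THH$ sends pushouts of $\mathbb{E}_\infty$-rings to relative tensor products. Applied to the pushout square with corners $\bb Z, R, \bb F_p, R\lotimes_{\bb Z}\bb F_p$, this gives $\THH(R\lotimes_{\bb Z}\bb F_p) \simeq \THH(R)\otimes_{\THH(\bb Z)}\THH(\bb F_p)$. Combining yields
\[
\THH(R\lotimes_{\bb Z}\bb F_p) \otimes_{\THH(\bb F_p)} \THH(\bb Z)/p \;\simeq\; \THH(R)\otimes_{\THH(\bb Z)} \THH(\bb Z)/p \;\simeq\; \THH(R)/p,
\]
establishing the required identity.

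There is no serious obstacle; both ingredients are standard in the modern treatment of topological Hochschild homology. The only content lies in observing that the expression on the right-hand side of the displayed identity is genuinely a functor of $R\lotimes_{\bb Z}\bb F_p$ (as a connective $\mathbb{E}_\infty$-ring), from which invariance of $\THH/p$ under equivalences of the derived mod-$p$ reduction is automatic.
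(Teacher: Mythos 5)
Your reduction hinges on the displayed identity
\[
\THH(R)/p \;\simeq\; \THH(R\lotimes_{\bb Z}\bb F_p) \otimes_{\THH(\bb F_p)} \THH(\bb Z)/p,
\]
but the right-hand side is not well-defined as written: the relative tensor product over $\THH(\bb F_p)$ requires $\THH(\bb Z)/p$ to carry a $\THH(\bb F_p)$-module structure, and it has no such structure compatible with its tautological $\THH(\bb Z)$-module structure. (The only evident action comes from the augmentation $\THH(\bb F_p)\to H\bb F_p$ followed by $H\bb F_p\to \THH(\bb Z)\otimes_{H\bb Z}H\bb F_p\simeq \THH(\bb Z)/p$, but restricting that along $\THH(\bb Z)\to\THH(\bb F_p)$ kills the positive-degree homotopy of $\THH(\bb Z)$ and so does not recover the standard action.) Correspondingly, the first equivalence in your final display implicitly uses $\THH(\bb F_p)\otimes_{\THH(\bb F_p)}\THH(\bb Z)/p\simeq\THH(\bb Z)/p$, which presupposes exactly the missing structure. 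The two objects being conflated are genuinely different: $\pi_2\THH(\bb F_p)=\bb F_p$ whereas $\pi_2(\THH(\bb Z)/p)=0$. Each of your two ``standard ingredients'' is correct on its own, but they base-change $\THH(R)$ along two different $\THH(\bb Z)$-modules ($\THH(\bb F_p)$ versus $\THH(\bb Z)/p$), and nothing in the argument bridges them; note also that the base-change formula $\THH(R\lotimes_{\bb Z}\bb F_p)\simeq\THH(R)\otimes_{\THH(\bb Z)}\THH(\bb F_p)$ expresses $\THH$ of the reduction in terms of $\THH(R)$, which is the wrong direction for what the lemma needs.

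The statement has a much more elementary proof, which is the one the paper gives: the hypothesis says precisely that $HR\to HR'$ is a mod $p$ equivalence of spectra (since $HR\otimes_{H\bb Z}H\bb F_p\simeq HR/p$), mod $p$ equivalences are closed under smash products and arbitrary colimits, and $\THH(R)$ is the geometric realization of the cyclic bar construction with terms $(HR)^{\otimes n}$. No base-change formula for $\THH$ is required.
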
 
\begin{proof} 
The hypothesis is equivalent to saying that $HR\rightarrow HR'$ is a mod $p$
equivalence of spectra.  It follows that $(HR)^{\otimes n}\rightarrow
(HR')^{\otimes n}$ is a mod $p$ equivalence for all $n\geq 0$, and hence that
$\THH(R)\rightarrow \THH(R')$ is a mod $p$ equivalence, since mod $p$ equivalences are preserved under colimits.
\end{proof} 

\begin{lemma} 
\label{modplemma}
Let $R$ be a noetherian ring and let $\widehat{R}_p$ be its $p$-adic completion
(which is also the derived $p$-adic completion, as $R$ has bounded $p$-power
torsion). Then the map 
$\THH(R) \to  \THH(\widehat{R}_p)$ is a mod $p$ equivalence. 
\end{lemma}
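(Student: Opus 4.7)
The plan is to reduce via Lemma \ref{modplemma0} to a purely algebraic statement about the derived tensor product $R \lotimes_{\mathbb{Z}} \mathbb{F}_p$, and then exploit two classical facts about the $p$-adic completion of a noetherian ring: that it is flat, and that it induces an isomorphism modulo any power of $p$.

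More concretely, Lemma \ref{modplemma0} says it suffices to show that the map $R \lotimes_{\mathbb{Z}} \mathbb{F}_p \to \widehat{R}_p \lotimes_{\mathbb{Z}} \mathbb{F}_p$ is an equivalence of $H\mathbb{Z}$-modules (equivalently, of chain complexes). Using the fiber sequence $R \xrightarrow{p} R \to R \lotimes_{\mathbb{Z}} \mathbb{F}_p$ (and similarly for $\widehat{R}_p$), the homotopy groups of $R \lotimes_{\mathbb{Z}} \mathbb{F}_p$ are given by $R/p$ in degree $0$ and $R[p]$ (the $p$-torsion) in degree $1$, so we need to show both $R/p \to \widehat{R}_p/p$ and $R[p] \to \widehat{R}_p[p]$ are isomorphisms.

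The first isomorphism is immediate from the definition of the $p$-adic completion. For the second, I would use that $\widehat{R}_p$ is flat over $R$ (since $R$ is noetherian, by a standard application of Artin--Rees), so that $\widehat{R}_p \lotimes_{\mathbb{Z}} \mathbb{F}_p \simeq \widehat{R}_p \otimes_R (R \lotimes_{\mathbb{Z}} \mathbb{F}_p)$. It then remains to verify that, for any $R$-module $M$ annihilated by $p$, the natural map $M \to \widehat{R}_p \otimes_R M$ is an isomorphism. But such an $M$ is an $R/p$-module, and the canonical map $R/p \to \widehat{R}_p/p$ is an isomorphism, giving the result.

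The main obstacle — if there is one — is simply to be careful about when the derived and underived tensor products agree and to invoke the noetherian hypothesis in the right place (flatness of $\widehat{R}_p$ over $R$). Everything else is a routine manipulation of $p$-torsion modules.
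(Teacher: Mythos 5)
Your proof is correct, and it follows the paper's strategy in its first and essential step: reduce via Lemma~\ref{modplemma0} to showing that $R \lotimes_{\mathbb{Z}} \mathbb{F}_p \to \widehat{R}_p \lotimes_{\mathbb{Z}} \mathbb{F}_p$ is an equivalence. Where you diverge is in how you verify that equivalence. You compute both sides degreewise ($R/p$ in degree $0$ and the $p$-torsion in degree $1$) and invoke flatness of $\widehat{R}_p$ over $R$ together with $\widehat{R}_p/p^n \cong R/p^n$ --- both standard consequences of Artin--Rees for noetherian rings --- to see that the relevant maps on homotopy groups are isomorphisms. The paper instead observes in one line that a noetherian ring has bounded $p$-torsion, which (implicitly) forces the classical $p$-adic completion to agree with the derived $p$-completion, and derived reduction mod $p$ does not see the difference between a ring and its derived $p$-completion. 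Your route is more explicit and self-contained; the paper's is shorter but leaves the key implication unspoken. Both extract the noetherian hypothesis through standard commutative algebra, and I see no gap in your argument.
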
 
\begin{proof} 
A noetherian ring has bounded $p$-torsion, which implies that the map $R \to \widehat{R}_p$ induces an equivalence $R
\dotimes_\bb Z \mathbb{F}_p \simeq \widehat{R}_p \dotimes_\bb Z
\mathbb{F}_p$.  Thus the statement follows from Lemma \ref{modplemma0}.
\end{proof} 

The following is a slight extension of the continuity results of Dundas--Morrow
\cite{DundasMorrow}, who treated the case in which $R$ is a $\mathbb{Z}_{(p)}$-algebra.

\begin{proposition} 
\label{THHcont}
Let $R$ be a noetherian ring which is complete along an ideal $I\subseteq R$, and suppose that 
$R/pR$ is $F$-finite.
Then the map $\THH(R) \to \varprojlim_s \THH(R/I^s)$ is an equivalence modulo $p$.
\end{proposition}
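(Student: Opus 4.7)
My plan is to reduce the statement to the continuity theorem of Dundas--Morrow \cite{DundasMorrow} for $\mathbb{Z}_{(p)}$-algebras by replacing $R$ with its $p$-adic completion $\widehat{R}_p$. Since $R$ is noetherian, each quotient $R/I^s$ is also noetherian, so Lemma~\ref{modplemma} will give that both $\THH(R) \to \THH(\widehat{R}_p)$ and $\THH(R/I^s) \to \THH(\widehat{(R/I^s)}_p)$ are mod $p$ equivalences. Since the functor $X \mapsto X/p$ is exact and therefore commutes with sequential inverse limits of spectra, it will suffice to prove that
\[ \THH(\widehat{R}_p)/p \longrightarrow \varprojlim_s \THH(\widehat{(R/I^s)}_p)/p \]
is an equivalence.

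Next I would unpack the target. Using that $R$ is noetherian and $R/I^s$ is a finitely generated $R$-module, one has the identification $\widehat{(R/I^s)}_p = \widehat{R}_p \otimes_R R/I^s = \widehat{R}_p/I^s\widehat{R}_p$. The ring $\widehat{R}_p$ is itself noetherian (as the $p$-adic completion of a noetherian ring), is a $\mathbb{Z}_p$-algebra, and has $\widehat{R}_p/p = R/p$, so in particular its reduction modulo $p$ remains $F$-finite. What is left to verify, in order to invoke Dundas--Morrow, is that $\widehat{R}_p$ is $I\widehat{R}_p$-adically complete.

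I expect this last completeness check to be the only step that requires any care, though it is a standard exercise in commutative algebra. Because $R$ is noetherian and $I$-complete, every finitely generated $R$-module---and in particular each $R/p^n$---is $I$-adically complete, so that $R/p^n \simeq \varprojlim_s R/(p^n+I^s)$. Interchanging the two inverse limits then gives
\[ \widehat{R}_p = \varprojlim_n R/p^n = \varprojlim_n \varprojlim_s R/(p^n+I^s) = \varprojlim_s \widehat{(R/I^s)}_p = \varprojlim_s \widehat{R}_p/I^s\widehat{R}_p, \]
which is exactly the statement that $\widehat{R}_p$ is $I\widehat{R}_p$-complete. At this point the hypotheses of the Dundas--Morrow continuity theorem are in place for the noetherian $\mathbb{Z}_{(p)}$-algebra $\widehat{R}_p$ together with its ideal $I\widehat{R}_p$, and I would simply cite it to finish. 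The argument imports the genuinely hard part of the continuity statement as a black box; the real content of the proof is the commutative-algebra reduction that puts us into the $\mathbb{Z}_{(p)}$-algebra situation without losing either $I$-completeness or $F$-finiteness modulo $p$.
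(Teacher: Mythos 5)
Your proposal is correct and follows essentially the same route as the paper: reduce to the Dundas--Morrow continuity theorem for the $p$-adic completion $\widehat{R}_p$, using Lemma~\ref{modplemma} to pass between $R$ (resp.\ each $R/I^s$) and its $p$-adic completion, and the identification $\widehat{(R/I^s)}_p \simeq \widehat{R}_p/I^s\widehat{R}_p$. The only difference is that you spell out the ``standard facts in commutative algebra'' (the $I\widehat{R}_p$-adic completeness of $\widehat{R}_p$ via the interchange of inverse limits) that the paper cites without proof, and your verification of these is correct.
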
 
\begin{proof} 
First we note that $\hat R_p = \varprojlim_s \widehat{R/I^s}_p$ and $\widehat{R/I^s}_p = \hat R_p /I^s\hat R_p$.  Indeed, $R$ being noetherian, we have that on finitely generated $R$-modules the $p$-adic completion identifies with the derived $p$-completion. It therefore preserves short exact sequences and sequential inverse limits along surjective transition maps, as these are examples of derived limits.

In particular, $\hat R_p$ is $I\hat
R_p$-adically complete. Therefore, the canonical map
\[ \THH(\widehat{R}_p)/p \To \varprojlim_s \THH( \widehat{R}_p/I^s
\widehat{R}_p)/p\] is an equivalence by \cite[Thm.~4.5]{DundasMorrow}. But by the above, each quotient $\hat R_p/I^s\hat
R_p$ coincides with the $p$-adic completion of $R/I^s$, so that applying
Lemma~\ref{modplemma} to $R$ and to each $R/I^s $ completes the proof.
\end{proof}

We can now state and prove our main result of the subsection, which resolves the continuity question in algebraic $K$-theory for all complete noetherian rings satisfying an $F$-finiteness hypothesis.

\begin{theorem} 
\label{Kcont}
Let $R$ be a noetherian ring which is complete along an ideal $I$, and suppose that 
$R/pR$ is $F$-finite. Then the map $K(R) \to \varprojlim_s K(R/I^s)$ is an
equivalence modulo $p$. 
\end{theorem}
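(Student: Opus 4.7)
The plan is to combine the main rigidity result (Theorem~\ref{mainthm}) with the topological-Hochschild-continuity result of Proposition~\ref{THHcont}. Since $R$ is $I$-adically complete, it is also $I^s$-adically complete for every $s\ge 1$, so each pair $(R, I^s)$ is henselian. Theorem~\ref{mainthm} therefore yields equivalences $\knf(R)/p \xrightarrow{\sim} \knf(R/I^s)/p$ for all $s$. Since the transition maps $\knf(R/I^{s+1})/p \to \knf(R/I^s)/p$ are then necessarily equivalences, the tower $\{\knf(R/I^s)/p\}_s$ is essentially constant, and the natural map $\knf(R)/p \to \varprojlim_s \knf(R/I^s)/p$ is an equivalence.

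Next, I would upgrade the mod $p$ continuity of $\THH$ provided by Proposition~\ref{THHcont} to the corresponding statement for $\TC$. Each $\THH(R/I^s)$ lies in $\CycSp_{\ge 0}$, so by Remark~\ref{TCinvlim} the inverse limit of the tower $\{\THH(R/I^s)\}$ exists in $\CycSp$, is computed at the level of underlying spectra, and satisfies
\[
\varprojlim_s \TC(R/I^s) \;\simeq\; \TC\bigl(\varprojlim_s \THH(R/I^s)\bigr).
\]
By Proposition~\ref{THHcont}, the canonical map of cyclotomic spectra $\THH(R) \to \varprojlim_s \THH(R/I^s)$ is a mod $p$ equivalence on underlying spectra; since the forgetful functor $\CycSp\to \Sp$ is conservative and exact, this map becomes an equivalence in $\CycSp$ after smashing with $S^0/p$. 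Applying $\TC$, which is an exact functor and hence commutes with $/p$, and noting that $\varprojlim$ commutes with the cofiber defining $/p$ (as $/p$ is a shift of a fiber), I obtain that the natural map $\TC(R)/p \to \varprojlim_s \TC(R/I^s)/p$ is an equivalence.

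Finally, consider the map of fiber sequences $\knf(A)/p \to K(A)/p \to \TC(A)/p$ for $A = R$ and, after taking the inverse limit over $s$, for the tower of $A = R/I^s$. The preceding two paragraphs show that the outer two vertical comparison maps are equivalences, so the middle one, namely $K(R)/p \to \varprojlim_s K(R/I^s)/p$, is as well. There is no substantial obstacle: the theorem is essentially a repackaging of Theorem~\ref{mainthm} and Proposition~\ref{THHcont}, with the only real verification being the promotion of mod $p$ continuity for $\THH$ to mod $p$ continuity for $\TC$ via the Nikolaus--Scholze formalism and Remark~\ref{TCinvlim}.
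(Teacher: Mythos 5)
Your proposal is correct and follows essentially the same route as the paper: reduce via Theorem~\ref{mainthm} to continuity of $\TC/p$, then deduce that from Proposition~\ref{THHcont} together with Remark~\ref{TCinvlim}. You have merely spelled out in more detail the steps (henselianity of $(R,I^s)$, passage from $\THH$ to $\TC$ via the inverse-limit compatibility of cyclotomic spectra, and the fiber-sequence comparison) that the paper's two-line proof leaves implicit.
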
 
\begin{proof} 
Using Theorem~\ref{mainthm}, the result reduces to the
statement that $\TC(R)/p \to \varprojlim_s \TC(R/I^s)/p$ is an equivalence.  By Remark~\ref{TCinvlim}, this in turn follows from the analogous statement for $\THH$, which is given by Proposition~\ref{THHcont}. 
\end{proof} 

\begin{remark}
\label{henspairequivtocontinuity}
One can see that Theorem~\ref{Kcont} is in fact equivalent to Theorem~\ref{mainthm} (our main theorem on henselian pairs).  Indeed, suppose Theorem~\ref{Kcont} is known, and let $(R,I)$ be a henselian pair.  We want to show that $\knf(R)/p\rightarrow \knf(R/I)/p$ is an equivalence.  Since $K$ and $\TC/p$ commute with filtered colimits, so does $\knf/p$, hence we can assume $(R,I)$ is the henselization of a finite type $\mathbb{Z}$-algebra at an ideal.  

Let $\hat{R}$ denote the $I$-adic completion of $R$. 
By N\'eron--Popescu desingularization \cite{Popescu1, Popescu2} (see also
\cite[Tag 07BW]{stacks-project}), applicable in view of the geometric regularity of $R \to
\hat{R}$ (see \cite[7.8.3(v)]{EGA_IV_2}), the map $R\rightarrow \hat{R}$ is
ind-smooth, i.e., we can write $\hat{R}$ as a filtered colimit of smooth
$R$-algebras.
Given a smooth $R$-algebra $A$ and a map $A \to \hat{R}$ of
$R$-algebras, it follows
that the map $R \to A$ admits a section by Elkik's theorem
(Theorem~\ref{elkikthm}). 
Therefore,  we deduce that $R\rightarrow \hat{R}$ is a filtered colimit
of split injections.  Thus the claim for $(R,I)$ (i.e., that $\knf(R)/p \to
\knf(R/I)/p$ is an equivalence) will follow from the claim for
$(\hat{R}, I \hat{R})$.  (This is a standard Artin approximation argument.)

However, by the above continuity in $K$ and $\TC$, we have
$$\knf(\hat{R})/p\overset{\simeq}{\rightarrow}\varprojlim
\knf(R/I^n)/p.$$  On the other hand, the right hand side is a constant limit
with value $\knf(R/I)/p$, by Theorem~\ref{DGMthm}.  This gives the claim for
$(\hat{R}, I\hat{R})$, and therefore in general, by the above argument.
\end{remark}

We finish the subsection by checking that the $F$-finiteness hypothesis in the
above theorem appears to be necessary. This arises from a well-known problem in
Milnor $K$-theory, namely that the complexity of symbols modulo powers of the
ideal in question can increase without bound; to make this precise we closely
follow the exposition of \cite[App.~B]{BlochEsnaultKerz2013}, where Bloch--Esnault--Kerz proved an analogous discontinuity result in characteristic zero.

\begin{theorem}
Let $k$ be an field of characteristic $p$ which is not $F$-finite, i.e., any
$p$-basis\footnote{We refer to \cite[\S26]{Matsumura1989} for a reminder on the notion of a $p$-basis, including the fact that a collection of elements $\{b_i\}$ of $k$ forms part of a $p$-basis if and only if their differentials $\{db_i\}$ are linearly independent in $\Omega^1_k$.} of $k$ has infinite cardinality. Then the map $K(k[[t]])/p \to
\varprojlim_s K(k[t]/(t^s))/p$ is not an equivalence; more precisely, the map on $\pi_2$ is not surjective.
\end{theorem}
\begin{proof}
We begin with several straightforward reductions to Milnor $K$-theory. Firstly,
since $k[[t]]^\times$ is $p$-torsion-free, the canonical map $K_2(k[[t]])/p\to
\pi_2 (K(k[[t]])/p)$ is an isomorphism. The $p$-torsion in the
pro abelian group $\{k[t]/(t^s)^\times\}_s$ is also zero, since the transition
map $k[t]/(t^{ps})^\times\to k[t]/(t^s)^\times$ clearly kills all $p$-torsion in
the domain, and therefore $\{K_2(k[t]/(t^s))/p\}_s\to
\{\pi_2(K(k[t]/(t^s))/p) \}_s$ is an isomorphism of pro abelian groups. Secondly, $k[[t]]$ and
$k[t]/(t^s)$ are local rings with infinite residue field, whence a classical
result in algebraic $K$-theory \cite[\S8]{vanderKallen1977} states that  $K_2^M(k[[t]])\isoto K_2(k[[t]])$
and $K_2^M(k[t]/(t^s))\isoto K_2(k[t]/(t^s))$. Finally, the
canonical map $\pi_2 (\varprojlim_s K(k[t]/(t^s))/p)\to \varprojlim_s\pi_2(
K(k[t]/(t^s))/p)$ is surjective, by the Milnor sequence. In conclusion, to prove the theorem it is sufficient to show that the canonical map $K_2^M(k[[t]])/p\to\varprojlim_sK_2^M(k[t]/(t^s))/p$ is not surjective.

To do this, we will detect symbols using the $\dlog$ maps from $K_2^M$ to absolute K\"ahler differentials $\Omega^2_{-}:=\Omega^2_{-/\bb F_p}$:
\[\xymatrix{
\Omega^2_{k[[t]]}\ar[r]&\varprojlim_s\Omega^2_{k[t]/(t^s)}\\
K_2^M(k[[t]])/p\ar[u]_{\op{dlog}}\ar[r] &\varprojlim_s K_2^M(k[t]/(t^s))/p\ar[u]_{\op{dlog}}
}\]
It remains to construct an element in the bottom right of the diagram whose image in the top right does not come from the top left.

We now closely follow Bloch--Esnault--Kerz, with the necessary modifications to deal with the fact that we will eventually need to restrict to dlog forms. Given any map of rings $R\to S$ and differential form $\tau\in\Omega^2_{S/R}$, define its {\em weight} $w(\tau)$ to be the smallest integer $n$ for which it is possible to write $\tau=\sum_{i=1}^na_idb_i\wedge dc_i$ for some $a_i,b_i,c_i\in S$. 
If $\ell$ is a subfield of $k$ and $b_1,\dots,b_n,c_1,\dots,c_n\in k$ form part
of a $p$-basis for $k$ relative to $\ell$, then the elements $d b_1,\dots,d
b_n,d c_1,\dots,d c_n$ are linearly independent in the $k$-vector space
$\Omega^1_{k/\ell}$ and so this element $\sum_{i=1}^ndb_i\wedge dc_i$ has weight
$\ge n$ in $\Omega^2_{k/\ell}$ (Lemma~\ref{linalg} below).

Next consider the derivation
$$k[t]/(t^s) \rightarrow \Omega^1_k \otimes_k k[t]/(t^s)$$
which ``holds $t$ constant", so $\sum_n c_n t^n \mapsto \sum_n (dc_n)t^n$.  This extends by multiplicativity to a map
$$\Omega^\ast_{k[t]/(t^s)}\rightarrow \Omega^\ast_k \otimes_k k[t]/(t^s)$$
which splits the canonical map $\Omega^\ast_k\otimes_k k[t]/(t^s)\rightarrow
\Omega^\ast_{k[t]/(t^s)}$.   Restricting to degree $\ast=2$ and passing to the
limit as $s\to\infty$ defines \[e:\varprojlim_s\Omega^2_{k[t]/(t^s)}\To
\varprojlim_s\Omega^2_k\otimes_kk[t]/(t^s),\] where each element on the right
may be expressed as $\sum_{j\ge 0}\tau_i t^i$ for some unique
$\tau_0,\tau_1,\dots\in \Omega^2_k$ (the {\em $t$-adic coefficients} of the
element). Bloch--Esnault--Kerz \cite[Lem.~B2]{BlochEsnaultKerz2013} show
(assuming $k = \mathbb{C}$, but the argument works in general) that the image of the composition \[\Omega^2_{k[[t]]}\To \varprojlim_s\Omega^2_{k[t]/(t^s)}\To \varprojlim_s\Omega^2_k\otimes_kk[t]/(t^s)\] lands inside the set of those elements $\sum_{j\ge 0}\tau_j t^j$ whose $t$-adic coefficients satisfy the following: there exists $N\ge0$ such that $w(\tau_j)\le N\binom{j+2}j$ for all $j\ge0$.

We are now prepared to complete the proof by constructing a bad element of $\varprojlim_s K_2^M(k[t]/(t^s))/p$. First pick a sequence $0<w_1<w_2<\cdots$ of integers growing sufficiently fast such that no value of $N$ satisfies $w_j\le N\binom{2j}{2j-2}$ for all $j\ge1$. Then pick a sequence of subfields $\bb F_p=k_0\subset k_1\subset k_2\subset\cdots$ of $k$ such that any $p$-basis for $k_j$ relative to $k_{j-1}$ has $\ge2w_j$ elements; let $b_1^{(j)},\cdots,b_{w_j}^{(j)},c_1^{(j)},\cdots,c_{w_j}^{(j)}\in k_i$ be part of such a relative $p$-basis. Set \[f_s:=\sum_{j=1}^{s-1}\sum_{i=1}^{w_j}\{1+t^jb_i^{(j)},1+t^jc_i^{(j)}\}\in K_2^M(k[t]/(t^{s}))/p\] and note that the transition map $K_2^M(k[t]/(t^{s}))/p\to K_2^M(k[t]/(t^{s-1}))/p$ exactly kills the $j=s-1$ part of the sum and therefore sends $f_s$ to $f_{s-1}$; we therefore may define $f:=\varprojlim_s f_s\in\varprojlim_sK_2^M(k[t]/(t^{s}))/p$.

Let $\sum_{i\ge 1}\tau_it^i$ be the expansion of $e(\dlog
f)\in\varprojlim_s\Omega^2_k\otimes_kk[t]/(t^s)$. Noting that
\[\dlog(1+t^jb)\wedge\dlog(1+t^jc)\equiv t^{2j}da\wedge db\text{ mod
}dt,\,t^{3j}\] with all the higher order terms given by various expressions in
$b,c$, we see that \[\tau_{2s-2}\equiv\sum_{i=1}^{w_{s}}b_i^{(s)}\wedge
c_i^{(s)} \text{ mod }{\Omega^2_{k_{s-1}}}.\] By the second paragraph of the proof we deduce that the image of $\tau_{2s-2}$ in $\Omega^2_{k/k_{s-1}}$ has weight $\ge w_s$, whence a fortiori $w(\tau_{2s-2})\ge w_s$. By choice of the sequence $0< w_1<\cdots$, it follows that $\dlog f$ cannot be lifted to $\Omega^2_{k[[t]]}$, which completes the proof.
\end{proof}

Let $V$ be a vector space over a field $k$. 
Given a 2-form $\omega \in \bigwedge^2 V$, we define the \emph{weight}
of $\omega$ to be the minimal $n$ such that there exist elements
$\left\{x_i, y_i|1 \leq i \leq n\right\} \subset V$ such that
$\omega = \sum_{i=1}^n x_i \wedge y_i$. 
The following linear algebra lemma was used in the above proof.
\begin{lemma} 
\label{linalg} 
Let $\left\{u_1, \dots, u_n, v_1, \dots, v_n\right\}$ be linearly independent in
$V$. Then the form $\sum_{i=1}^n u_i \wedge v_i$ has weight $n$. 
\end{lemma}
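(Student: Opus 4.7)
The plan is to prove $w(\omega) \geq n$ by constructing a characteristic-independent linear invariant. Note that the most obvious approach, namely observing that the $n$-fold wedge $\omega^{\wedge n} = n! \, u_1 \wedge v_1 \wedge \cdots \wedge u_n \wedge v_n$ is nonzero, fails in characteristics dividing $n!$. Instead I will use a contraction map.

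Given any element $\omega \in \bigwedge^2 V$, I will associate to it an intrinsic $k$-linear map $\omega^\flat : V^\ast \to V$, defined on a pure wedge by $\omega^\flat(\phi) = \phi(x)\,y - \phi(y)\,x$ for $\omega = x \wedge y$ and extended by linearity. This is well-defined, and for any presentation $\omega = \sum_{i=1}^m x_i \wedge y_i$, the image of $\omega^\flat$ is contained in $\mathrm{span}(x_1, y_1, \ldots, x_m, y_m)$. Applied to an optimal presentation, this yields
\[
\mathrm{rank}(\omega^\flat) \leq 2\, w(\omega).
\]

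For the matching lower bound on the specific $\omega = \sum_{i=1}^n u_i \wedge v_i$ of the lemma, I will exhibit $2n$ linearly independent elements in the image. Since $\{u_1, \ldots, u_n, v_1, \ldots, v_n\}$ is linearly independent in $V$, I can extend this collection to a basis of $V$ and let $u_j^\ast, v_j^\ast \in V^\ast$ be the associated coordinate functionals, so $u_j^\ast(u_i) = \delta_{ij}$, $u_j^\ast(v_i) = 0$, and analogously for $v_j^\ast$. A direct calculation from the formula for $\omega^\flat$ gives $\omega^\flat(u_j^\ast) = v_j$ and $\omega^\flat(v_j^\ast) = -u_j$ for each $j$, so the image of $\omega^\flat$ contains the linearly independent set $\{u_1, \ldots, u_n, v_1, \ldots, v_n\}$, hence has rank $\geq 2n$.

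Combining the two bounds gives $2n \leq \mathrm{rank}(\omega^\flat) \leq 2\, w(\omega)$, whence $w(\omega) \geq n$; the reverse inequality is immediate from the given expression for $\omega$. There is no real obstacle here beyond selecting the right invariant: the contraction $\omega^\flat$ is the characteristic-free substitute for the rank of the associated alternating bilinear form, and the only point requiring care is verifying that the coordinate functionals $u_j^\ast, v_j^\ast$ can indeed be chosen, which is why the linear independence hypothesis is used.
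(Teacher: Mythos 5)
Your proof is correct and follows essentially the same route as the paper: the contraction $\omega^\flat$ is exactly the interior product $f_\omega\colon V^\vee\to V$ used there, with the same two bounds $\mathrm{rank}(f_\omega)\le 2w(\omega)$ and $\mathrm{rank}(f_{\omega_0})=2n$ via the dual basis. No issues.
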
 
\begin{proof} 
Recall that we have an \emph{interior product} $V^{\vee} \otimes \bigwedge^2 V
\to V$ given  by $v \otimes (x \wedge y) \mapsto \left \langle v,
x\right\rangle y - \left \langle v, y\right\rangle x$. 
Given $\omega \in \bigwedge^2 V$, we obtain a map $f_\omega: V^{\vee} \to V$. 
If $\omega$ has weight $w$, then the rank of $f_{\omega}$ is at most $2w$. 
Completing $\left\{u_1,\dots, u_n, v_1, \dots, v_n\right\}$ to a basis of $V$
and forming the dual basis, 
one now sees that the rank of $f_{\omega_0}$ for $\omega_0 = \sum_{i=1}^n u_i
\wedge v_i$ is $2n$; together, this implies the lemma. 
\end{proof}

\subsection{$p$-adic continuity and $K$-theory}
We now specialize the continuity question to the case where the ideal is $(p)$.
In this case, we can often obtain stronger pro isomorphisms rather than simply isomorphisms on inverse limits and, secondly, hypotheses such as noetherianness and $F$-finiteness are no longer necessary. 

\begin{definition}
Let $R$ be a commutative ring. We say that $K$-theory is \emph{$p$-adically continuous} for $R$ if the map of spectra
$K(R) \to \varprojlim_i K(R/p^iR)$ is an equivalence modulo $p$.
\end{definition}

One has the following general result.

\begin{theorem}[Geisser--Hesselholt \cite{GHlocal}]
\label{GHcontinuity}
If $R$ is a local ring which is $p$-torsion-free and henselian along $(p)$, then $K$-theory is $p$-adically continuous at $R$. Moreover, the
map $K(R)/p \to \left\{ K(R/p^iR)/p\right\}_{i \geq 1}$ 
induces an isomorphism of pro abelian groups 
upon applying $\pi_j$ for any $j$. 
\end{theorem}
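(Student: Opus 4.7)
The plan is to leverage Theorem~\ref{mainthm} to reduce this $p$-adic continuity statement for $K/p$ to the analogous one for $\TC/p$. Since $R$ is henselian along $(p)$ it is henselian along $(p^i)$ for every $i\ge 1$, so the pair $(R, p^iR)$ is henselian and Theorem~\ref{mainthm} gives that the canonical map $\knf(R)/p\to \knf(R/p^iR)/p$ is an equivalence for each $i$. Fitting the maps $\knf(R)/p\to \knf(R/p^{i+1}R)/p\to \knf(R/p^iR)/p$ into a commutative triangle with the outer arrow an equivalence shows the transition maps in the tower $\{\knf(R/p^iR)/p\}_i$ are also equivalences. Hence the tower is pro-constant with value $\knf(R)/p$, and in particular each $\pi_j$ tower is pro-constantly isomorphic to $\pi_j\knf(R)/p$. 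Via the fiber sequence $\knf\to K\to \TC$, the theorem therefore reduces to the analogous pro-isomorphism statement for $\TC/p$.

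For the $\TC/p$ statement, the $p$-torsion-freeness of $R$ gives $R\otimes^{\mathbb L}_{\mathbb{Z}}\mathbb{F}_p\simeq R/p$, and a direct computation with the free resolution $\mathbb{Z}\xrightarrow{p}\mathbb{Z}$ of $\mathbb{F}_p$ shows that the pro-system of derived reductions $\{R/p^iR\otimes^{\mathbb L}_{\mathbb{Z}}\mathbb{F}_p\}_i$ is pro-equivalent to the constant pro-ring $R/p$: both sides have $\pi_0=R/p$ constant, while each term has $\pi_1\cong R/p$ but the transition maps on $\pi_1$ are identically zero. Lemma~\ref{modplemma0}, applied levelwise through the cyclic bar construction $\THH=|(HR)^{\otimes \bullet+1}|$ (each finite tensor power being finitary in the ring argument), upgrades this to a pro-equivalence $\{\THH(R/p^iR)/p\}_i\simeq \THH(R)/p$ of underlying spectra, with a compatible enhancement as cyclotomic spectra.

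To propagate this from $\THH$ to $\TC/p$, I would work in the classical description \eqref{keyformulaclassical} and induct on $r$ via the isotropy separation cofiber sequence \eqref{isotropysep}
\[
\THH(R)_{hC_{p^{r-1}}}\to \TR^r(R;p)\xrightarrow{R}\TR^{r-1}(R;p),
\]
noting that $(-)_{hC_{p^{r-1}}}$ is finitary and so preserves pro-equivalences; this yields pro-equivalence for each $\TR^r(-;p)/p$ and, via the fiber sequence with $\mathrm{id}-F$, for each $\TC^r(-;p)/p$. Finally, Proposition~\ref{prop_pro_constant} identifies, in any fixed truncation range, the inverse tower $\{\TC^r(-;p)/p\}_r$ with $\TC(-)/p$ pro-constantly, so the pro-equivalence passes to $\TC/p$ and the proof is complete.

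The main obstacle will be the structural book-keeping needed to rigidify the pro-equivalence of derived mod-$p$ rings into one of $\THH$ with its full cyclotomic structure, compatibly with the $C_{p^n}$-fixed point towers. If this turns out to require more care than anticipated, the cleanest alternative is to invoke \cite{GHlocal} directly for the $\TC/p$ half of the argument; the contribution of our method then lies in the reduction of $\knf/p$ continuity (and hence all of $K/p$ continuity) to $\TC/p$ continuity via Theorem~\ref{mainthm}, which previously required separate arguments tailored to each case.
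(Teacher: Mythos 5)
Your proposal is correct and is essentially the approach the paper itself takes: Theorem~\ref{GHcontinuity} is quoted from Geisser--Hesselholt, but the paper reproves a generalization of it in the same subsection (Theorems~\ref{THHmodp}, \ref{TCderpcontinuous}, \ref{derivedpadiccont} and the underived theorem that follows) by exactly your strategy --- apply Theorem~\ref{mainthm} to the henselian pairs $(R,p^iR)$ to reduce everything to $\TC$, note that $p$-torsion-freeness makes the tower of derived mod-$p$ reductions pro-constant with value $R/p$, and propagate this through the cyclic bar construction for $\THH$ (this is Lemma~\ref{transfernilp} plus Lemma~\ref{simplicialalmostquickconv}; note the levelwise maps are \emph{not} mod-$p$ equivalences, so one really needs the pro-statement rather than a levelwise application of Lemma~\ref{modplemma0}) and then to $\TC/p$. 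The only implementation difference is in the last step, where the paper invokes Lemma~\ref{TCalmostquickconv} (based on the Nikolaus--Scholze formula) instead of your induction over the classical $\TR^r$-towers combined with Proposition~\ref{prop_pro_constant}; both tools are provided in the paper and either closes the argument.
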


Although the previous theorem concerns $p$-adic rings, its proof uses Gabber rigidity for $\mathbb{Z}[1/p]$-algebras, via a clever trick due to Suslin.  Here we will obtain a generalization of Theorem \ref{GHcontinuity} using our version of rigidity, which directly applies to $p$-adic rings.  We observe also that the proof naturally yields something slightly
stronger than an equivalence of pro abelian groups. 

In this section, it will be convenient to work not only with commutative
rings, but also with more general ring spectra.  Hence, we will use the following variant. 

\begin{var}
The functors $K, \TC$ are defined not only for ordinary rings $R$, but more
generally for arbitrary $\mathbb{E}_1$-ring spectra $R$. For such $R$, we define
$\knf(R)$ similarly, as the fiber of the cyclotomic trace $K(R) \to \TC(R)$. 
\end{var}

Part of the theorem of Dundas--Goodwillie--McCarthy \cite{dgm} states that if $R$
is a connective $\mathbb{E}_1$-algebra, then the map $\knf(R) \to \knf(\pi_0 R)$
is an equivalence. 
In this sense, there is no extra generality afforded by the above variant. On
the other hand, we will find that it is often easier to control $\TC$ for appropriately
``derived'' constructions than underived constructions.

We next review some facts about nilpotent towers; compare \cite{Mthick}. 

\begin{definition} 
Let $\left\{A_i\right\}_{i \geq 1}$ be a tower of abelian groups. We say that
the tower is \emph{nilpotent} if there exists $N> 0$ such that all the maps
$A_{i+N} \to A_i$ are zero. We say that
the tower is \emph{quickly converging} if there exists $r \in \mathbb{Z}_{>0}$ such
that the tower $\{\mathrm{im}(A_{i+r} \to A_i)\}_{i\ge1}$ is eventually constant. 
This is stronger than the Mittag--Leffler condition. 
\label{quickconvab}
\end{definition}

By \cite[Lemma 3.10]{Mthick}, it follows that the collection of towers of
abelian groups which are quickly converging forms an abelian subcategory of the
category of towers which is closed under extensions. 
It thus follows that if $\left\{A_i\right\}$ is a quickly converging tower and
$A$ is the inverse limit, then the kernel and cokernel of the map of towers
$\left\{A\right\} \to \left\{A_i\right\}$ are both nilpotent. 
In particular, the category of quickly converging towers is the smallest abelian 
subcategory containing the nilpotent towers and the constant towers which is
closed under extensions. 

\begin{definition}
Let $\left\{X_i\right\}_{i \geq 1}$ be a tower in $\Sp$, i.e., $\{X_i\}_{i\ge1}\in\op{Tow}(\Sp):=\op{Fun}(\bb N^\sub{op},\Sp)$. 
We say that the tower is \emph{nilpotent} if there exists $N> 0$ such that all the
maps $X_{i+N} \to X_i$ are nullhomotopic. The collection of nilpotent towers
forms a thick subcategory of the stable $\infty$-category $\op{Tow}(\Sp)$. 
We say that a tower $\left\{X_i\right\}$ is \emph{quickly converging}
if the cofiber of the map of towers $\left\{X\right\}
\to \left\{X_i\right\}$ is nilpotent, where $X:=\varprojlim_iX_i$, or equivalently if $\left\{X_i\right\}$ lies in the smallest thick subcategory of towers containing the constant towers and the nilpotent towers.
\end{definition}

\begin{lemma}
Let $\left\{X_i\right\}_{i\ge1}$ be a tower in $\Sp$.
 If $\left\{X_i\right\}$ is quickly converging,  the
tower $\left\{\pi_j X_i\right\}$ of abelian groups is quickly converging for each $j$. 
The converse holds 
if we suppose that each $X_i$ has homotopy groups concentrated in the fixed range
$[a,b]$.
\end{lemma}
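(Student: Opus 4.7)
The plan is to treat the two directions separately, using a thick-subcategory argument in the forward direction and a Postnikov induction in the reverse direction.

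For the forward direction, I would consider the full subcategory $\mathcal{T} \subset \operatorname{Fun}(\mathbb{N}^{\mathrm{op}}, \Sp)$ consisting of those towers $\{X_i\}$ such that $\{\pi_j X_i\}$ is a quickly converging tower of abelian groups for every $j \in \mathbb{Z}$. By the characterization of quickly converging towers of spectra as the smallest thick subcategory containing constants and nilpotents, it suffices to verify that $\mathcal{T}$ is thick and contains both classes. Constant and nilpotent towers of spectra clearly lie in $\mathcal{T}$, since $\pi_j$ of a constant tower is constant and $\pi_j$ of a tower whose $N$-step transition maps are nullhomotopic has $N$-step transition maps equal to zero. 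Closure of $\mathcal{T}$ under retracts is immediate from Definition~\ref{quickconvab}. The one nontrivial point is closure under cofibers: given a cofiber sequence $\{A_i\} \to \{B_i\} \to \{C_i\}$ with the outer terms in $\mathcal{T}$, one obtains a long exact sequence of towers of abelian groups, and $\{\pi_j B_i\}$ fits as an extension between the cokernel and the kernel of two morphisms between quickly converging towers. Here one invokes the fact (recalled in the excerpt from \cite[Lemma 3.10]{Mthick}) that quickly converging towers of abelian groups form an abelian subcategory closed under extensions, so that kernels, cokernels, and extensions stay in the class.

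For the converse direction, suppose each $X_i$ has homotopy concentrated in $[a,b]$, and proceed by induction on $b - a$. In the base case $a = b$ the tower reduces to $\{\Sigma^a H A_i\}$ where $A_i = \pi_a X_i$, and I would show that the Eilenberg--MacLane functor $H$ sends quickly converging towers of abelian groups to quickly converging towers of spectra. Indeed, writing $A = \varprojlim A_i$, the kernel $\{K_i\}$ and cokernel $\{C_i\}$ of $\{A\} \to \{A_i\}$ are nilpotent towers of abelian groups; applying $\Sigma^a H$ to the two short exact sequences extracted from the resulting four-term exact sequence yields cofiber sequences of towers of spectra whose outer terms are either constant or nilpotent (nilpotence is preserved because a zero map of abelian groups in the unique relevant degree gives a nullhomotopic map of Eilenberg--MacLane spectra). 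Hence $\{\Sigma^a H A_i\}$ lies in the thick subcategory generated by constants and nilpotents.

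For the inductive step, I would use the functorial Postnikov cofiber sequence
\[ \tau_{\leq b-1} X_i \to X_i \to \Sigma^b H \pi_b X_i, \]
which assembles into a cofiber sequence of towers of spectra by naturality. Both the left and right towers have homotopy in a range of width strictly less than $b - a + 1$, and both satisfy the hypothesis that $\{\pi_j -\}$ is quickly converging for each $j$ (for the left tower this uses that truncation at level $b-1$ is an isomorphism on $\pi_j$ for $j \leq b-1$ and zero otherwise). By the inductive hypothesis the outer terms are quickly converging towers of spectra, hence so is $\{X_i\}$, since quickly converging towers form a thick subcategory. The main obstacle is the forward direction's closure under cofibers, which depends essentially on the abelian-subcategory structure of quickly converging towers of abelian groups; everything else is formal once the base case of the $H$ functor is settled.
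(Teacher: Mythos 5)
Your proof is correct and takes essentially the same route as the paper's: a thick-subcategory argument for the forward direction (with closure under cofibers supplied by the long exact sequence and the fact that quickly converging towers of abelian groups form an abelian subcategory closed under extensions), and a d\'evissage to the single-degree case for the converse, where the Eilenberg--MacLane functor transports the constant/nilpotent decomposition of quickly converging towers of abelian groups to spectra. One minor slip: the Postnikov cofiber sequence should read $\Sigma^b H\pi_b X_i \to X_i \to \tau_{\leq b-1}X_i$ rather than the reverse, but this does not affect the thick-subcategory argument.
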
 
\begin{proof} 
It follows from \cite[Lemma 3.11]{Mthick} that if $\left\{X_i\right\}$ is
quickly converging, then the towers of homotopy groups are 
quickly converging. 
Indeed, if $\left\{X_i\right\}$ is nilpotent or constant, then the tower of
homotopy groups is clearly quickly converging, and the referenced result shows
that the condition on homotopy groups is stable in cofiber sequences of towers. 
For the converse direction, a d\'evissage reduces to the
case where $a= b$. In this case, it follows from the paragraph following 
Definition~\ref{quickconvab}: that is, quickly convergent towers of abelian
groups are built up from constant and nilpotent towers. 
\end{proof} 

\begin{definition} 
We say that a tower $\left\{X_i\right\}$ of spectra  is \emph{almost nilpotent}
if for each $n \in \mathbb{Z}$, the truncated tower $\left\{\tau_{\leq n}
X_i\right\}$ is nilpotent, or equivalently if each tower $\left\{\pi_n
(X_i)\right\}$ of abelian groups is nilpotent. We say that a tower is \emph{almost quickly
converging} if for each $n$, the tower $\left\{\tau_{\leq n} X_i\right\}$ is
quickly converging. If the tower is uniformly bounded below, then it is almost
quickly converging if 
and only  if each tower $\left\{\pi_n
(X_i)\right\}$ of abelian groups is quickly convergent.  
\end{definition}

The following then is a direct consequence of \cite[Lemma 3.11]{Mthick}. 
\begin{lemma} 
The almost nilpotent and almost quickly converging towers 
form thick subcategories of $\mathrm{Tow}( \Sp)$ including the nilpotent and
quickly converging towers respectively.
\end{lemma}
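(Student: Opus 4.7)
The plan is to reduce the thickness statements for the ``almost'' variants to thickness of the corresponding unrestricted classes of towers, by exploiting that each truncation functor $\tau_{\le n}:\Sp\to\Sp$ is exact.

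First I would verify that $\tau_{\le n}:\Sp\to\Sp$ preserves cofiber sequences. This is because $\tau_{\le n}$ is left adjoint to the inclusion $\Sp_{\le n}\hookrightarrow\Sp$ and hence preserves all colimits; alternatively it is immediate from the long exact sequence in homotopy. Applied termwise, the induced functor $\tau_{\le n}$ on $\mathrm{Tow}(\Sp)$ likewise preserves cofiber sequences.

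Next I would show that the collection of nilpotent towers in $\mathrm{Tow}(\Sp)$ is a thick subcategory. Closure under suspensions is trivial, and closure under retracts follows because the composite $X_{i+N}\to Y_{i+N}\to Y_i\to X_i$ is null whenever the middle map is. The essential content is closure under extensions, which by rotating the cofiber sequence reduces to the ``middle'' case: given a cofiber sequence $X\to Y\to Z$ of towers with $X_{i+N}\to X_i$ and $Z_{i+N}\to Z_i$ null for every $i$, I would show $Y_{i+2N}\to Y_i$ is null. Indeed, by naturality of $Y\to Z$, the composite $Y_{i+2N}\to Y_{i+N}\to Z_{i+N}$ agrees with $Y_{i+2N}\to Z_{i+2N}\to Z_{i+N}=0$, so $Y_{i+2N}\to Y_{i+N}$ lifts through $\mathrm{fib}(Y_{i+N}\to Z_{i+N})=X_{i+N}$; then the further composite to $Y_i$ factors, by naturality of $X\to Y$, through $X_{i+N}\to X_i\to Y_i$, which vanishes. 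For quickly converging towers, thickness is automatic from the definition, which characterizes them as the smallest thick subcategory containing the constant and nilpotent towers.

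Finally, a tower $X$ is almost nilpotent (resp.\ almost quickly converging) exactly when $\tau_{\le n}X$ lies in the thick subcategory of nilpotent (resp.\ quickly converging) towers for every $n\in\bb Z$. Since each $\tau_{\le n}$ preserves cofiber sequences, suspensions, and retracts, the preimage under $\tau_{\le n}$ of a thick subcategory is thick, and an intersection of thick subcategories is thick; this yields both thickness statements. The inclusions ``nilpotent $\subset$ almost nilpotent'' and ``quickly converging $\subset$ almost quickly converging'' hold because each $\tau_{\le n}$ manifestly preserves null maps, and hence sends nilpotent towers to nilpotent towers; the quickly converging case then follows by the thickness just established. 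The hard part will be the 2-out-of-3 verification for nilpotent towers in the second step; everything else is formal manipulation of the definitions.
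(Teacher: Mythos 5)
There is a genuine gap at the pivot of your argument: the claim that $\tau_{\le n}\colon \Sp\to\Sp$ preserves cofiber sequences (and, implicitly, suspensions) is false. The truncation $\tau_{\le n}$ is left adjoint to the inclusion $\Sp_{\le n}\hookrightarrow\Sp$ and therefore preserves colimits \emph{computed in $\Sp_{\le n}$}; but the inclusion $\Sp_{\le n}\hookrightarrow\Sp$ does not preserve colimits, so the composite endofunctor of $\Sp$ is not exact. Concretely, applying $\tau_{\le -1}$ to the cofiber sequence $\Sigma^{-1}H\mathbb{F}_p\to 0\to H\mathbb{F}_p$ yields $\Sigma^{-1}H\mathbb{F}_p\to 0\to 0$, which is not a cofiber sequence; equivalently, $\tau_{\le n}\Sigma\not\simeq\Sigma\tau_{\le n}$. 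Hence the step ``the preimage under $\tau_{\le n}$ of a thick subcategory is thick'' does not go through, and that step is exactly where the content of the lemma lives (closure of the almost nilpotent and almost quickly converging towers under extensions and shifts).

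The fix — and the content of \cite[Lemma 3.11]{Mthick}, which is all the paper offers as proof — is to argue on towers of homotopy groups rather than on truncations of towers of spectra. Closure under shifts is then immediate from the characterization of almost nilpotence via the towers $\{\pi_n X_i\}$. For closure under extensions in the almost nilpotent case, given a cofiber sequence $X\to Y\to Z$ with $X,Z$ almost nilpotent, run your composite trick on the long exact sequence $\pi_nX_i\to\pi_nY_i\to\pi_nZ_i$: an element of $\pi_nY_{i+N_1+N_2}$ maps to zero in $\pi_nZ_{i+N_1}$, hence its image in $\pi_nY_{i+N_1}$ lifts to $\pi_nX_{i+N_1}$, and so dies in $\pi_nY_i$ after $N_1$ further steps. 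For the almost quickly converging case one similarly needs that quickly converging towers of abelian groups form an abelian subcategory closed under extensions (the paper's quoted \cite[Lemma 3.10]{Mthick}), combined with the homotopy-group characterization of quick convergence for towers concentrated in a bounded range. Your verification that nilpotent towers of spectra form a thick subcategory — in particular the $2N$ composite argument for extensions — is correct and is essentially the argument one redeploys on homotopy groups; it is only the reduction via exactness of $\tau_{\le n}$ that must be abandoned.
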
 

From this we get:

\begin{lemma} 
\label{simplicialalmostquickconv}
Let $X_{\bullet, \ast}$ be a simplicial object in the $\infty$-category of
towers of connective spectra. Suppose each tower $\dots \to X_{j,2} \to X_{j,
1}$ is almost quickly convergent, with limit $X_j$. 
Then the geometric realization $\dots \to |X_{\bullet, 2}| \to |X_{\bullet, 1}|$
is almost quickly converging and has limit $\vert X_\bullet \vert$ via the natural comparison map.
\end{lemma}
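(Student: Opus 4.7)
The plan is to work one truncation level at a time. Fix $n\ge0$; by definition of almost quickly converging, it will suffice to show that the tower $\{\tau_{\leq n}|X_{\bullet,i}|\}_i$ is quickly converging and that its limit identifies with $\tau_{\leq n}|X_\bullet|$, since each bounded-above tower is quickly converging if and only if it is so at the level of homotopy groups, and the full conclusion then follows by passing to the inverse limit over $n$.

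The key technical input is the skeletal filtration. For a simplicial object $Y_\bullet$ in $\Sp_{\geq 0}$, the cofiber of $|Y_\bullet|^{(n)}\to|Y_\bullet|$ is built out of $k$-fold suspensions of (latching quotients of) $Y_k$ for $k>n$, hence is $(n+1)$-connective; therefore $\tau_{\leq n}|Y_\bullet|\simeq \tau_{\leq n}|Y_\bullet|^{(n)}$. Applying this in our situation, the tower $\{\tau_{\leq n}|X_{\bullet,i}|\}_i$ is equivalent to the tower $\{\tau_{\leq n}|X_{\bullet,i}|^{(n)}\}_i$. But $|X_{\bullet,i}|^{(n)}$ is a \emph{finite} colimit in $\Sp$ of the spectra $X_{j,i}$ for $j\le n$, computed functorially in $i$; thus as a tower it is a finite colimit, in $\op{Fun}(\bb N^\sub{op},\Sp)$, of the towers $\{X_{j,i}\}_i$ for $j\le n$. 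Since $\tau_{\leq n}$ is a right adjoint and finite colimits coincide with finite limits in a stable $\infty$-category, $\tau_{\leq n}$ commutes with these finite colimits, so we may equivalently view $\{\tau_{\leq n}|X_{\bullet,i}|^{(n)}\}_i$ as a finite colimit of the towers $\{\tau_{\leq n}X_{j,i}\}_i$ for $j\le n$.

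Now each $\{\tau_{\leq n}X_{j,i}\}_i$ is quickly converging, by the assumption that $\{X_{j,i}\}_i$ is almost quickly converging. Quickly converging towers form a thick subcategory of $\op{Fun}(\bb N^\sub{op},\Sp)$, so this subcategory is closed under the finite colimits just described; hence $\{\tau_{\leq n}|X_{\bullet,i}|\}_i$ is quickly converging, giving the first assertion of the lemma. For the identification of the limit, $\tau_{\leq n}$ commutes with inverse limits (as a right adjoint) and the finite colimit defining the $n$-skeleton commutes with inverse limits (being also a finite limit in $\Sp$), so
\[\tau_{\leq n}\varprojlim_i|X_{\bullet,i}|\simeq \varprojlim_i \tau_{\leq n}|X_{\bullet,i}|^{(n)}\simeq |\varprojlim_i \tau_{\leq n}X_{\bullet,i}|^{(n)}\simeq |\tau_{\leq n}X_\bullet|^{(n)}\simeq \tau_{\leq n}|X_\bullet|,\]
where the penultimate step uses that $\tau_{\leq n}$ commutes with the limit $X_j=\varprojlim_iX_{j,i}$. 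Passing to the limit over $n$ yields $\varprojlim_i|X_{\bullet,i}|\simeq |X_\bullet|$.

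The main point requiring care is the careful bookkeeping between the skeletal filtration, truncation, and inverse limits, and in particular the use of the stability of $\Sp$ to swap finite colimits with finite limits so as to pull $\tau_{\leq n}$ and $\varprojlim_i$ past the $n$-skeleton. Once this commutation is in place, quick convergence is formal from the thickness of the subcategory of quickly converging towers.
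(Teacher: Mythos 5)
Your strategy is exactly the paper's (whose proof is the one-line remark that one approximates the realization by $n$-skeleta, which are finite colimits, in any range of degrees), and the quick-convergence half of your argument is fine: the $n$-skeleton is a finite colimit of the $X_{j,i}$, finite colimits preserve the thick subcategory of quickly converging towers, and $\tau_{\leq n}$ of the full realization agrees with $\tau_{\leq n}$ of a skeleton (note an off-by-one: the cofiber of $|Y|^{(n)}\to|Y|$ is only $(n+1)$-connective, so you should use $|Y|^{(n+1)}$ to control $\tau_{\leq n}$, or settle for $\tau_{\leq n-1}$; this is harmless).

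The identification of the limit, however, rests on a justification that is wrong as stated. You assert twice that ``$\tau_{\leq n}$ commutes with inverse limits (as a right adjoint)''; but $\tau_{\leq n}$ is a \emph{left} adjoint (to the inclusion $\Sp_{\leq n}\hookrightarrow\Sp$), and it genuinely fails to commute with inverse limits of towers: $\pi_n\varprojlim_i Y_i$ contains a $\varprojlim^1_i\pi_{n+1}Y_i$ contribution that is killed by truncating first. So the first and penultimate equivalences in your displayed chain need an argument. They are salvageable here precisely because of the hypotheses: a quickly converging tower of abelian groups is pro-isomorphic to the constant tower on its limit, hence Mittag--Leffler with vanishing $\varprojlim^1$, so for the almost quickly converging towers $\{X_{j,i}\}_i$ (and, by the part you already proved, $\{|X_{\bullet,i}|\}_i$) the Milnor sequence gives $\tau_{\leq n}\varprojlim_i\simeq\varprojlim_i\tau_{\leq n}$. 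Alternatively, you can avoid commuting $\tau_{\leq n}$ past $\varprojlim_i$ altogether: commute only the finite colimit $|-|^{(n+1)}$ past $\varprojlim_i$ to get $\varprojlim_i|X_{\bullet,i}|^{(n+1)}\simeq|X_\bullet|^{(n+1)}$, and then note that the cofibers of $|X_{\bullet,i}|^{(n+1)}\to|X_{\bullet,i}|$ are uniformly $(n+2)$-connective, so passing to the limit loses at most one degree of connectivity and the comparison map is an equivalence in degrees $\leq n$ for every $n$. Either repair should be made explicit; as written, the key commutation is asserted on false grounds.
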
 
\begin{proof} 
This follows from approximating the geometric realization with an $n$-skeletal
geometric realization in any range of degrees. 
\end{proof}

We now include a basic observation that given a tower of cyclotomic spectra
whose underlying tower of spectra is almost quickly converging, the tower of
$\TC/p$ is also almost quickly converging.  
\begin{lemma} 
\label{TCalmostquickconv}
Let $\left\{X_i\right\}$ be a tower in $\CycSp_{ \geq 0}$.
Suppose that the underlying tower of spectra $\left\{X_i/p\right\}$ is
almost quickly
converging. 
Then the tower of spectra $\left\{\TC(X_i)/p\right\}$ is almost quickly converging.
\end{lemma}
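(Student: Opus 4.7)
The plan is to reduce, via the strong finiteness result of Proposition~\ref{strong:finiteness}, to showing that the generating functor $F_0: X\mapsto (X\otimes H\mathbb{F}_p^{\mathrm{triv}})_{hS^1}$ sends $\{X_i\}$ to an almost quickly converging tower, and this in turn will follow from the fact that both $(-)\otimes H\mathbb{Z}$ and $(-)_{hS^1}$ preserve connectivity.

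In more detail, first I would fix $n\in\mathbb{Z}$ and aim to prove that $\{\tau_{\leq n}(\TC(X_i)/p)\}$ is quickly converging, which is precisely what almost quick convergence of $\{\TC(X_i)/p\}$ requires. By Proposition~\ref{strong:finiteness}, there exists a functor $F\in\fun(\CycSp_{\geq 0},\Sp)$ lying in the thick subcategory generated by $F_0$, with $\tau_{\leq n}F(X)\simeq\tau_{\leq n}(\TC(X)/p)$ naturally in $X$. Since the evaluation map $G\mapsto\{G(X_i)\}$ is an exact functor to $\operatorname{Tow}(\Sp)$, and since almost quickly converging towers constitute a thick subcategory of $\operatorname{Tow}(\Sp)$, it suffices to check that the single tower $\{F_0(X_i)\}$ is almost quickly converging.

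The underlying spectrum of $X_i\otimes H\mathbb{F}_p^{\mathrm{triv}}$ is $X_i\otimes H\mathbb{F}_p\simeq (X_i/p)\otimes H\mathbb{Z}$, with the $S^1$-action inherited from $X_i$; hence $F_0(X_i)\simeq ((X_i/p)\otimes H\mathbb{Z})_{hS^1}$. Both $(-)\otimes H\mathbb{Z}$ and $(-)_{hS^1}$ are exact and preserve connectivity on uniformly bounded-below spectra: if the cofiber of a map is $(n+1)$-connective, so is its image under either operation. As a result, each operation sends almost nilpotent towers to almost nilpotent towers and constant towers to constant towers, and hence, by a thick-subcategory argument in $\operatorname{Tow}(\Sp)$, preserves the larger class of almost quickly converging towers. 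Applying this to the hypothesis that $\{X_i/p\}$ is almost quickly converging yields that $\{F_0(X_i)\}$ is almost quickly converging, which by the previous paragraph completes the proof. The one technical subtlety to watch out for is the interaction between the $\tau_{\leq n}$ appearing in Proposition~\ref{strong:finiteness} and the $\tau_{\leq n}$ in the definition of almost quick convergence; but since both truncations are at the same level, they play together seamlessly, and the remaining work is elementary connectivity bookkeeping.
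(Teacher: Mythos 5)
Your proposal is correct and is essentially an expanded version of the paper's own (one-line) proof, which likewise reduces via Proposition~\ref{strong:finiteness} to the generating functor $X\mapsto (X\otimes H\mathbb{F}_p^{\mathrm{triv}})_{hS^1}$ and then uses that smashing with $H\mathbb{Z}$ and taking $S^1$-homotopy orbits are exact, connectivity-preserving operations that respect almost quick convergence. The connectivity bookkeeping you defer at the end is exactly the routine verification the paper also leaves implicit.
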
 
\begin{proof}  
This follows in a similar fashion as Proposition~\ref{strong:finiteness}. See
also Remark \ref{TCinvlim} for the identification of the inverse limit of the
above tower in $\CycSp$.
\end{proof}

\begin{lemma} 
\label{transfernilp}
Let $\left\{X_i\right\}$ and $\left\{Y_i\right\}$ be two towers of connective spectra.  If $\left\{X_i\right\}$ and $\left\{Y_i\right\}$ are almost quickly converging with inverse limits $X$ and $Y$, then $\left\{X_i\otimes Y_i\right\}$ is almost quickly converging with inverse limit $X\otimes Y$.
\end{lemma}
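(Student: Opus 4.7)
The plan is to show that the natural map of towers $\{X \otimes Y\} \to \{X_i \otimes Y_i\}$ (from the constant tower at the tensor product of the inverse limits) has almost nilpotent cofiber. Once this is established, since almost nilpotent and constant towers are both almost quickly converging, and the almost quickly converging towers form a thick subcategory by the preceding lemma, $\{X_i \otimes Y_i\}$ will also be almost quickly converging. Moreover, almost nilpotent towers have vanishing inverse limit (in each finite range of degrees the truncation is a nilpotent tower of bounded spectra, so both $\varprojlim$ and $\varprojlim^1$ vanish on homotopy groups), so passing to limits in the defining cofiber sequence will identify $\varprojlim X_i \otimes Y_i$ with $X \otimes Y$.

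To analyze the cofiber, I would factor the map as $\{X \otimes Y\} \to \{X_i \otimes Y\} \to \{X_i \otimes Y_i\}$, yielding successive cofibers $\{(X_i/X) \otimes Y\}$ and $\{X_i \otimes (Y_i/Y)\}$, where $X_i/X$ and $Y_i/Y$ denote the cofibers of the canonical maps from the limits. First I would verify that $X$ and $Y$ are connective: any negative homotopy of $X = \varprojlim X_i$ would have to come from $\varprojlim^1 \pi_0 X_i$, which vanishes because the quickly converging tower $\{\pi_0 X_i\}$ is Mittag--Leffler. Consequently $\{X_i/X\}$ and $\{Y_i/Y\}$ are towers of connective spectra, and almost nilpotence of these cofiber towers is then a direct consequence of the almost quick convergence hypotheses (using that the left adjoint $\tau_{\leq n}$ preserves cofibers and the hypothesis that $\{\tau_{\leq n} X_i\}$, $\{\tau_{\leq n} Y_i\}$ are quickly converging).

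The key remaining point is the interaction between truncation and tensor product: for connective spectra $A, B$ one has $\tau_{\leq n}(A \otimes B) \simeq \tau_{\leq n}(\tau_{\leq n} A \otimes \tau_{\leq n} B)$, and the structure map at distance $N$ in a pointwise tensor product of towers is the tensor of the individual structure maps, so nullhomotopy of \emph{one} factor at distance $N$ kills the whole. Applied to $\{\tau_{\leq n}(X_i/X)\} \otimes \tau_{\leq n} Y$ (first factor nilpotent, second constant) and to $\{\tau_{\leq n} X_i\} \otimes \{\tau_{\leq n}(Y_i/Y)\}$ (second factor nilpotent), both cofibers $\{(X_i/X) \otimes Y\}$ and $\{X_i \otimes (Y_i/Y)\}$ are almost nilpotent, and hence so is their composite. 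This interaction of connectivity, truncation and tensor is the only real content; everything else is bookkeeping within the thick-subcategory framework already set up.
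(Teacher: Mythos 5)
Your proof is correct and follows essentially the same route as the paper's: both reduce, via the identity $\tau_{\leq n}(A\otimes B)\simeq\tau_{\leq n}(\tau_{\leq n}A\otimes B)$ for connective spectra, to the observations that tensoring a truncated nilpotent tower with anything connective stays nilpotent and that the constant-by-constant case is immediate, your explicit two-step cofiber factorization through $\{X_i\otimes Y\}$ being a concrete form of the paper's ``thick subcategory argument.'' The only addition is your explicit verification that $X$ and $Y$ are connective (via Mittag--Leffler vanishing of $\varprojlim^1\pi_0$), a point the paper leaves implicit.
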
 
\begin{proof} 
Note that $\tau_{\leq m}( X_i \otimes Y_i) \simeq \tau_{\leq m} ( \tau_{\leq m}
X_i \otimes Y_i)$. So if $\left\{\tau_{\leq m} X_i\right\}$ is nilpotent, then so is 
$\left\{\tau_{\leq m}( X_i \otimes Y_i)\right\}$, and the limit is zero.  Symmetrically, we get the same if $\left\{\tau_{\leq m}Y_i\right\}$ is nilpotent.  On the other hand, if $\left\{\tau_{\leq m}X_i\right\}$ and $\left\{\tau_{\leq m} Y_i\right\}$ are both constant, we see that $\tau_{\leq m} (X_i\otimes Y_i)$ is constant, and the map from $X\otimes Y$ to the limit is an isomorphism degrees $\leq m-1$.  A thick subcategory argument lets us conclude.\end{proof}

\begin{theorem} 
\label{THHmodp}
Suppose that $\{R_i\}_{i\ge1}$ is a tower of connective $\mathbb{E}_1$-ring
spectra, and $R$ is another connective $\mathbb{E}_1$-ring spectrum with a
comparison map $R\rightarrow \varprojlim_i R_i$.  If this comparison map is an equivalence modulo $p$ and the tower of spectra $\{R_i/p\}$ is almost quickly converging, then the comparison map
$$\TC(R)\rightarrow\varprojlim_i \TC(R_i)$$
is an equivalence modulo $p$ and the tower $\{\TC(R_i)/p\}$ is almost quickly converging.
\end{theorem}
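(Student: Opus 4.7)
The plan is to lift the hypothesis from the $R_i$ to the topological Hochschild homology spectra $\THH(R_i)$, and then apply the structural results of Section~\ref{section_finiteness} to pass from $\THH$ to $\TC$. More precisely, I would first prove that the tower $\{\THH(R_i)\}$ in $\CycSp_{\geq 0}$, viewed on underlying spectra after reducing modulo $p$, is almost quickly converging with inverse limit $\THH(R)/p$. Given this, Lemma~\ref{TCalmostquickconv} immediately produces the almost quick convergence of $\{\TC(R_i)/p\}$; and by Remark~\ref{TCinvlim} the inverse limit of the tower $\{\THH(R_i)\}$ in $\CycSp$ agrees with that in $\Sp$ and is preserved by $\TC$, so that the exactness of $\TC$ (which in particular preserves mod $p$ equivalences) converts the equivalence $\THH(R)/p \simeq \varprojlim \THH(R_i)/p$ into the sought equivalence $\TC(R)/p \simeq \varprojlim \TC(R_i)/p$.

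To establish the $\THH$-level claim, I would use the cyclic bar construction $\THH(R_i) = |R_i^{\otimes \bullet + 1}|$. Since smashing with the finite spectrum $S^0/p$ commutes with geometric realizations, we have $\THH(R_i)/p = |R_i^{\otimes \bullet + 1}/p|$, and similarly for $R$. By Lemma~\ref{simplicialalmostquickconv}, it therefore suffices to prove that for each fixed $n \geq 0$, the tower of connective spectra $\{R_i^{\otimes (n+1)}/p\}_i$ is almost quickly converging with inverse limit $R^{\otimes (n+1)}/p$. The hard part here is that the hypothesis only gives almost quick convergence of $\{R_i/p\}_i$ (with inverse limit $R/p$, via the identity $(\varprojlim R_i)/p \simeq \varprojlim (R_i/p)$, which holds because $S^0/p$ is a finite spectrum), while the tower $\{R_i\}_i$ itself is not controlled; in particular Lemma~\ref{transfernilp} cannot be iterated directly over the sphere to build up higher tensor powers.

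The resolution is to pass to the symmetric monoidal $\infty$-category of $S^0/p$-modules. A short computation gives the canonical identification $R_i^{\otimes_{S^0}(n+1)}/p \simeq (R_i/p)^{\otimes_{S^0/p}(n+1)}$, and similarly for $R$; and since the forgetful functor to spectra is conservative, preserves limits, and preserves connectivity, the notion of almost quick convergence transfers between the two categories. The proof of Lemma~\ref{transfernilp} applies verbatim in the $S^0/p$-linear setting, so iterating it on the almost quickly converging tower $\{R_i/p\}_i$ with itself yields the desired almost quick convergence of $\{(R_i/p)^{\otimes_{S^0/p}(n+1)}\}_i \simeq \{R_i^{\otimes (n+1)}/p\}_i$, with inverse limit $(R/p)^{\otimes_{S^0/p}(n+1)} \simeq R^{\otimes (n+1)}/p$. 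Beyond this $S^0/p$-linear workaround, the rest of the argument is a formal assembly of the lemmas from Section~\ref{section_finiteness}.
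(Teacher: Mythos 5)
Your overall architecture is exactly the paper's: reduce to $\THH$ via the cyclic bar construction and Lemma~\ref{simplicialalmostquickconv}, control the towers of smash powers, and then conclude with Lemma~\ref{TCalmostquickconv} and Remark~\ref{TCinvlim}. But the central step has a genuine gap: there is no symmetric monoidal $\infty$-category of $S^0/p$-modules, because the mod $p$ Moore spectrum is not an $\mathbb{E}_\infty$-ring --- indeed not even an $\mathbb{E}_1$-ring. For $p=2$ the spectrum $S^0/2$ admits no unital multiplication at all (its identity map has order $4$), for $p=3$ any multiplication fails to be homotopy associative, and for $p\geq 5$ it is $A_{p-1}$ but not $A_p$. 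Consequently the relative tensor product $\otimes_{S^0/p}$ and the identification $R_i^{\otimes(n+1)}/p\simeq (R_i/p)^{\otimes_{S^0/p}(n+1)}$ that your argument hinges on are not defined, and the theorem is asserted for all primes, including $p=2,3$ where even weak multiplicative structure is unavailable.

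The correct way to carry out the idea, which is what the paper does, avoids any multiplicative structure on $S^0/p$. One observes that ``almost quickly converging modulo $p$'' can be tested after smashing with any spectrum $X$ generating the same thick subcategory as $S^0/p$; taking $X=(S^0/p)^{\otimes(n+1)}$ (note $S^0/p\otimes S^0/p$ is the cofiber of $p$ on $S^0/p$, so it lies in that thick subcategory, and conversely), one has the purely sphere-linear rearrangement $R_i^{\otimes(n+1)}\otimes (S^0/p)^{\otimes(n+1)}\simeq (R_i/p)^{\otimes(n+1)}$, and the tower on the right is handled by iterating Lemma~\ref{transfernilp} directly on the hypothesis that $\{R_i/p\}$ is almost quickly converging with limit $R/p$. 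With that substitution the rest of your assembly of the lemmas goes through.
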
 
\begin{proof} 
Note that a tower is almost quickly converging modulo $p$ if and only if it is almost quickly converging after smashing with $X$, for any choice of spectrum $X$ generating the same thick subcategory as $S^0/p$.  For example one can take $X=S^0/p\otimes S^0/p$.\footnote{This is the cone of multiplcation by $p$ on $S^0/p$, hence lies in the thick subcategory generated by $S^0/p$.  For the converse, note that $p^2$ kills $S^0/p$, so $S^0/p$ is a retract of $(S^0/p \otimes S^0/p)/p$.}  Keeping this in mind, an inductive application of Lemma \ref{transfernilp} shows that $\left\{(HR_i)^{\otimes n}\right\}_i$ is almost quickly converging modulo $p$ for any $n\geq 0$, and has mod~$p$ limit $(HR)^{\otimes n}/p$.  Then by Lemma \ref{simplicialalmostquickconv} we deduce that $\left\{\THH(R_i)/p\right\}$ is almost quickly converging with limit $\THH(R)/p$.  From this, Lemma \ref{TCalmostquickconv} lets us conclude.
\end{proof} 
In the $\mathbb{Z}$-linear case, this immediately implies the following
general $p$-adic continuity result for $\TC$. We denote by $\mod_{H\mathbb{Z}}$ the symmetric monoidal $\infty$-category of $H\mathbb{Z}$-module spectra, or equivalently the derived $\infty$-category of $\mathbb{Z}$. 

\begin{theorem}
\label{TCderpcontinuous}
Let $R$ be a connective
$\mathbb{E}_1$-algebra in $\mod_{H\mathbb{Z}}$.
Then the map 
\[   \TC(R) \to \varprojlim_i \TC( R \otimes_{H\mathbb{Z}} H\mathbb{Z}/p^i) \]
is a $p$-adic equivalence. 
Moreover the tower on the right-hand-side is almost quickly
converging modulo $p$.
\end{theorem}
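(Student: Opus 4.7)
The plan is to apply Theorem~\ref{THHmodp} to the tower $R_i := R \otimes_{\mathbb{Z}} \mathbb{Z}/p^i$. Each $R_i$ is a connective $\mathbb{E}_1$-ring spectrum, being the base change of $R$ along the $\mathbb{E}_\infty$-morphism $\mathbb{Z} \to \mathbb{Z}/p^i$, and its inverse limit is the derived $p$-completion of $R$. It thus suffices to verify the two hypotheses of that theorem: (a) the map $R \to \varprojlim_i R_i$ is an equivalence modulo $p$, and (b) the tower $\{R_i/p\}_i$ is almost quickly converging.

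Both will follow from a direct analysis of this tower. From the cofiber sequence $R \xrightarrow{p^i} R \to R_i$, applying $(-) \otimes S^0/p$ gives $R/p \xrightarrow{p^i} R/p \to R_i/p$. Since $R/p$ is an $H\mathbb{Z}/p$-module spectrum, multiplication by $p^i$ is null for $i \geq 1$, yielding a (noncanonical) equivalence $R_i/p \simeq R/p \oplus \Sigma R/p$. The transition map $R_{i+1}/p \to R_i/p$ arises from a map of cofiber sequences which is the identity on the ``target'' copies of $R$ and multiplication by $p$ on the ``source'' copies; after $(-) \otimes S^0/p$, this becomes the identity on the target and zero on the source. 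Consequently, under the identification $\pi_n(R_i/p) \cong \pi_n(R/p) \oplus \pi_{n-1}(R/p)$, the transition map is the identity on the first summand and zero on the second.

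It follows that for each $n$, the tower of abelian groups $\{\pi_n(R_i/p)\}_i$ decomposes as a direct sum of a constant tower and a nilpotent tower, and is therefore quickly converging with limit $\pi_n(R/p)$. Since $R_i/p$ is uniformly bounded below, the characterization given earlier in the paper implies that $\{R_i/p\}_i$ is almost quickly converging with inverse limit $R/p$, verifying hypothesis~(b). For hypothesis~(a), one uses that $S^0/p$ is a finite spectrum and hence commutes with inverse limits: $(\varprojlim_i R_i)/p \simeq \varprojlim_i(R_i/p) \simeq R/p$. Applying Theorem~\ref{THHmodp} then gives both conclusions of the statement simultaneously. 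The main content of the argument is the explicit analysis of the tower $\{R_i/p\}_i$; no significant obstacle arises.
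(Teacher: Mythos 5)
Your proof is correct and is essentially the paper's argument: the paper simply observes that the fibers of $R\to R\otimes_{\mathbb{Z}}\mathbb{Z}/p^i$ form the tower $\{R\xrightarrow{p}R\xrightarrow{p}\cdots\}$, whose transition maps die mod $p$, and then invokes Theorem~\ref{THHmodp}; your explicit splitting $R_i/p\simeq R/p\oplus\Sigma R/p$ is just a more hands-on phrasing of the same observation. One small caution: the levelwise splittings need not be compatible as $i$ varies, so the tower $\{\pi_n(R_i/p)\}_i$ is an extension of a nilpotent tower by a constant one rather than literally a direct sum — but that suffices, since quickly converging towers are closed under extensions.
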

\begin{proof}
It suffices to remark that the transition maps in the tower of fibers
$\{\operatorname{fib}(R\rightarrow R\otimes_{H\mathbb{Z}} H\mathbb{Z}/p^i\mathbb{Z})\}_n$ identify with multiplication by $p$ maps, hence are zero modulo $p$, so that Proposition \ref{THHmodp} applies.
\end{proof}

We now obtain a general result on derived $p$-adic continuity of $K$-theory,
and as a corollary a generalization of Theorem~\ref{GHcontinuity}. 

\begin{theorem}
\label{derivedpadiccont}
Let $R$ be a connective $\mathbb{E}_1$-algebra in $\md_{H\mathbb{Z}}$ such that $\pi_0(R)$
is commutative and henselian along $(p)$. Then the tower $\{K(R  \otimes_{H\mathbb{Z}}
H\mathbb{Z}/p^i\mathbb{Z}) / p\}_{i \geq 1}$ is almost
quickly converging, and
$$K(R)\rightarrow \varprojlim_i K(R\otimes_{H\mathbb{Z}}H\mathbb{Z}/p^i\mathbb{Z})$$
is an equivalence modulo $p$.
\end{theorem}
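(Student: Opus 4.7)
The plan is to reduce the statement to its $\TC$-analog, Theorem~\ref{TCderpcontinuous}, via the cyclotomic-trace fiber sequence $\knf \to K \to \TC$, using the Dundas--Goodwillie--McCarthy rigidity theorem (Theorem~\ref{DGMthm}) to pass from connective $\mathbb{E}_1$-rings to their $\pi_0$, and our main henselian rigidity theorem (Theorem~\ref{mainthm}) to handle the pro-structure of $\knf$.

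First I would show that the tower $\{\knf(R \otimes_{\mathbb Z} \mathbb Z/p^i\mathbb Z)/p\}_{i \geq 1}$ is pro-constant with value $\knf(R)/p$. Since the Postnikov truncation maps $R \to \pi_0 R$ and $R \otimes_{\mathbb Z}\mathbb Z/p^i\mathbb Z \to \pi_0(R)/p^i\pi_0(R)$ are isomorphisms on $\pi_0$, Theorem~\ref{DGMthm} yields natural equivalences
\[
\knf(R) \simeq \knf(\pi_0 R), \qquad \knf(R \otimes_{\mathbb Z}\mathbb Z/p^i\mathbb Z) \simeq \knf(\pi_0(R)/p^i\pi_0(R)).
\]
Next, a standard argument (using uniqueness of lifts of simple roots) shows that if $(\pi_0 R, p\pi_0 R)$ is henselian then so is $(\pi_0 R, p^i\pi_0 R)$ for every $i$, so Theorem~\ref{mainthm} supplies a mod-$p$ equivalence $\knf(\pi_0 R)/p \xrightarrow{\simeq} \knf(\pi_0(R)/p^i\pi_0(R))/p$. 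Composing these equivalences, the natural maps $\knf(R)/p \to \knf(R \otimes_{\mathbb Z}\mathbb Z/p^i\mathbb Z)/p$ are equivalences for all $i$; by two-out-of-three applied to the commutative triangles coming from $R \to R \otimes_{\mathbb Z}\mathbb Z/p^i \to R \otimes_{\mathbb Z}\mathbb Z/p^{i-1}$, the transition maps in the tower are themselves mod-$p$ equivalences, establishing the pro-constancy.

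Finally I would assemble the conclusion. The fiber sequence $\knf \to K \to \TC$ applied to the tower $\{R \otimes_{\mathbb Z}\mathbb Z/p^i\mathbb Z\}$ and smashed with $S^0/p$ gives a fiber sequence of towers of spectra. The $\knf$-tower is pro-constant by the previous step, and hence trivially almost quickly converging; the $\TC$-tower is almost quickly converging by Theorem~\ref{TCderpcontinuous}. Since the almost quickly converging towers form a thick subcategory, the middle $K$-theory tower is almost quickly converging as well, which is the first claim. Passing to inverse limits (which commute with the fiber sequence and with smashing by $S^0/p$ in this stable setting) and invoking the known mod-$p$ equivalences on the $\knf$- and $\TC$-terms then yields the mod-$p$ equivalence $K(R) \to \varprojlim_i K(R\otimes_{\mathbb Z}\mathbb Z/p^i\mathbb Z)$.

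The main obstacle is really Step~1, and more precisely the fact that it uses the full strength of Theorem~\ref{mainthm}, our main rigidity result; given that and the finiteness statement Theorem~\ref{TCderpcontinuous}, everything else is essentially formal manipulation of towers in a stable $\infty$-category.
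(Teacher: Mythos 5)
Your proposal is correct and follows exactly the paper's argument: the paper's proof likewise invokes Theorem~\ref{mainthm} (together with the implicit reduction to $\pi_0$ via Theorem~\ref{DGMthm}) to identify $\knf(R)/p$ with each $\knf(R\otimes_{\mathbb{Z}}\mathbb{Z}/p^i\mathbb{Z})/p$, and then reduces everything to Theorem~\ref{TCderpcontinuous}. You have simply spelled out the details (henselianness of $(p^i)$, the two-out-of-three argument for pro-constancy, and the thick-subcategory bookkeeping for almost quickly converging towers) that the paper leaves implicit.
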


\begin{proof} 
By Theorem~\ref{mainthm} we see $\knf(R) = \knf(R \otimes_{H\mathbb{Z}}
H\mathbb{Z}/p^i \mathbb{Z})$ for
each $i$. This reduces us to the analogous result for $\TC$, which is Theorem~\ref{TCderpcontinuous}. 
\end{proof} 

There is also an underived version under a very mild hypothesis, which
generalizes Geisser--Hesselholt's Theorem~\ref{GHcontinuity}.
\begin{theorem} 
Let $R$ be a commutative ring which 
is henselian along $(p)$, and suppose the $p$-power-torsion in $R$ is bounded. Then 
$K$-theory is $p$-adically continuous at $R$. Moreover, the tower $\{K(R/p^iR)/p\}$ is almost quickly converging.
\end{theorem}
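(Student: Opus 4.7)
The plan is to apply our general $\TC$-continuity criterion, Theorem~\ref{THHmodp}, to the tower $\{R/p^iR\}_{i\ge 1}$ of discrete quotients, and then combine the conclusion with the rigidity Theorem~\ref{mainthm} to transfer the statement from $\TC$ to $K$.

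For Theorem~\ref{THHmodp} to apply, two things must be verified: (a) the comparison map $R\to\varprojlim_i R/p^iR$ (which identifies at the level of spectra with $R\to H\widehat R_p$, since the relevant $\varprojlim^1$ vanishes by Mittag--Leffler) is a mod-$p$ equivalence, and (b) the tower $\{H(R/p^iR)/p\}$ is almost quickly converging. Bounded $p$-power torsion is precisely what is needed to check both. Fix $N$ with $R[p^\infty]=R[p^N]$. The spectrum $H(R/p^iR)/p$ has $\pi_0=R/pR$ (constant in $i$) and $\pi_1=(R/p^iR)[p]$, vanishing in all other degrees. A direct computation of $p$-torsion in $R/p^iR$ yields, for $i\ge N+1$, a natural isomorphism
\[(R/p^iR)[p]\;\cong\;R[p]\,\oplus\,\bigl(p^{i-1}R/p^iR\bigr),\]
under which the transition maps $(R/p^{i+1}R)[p]\to(R/p^iR)[p]$ act as the identity on the first summand and kill the second. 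Hence the tower of $\pi_1$'s is quickly converging with stable value $R[p]$, which proves (b). Passing to inverse limits and using the standard isomorphism $R[p]\cong\widehat R_p[p]$ (which itself is a consequence of bounded $p$-power torsion) then yields (a). The explicit $\pi_1$-calculation is the main---but largely routine---technical obstacle.

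Theorem~\ref{THHmodp} then gives that the tower $\{\TC(R/p^iR)/p\}$ is almost quickly converging with limit $\TC(R)/p$. Finally, because $(R,pR)$ is henselian and $p^iR\subseteq pR$, each pair $(R,p^iR)$ is also henselian (the henselian property descends to smaller ideals contained in the Jacobson radical, using the implicit function theorem argument together with uniqueness of lifts). Theorem~\ref{mainthm} then gives $\knf(R)/p\simeq\knf(R/p^iR)/p$ for every $i\ge1$, so the tower $\{\knf(R/p^iR)/p\}$ is a constant tower with value $\knf(R)/p$. The fiber sequence of towers $\{\knf(R/p^iR)/p\}\to\{K(R/p^iR)/p\}\to\{\TC(R/p^iR)/p\}$, together with the thickness of the subcategory of almost-quickly-converging towers, then yields that $\{K(R/p^iR)/p\}$ is almost quickly converging with limit $K(R)/p$, which is exactly the desired statement of $p$-adic continuity together with almost-quick-convergence of the tower.
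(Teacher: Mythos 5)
Your argument is correct and follows essentially the same route as the paper: verify the hypotheses of Theorem~\ref{THHmodp} for the tower $\{R/p^iR\}$ using bounded $p$-power torsion, then transfer from $\TC$ to $K$ via Theorem~\ref{mainthm}. The only (cosmetic) differences are that you check almost quick convergence by computing $(R/p^iR)[p]$ directly, whereas the paper compares $\{R/p^iR\}$ with the derived tower $\{R\otimes_{\mathbb{Z}}\mathbb{Z}/p^i\mathbb{Z}\}$ of Theorem~\ref{TCderpcontinuous}, whose fiber $\{\Sigma R[p^i]\}$ is almost nilpotent mod $p$ by bounded torsion; and that your appeal to the (true, standard) fact that $(R,p^iR)$ is henselian could also be replaced by combining Theorem~\ref{mainthm} for the pair $(R,pR)$ with Theorem~\ref{DGMthm} for the nilpotent ideal $pR/p^iR$.
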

\begin{proof} 
The map of towers $\{HR\otimes_\mathbb{Z}\mathbb{Z}/p^i\mathbb{Z}\}\rightarrow
\{H(R/p^iR)\}$ has fiber $\{\Sigma H(R[p^i])\}$ with transition maps multiplication by
$p$.  Because of the bounded $p$-power torsion hypothesis, this tower
$\{\Sigma H(R[p^i])\}$ is almost nilpotent modulo $p$.  So we deduce $p$-adic
continuity for $\TC$ as in Proposition \ref{TCderpcontinuous}, and then $p$-adic
continuity for $K$-theory as in Proposition \ref{derivedpadiccont}.
\end{proof}

\begin{remark}
Without assuming $\mathbb{Z}$-linear structure, one can get similar results by
replacing $-\otimes_\mathbb{Z}\mathbb{Z}/p^i\mathbb{Z}$ with $-\otimes S_i$ for
any reasonable tower of $E_\infty$-algebras $\left\{S_i\right\}$ of which $p$-adically approximates the sphere spectrum, for example the tower coming from the usual cosimplicial Adams resolution associated to $S^0\rightarrow H\mathbb{F}_p$.
\end{remark}

\subsection{Pro Geisser--Levine theorems}
We begin by recalling from subsection \ref{subsec:equalchar} that one
classically associates to any $\bb F_p$-algebra $R$ the abelian group
$\nu^n(R)=\Omega^n_{R,\sub{log}}$, defined either as $\ker( 1 - C^{-1}:
\Omega^n_R \to \Omega^n_R / d \Omega^{n-1}_R)$ or as the subgroup of
$\Omega^n_R$ which is generated \'etale locally by dlog forms. As we recalled in Theorem~\ref{GHGL}, when $R$ is moreover local and ind-smooth then Geisser and Levine established isomorphisms $K_n(R;\bb Z/p\bb Z)\cong\Omega^n_{R,\sub{log}}$ for all $n\ge0$. The goal of this section is to establish an entirely analogous description of the pro abelian groups $\{K_n(R/I^s;\bb Z/p\bb Z)\}_s$ whenever $I\subseteq R$ is an ideal.

It is convenient to work not only modulo $p$ but more generally modulo $p^r$ with $r>1$. This necessitates introducing some standard notation surrounding de Rham--Witt groups, in particular the logarithmic subgroup which serves as a mod $p^r$ lift of $\Omega^n_{R,\sub{log}}$.

\begin{definition}[Logarithmic de Rham--Witt groups]\label{definition_dRW}
Let $R$ be an $\bb F_p$-algebra. 
We recall the de Rham-Witt complex $\{W_r \Omega_R^{\ast}\}$ as in
\cite{illusie-derham-witt} (see also 
Definition~\ref{dRW1}). 
Letting $[\cdot]:R\to W_r(R)$ denote the Teichm\"uller lift, there is a resulting group homomorphism $\dlog[\cdot]:R^\times\to W_r\Omega^1_R$, $\al\mapsto\dlog[\al]=\tfrac{d[\al]}{[\al]}$, and more generally \[\dlog[\cdot]:R^{\times\otimes n}\to W_r\Omega_{R}^n,\quad \al_1\otimes\cdots\otimes\al_n\mapsto\dlog[\al_1]\wedge\cdots\wedge\dlog[\al_n].\] When $R$ is local, the image of this map will be denoted by $W_r\Omega^n_{R,\sub{log}}$.
\end{definition}

\begin{definition}[Globalization]
Given an $\bb F_p$-scheme $X$, one defines $W_r\Omega^n_X$ to be the Zariski (or \'etale, depending on the context) sheaf obtained by sheafifying $U\mapsto W_r\Omega^n_{\mathcal O_X(U)}$. When $X=\Spec R$ is affine, this sheaf (in either topology) has no higher cohomology and has global sections $W_r\Omega_R^n$: this follows from flat descent and the facts that if $R\to S$ is \'etale then so is $W_r(R)\to W_r(S)$ \cite[Thm.~2.4]{vanderKallen1986} and moreover the canonical map $W_r\Omega_R^n\otimes_{W_r(R)}W_r(S)\to W_r\Omega_S^n$ is an isomorphism \cite[Prop.~1.7]{LangerZink2004}.

We define $W_r\Omega^n_{R,\sub{log}}$ to be the subgroup of $W_r\Omega^n_R$
consisting of elements which can be written Zariski locally as sums of dlog
forms, i.e., $H^0(\Spec R,-)$ of the image (as a Zariski sheaf) of
\[\dlog[\cdot]:\bb G_{m,\Spec R}^{\otimes n}\to W_r\Omega_{\Spec R}^n,\quad
\al_1\otimes\cdots\otimes\al_n\mapsto\dlog[\al_1]\wedge\cdots\wedge\dlog[\al_n].\]
We note that $W_r\Omega^n_{R,\sub{log}}$ would be unchanged if we were to
replace Zariski by \'etale in the previous sentence, by \cite[Cor. 
4.1(iii)]{Morrow-HW}; in particular, $W_1\Omega^n_{R,\sub{log}}$ really is the same as $\Omega^n_{R,\sub{log}}$ as defined at the start of the subsection. Similarly, if $X$ is a scheme, we let
$W_r \Omega_{X, \sub{log}}^n$ denote the sheaf $\spec R
\mapsto W_r \Omega^n_{R, \sub{log}}$. 
\end{definition}

In the case that $R$ is an ind-smooth local $\bb F_p$-algebra, Geisser--Levine
\cite{GL} established isomorphisms $K_n(R;\bb Z/p^r\bb Z)\cong W_r\Omega^n_{R,\sub{log}}$ (given by the map $\dlog[\cdot]$ on symbols) for all $n\ge0$, $r\ge1$, and proved that each group $K_n(R)$ is $p$-torsion-free. The primary goal of this subsection is to establish the following pro version of this theorem:

\begin{theorem}\label{theorem_pro_GL}
Let $R$ be a $F$-finite, regular, local noetherian $\bb F_p$-algebra and $I\subseteq R$ an ideal; fix $n,r\ge 0$. Then the pro abelian group $\{K_n(R/I^s)\}_s $ is $p$-torsion-free and there is a natural isomorphism of pro abelian groups \[\{K_n(R/I^s;\bb Z/p^r\bb Z)\}_s\isoto \{W_r\Omega^n_{R/I^s,\sub{log}}\}_s\] given by $\dlog[\cdot]$ on symbols.
\end{theorem}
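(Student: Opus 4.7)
The plan is to compare both sides via the cyclotomic trace and then apply the pro version of Geisser--Hesselholt's identification of $\TC/p^r$ in terms of logarithmic de Rham--Witt.

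\emph{Step 1 (Set-up and invariance of $\knf$).} Since $R$ is $F$-finite, regular, local noetherian of characteristic $p$, the map $\mathbb{F}_p \to R$ is geometrically regular, hence by N\'eron--Popescu desingularization $R$ is a filtered colimit of smooth $\mathbb{F}_p$-algebras. Replacing $R$ by its $I$-adic completion does not change the pro system $\{R/I^s\}_s$ and preserves the hypotheses, so we may assume $R$ is $I$-adically complete; in particular $(R,I)$ is a henselian pair. Theorem~\ref{DGMthm} applied to the nilpotent surjections $R/I^s \twoheadrightarrow R/I$ yields that $\{\knf(R/I^s)/p^r\}_s$ is constantly $\knf(R/I)/p^r$, which by the main Theorem~\ref{mainthm} equals $\knf(R)/p^r$. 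Since $R$ is ind-smooth, Theorem~\ref{GHGL} identifies this constant with $\widetilde{\nu}^{n+2}_r(R)$ on $\pi_n$.

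\emph{Step 2 (Pro computation of $\TC$).}  Combining Hesselholt's HKR description of $\THH$, the Geisser--Hesselholt identification of $\TC/p^r$ for ind-smooth $\mathbb{F}_p$-algebras, and their pro extension by Morrow \cite{Morrow-HW, Morrowpro}, I would obtain a natural pro short exact sequence
\[
0 \to \{\widetilde{\nu}^{n+1}_r(R/I^s)\}_s \to \{\TC_n(R/I^s)/p^r\}_s \to \{W_r\Omega^n_{R/I^s,\log}\}_s \to 0,
\]
in which the left-hand pro system is pro-constantly $\widetilde{\nu}^{n+1}_r(R)$; this last pro-constancy again uses the ind-smoothness of $R$ and Morrow's pro computation of the relevant kernels and cokernels of $F-1$ along the tower $\{W_r\Omega^\bullet_{R/I^s}\}_s$ (extending the analysis around the square \eqref{thisdiag}).

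\emph{Step 3 (Conclusion).} The cyclotomic trace cofiber sequence $\knf \to K \to \TC$, together with Steps 1 and 2, yields a long exact sequence of pro abelian groups in which the constant $\widetilde{\nu}^{n+2}_r(R)$ and $\widetilde{\nu}^{n+1}_r(R)$ contributions cancel with the left-hand summand of the $\TC$ exact sequence. What remains is the natural dlog map
\[
\{K_n(R/I^s;\mathbb{Z}/p^r)\}_s \to \{W_r\Omega^n_{R/I^s,\log}\}_s,
\]
which is therefore a pro-isomorphism; the identification on symbols is the explicit $\dlog[\cdot]$ by functoriality of the trace. Finally, $p$-torsion-freeness of the pro system $\{K_n(R/I^s)\}_s$ follows from the Bockstein exact sequence, since $\{W_r\Omega^n_{R/I^s,\log}\}_s$ is $p$-torsion-free in the pro category.

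\emph{Main obstacle.}  The hardest part is Step 2: upgrading the Geisser--Hesselholt identification of $\TC_n/p^r$ from the ind-smooth case to a pro statement along the nilpotent thickenings $\{R/I^s\}_s$, and verifying the pro-constancy of the $\widetilde{\nu}$ contribution. This rests on Morrow's pro de Rham--Witt calculations in \cite{Morrow-HW, Morrowpro}, combined with the finiteness of $\TC/p$ established in Section \ref{section_finiteness}, which provides the necessary control over the tower in a range of degrees.
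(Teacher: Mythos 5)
Your overall architecture (reduce to the henselian/complete case, use Theorem~\ref{DGMthm} and Theorem~\ref{mainthm} to make $\knf/p^r$ pro-constant along the tower, then compute $\TC$ of the thickenings via de Rham--Witt theory) is the same as the paper's, which deduces the theorem from a relative statement, Theorem~\ref{theorem_pro_GL_relative}. But your Step 2 is not a citation of an existing result --- it is the entire content of the proof, and as stated it is a black box. The Geisser--Hesselholt exact sequence $0 \to \widetilde{\nu}^{n+1}_r \to \pi_n(\TC(-)/p^r) \to W_r\Omega^n_{-,\log} \to 0$ is only available for ind-smooth algebras, and the rings $R/I^s$ are singular; no pro version of it for arbitrary nilpotent thickenings appears in \cite{Morrow-HW, Morrowpro}. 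What the paper actually does is: (a) pass to the \emph{relative} theory and use the pro-HKR theorem of Dundas--Morrow to identify the \emph{diagonal} system $\{\TR^s_n(R,I^s;p)\}_s$ with $\{W_s\Omega^n_{(R,I^s)}\}_s$ (note that the Witt level and the power of $I$ must vary together --- your sequence is stated at a fixed level $r$, which requires the extra comparison of Lemma~\ref{lemma_drw2}, itself resting on Illusie/Shiho and an \'etale-cohomological vanishing argument); (b) prove that $R-F$ is surjective on relative de Rham--Witt groups of a henselian pair (Lemma~\ref{lemma_drw1}, via N\'eron--Popescu and completion), so that the $\TC^s$ long exact sequence splits and the kernel of $R-F$ is identified with relative log forms; and (c) invoke the pro-constancy of $\{\TC^s(-;\mathbb{Z}/p^r)\}_s$ (Proposition~\ref{prop_pro_constant}, which depends on the finiteness results of Section~\ref{section_finiteness}) to pass from $\TC^s$ to $\TC$. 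None of these steps is routine, and your proposal does not indicate how any of them would be carried out; flagging Step 2 as "the main obstacle" is accurate but leaves the proof incomplete.

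There is also a genuine error at the end: the $p$-torsion-freeness of $\{K_n(R/I^s)\}_s$ does \emph{not} follow from the Bockstein sequence merely because the target of the dlog isomorphism is well-behaved. The Bockstein sequence $0 \to K_n(A)/p \to K_n(A;\mathbb{Z}/p) \to K_{n-1}(A)[p] \to 0$ forces $K_{n-1}(A)[p]=0$ only once you know that $K_n(A)/p \to K_n(A;\mathbb{Z}/p)$ is (pro-)surjective. The paper obtains this (Corollary~\ref{corollary_Milnor}) from the fact that $\{K_n(R/I^s;\mathbb{Z}/p^r)\}_s$ is generated by Milnor symbols --- a consequence of the dlog isomorphism together with the surjectivity of $\dlog[\cdot]$ onto $W_r\Omega^n_{\log}$ --- and symbols visibly lift to integral $K$-theory. (Also note that "$\{W_r\Omega^n_{R/I^s,\log}\}_s$ is $p$-torsion-free" is not meaningful for fixed $r$, since these are $\mathbb{Z}/p^r$-modules; the relevant torsion-freeness statement in the paper concerns the diagonal system $\{W_s\Omega^n_{(R,I^s),\log}\}_s$ and is part of Lemma~\ref{lemma_drw2}.)
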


This theorem was proved in \cite{Morrow-HW} under the assumption that $\Spec R/I$ was sufficiently regular (the terminology used there was ``generalised normal crossing''), and some applications were given to higher dimensional class field theory and to the deformation theory of algebraic cycles. Here we combine the results of \cite{Morrow-HW} with our main rigidity theorem to prove the theorem in general as well as a similar result in a non-local, relative case (Theorem \ref{theorem_pro_GL_relative}).

In fact, 
Theorem \ref{theorem_pro_GL_relative} below (which works for an arbitrary
henselian ideal in a regular $F$-finite $\mathbb{F}_p$-algebra) is more
fundamental. 
We can view this statement as a pro-version of the following calculation. 

\begin{proposition} 
Let $(R, I)$ be a henselian pair of $\mathbb{F}_p$-algebras. Suppose that 
$R$ and $R/I$ are ind-smooth. 
Then we have a natural isomorphism \begin{equation} \label{relhensmodp} K_n(R,
I; \mathbb{Z}/p) \simeq \Omega^n_{(R, I), \sub{log}},.\end{equation}
where $\Omega^n_{(R, I), \sub{log}}:=\op{ker}(\Omega^n_{R,\sub{log}}\rightarrow \Omega^n_{R/I,\sub{log}})$.
\end{proposition}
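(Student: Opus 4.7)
The plan is to reduce to a computation on topological cyclic homology via the main rigidity theorem, and then use the Geisser--Hesselholt description of $\pi_*(\TC/p)$ together with the toy rigidity statement from Proposition \ref{toyFprigidityTC} to carry out that computation by diagram chase.

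First, I will apply Theorem \ref{mainthm} to the henselian pair $(R,I)$ to identify $K(R,I)/p$ with $\TC(R,I)/p$. This reduces the problem to producing a natural isomorphism $\pi_n(\TC(R,I)/p)\simeq \Omega^n_{(R,I),\log}$, where the right-hand side is read as $\ker(\Omega^n_{R,\log}\to \Omega^n_{R/I,\log})$.

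Next, I will analyze the fiber sequence $\TC(R,I)/p \to \TC(R)/p \to \TC(R/I)/p$ using the long exact sequence on homotopy. Since $R$ and $R/I$ are ind-smooth, Theorem \ref{GHGL}(2) gives functorial short exact sequences
\[
0 \to \widetilde{\nu}^{n+1}(R) \to \pi_n(\TC(R)/p) \to \nu^n(R) \to 0
\]
and similarly for $R/I$. By Proposition \ref{toyFprigidityTC}, the map $\widetilde{\nu}^{n+1}(R)\to \widetilde{\nu}^{n+1}(R/I)$ is an isomorphism and the map $\nu^n(R)\to \nu^n(R/I)$ is surjective, for every $n\ge 0$. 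A snake lemma argument applied to the comparison of these two short exact sequences then shows that $\pi_n(\TC(R)/p)\to \pi_n(\TC(R/I)/p)$ is surjective with kernel naturally isomorphic to $\ker(\nu^n(R)\to \nu^n(R/I))=\Omega^n_{(R,I),\log}$.

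Feeding this back into the long exact sequence for the fiber $\TC(R,I)/p$, the surjectivity in every degree makes the long sequence split into short exact sequences, producing the desired natural isomorphism $\pi_n(\TC(R,I)/p)\simeq \Omega^n_{(R,I),\log}$; composing with the rigidity identification gives the statement. I do not anticipate a serious obstacle: all the essential ingredients (main rigidity, the Geisser--Hesselholt description of $\pi_*(\TC/p)$ for ind-smooth $\mathbb{F}_p$-algebras, and the elementary toy rigidity for $\nu^n$ and $\widetilde{\nu}^n$) have already been established, and the remainder is bookkeeping with the snake lemma and the long exact sequence. The only minor subtlety worth emphasizing in the write-up is that we cannot simply invoke Geisser--Levine on the $K$-theory side, because non-locality of $R$ or $R/I$ is allowed, which is precisely why routing through $\TC$ (where the étale descent spectral sequence degenerates on affines thanks to the quasi-coherence observations of Lemma \ref{qcohomega}) is the natural path.
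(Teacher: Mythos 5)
Your proposal is correct and follows essentially the same route as the paper's proof: reduce to relative $\TC$ via Theorem~\ref{mainthm}, invoke the Geisser--Hesselholt short exact sequences of Theorem~\ref{GHGL}(2) for the ind-smooth rings $R$ and $R/I$, and conclude by Proposition~\ref{toyFprigidityTC}. The paper leaves the snake-lemma and long-exact-sequence bookkeeping implicit, whereas you spell it out; the content is identical.
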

\begin{proof}
Indeed, this follows from our main result (which identifies the relative
$K$-theory with relative $\TC$), 
together with the Geisser--Hesselholt calculations of relative $\TC$. 
We have
the short exact sequence
\[  0 \to \widetilde{\nu}^{n+1}(R) \to \pi_n(\TC(R)/p)  \to \nu^n(R) \to 0 
\]
as in Theorem~\ref{GHGL} (see also \eqref{thisdiag}).
Using Proposition~\ref{toyFprigidityTC}, the result follows. 
\end{proof}

We begin with a couple of lemmas concerning relative log de Rham--Witt groups.

\begin{definition}
Given an $\bb F_p$-algebra $R$ and ideal $I\subseteq R$, we set $W_r\Omega^n_{(R,I),\sub{log}}:=\ker(W_r\Omega^n_{R,\sub{log}}\to W_r\Omega^n_{R/I,\sub{log}})$.\end{definition}

\begin{lemma}\label{lemma_drw1}
Let $R$ be an $\bb F_p$-algebra which is henselian along an ideal $I\subseteq R$. Then
$R-F:W_r\Omega^n_{(R,I)}\to W_{r-1}\Omega^n_{(R,I)}$ is surjective.
\end{lemma}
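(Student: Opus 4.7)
The plan is to argue by induction on $r$, with the base case $r=2$ supplying the essential henselian input and the inductive step being a standard devissage along the Restriction.

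For the base case $r=2$, the map $R-F : W_2\Omega^n_R \to W_1\Omega^n_R = \Omega^n_R$ admits the explicit formula
\[
(R-F)\bigl([a_0]\,d[a_1]\wedge\cdots\wedge d[a_n]\bigr) = \bigl(a_0 - a_0^p\,a_1^{p-1}\cdots a_n^{p-1}\bigr)\,da_1\wedge\cdots\wedge da_n,
\]
which follows from $F[a]=a^p$ and $Fd[a]=a^{p-1}da$ in $W_1\Omega^\bullet=\Omega^\bullet$. Now $\Omega^n_{(R,I)}$ is generated as an abelian group by forms $c\,da_1\wedge\cdots\wedge da_n$ in which at least one of $\{c,a_1,\dots,a_n\}$ lies in $I$. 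For each such form, Lemma~\ref{solveeq:hens} (applied in the same spirit as in Proposition~\ref{toyFprigidityTC}) produces $u\in R$ with $u - u^p a_1^{p-1}\cdots a_n^{p-1} = c$, and one can arrange $u\in I$ whenever $c\in I$; then $[u]\,d[a_1]\wedge\cdots\wedge d[a_n]$ lies in $W_2\Omega^n_{(R,I)}$ and is a preimage.

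For the inductive step from $r-1$ to $r\ge 3$, I would first note that the functoriality of the de Rham--Witt complex together with the surjectivity of the absolute Restriction $R:W_r\Omega^n_R\to W_{r-1}\Omega^n_R$ yield, by a short diagram chase on the comparison of the relative pieces $W_s\Omega^n_{(R,I)}$ at levels $s=r,r-1$, that $R$ is also surjective on the relative pieces. Applying the snake lemma to the diagram
\[
\xymatrix@C=4mm{
0\ar[r]&K_r\ar[r]\ar[d]^{R-F}&W_r\Omega^n_{(R,I)}\ar[r]^R\ar[d]^{R-F}&W_{r-1}\Omega^n_{(R,I)}\ar[r]\ar[d]^{R-F}&0\\
0\ar[r]&K_{r-1}\ar[r]&W_{r-1}\Omega^n_{(R,I)}\ar[r]^R&W_{r-2}\Omega^n_{(R,I)}\ar[r]&0
}
\]
(where $K_s$ denotes the kernel of $R$ on the relative piece) together with the inductive hypothesis on the right-hand vertical arrow reduces the problem to showing that the leftmost vertical map $R-F:K_r\to K_{r-1}$ is surjective. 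By Illusie's description of $\ker R$ as the sum $V^{s-1}\Omega^n_R+dV^{s-1}\Omega^{n-1}_R$ inside $W_s\Omega^n_R$ \cite{illusie}, together with the basic identities $FV=p$ and $FdV=d$, this final surjectivity reduces, stratum by stratum, to the same henselian Artin--Schreier-type equation treated in the base case.

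The main obstacle is this last reduction on the kernels $K_r \to K_{r-1}$: one must carefully track how $F$ mixes the $V^{r-1}$- and $dV^{r-1}$-strata of $K_r$ and identify the correct relative pieces so that the henselian equations of Lemma~\ref{solveeq:hens} can be solved compatibly. Fortunately, the decoupling between these two strata afforded by $FdV=d$ allows one to handle them successively, which makes the induction go through.
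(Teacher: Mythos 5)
Your base case $r=2$ is fine (it is essentially the computation of Proposition~\ref{toyFprigidityTC}), but the inductive step contains a genuine gap. On $K_r=\ker(R)\cap W_r\Omega^n_{(R,I)}$ the Restriction vanishes by definition, so your leftmost vertical map is just $-F$. Now $F$ annihilates the whole stratum $V^{r-1}\Omega^n_R$: one has $F V^{r-1}\alpha = pV^{r-2}\alpha = V^{r-2}(p\alpha)=0$, since $V$ is additive and $\Omega^n_R$ is killed by $p$. Hence $F(K_r)\subseteq dV^{r-2}\Omega^{n-1}_R$, which is in general a proper subgroup of $K_{r-1}\supseteq V^{r-2}\Omega^n_{(R,I)}+dV^{r-2}\Omega^{n-1}_{(R,I)}$. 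Concretely, for $n=0$ and $r=3$ one gets $K_3=V^2(W_1(I))$ and $F(K_3)=pV(W_1(I))=0$, while $K_2=V(W_1(I))\neq 0$ whenever $I\neq 0$. So $R-F:K_r\to K_{r-1}$ is \emph{not} surjective and the four-lemma argument does not close. What the diagram chase actually needs is the weaker statement $K_{r-1}\subseteq\mathrm{im}\bigl(R-F:W_r\Omega^n_{(R,I)}\to W_{r-1}\Omega^n_{(R,I)}\bigr)$, and the preimage of $V^{r-2}\alpha$ must be sought in $\mathrm{Fil}^{r-2}$, not in $K_r=\mathrm{Fil}^{r-1}$ (e.g.\ for $n=0$, $r=3$, a preimage of $V(a)$ is $(0,x_1,x_2)$ with $x_1-x_1^p=a$). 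Carrying this out amounts to redoing the complete-case argument of \cite[Prop.~2.20]{Morrow-HW}, and it additionally relies on the identifications $\ker(R)=V^{r-1}\Omega^n+dV^{r-1}\Omega^{n-1}$ and the surjectivity of the relative Restriction for arbitrary (non-smooth) $\mathbb{F}_p$-algebras, which you assert but which are theorems only in the (ind-)smooth setting.

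For comparison, the paper avoids any dévissage on the de Rham--Witt groups: it reduces by a filtered colimit to the case where $(R,I)$ is the henselization of a finite type $\mathbb{F}_p$-algebra, observes via N\'eron--Popescu and Elkik's theorem that $R\to\widehat R$ is a filtered colimit of split injections, so that $\operatorname{coker}(R-F)$ on the relative groups injects into the corresponding cokernel for $(\widehat R,I\widehat R)$, and then quotes the complete case from \cite[Prop.~2.20]{Morrow-HW}. If you want a self-contained direct proof along your lines, you would need to supply the $V$-filtration analysis sketched above rather than the kernel-of-$R$ reduction as written.
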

\begin{proof}

If $R$ is $F$-finite, noetherian, and $I$-adically complete then this was proved
in \cite[Prop.~2.20]{Morrow-HW}, and we will reduce the general case to this
one. Firstly, by taking a filtered colimit we may suppose that $R$ is the
henselization of a finite type $\bb F_p$-algebra along some ideal, and in
particular that $R$ is $F$-finite and excellent \cite[Tag 07QS]{stacks-project}. Then the $I$-adic completion $\hat R$ is geometrically regular over $R$ by \cite[7.8.3(v)]{EGA_IV_2}, whence it is a filtered colimit of smooth $R$-algebras $A$ by N\'eron--Popescu. But each structure map $R\to A$ admits a splitting
as in Remark~\ref{henspairequivtocontinuity}. 
Therefore, taking a filtered colimit shows that \[\op{Coker}(W_r\Omega^n_{(R,I)}\xto{R-F} W_{r-1}\Omega^n_{(R,I)})\To \op{Coker}(W_r\Omega^n_{(\hat R,I\hat R)}\xto{R-F} W_{r-1}\Omega^n_{(\hat R,I\hat R)})\] is injective. Since the right side vanishes by the first sentence of the paragraph, the proof is complete.
\end{proof}

A important result of Illusie states that if $X$ is a smooth variety over a
perfect field of characteristic $p$, then the canonical map of \'etale sheaves
$W_{s}\Omega^n_{X,\sub{log}}/p^r\to W_r\Omega^n_{X,\sub{log}}$ is an isomorphism
whenever $s\ge r$ and $n\ge0$, and therefore \[0\To
\{W_s\Omega^n_{X,\sub{log}}\}_s\xto{p^r}
\{W_s\Omega^n_{X,\sub{log}}\}_s\xto{\{R^{s-r}\}_s}W_r\Omega^n_{X,\sub{log}}\To
0\] is an exact sequence of pro sheaves on $X_\sub{\'et}$ \cite[\S
I.5.7]{illusie-derham-witt}. We will require the following relative form of this result for henselian ideals:

\begin{lemma}\label{lemma_drw2}
Let $R$ be an $F$-finite, regular, noetherian $\bb F_p$-algebra which is henselian along an ideal $I\subseteq R$; fix $n\ge0$. Then the sequence of pro abelian groups
\[0\To\{W_s\Omega^n_{(R,I^s),\sub{log}}\}_s\xto{p^r}\{W_s\Omega^n_{(R,I^s),\sub{log}}\}_s\xto{\{R^{s-r}\}_s}\{W_r\Omega^n_{(R,I^s),\sub{log}}\}_s\To 0\]
is exact for any $r\ge1$.
\end{lemma}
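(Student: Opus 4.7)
The plan is to deduce the relative pro-exactness from the absolute (non-relative) version of Illusie's exact sequence by a snake-lemma argument at the pro level, with the henselian hypothesis entering through the techniques of Lemma \ref{lemma_drw1}.

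First I would establish the absolute version: for each $s\ge r$, the sequence
\[0 \to W_s\Omega^n_{R,\sub{log}} \xto{p^r} W_s\Omega^n_{R,\sub{log}} \xto{R^{s-r}} W_r\Omega^n_{R,\sub{log}} \to 0\]
is exact. For smooth varieties over a perfect field this is the classical result of Illusie \cite[I.5.7]{illusie}; the extension to $F$-finite regular noetherian $\bb F_p$-algebras proceeds as in the proof of Lemma \ref{lemma_drw1}, by N\'eron--Popescu desingularization of the $I$-adic completion together with an Elkik splitting (Theorem \ref{elkikthm}) to descend the isomorphism from the smooth $R$-algebras approximating $\hat R$ back down to $R$. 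Injectivity of $p^r$ on $W_s\Omega^n_{(R,I^s),\sub{log}}$ is then immediate since the relative subgroup embeds in $W_s\Omega^n_{R,\sub{log}}$.

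Next I would run a snake-lemma argument on the commutative diagram of pro systems
\[\xymatrix@C=0.4cm{
0 \ar[r] & \{W_s\Omega^n_{(R,I^s),\sub{log}}\}_s \ar[r] \ar[d]^{p^r} & \{W_s\Omega^n_{R,\sub{log}}\}_s \ar[r] \ar[d]^{p^r} & \{Q_s\}_s \ar[r] \ar[d]^{p^r} & 0 \\
0 \ar[r] & \{W_s\Omega^n_{(R,I^s),\sub{log}}\}_s \ar[r] & \{W_s\Omega^n_{R,\sub{log}}\}_s \ar[r] & \{Q_s\}_s \ar[r] & 0
}\]
where $Q_s$ denotes the image of $W_s\Omega^n_{R,\sub{log}}$ inside $W_s\Omega^n_{R/I^s,\sub{log}}$, and then again with the Restriction maps $R^{s-r}$ replacing $p^r$. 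Provided the analogous pro-exact sequence holds for the quotient system $\{Q_s\}_s$, a straightforward snake chase in both directions yields the desired pro-exactness for the relative column. The middle column is handled by the first step.

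The hard part will be verifying this analogous pro-exactness for $\{Q_s\}_s$. The quotients $R/I^s$ are not regular, so Illusie's absolute result does not apply directly; however, the pro system becomes tractable via a d\'evissage through the short exact sequences $0 \to I^k/I^{k+1} \to R/I^{k+1} \to R/I^k \to 0$, whose kernels are finitely generated modules over the regular ring $R/I$, combined with Artin--Rees in the noetherian ring $R$ to control the overlap of $I$-adic filtrations with the kernels of $p^r$ and $R^{s-r}$. Lemma \ref{lemma_drw1} will be used at the pro limit to produce the required pro-surjectivity of $R^{s-r}$: given a class in $W_r\Omega^n_{(R,I^s),\sub{log}}$, we lift it to $W_{s'}\Omega^n_{R,\sub{log}}$ via the absolute surjectivity and then correct the lift by an error term supported on $R/I^{s'}$, using the henselian hypothesis to push the error into arbitrarily deep powers of $I$ as $s'$ grows.
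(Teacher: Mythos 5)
Your overall architecture (absolute sequence plus a snake lemma against the quotient system) is the right shape, and it is essentially how the paper argues, but two of your steps hide the actual content of the proof. First, Illusie's theorem is a statement about \emph{\'etale sheaves}, not about the groups $W_s\Omega^n_{R,\sub{log}}$ of global sections: the sheaf $W_r\Omega^n_{X,\sub{log}}$ has nonvanishing $H^1_\sub{\'et}$ on affines in general (already $H^1_\sub{\'et}(\Spec R,\nu^n)=\widetilde{\nu}^n(R)$ is typically nonzero), so surjectivity of $R^{s-r}$ and exactness in the middle cannot simply be read off at the level of sections. This is where the henselian hypothesis genuinely enters: the paper first proves that the \emph{relative} log sheaf $W_r\Omega^n_{(X,Y_s),\sub{log}}$ has vanishing $H^1_\sub{\'et}$ in the pro sense, by resolving it via $R-F$ acting on the relative de Rham--Witt sheaves (which are cohomologically trivial on affines) and invoking Lemma \ref{lemma_drw1} for surjectivity of $R-F$ on global sections. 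Your proposal uses Lemma \ref{lemma_drw1} only loosely ``to push the error into deep powers of $I$,'' which does not substitute for this cohomological vanishing; without it the passage from sheaf exactness to exactness of the displayed sequence of pro abelian groups fails.

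Second, the pro-exactness of Illusie's sequence for the quotient system $\{W_s\Omega^n_{R/I^s,\sub{log}}\}_s$ is not something you can get by d\'evissage through $0\to I^k/I^{k+1}\to R/I^{k+1}\to R/I^k\to 0$ and Artin--Rees: logarithmic de Rham--Witt groups are not an additive functor of the ring, and the kernels of the transition maps $W_s\Omega^n_{R/I^{k+1},\sub{log}}\to W_s\Omega^n_{R/I^k,\sub{log}}$ have no tractable description in terms of $I^k/I^{k+1}$. This step is a substantial theorem in its own right --- the pro version of Illusie's sequence for the formal completion of $X$ along $Y_1$, which the paper imports as \cite[Cor.~4.8]{Morrow-HW} (together with Shiho's extension of the absolute statement to arbitrary regular $\bb F_p$-schemes for the middle column). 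As written, your ``hard part'' essentially assumes the result it needs to prove. A correct repair is to run your entire diagram in the category of pro \'etale sheaves on $X$, cite the absolute and formal-completion pro-Illusie sequences for the two columns, take kernels of the surjective vertical comparison maps, and only then pass to global sections using the relative $H^1$-vanishing described above.
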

\begin{proof}
We will need to argue via the \'etale topology, so we begin by introducing the necessary sheaves. Let $X:=\Spec R$, with closed subschemes $Y_s:=\Spec R/I^s$ for $s\ge1$; let $W_r\Omega^n_X$, $W_r\Omega^n_{Y_s}$, and $W_r\Omega^n_{(X,Y_s)}:=\ker(W_r\Omega^n_X\to W_r\Omega^n_{Y_s})$ be the corresponding \'etale sheaves on $X$. As explained in Definition \ref{definition_dRW}, these sheaves have no higher \'etale cohomology (since $X$ is affine) and their global sections are respectively $W_r\Omega^n_R$, $W_r\Omega^n_{R/I^s}$, and $W_r\Omega^n_{(R,I^s)}$.

Next let $W_r\Omega^n_{X,\sub{log}}$ be the image in the \'etale topology of $\dlog[\cdot]:\bb G_{m,X}^{\otimes n}\to W_r\Omega_{X}^n$, and similarly for each $Y_s$, and set $W_r\Omega^n_{(X,Y_s),\sub{log}}:=\ker(W_r\Omega^n_{X,\sub{log}}\to W_r\Omega^n_{Y_s,\sub{log}})$. As we already mentioned in Definition \ref{definition_dRW}, the subgroup of dlog forms may be defined using either the Zariski or \'etale topology by \cite[Cor.~4.1(iii)]{Morrow-HW}, and therefore the global sections of these sheaves are respectively $W_r\Omega^n_{R,\sub{log}}$, $W_r\Omega^n_{R/I^s,\sub{log}}$, and $W_r\Omega^n_{(R,I^s),\sub{log}}$.

We claim that $\{H^1_\sub{\'et}(X,W_r\Omega^n_{(X,Y_s),\sub{log}})\}_r=0$ for
each $s\ge 1$. After replacing $I$ by $I^s$ (since a subideal of a henselian
ideal remains henselian, Remark~\ref{subideal:henspair}) we may as well assume $s=1$ and write $Y=Y_s$ for simplicity of notation; the key will be that $X$ is an affine scheme, henselian along $Y$. Appealing to \cite[Cor.~4.1(iii)]{Morrow-HW} for both $X$ and $Y$ gives rise to short exact sequences of pro sheaves on $X_\sub{\'et}$, in which the three vertical arrows are surjective:
\[\xymatrix{
0\ar[r]& \{W_r\Omega^n_{X,\sub{log}}\}_r\ar[r]\ar[d]&\{W_r\Omega^n_X\}_r\ar[r]^{R-F}\ar[d]&\{W_{r-1}\Omega^n_X\}_r\ar[r]\ar[d]&0\\
0\ar[r]& \{W_r\Omega^n_{Y,\sub{log}}\}_r\ar[r]&\{W_r\Omega^n_Y\}_r\ar[r]^{R-F}&\{W_{r-1}\Omega^n_Y\}_r\ar[r]&0
}\]
Taking the kernels of the vertical arrows gives us a short exact sequence of relative terms
\begin{equation}0\To \{W_r\Omega^n_{(X,Y),\sub{log}}\}_r\To\{W_r\Omega^n_{(X,Y)}\}_r\xto{R-F}\{W_{r-1}\Omega^n_{(X,Y)}\}_r\To0\label{equation_R-F}\end{equation} Since the middle and right terms have no higher cohomology, it is enough to check that $R-F$ is surjective on global sections; but this is even true for each fixed level $r$ by Lemma \ref{lemma_drw1}.

Finally, appealing to Illusie's result recalled immediately before the lemma (which has been extended to arbitrary regular $\bb F_p$-schemes by A.~Shiho \cite[Cor.~2.13]{Shiho2007}) and to an analogous pro version for the formal completion of $X$ along along $Y_1$ \cite[Cor.~4.8]{Morrow-HW} gives rise to short exact sequences of pro sheaves on $X_\sub{\'et}$, in which the three vertical arrows are again surjective:
\[\xymatrix{
0\ar[r]& \{W_s\Omega^n_{X,\sub{log}}\}_s\ar[r]^{p^r}\ar[d]&\{W_s\Omega^n_{X,\sub{log}}\}_s\ar[r]^{\{R^{r-s}\}_s}\ar[d]&W_{r}\Omega^n_{X,\sub{log}}\ar[r]\ar[d]&0\\
0\ar[r]& \{W_s\Omega^n_{Y_s,\sub{log}}\}_s\ar[r]^{p^r}&\{W_s\Omega^n_{Y_s,\sub{log}}\}_s\ar[r]^{\{R^{r-s}\}_s}&\{W_{r}\Omega^n_{Y_s,\sub{log}}\}_s\ar[r]&0
}\]
Taking kernels gives a short exact sequence of relative terms
\[0\To \{W_s\Omega^n_{(X,Y_s),\sub{log}}\}_s\xto{p^r}\{W_s\Omega^n_{(X,Y_s),\sub{log}}\}_s\xto{\{R^{r-s}\}_s}\{W_{r}\Omega^n_{(X,Y_s),\sub{log}}\}_s\To0.\] Taking global sections completes the proof since the previous paragraph showed that the left term has no $H^1_\sub{\'et}$.
\end{proof}

The following is our relative form of Theorem \ref{theorem_pro_GL} for henselian
ideals (and a pro-version of the identification \eqref{relhensmodp}), from which Theorem \ref{theorem_pro_GL} will immediately follow.
 
\begin{theorem}\label{theorem_pro_GL_relative}
Let $R$ be a $F$-finite, regular, noetherian $\bb F_p$-algebra which is henselian along an ideal $I\subseteq R$; fix $n,r\ge 0$. Then the trace map induces a natural isomorphism of pro abelian groups \[\{K_n(R,I^s;\bb Z/p^r\bb Z)\}_s\isoto \{W_r\Omega^n_{(R,I^s),\sub{log}}\}_s.\]
\end{theorem}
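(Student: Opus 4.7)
The plan is to apply Theorem~\ref{mainthm} to reduce from $K$-theory to $\TC$, and then to extend Geisser--Hesselholt's identification of $\TC$ with logarithmic de Rham--Witt from the ind-smooth setting to the pro system $\{R/I^s\}_s$.

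First, by Theorem~\ref{mainthm} applied to each henselian pair $(R, I^s)$, the cyclotomic trace induces compatible mod-$p$ equivalences $K(R, I^s)/p \simeq \TC(R, I^s)/p$. The mod-$p$ vanishing of $\knf(R, I^s)$ forces $p$ to act invertibly there, hence $\knf(R, I^s)/p^r = 0$ as well, giving $K(R, I^s)/p^r \simeq \TC(R, I^s)/p^r$ for all $r\ge1$. It therefore suffices to produce a natural pro isomorphism
\[\{\pi_n \TC(R, I^s)/p^r\}_s \isoto \{W_r\Omega^n_{(R, I^s), \log}\}_s\]
matching $\dlog$ of Teichm\"uller lifts.

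Second, $R$ is ind-smooth over $\mathbb{F}_p$ by N\'eron--Popescu, since $\mathbb{F}_p \to R$ is a regular map (using that $\mathbb{F}_p$ is perfect and $R$ is regular noetherian). A mod-$p^r$ extension of Theorem~\ref{GHGL}---obtained by combining Hesselholt's HKR identification \eqref{HHKR2} with the formula $\TC \simeq \mathrm{fib}(1-F\colon\TR\to\TR)$ and the \'etale short exact sequence $0 \to W_r\Omega^n_{-, \log} \to W_r\Omega^n_{-} \xrightarrow{1-F} W_r\Omega^n_{-}/dW_r\Omega^{n-1}_{-} \to 0$ via \'etale descent---yields for any ind-smooth local $\mathbb{F}_p$-algebra $A$ a natural short exact sequence
\[0 \to \widetilde{W_r\Omega}^{n+1}(A) \to \pi_n \TC(A)/p^r \to W_r\Omega^n_{A, \log} \to 0,\]
where $\widetilde{W_r\Omega}^{n+1}(A)$ denotes the cokernel of $1 - F$ acting on $W_r\Omega^{n+1}_A/dW_r\Omega^n_A$. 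This formula extends to the pro system $\{R/I^s\}_s$---despite the $R/I^s$ being singular---via the $\TC$-continuity of Proposition~\ref{THHcont} (using $F$-finiteness of $R/p$), the parallel dRW-continuity encoded in Lemmas~\ref{lemma_drw1}--\ref{lemma_drw2}, and the ind-smoothness of the $I$-adic completion $\hat R$ combined with the Artin-approximation argument of Remark~\ref{henspairequivtocontinuity}.

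Third, taking relative terms of the short exact sequences for $R$ versus $\{R/I^s\}_s$ reduces the theorem to the pro-vanishing $\{\widetilde{W_r\Omega}^{n+1}(R, I^s)\}_s = 0$. Via Lemma~\ref{lemma_drw2}, this reduces inductively on $r$ to the case $r=1$, namely $\{\widetilde{\nu}^{n+1}(R, I^s)\}_s = 0$, which is a direct pro-analog of Proposition~\ref{toyFprigidityTC}: the $u$-solving argument there, using the henselian property via Lemma~\ref{solveeq:hens}, works uniformly in $s$. The main obstacle is the second step: rigorously transferring the mod-$p^r$ Geisser--Hesselholt formula from absolute ind-smooth rings to the relative pro system $\{R/I^s\}_s$. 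Theorem~\ref{mainthm} is essential here, as it allows us to avoid any direct analysis of the cyclotomic structure of $\THH(R/I^s)$---the source of the ``generalized normal crossings'' hypothesis in \cite{Morrow-HW}.
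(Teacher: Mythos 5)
Your overall architecture (reduce $K$ to $\TC$ via Theorem~\ref{mainthm}, then identify relative $\TC$ with logarithmic de Rham--Witt forms) matches the paper's, and your first and third steps are essentially sound: the passage from mod $p$ to mod $p^r$ equivalences of $\knf$ is fine, and the reduction of the de Rham--Witt side to a relative surjectivity/vanishing statement proved by the henselian $u$-solving trick is exactly what Lemma~\ref{lemma_drw1} and Lemma~\ref{lemma_drw2} accomplish. However, your second step --- ``this formula extends to the pro system $\{R/I^s\}_s$'' --- is the crux of the whole theorem, and the tools you cite do not deliver it. The statement to be proved is an isomorphism of \emph{pro} abelian groups, which is strictly stronger than any statement about inverse limits. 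Proposition~\ref{THHcont} only asserts that $\THH(R)/p \to \varprojlim_s \THH(R/I^s)/p$ is an equivalence; it says nothing about the pro system $\{\pi_n\TC(R/I^s)/p^r\}_s$ level by level. Likewise, the Artin approximation argument of Remark~\ref{henspairequivtocontinuity} relates $R$ to $\hat R$, but the quotients $R/I^s \cong \hat R/I^s\hat R$ are the same singular rings either way, so ind-smoothness of $\hat R$ gives no control over $\TC$ of the quotients. Since the $R/I^s$ are singular, the Geisser--Hesselholt description of $\pi_*\TC/p^r$ simply does not apply to them, and nothing in your argument replaces it.

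The paper fills this gap with two inputs you omit. First, the pro Hochschild--Kostant--Rosenberg theorem of Dundas--Morrow \cite[Cor.~4.15]{DundasMorrow}, which gives a genuine pro isomorphism $\{W_s\Omega^n_{R/I^s}\}_s \isoto \{\TR^s_n(R/I^s;p)\}_s$; combined with Hesselholt's HKR for the ind-smooth $R$ and the surjectivity of $W_s\Omega^n_R \to W_s\Omega^n_{R/I^s}$, this identifies the relative pro groups $\{\TR^s_n(R,I^s;p)\}_s$ with $\{W_s\Omega^n_{(R,I^s)}\}_s$, and then the surjectivity of $R-F$ (Lemma~\ref{lemma_drw1}) computes $\{\TC^s_n(R,I^s;p)\}_s$ as the kernel of $R-F$, i.e.\ the logarithmic forms. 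Second, the pro-constancy of $\{\TC^s(-;\mathbb{Z}/p^r)\}_s$ (Proposition~\ref{prop_pro_constant}), which rests on the finiteness theorem that $\TC/p$ commutes with filtered colimits, is what converts the $\{\TC^s_n\}_s$ computation into a statement about $\TC_n$ itself. Without these (or equivalents), your step~2 is an assertion, not a proof. A minor additional point: your operator ``$1-F$ on $W_r\Omega^{n+1}_A/dW_r\Omega^n_A$'' does not typecheck at finite level $r$, since $F$ lowers the level; the correct formulation is $R-F\colon W_{s}\Omega \to W_{s-1}\Omega$ on the pro system, or the operator $\pi-\overline{F}$ of Proposition~\ref{identifytwiddle}.
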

\begin{proof}
The proof is similar to the techniques of \cite[\S5.1]{Morrow-HW}, but we repeat
the necessary details here. In particular, we begin with the same recollections
on Hochschild--Kostant--Rosenberg theorems for the spectra $\TR^s$. If $R$ is
any $\bb F_p$-algebra, then the pro graded ring $\{\TR^s_*(R;p)\}_s$ is a
$p$-typical Witt complex with respect to its operators $F,V,R$; by universality
of the de Rham--Witt complex, there are therefore natural maps of graded $W_s(R)$-algebras \cite[Prop.~1.5.8]{Hesselholt} $\lambda_{s,R}:W_s\Omega_R^*\to \TR_*^s(R;p)$ for $s\ge0$, which are compatible with the Frobenius,  Verschiebung, and Restriction maps (in other words, a morphism of $p$-typical Witt complexes).

From now on in the proof assume that $R$ is regular, noetherian, and $F$-finite, and let $I\subseteq R$ be any ideal. Hesselholt's HKR theorem \cite[Thm.~B]{Hesselholt} implies that the resulting map of pro abelian groups \[\lambda_R:\{W_s\Omega_R^n\}_s\To \{\TR_n^s(R;p)\}_s\] is an isomorphism for each $n\ge 1$; similarly, the pro HKR theorem of Dundas--Morrow \cite[Cor.~4.15]{DundasMorrow} implies that \[\lambda_{R/I^\infty}: \{W_s\Omega_{R/I^s}^n\}_s\To \{\TR_n^s(R/I^s;p)\}_s \tag{pro-HKR}\] is an isomorphism of pro abelian groups for each $n\ge0$. Since $W_s\Omega^n_R\to W_s\Omega^n_{R/I^s}$ is surjective for all $s,n\ge0$, it follows that the long exact sequence associated to $\{\TR^s(R,I^s;p)\}_s\to \{\TR^s(R;p)\}_s\to \{\TR^s(R/I^s;p)\}_s$ breaks into short exact sequences and there are therefore natural induced isomorphisms of relative theories
\[\{W_s\Omega_{(R,I^s)}^n\}_s\isoto \{\TR_n^s(R,I^s;p)\}_s\] for all $n\ge0$.

Now assume that $R$ is henselian along $I$. By Lemma \ref{lemma_drw1}, the map
$R-F:W_{s}\Omega^n_{(R,I^s)}\to W_{s-1}\Omega^n_{(R,I^s)}$ is surjective for all
$n,s\ge0$, and therefore the previous isomorphism shows that the long exact
sequence associated to $\{\TC^s(R,I^s;p)\}_s\to
\{\TR^s(R,I^s;p)\}_s\xto{R-F}\{\TR^s(R,I^s;p)\}_s$ breaks into short exacts and
thereby induces natural isomorphisms \[\{\TC^s_n(R,I^s;p)\}_s\isoto
\{\ker(W_{s}\Omega^n_{(R,I^s)}\xto{R-F} W_{s-1}\Omega^n_{(R,I^s)})\}_s\] The
right side of the previous line is precisely
$\{W_s\Omega^n_{(R,I^s),\sub{log}}\}_s$ by applying \cite[Cor. 
4.1(iii)]{Morrow-HW} to $R$ and $R/I^s$ for each $s\ge1$ (in fact, we have
already made this argument: just take global sections in line
(\ref{equation_R-F}) for each $Y_s$ and then pass to the resulting diagonal of
the $\bb N^2$-indexed pro abelian group); moreover, this latter pro abelian
group is $p$-torsion-free by the injectivity in Lemma \ref{lemma_drw2} (this is
anyway easy: the pro abelian group is contained in $\{W_s\Omega^n_R\}_s$, which
is $p$-torsion-free thanks to the equality of the $p$- and canonical-filtrations
on the de Rham--Witt groups of any regular $\bb F_p$-algebra: see
\cite[Prop.~I.3.2 \& I.3.4]{illusie-derham-witt} for the case of a smooth algebra over a perfect field, then apply N\'eron--Popescu), so passing to finite coefficients gives us isomorphisms \[\{\TC^s_n(R,I^s;\bb Z/p^r\bb Z)\}_s\cong\{W_s\Omega^n_{(R,I^s),\sub{log}}/p^r\}_s\] for each fixed $r\ge1$. The right side is $\{W_r\Omega^n_{(R,I^s),\sub{log}}\}_s$ by (the hard part of) Lemma \ref{lemma_drw2}.

Finally, pro-constancy of $\{\TC^s(-;\bb Z/p^r)\}_s$ (Proposition \ref{prop_pro_constant}) implies that the left side of the previous line is $\{\TC_n(R,I^s;\bb Z/p^r\bb Z)\}_s$, which by Theorem \ref{mainthm} identifies with $\{K_n(R,I^s;\bb Z/p^r\bb Z)\}_s$ via the trace map.
\end{proof}

\begin{corollary}
Theorem \ref{theorem_pro_GL} is true.
\end{corollary}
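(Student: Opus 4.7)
The plan is to reduce Theorem \ref{theorem_pro_GL} to Theorem \ref{theorem_pro_GL_relative} via henselization and then extract the quotient $K$-theory by comparing long exact sequences. Since the pro-ring $\{R/I^s\}_s$ is unchanged upon replacing $R$ by its henselization $R^h$ along $I$, and the hypotheses ($F$-finite, regular, local, noetherian) are all stable under this ind-\'etale construction, we may assume $(R,I)$ is a henselian pair. Theorem \ref{theorem_pro_GL_relative} then gives
\[\{K_n(R, I^s;\bb Z/p^r\bb Z)\}_s \isoto \{W_r\Omega^n_{(R,I^s),\sub{log}}\}_s,\]
while Kunz's theorem (an $F$-finite regular noetherian $\bb F_p$-algebra is excellent, hence geometrically regular) together with N\'eron--Popescu desingularization writes $R$ as a filtered colimit of smooth $\bb F_p$-algebras, so Geisser--Levine applies to give $K_n(R;\bb Z/p^r\bb Z) \cong W_r\Omega^n_{R,\sub{log}}$ and the $p$-torsion-freeness of $K_n(R)$.

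Next I would compare the pro long exact sequence on mod $p^r$ $K$-theory arising from $K(R,I^s) \to K(R) \to K(R/I^s)$ with the short exact sequence
\[0 \to W_r\Omega^n_{(R,I^s),\sub{log}} \to W_r\Omega^n_{R,\sub{log}} \to W_r\Omega^n_{R/I^s,\sub{log}} \to 0,\]
whose right surjectivity uses that $R$ being local implies every unit of $R/I^s$ lifts to a unit of $R$, so the $\dlog$ generators of the quotient log de Rham--Witt group all come from $R$. Under the identifications of the previous paragraph, the boundary map $\{K_n(R/I^s;\bb Z/p^r)\}_s \to \{K_{n-1}(R,I^s;\bb Z/p^r)\}_s$ has pro-image lying in the kernel of an injection into the constant group $W_r\Omega^{n-1}_{R,\sub{log}}$, hence vanishes pro-ly; the resulting pro short exact sequence of $K$-groups compared with the above via the five-lemma in the abelian category of pro abelian groups yields the isomorphism $\{K_n(R/I^s;\bb Z/p^r\bb Z)\}_s \isoto \{W_r\Omega^n_{R/I^s,\sub{log}}\}_s$, realized on symbols by $\dlog[\cdot]$ by naturality of the trace.

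The remaining claim, and the principal obstacle, is the pro-$p$-torsion-freeness of $\{K_n(R/I^s)\}_s$. My plan is first to establish the analogous property for the relative system $\{K_n(R,I^s)\}_s$, and then to deduce the claim by applying the snake lemma to multiplication by $p^m$ on the pro long exact sequence (the snake vanishes because $K_n(R)$ is $p$-torsion-free by Geisser--Levine). For the relative case, the strategy is to run the comparison of the previous paragraph with varying $r$, verifying compatibility with the transition maps (coefficient reduction on the $K$-theory side, Restriction on the de Rham--Witt side), so as to transport the pro-exact sequence of Lemma \ref{lemma_drw2} to the $K$-theoretic pro-system; combined with the Bockstein identity $\beta_r \circ \mathrm{red}_{\bb Z/p^{r+1}\to\bb Z/p^r} = p\cdot\beta_{r+1}$, this shows that any pro-$p$-torsion class in $\{K_n(R,I^s)\}_s$ is infinitely $p$-divisible. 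Ruling out such classes is the delicate point, and I would approach it via the pseudocoherence machinery of Section 4 together with a pro-Milnor argument exploiting that the pro $p$-adic completion $\{K(R,I^s)^\wedge_p\}_s$ is identified on homotopy with the pro-$p$-torsion-free system $\{W\Omega^n_{(R,I^s),\sub{log}}\}_s$.
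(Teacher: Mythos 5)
Your reduction to the henselian case and your derivation of the mod $p^r$ isomorphism are essentially the paper's argument: replace $R$ by its henselization along $I$ (which leaves $\{R/I^s\}_s$ and the hypotheses intact), invoke Theorem \ref{theorem_pro_GL_relative} for the relative term and Geisser--Levine for $R$ itself (a regular noetherian $\mathbb{F}_p$-algebra is ind-smooth by N\'eron--Popescu), and then note that the boundary map in the pro long exact sequence is pro-zero because its image is the kernel of $\{K_{n-1}(R,I^s;\mathbb{Z}/p^r)\}_s\to K_{n-1}(R;\mathbb{Z}/p^r)$, which under the identifications is the kernel of the inclusion $\{W_r\Omega^{n-1}_{(R,I^s),\sub{log}}\}_s\hookrightarrow W_r\Omega^{n-1}_{R,\sub{log}}$. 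That part is correct and matches the paper.

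The gap is in the $p$-torsion-freeness of $\{K_n(R/I^s)\}_s$. Your plan reduces this to showing that a pro-$p$-torsion class is infinitely $p$-divisible and then ``ruling out such classes,'' but you give no argument for that last step and flag it yourself as the delicate point; it is not clear one exists along these lines, since pro systems of abelian groups can contain nonzero divisible torsion, and neither the pseudocoherence machinery of Section 4 nor a ``pro-Milnor argument'' is set up to exclude it. The paper's route (Corollary \ref{corollary_Milnor}) is a two-line consequence of the isomorphism you have already established: since $\{K_n(R/I^s;\mathbb{Z}/p)\}_s\cong\{\Omega^n_{R/I^s,\sub{log}}\}_s$ is generated by $\dlog$ symbols, the map $\{K_n^M(R/I^s)/p\}_s\to\{K_n(R/I^s;\mathbb{Z}/p)\}_s$ is pro-surjective; symbols lift to integral $K$-theory, so $\{K_n(R/I^s)\}_s\to\{K_n(R/I^s;\mathbb{Z}/p)\}_s$ is pro-surjective, and the universal coefficient sequence $0\to K_n(R/I^s)/p\to K_n(R/I^s;\mathbb{Z}/p)\to K_{n-1}(R/I^s)[p]\to 0$ then forces $\{K_{n-1}(R/I^s)[p]\}_s=0$. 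You should replace the final paragraph of your plan with this argument.
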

\begin{proof}
Let $R$ be as in the statement of Theorem \ref{theorem_pro_GL}. Without loss of generality $R$ is henselian along $I$, whence $R$ is also local, and so the result follows by combining Theorem \ref{theorem_pro_GL_relative} with the isomorphism $K_n(R; \bb Z/p^r\bb Z)\isoto W_r\Omega^n_{R,\sub{log}}$ of Geisser--Levine \cite{GL}. Note that the two results are compatible since the trace map is given by $\dlog[\cdot]$ on symbols by \cite[Lem.~4.2.3 \& Cor.~6.4.1]{GH}.

We establish the $p$-torsion-freeness of $\{K_n(R/I^s)\}_s$ in the next corollary.
\end{proof}

In the following corollary $K_n^M$ denotes Milnor $K$-theory (either the
classical version or Kerz's improved variant \cite{Kerz2010}; the corollary holds for both):

\begin{corollary}[Comparison to Milnor $K$-theory; $p$-torsion-freeness]\label{corollary_Milnor}
Let $R$ be an $F$-finite, regular, noetherian $\bb F_p$-algebra and $I\subseteq R$ an ideal such that $R/I$ is local. Then, for all $n,r\ge0$:
\begin{enumerate}
\item the canonical map $\{K_n^M(R/I^s)/p^r\}_s\to\{K_n(R/I^s;\bb Z/p^r)\}_s$ is surjective and has the same kernel as $\dlog[\cdot]:\{K_n^M(R/I^s)/p^r\}_s\to \{W_r\Omega^n_{R/I^s}\}_s$;
\item the pro abelian group $\{K_n(R/I^s)\}_s$ is $p$-torsion-free.
\end{enumerate}
\end{corollary}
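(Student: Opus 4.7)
The plan is to handle both parts together, with part (1) being essentially immediate from Theorem \ref{theorem_pro_GL} and Definition \ref{definition_dRW}, and part (2) falling out of part (1) by a universal-coefficient argument.

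For part (1), I would first localize $R$ at the maximal ideal containing $I$, which leaves the pro system $\{R/I^s\}_s$ unchanged (as $R/I$ is local) while preserving $F$-finiteness, regularity, and noetherianness. Then Theorem \ref{theorem_pro_GL} yields a pro-isomorphism $\psi_r:\{K_n(R/I^s;\bb Z/p^r\bb Z)\}_s\isoto\{W_r\Omega^n_{R/I^s,\sub{log}}\}_s$ which is, by construction, induced by $\dlog[\cdot]$ on Milnor symbols; that is, the composite
\[\{K_n^M(R/I^s)/p^r\}_s\To\{K_n(R/I^s)/p^r\}_s\hookrightarrow\{K_n(R/I^s;\bb Z/p^r\bb Z)\}_s\xto{\psi_r}\{W_r\Omega^n_{R/I^s,\sub{log}}\}_s\]
(the first arrow being the reduction mod $p^r$ of the natural Milnor-to-Quillen comparison, the second the universal coefficient inclusion) is literally $\dlog[\cdot]$ on symbols. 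Since each $R/I^s$ is local by assumption, Definition \ref{definition_dRW} asserts that $W_r\Omega^n_{R/I^s,\sub{log}}$ is generated as an abelian group by these $\dlog$-images, so the displayed composite is pro-surjective. The pro-surjectivity of $\{K_n^M(R/I^s)/p^r\}_s\to\{K_n(R/I^s;\bb Z/p^r\bb Z)\}_s$ follows, and the kernel assertion is then immediate since $W_r\Omega^n_{R/I^s,\sub{log}}\hookrightarrow W_r\Omega^n_{R/I^s}$ is injective.

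For part (2), the key observation is that the universal coefficient inclusion $\{K_n(R/I^s)/p^r\}_s\hookrightarrow\{K_n(R/I^s;\bb Z/p^r\bb Z)\}_s$ from the diagram above is also pro-surjective: indeed, its image contains the image of $\{K_n^M(R/I^s)/p^r\}_s$, which pro-surjects onto $\{K_n(R/I^s;\bb Z/p^r\bb Z)\}_s$ by part (1). Being both levelwise injective and pro-surjective, this universal coefficient inclusion is a pro-isomorphism for every $n\ge 0$ and $r\ge 1$. The universal coefficient short exact sequence then forces $\{K_{n-1}(R/I^s)[p^r]\}_s=0$ pro-ly; reindexing over $n$, we conclude that $\{K_n(R/I^s)\}_s$ has no $p^r$-torsion for any $r$, and in particular is $p$-torsion-free.

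I expect no serious obstacle in executing this plan. The only substantive input beyond Theorem \ref{theorem_pro_GL} and Definition \ref{definition_dRW} is that the pro-Geisser--Levine identification factors through the Milnor-to-Quillen comparison on symbols, which is a structural feature of how the trace map is constructed and has been invoked implicitly at several points in the preceding material (compare e.g.\ the reference to \cite[Lem.~4.2.3 \& Cor.~6.4.1]{GH} at the end of the proof of Theorem \ref{theorem_pro_GL}).
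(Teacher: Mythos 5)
Your proof is correct and follows essentially the same route as the paper: part (1) is read off from the commutative triangle relating the Milnor map, the pro-isomorphism of Theorem \ref{theorem_pro_GL}, and $\dlog[\cdot]$ (whose image generates $W_r\Omega^n_{R/I^s,\sub{log}}$ for local rings), and part (2) follows from the resulting pro-surjectivity of $\{K_n(R/I^s)/p^r\}_s\to\{K_n(R/I^s;\bb Z/p^r\bb Z)\}_s$ together with the universal coefficient sequence. Your explicit reduction to the case of local $R$ by localizing at the maximal ideal containing $I$ is a detail the paper leaves implicit, and is handled correctly.
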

\begin{proof}
We can replace $R$ by its $I$-adic completion and thereby assume $R$ itself is local.  Part (1) is an immediate consequence of the commutative diagram
\[\xymatrix{
\{K_n^M(R/I^s)/p^r\}_s\ar[r]\ar@/_1cm/[rr]_{\dlog[\cdot]}& \{K_n(R/I^s;\bb Z/p^r\bb Z)\}_s\ar[r]^{\cong}&\{W_r\Omega_{R/I^s,\sub{log}}^n\}_s
}\]
which summarises the statement of Theorem \ref{theorem_pro_GL}, since the bendy arrow is surjective.

Part (1) clearly implies that $\{K_n(R/I^s)\}_s\to \{K_n(R/I^s;\bb Z/p\bb Z)\}_s$
is surjective, whence the usual exact sequence implies it is an isomorphism and that $\{K_{n-1}(R/I^s)[p]\}_s=0$.
\end{proof}

\begin{corollary}[Lifting classes]\label{corollary_lifting}
Let $R$ be an $F$-finite, regular, noetherian, local  $\bb F_p$-algebra and $I\subseteq R$ an ideal. Then, for all $n,r\ge0$:
\begin{enumerate}
\item the sequences \[0\To \{K_n(R,I^s;\bb Z/p^r\bb Z)\}_s\To K_n(R;\bb Z/p^r\bb Z)\To\{K_n(R/I^s;\bb Z/p^r\bb Z)\}_s\To 0\] are short exact;
\item there exists $s\ge 1$ with the following property: if an element of
$K_n(R/I;\bb Z/p^r\bb Z)$ lifts to $K_n(R/I^s;\bb Z/p^r\bb Z)$, then it is
symbolic and hence lifts to $K_n(R)$. 
\end{enumerate}
\end{corollary}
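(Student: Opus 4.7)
The plan is to reduce both parts to the identifications of $K$-theory with differential forms and symbols established earlier in the section. Since $R$ is $F$-finite and regular over $\bb F_p$, N\'eron--Popescu presents $R$ as ind-smooth over $\bb F_p$, so the theorem of Geisser--Levine \cite{GL} identifies $K_n(R;\bb Z/p^r\bb Z)\cong W_r\Omega^n_{R,\sub{log}}$; the analogous pro-identification for $R/I^s$ is furnished by Theorem~\ref{theorem_pro_GL}; and for Milnor $K$-theory we have the pro-surjectivity of Corollary~\ref{corollary_Milnor}(1).

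For part (1), I would pass the fiber sequences $K(R,I^s;\bb Z/p^r\bb Z)\to K(R;\bb Z/p^r\bb Z)\to K(R/I^s;\bb Z/p^r\bb Z)$ (natural in $s$) to a long exact sequence of pro abelian groups, and reduce the desired short exactness to the pro-surjectivity of the map $K_n(R;\bb Z/p^r\bb Z)\to \{K_n(R/I^s;\bb Z/p^r\bb Z)\}_s$ for both $n$ and $n+1$: the index $n$ yields surjectivity on the right of the claimed short exact sequence, while the index $n+1$ forces the connecting morphism $\{K_{n+1}(R/I^s;\bb Z/p^r\bb Z)\}_s\to \{K_n(R,I^s;\bb Z/p^r\bb Z)\}_s$ to be pro-zero, giving pro-injectivity on the left. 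Under the identifications above, this pro-map becomes the reduction $W_r\Omega^n_{R,\sub{log}}\to \{W_r\Omega^n_{R/I^s,\sub{log}}\}_s$, and it suffices to verify surjectivity at each stage $s$: the target, associated to the local ring $R/I^s$, is globally generated by symbols $\dlog[\bar u_1]\wedge\cdots\wedge\dlog[\bar u_n]$, and each unit $\bar u_j\in R/I^s$ lifts to a unit $u_j\in R$ (since $R$ is local), whose symbol is the required preimage.

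For part (2), the plan is to invoke the pro-surjectivity $\{K_n^M(R/I^s)/p^r\}_s\to \{K_n(R/I^s;\bb Z/p^r\bb Z)\}_s$ of Corollary~\ref{corollary_Milnor}(1) at the stage $s=1$, furnishing an integer $s\ge 1$ such that the image of $K_n(R/I^s;\bb Z/p^r\bb Z)\to K_n(R/I;\bb Z/p^r\bb Z)$ lies in the image of $K_n^M(R/I)/p^r \to K_n(R/I;\bb Z/p^r\bb Z)$. For this $s$, any $x$ lifting to $K_n(R/I^s;\bb Z/p^r\bb Z)$ admits a symbolic representative $\sum_i\{\bar u_{i,1},\ldots,\bar u_{i,n}\}\bmod p^r$ in $K_n^M(R/I)/p^r$. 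Lifting each unit $\bar u_{i,j}\in R/I$ to a unit $u_{i,j}\in R$ produces an integral class $\sum_i\{u_{i,1},\ldots,u_{i,n}\}\in K_n^M(R)\to K_n(R)$, which reduces to $x$ in $K_n(R/I;\bb Z/p^r\bb Z)$ via the composition $K_n(R)\to K_n(R/I)\to K_n(R/I)/p^r \hookrightarrow K_n(R/I;\bb Z/p^r\bb Z)$.

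The main obstacle is to confirm that the pro-isomorphism of Theorem~\ref{theorem_pro_GL} is compatible, as a pro-map, with the reductions $R\to R/I^s$, so that pro-surjectivity on the dlog side genuinely transfers to pro-surjectivity on the $K$-theoretic side. Since both identifications are effected by the trace and $\dlog$ map on symbols, this compatibility is essentially built into the construction and amounts to a careful unwinding of definitions.
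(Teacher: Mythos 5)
Your proof is correct and follows essentially the same route as the paper: both parts reduce to the pro-surjectivity of $K_n(R;\bb Z/p^r\bb Z)\to\{K_n(R/I^s;\bb Z/p^r\bb Z)\}_s$, which holds because the target is generated by symbols (Corollary~\ref{corollary_Milnor}(1), equivalently the dlog description of Theorem~\ref{theorem_pro_GL}) and units lift along the surjection from the local ring $R$. The only cosmetic difference is that for part (1) you detour through the identification with $W_r\Omega^n_{\sub{log}}$ and Geisser--Levine for $R$, where the paper lifts symbols directly through Milnor $K$-theory; the substance is identical.
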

\begin{proof}
As we have just seen in Corollary \ref{corollary_Milnor}, $\{K_n(R/I^s;\bb Z/p^r\bb Z)\}_s$ is entirely symbolic; since $R\to R/I$ is surjective on units (using the assumption $I\subseteq\op{Jac}(R)$), we deduce that $K_n(R;\bb Z/p^r\bb Z)\to \{K_n(R/I^s;p^r\bb Z/\bb Z)\}_s$ is surjective for all $n\ge0$. Therefore the long exact relative sequences breaks into the desired short exact sequences.

Part (2) is a consequence of the surjectivity arguments of the previous paragraph, unravelling what it means for a map of pro abelian groups to be surjective.
\end{proof}

\begin{corollary}[Relative pro Geisser--Levine]
Let $R$ be an $F$-finite, regular, noetherian, local  $\bb F_p$-algebra and $I\subseteq R$ an ideal. Then, for all $n,r\ge0$, the trace map induces a natural isomorphism of pro abelian groups $\{K_n(R,I^s;\bb Z/p^r\bb Z)\}_s\cong\{W_r\Omega^n_{(R,I^s),\sub{log}}\}_s$
\end{corollary}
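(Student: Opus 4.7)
\emph{Plan.}
The plan is to derive this corollary as a direct assembly of three earlier results: the absolute Geisser--Levine identification for $R$ itself, the pro Geisser--Levine identification for the quotients $R/I^s$ given by Theorem~\ref{theorem_pro_GL}, and the pro-exact sequence of Corollary~\ref{corollary_lifting}(1). The key point is that $R$ itself (not just its henselization) has already been handled by Geisser--Levine, while the pro system on $R/I^s$ has been handled above, and Corollary~\ref{corollary_lifting}(1) glues them.

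First I would observe that $R$ is ind-smooth over $\mathbb{F}_p$. Since $\mathbb{F}_p$ is perfect, the regularity of $R$ coincides with geometric regularity over $\mathbb{F}_p$, and so by N\'eron--Popescu desingularization $R$ is a filtered colimit of smooth $\mathbb{F}_p$-algebras. The classical Geisser--Levine theorem, in the form recalled immediately before Theorem~\ref{theorem_pro_GL}, therefore supplies a natural isomorphism
\[K_n(R;\bb Z/p^r\bb Z)\isoto W_r\Omega^n_{R,\log}\]
given by $\dlog[\cdot]$ on symbols. Combined with the isomorphism $\{K_n(R/I^s;\bb Z/p^r\bb Z)\}_s\isoto\{W_r\Omega^n_{R/I^s,\log}\}_s$ of Theorem~\ref{theorem_pro_GL}, these identify the middle and right-hand terms of the short exact sequence
\[0 \To \{K_n(R,I^s;\bb Z/p^r\bb Z)\}_s \To K_n(R;\bb Z/p^r\bb Z) \To \{K_n(R/I^s;\bb Z/p^r\bb Z)\}_s \To 0\]
of Corollary~\ref{corollary_lifting}(1) with the natural map $W_r\Omega^n_{R,\log}\to\{W_r\Omega^n_{R/I^s,\log}\}_s$ induced by the projections $R\to R/I^s$.

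By the very definition of the relative logarithmic de Rham--Witt groups, the pro kernel of this map of pro abelian groups is precisely $\{W_r\Omega^n_{(R,I^s),\log}\}_s$, so exactness of the displayed sequence yields the desired isomorphism
\[\{K_n(R,I^s;\bb Z/p^r\bb Z)\}_s\cong\{W_r\Omega^n_{(R,I^s),\log}\}_s.\]
That this isomorphism is the one induced by the trace map follows from the compatibility of the cyclotomic trace with $\dlog[\cdot]$ on symbols, exactly as invoked in the last sentence of the proof of Theorem~\ref{theorem_pro_GL} via \cite[Lem.~4.2.3 \& Cor.~6.4.1]{GH}. There is no substantial obstacle: the corollary is essentially a formal consequence of Corollary~\ref{corollary_lifting}(1) once the absolute and pro forms of the Geisser--Levine theorem are in hand.
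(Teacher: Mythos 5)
Your proposal is correct and follows essentially the same route as the paper: the paper's proof is precisely to apply the short exact sequence of Corollary~\ref{corollary_lifting}(1), identify the middle term via the classical Geisser--Levine theorem and the right term via Theorem~\ref{theorem_pro_GL}, and read off the kernel. Your additional remarks (ind-smoothness of $R$ via N\'eron--Popescu to justify applying Geisser--Levine, and the compatibility of the trace with $\dlog[\cdot]$ on symbols) are correct elaborations of steps the paper leaves implicit.
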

\begin{proof}
This follows from the short exact sequence of Corollary \ref{corollary_lifting}(1) by applying usual Geisser--Levine to the middle term and Theorem \ref{theorem_pro_GL} to the right term.
\end{proof}

\section{Comparisons of $K$ and $\TC$}

In this section, we prove two main general results: the comparison of $K$-theory
and $\TC$ in large degrees under mild finiteness hypotheses (Theorem~\ref{KTCasymptotic}) and
the \'etale local comparison of $K$ and $\TC$ (Theorem~\ref{etalekthm}). We give various
examples in subsection~\ref{KTCexamples}. Finally, we  prove an injectivity result for the
cyclotomic trace on local $\mathbb{F}_p$-algebras
(Theorem~\ref{injectivityresult}).

\subsection{\'Etale $K$-theory is $\TC$}

The results of \cite{GH} show that if $R$ is a smooth algebra over a perfect
field of characteristic $p$, then the $p$-adic \'etale $K$-theory of $R$
agrees with $\TC(R)$. We extend this result to all commutative rings which are henselian along $(p)$.  
Recall that a 
local ring is called \emph{strictly henselian} if it is henselian local and its
residue field is separably closed. 

\begin{theorem} 
\label{etalekthm}
Let $R$ be a strictly henselian local ring of residue characteristic $p$. Then $\knf(R)/p =0$, i.e., the map $K(R) \to \TC(R)$ is a
$p$-adic equivalence. 
\end{theorem}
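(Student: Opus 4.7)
The plan is to use the main rigidity result to reduce to the case of a separably closed field of characteristic $p$, and then invoke the Geisser--Levine/Geisser--Hesselholt computation of $\knf/p$ on ind-smooth $\mathbb{F}_p$-algebras together with an \'etale-local vanishing to conclude.

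First I would apply Theorem~\ref{mainthm} to the henselian pair $(R,\mathfrak{m})$ to obtain an equivalence $\knf(R)/p\simeq\knf(k)/p$, where $k=R/\mathfrak{m}$ is separably closed of characteristic $p$. This reduces the theorem to the single case of proving $\knf(k)/p=0$ for such a field.

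Next I would observe that $k$ is ind-smooth over $\mathbb{F}_p$: every finitely generated sub-$\mathbb{F}_p$-algebra $A\subset k$ is a domain, and by generic smoothness there is a nonzero $f\in A$ (necessarily a unit of $k$) such that $A[1/f]\subset k$ is smooth over $\mathbb{F}_p$. These smooth subalgebras form a filtered system whose union is $k$. Theorem~\ref{GHGL}(3) then identifies
$$\pi_n(\knf(k)/p)\cong\widetilde{\nu}^{n+2}(k)$$
for all $n\geq -2$, where $\widetilde{\nu}^m(k)$ is the cokernel of $1-C^{-1}\colon\Omega^m_k\to\Omega^m_k/d\Omega^{m-1}_k$.

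The final step is to show that $\widetilde{\nu}^m(k)=0$ for all $m\geq 0$ when $k$ is separably closed. For this I would use the short exact sequence of \'etale sheaves
$$0\to\nu^m\to\Omega^m\xrightarrow{1-C^{-1}}\Omega^m/d\Omega^{m-1}\to 0$$
on $\spec(k)$, already noted in the proof of Theorem~\ref{GHGL}. Because $k$ is separably closed, the \'etale topos of $\spec(k)$ is the topos of sets and the global sections functor is exact; hence $1-C^{-1}$ is surjective on $k$-sections, which is exactly the desired vanishing. The only genuinely nontrivial input is Theorem~\ref{mainthm} itself---once rigidity is in hand, the remaining steps reduce to the Geisser--Hesselholt identification and the triviality of \'etale cohomology over a separably closed field.
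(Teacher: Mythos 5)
Your proposal is correct and follows essentially the same route as the paper: reduce via Theorem~\ref{mainthm} to the separably closed residue field $k$, note that $k$ is ind-smooth over $\mathbb{F}_p$, and invoke Theorem~\ref{GHGL} to reduce to the vanishing of $\widetilde{\nu}^m(k)$. The only (cosmetic) difference is at the last step, where the paper explicitly solves the separable equation $u - u^p a^{p-1}x_1^{p-1}\cdots x_n^{p-1}=1$ in $k$, whereas you cite the \'etale-local surjectivity of $1-C^{-1}$ together with the triviality of the \'etale topos of $\spec k$ --- but the proof of that surjectivity is exactly the same Artin--Schreier computation, so the two arguments coincide.
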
 
\begin{proof} By
Theorem~\ref{mainthm}, we may assume that $R=k$ is a field itself, which is
then separably closed. 
Thus we need to show that if $k$ is a separably closed field of characteristic
$p > 0$, then the map $K(k) \to \TC(k)$ is a $p$-adic equivalence. 

As $\mathbb{F}_p$ is perfect, $k$ is an ind-smooth $\mathbb{F}_p$-algebra (e.g., choose
a transcendence basis for $k$).  Thus it suffices to show that the terms
$\widetilde{\nu}^n(k)$ vanish by Theorem~\ref{GHGL}. 
That is, we need to show that the map
\[ 1  - C^{-1}: \Omega^n_k \to \Omega^n_k/ d \Omega^{n-1}_k  \]
is surjective. 
Given a form $\omega = a dx_1 \dots dx_n \in \Omega^n_k$, we have
$(1 - C^{-1})(u \omega) = (u - u^p a^{p-1} x_1^{p-1} \dots x_n^{p-1}) \omega$. 
Since $k$ is separably closed, 
we can solve the equation $
u - u^p a^{p-1} x_1^{p-1} \dots x_n^{p-1} = 1$ in $k$. This implies that $1 -
C^{-1}$ is surjective as desired and completes the proof. 
\end{proof} 

In the proof above, rather than using Theorem \ref{GHGL} and the groups
$\widetilde{\nu}^n(k)$, one can instead follow Suslin's arguments from
\cite{Suslinalgclosed} to show invariance of $\knf(-)/p$ under extensions of
separably closed fields.  This reduces to the case $k=\overline{\mathbb{F}_p}$,
which is easy since $K/p$ and $\TC/p$ can be directly calculated, cf.~\cite{HM97, Qui} or Example \ref{perfex}.
We record this alternative argument in the following proposition. 

\begin{proposition} 
Let $F: \mathrm{Ring} \to \mathrm{Ab}$ be a functor from rings to abelian
groups. Suppose that $F$ commutes with filtered colimits and satisfies rigidity,
i.e., for a henselian pair $(R, I)$ we have $F(R) \simeq F(R/I)$. 
Then for an extension $K \to L$ of separably closed fields, we have $F(K) \simeq
F(L)$. 
\end{proposition}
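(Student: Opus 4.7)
The plan is to mimic Suslin's argument from \cite{Suslinalgclosed}. Using that $F$ commutes with filtered colimits, I would first reduce to the case where $L/K$ has finite transcendence degree, by writing $L$ as the filtered union of the separably closed subfields obtained as the separable closure (inside $L$) of $K(S)$, as $S$ ranges over finite subsets of $L$; each such subfield has transcendence degree at most $|S|$ over $K$. An induction on transcendence degree then reduces us to two base cases: $L/K$ of transcendence degree one, and $L/K$ algebraic (hence purely inseparable, since both fields are separably closed).

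For the transcendence degree one case, I would choose a transcendental $t \in L$; since $L$ is then the separable closure of $K(t)$ inside itself, the extension $L/K$ is separable, and $L$ is a filtered colimit of finite separable extensions $E$ of $K(t)$. Each such $E$ is the fraction field of a smooth affine $K$-algebra $A \subset L$ (obtained by localizing a finite type model). Because $K$ is separably closed and $A$ is smooth, every closed point of $\spec A$ has residue field $K$; fixing such a $K$-rational point $x$ gives a splitting $K \hookrightarrow A \stackrel{e_x}{\twoheadrightarrow} K$ of the identity on $K$. This yields split injectivity of $F(K) \to F(L)$: any $\beta \in F(K)$ mapping to $0 \in F(L)$ must, by filtered colimits, map to $0$ in some $F(A)$, and then the retraction $e_x$ forces $\beta = 0$.

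The hard part---surjectivity---uses the henselization $R := A_x^h$. By rigidity, $F(R) \simeq F(K)$, with the structure map $K \hookrightarrow R$ providing an explicit inverse. The key observation, which uses the separable closedness of $L$ in an essential way, is that $R$ embeds into $L$ with $\mathrm{Frac}(R) = L$: for $R$ is a filtered colimit of local \'etale $A_x$-algebras $B$, and each $\mathrm{Frac}(B)$ is a finite separable extension of $\mathrm{Frac}(A_x) = L$, necessarily equal to $L$ since $L$ admits no proper finite separable extensions. Now for any $\alpha \in F(A)$, the image $\alpha_R$ in $F(R)$ and the image of $e_x(\alpha) \in F(K)$ under $K \hookrightarrow R$ both map to $e_x(\alpha)$ under the rigidity isomorphism $F(R) \simeq F(K)$, so they agree in $F(R)$; pushing along $R \hookrightarrow L$ shows that $\alpha$ and the image of $e_x(\alpha)$ agree in $F(L)$, giving surjectivity.

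The main obstacle is the base case of transcendence degree zero, i.e., a purely inseparable extension $K \subset L$ of separably closed fields, which can occur non-trivially when $K$ is not perfect. This case cannot be handled by smooth $K$-algebras; instead one would use the factorization $L \xrightarrow{\varphi} K \hookrightarrow L$ by a sufficiently high iterate of Frobenius (which lands in $K$ since $L/K$ is purely inseparable), reducing matters to showing $F$ inverts Frobenius on $F(K)$ and $F(L)$. Fortunately, in the intended application to $\knf(-)/p$ the reduction is to the perfect field $\overline{\mathbb{F}_p}$, and any extension $\overline{\mathbb{F}_p} \subset K$ of separably closed fields is automatically separable, so this base case does not intervene.
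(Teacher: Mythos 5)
Your treatment of the separable part of the extension is essentially sound and close in spirit to the paper's argument (retractions from smooth models give split injectivity; rigidity at the henselization of a rational point, together with an embedding of that henselization into $L$, gives surjectivity). Two supporting claims are nevertheless wrong as stated. First, closed points of a smooth affine scheme over a separably closed but imperfect $K$ need not have residue field $K$ (take $K[u]$ and the maximal ideal $(u^p-a)$ with $a\notin K^p$); what is true, and is all you need, is that a nonempty smooth $K$-scheme has a dense set of $K$-rational points. Second, $\mathrm{Frac}(A_x)$ is the finite separable extension $E$ of $K(t)$, not $L$, and $\mathrm{Frac}(R)$ for $R=A_x^h$ is only a separable algebraic extension of $E$, not all of $L$; the correct (and sufficient) statement is that this fraction field embeds into $L$ over $E$ because $L$ contains a separable closure of $E$, so one gets an embedding $R\hookrightarrow L$ extending $A\subset L$, which is what your comparison of $\alpha_R$ with $e_x(\alpha)$ actually uses.

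The genuine gap is the purely inseparable case, which you acknowledge but do not prove. The proposition is asserted for arbitrary extensions of separably closed fields, and these do have nontrivial purely inseparable parts when $K$ is imperfect (e.g.\ the separable closure of $\mathbb{F}_p(t)$ inside its algebraic closure); moreover your induction does not confine the inseparability to a single terminal step without further care. Your proposed remedy fails: nothing in the hypotheses (filtered colimits plus rigidity) forces $F$ to invert Frobenius, and for purely inseparable extensions of infinite exponent no iterate of Frobenius carries $L$ into $K$ at all; and retreating to "the application only needs perfect base fields" proves a weaker statement than the one claimed. The missing idea is the paper's opening reduction: letting $K_0\subset K$ be the separable closure of the prime field, both $K_0\to K$ and $K_0\to L$ are extensions of separably closed fields with $K_0$ perfect ($\overline{\mathbb{F}_p}$ or $\overline{\mathbb{Q}}$), so by two-out-of-three it suffices to treat a perfect base. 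Over a perfect $K$ every extension is separable, $L$ is a filtered colimit of smooth affine $K$-subalgebras, and your separable-case argument (no induction on transcendence degree needed) applies directly. With that reduction inserted at the start and the two local corrections above, your proof closes up.
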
 
\begin{proof}
Let $k$ be a separably closed field. 
The crux of Suslin's argument is to show that if
$q:X\rightarrow\Spec(k)$ is a connected smooth affine $k$-scheme of finite type, then
  for any class $\alpha \in F(X)$ and section $x: \Spec(k) \to X$, the
 pullback $x^* \alpha \in F(k)$ is independent of the choice of $x$. 
It suffices to fix a section $x_0: \Spec(k) \to X$,  
 and show (by connectedness, in view of the Zariski density of $k$-points in
 $X$, cf., e.g.,  \cite[Tag 04QM]{stacks-project}) that there exists a Zariski neighborhood $U$ of $x_0$ such that
 $x_0^\ast\alpha = x^\ast\alpha$ for all sections $x:\Spec(k)\rightarrow U$.
 Replacing $\alpha$ by $\alpha-q^\ast x_0^\ast \alpha$, we can assume
 $x^\ast\alpha=0$.  Then by the assumption of rigidity,
 applied to the henselization of $X$ at $x_0$, and the commutation of $F$ with
 filtered colimits, we deduce that there is an \'etale neighborhood $Y\rightarrow
 X$ of $x_0$ such that $\alpha$ pulls back to $0$ on $Y$.  Then we can take $U$ to be the image of $Y\rightarrow X$: as $k$ is separably closed, every $k$-point of $U$ lifts to $Y$.

Now we prove the proposition. 
It suffices to consider the case where $K$ is the separable closure of a prime
field, and so is in particular perfect. 
Therefore, the extension $K \to L$ is ind-smooth, i.e., $L$ is a filtered colimit of smooth
$K$-algebras $A_\alpha$, with $\alpha$ running over a filtered poset.  Then each map $K \to A_\alpha$ admits
a retraction, as $K$ is separably closed. 
It follows that $F(K) \to F(A_\alpha)$ is injective for each $\alpha$, so that
$F(K) \to F(L)$ is injective. 

We now argue surjectivity. Let $u \in F(A_\alpha)$; we show that  the image of
$u$ in $F(L)$ belongs to the
image of $F(K)$.  Passing to a component if necessary, we can assume $\Spec(A_\alpha)$ is connected.  Then $\Spec(A_\alpha\otimes_K L)$ is connected as well, since $K$ is algebraically closed.  Now consider the two maps
\[ f_1, f_2:A_\alpha   \rightrightarrows L,  \]
where the first map is the colimit structure map and where
where the second map is $A_\alpha \stackrel{s}{\to} K \to L$. 
By the first paragraph of the argument applied to the $L$-algebra $A_\alpha
\otimes_K L$, we conclude that these two maps have the same effect on $u$, verifying the claim.
\end{proof} 

We can rephrase Theorem \ref{etalekthm} in terms of homotopy group sheaves.  Let
$\pi_n(\mathcal{K}/p)$ denote the \'etale sheafification of the functor $\pi_n
(K(-)/p)$ over an arbitrary scheme $X$, and let $\pi_n( \mathcal{TC}/p)$ denote the same for $\pi_n (\TC(-)/p)$.  Then if $i$ denotes the closed inclusion $(X\times_{\Spec(\mathbb{Z})}\Spec(\mathbb{F}_p))_{et}\rightarrow X_{et}$, Theorem \ref{etalekthm} (plus the trivial fact that $\TC /p$ vanishes on rings where $p$ is invertible) is equivalent to the statement that for all $n$, there is an equivalence
\begin{equation} \label{stalkK}\pi_n(\mathcal{TC}/p)\simeq i_\ast i^\ast
\pi_n(\mathcal{K}/p)\end{equation}
adjoint to the map $i^\ast\pi_n(\mathcal{K}/p)\rightarrow i^\ast \pi_n(\mathcal{TC}/p)$ (also an equivalence) given by the restriction of the cyclotomic trace.  We can clearly also replace $(-)/p$ with $(-)/p^s$ for any $s$.

As a consequence, one obtains the following result, which will be discussed in more
detail in the forthcoming paper \cite{CM18}.
In the smooth case, this is one of the main results of \cite{GH}. 

\begin{theorem} 
\label{padicetaleKthy}
Let $X$ be a scheme proper over  $\spec R$ for a ring $R$ henselian along
$(p)$. Denote by
$\hat{K}^\sub{\'et}(-)$ the \'etale Postnikov sheafification of the $K$-theory presheaf,
meaning the limit over the \'etale sheafified Postnikov tower of
$K(-)$.\footnote{This is what Thomason's \'etale hypercohomology construction
implements. By \cite[Thm.~1.3]{CM18} it agrees with the more basic sheafification for the etale site as considered in \cite{HTT} under very mild finiteness assumptions on $X$.  We refer also to \cite[Sec.~1.3]{SAG} as a source for
sheaves of spectra and the associated $t$-structure.}  Then the natural map
$$\hat{K}^\sub{\'et}(X)/p\rightarrow \TC(X)/p$$
is an equivalence.
 \end{theorem}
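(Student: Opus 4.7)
The strategy is to combine the stalkwise identification from Theorem~\ref{etalekthm} with proper base change in étale cohomology, reducing everything to a computation on the mod-$p$ fiber. Let $i \colon X_p \hookrightarrow X$ be the closed immersion of $X_p := X \times_{\Spec R} \Spec(R/pR)$, and denote the étale Postnikov sheafifications on $X_\sub{\'et}$ by $\mathcal{K}^\sub{\'et}/p$ and $\mathcal{TC}^\sub{\'et}/p$; by construction, $K^\sub{\'et}(X)/p = R\Gamma_\sub{\'et}(X, \mathcal{K}^\sub{\'et}/p)$.

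First, I would establish the identification $\mathcal{TC}^\sub{\'et}/p \simeq i_* i^* \mathcal{K}^\sub{\'et}/p$ of hypercomplete sheaves of spectra on $X_\sub{\'et}$. At a geometric point $\bar x$ of residue characteristic $p$, the cyclotomic trace is an equivalence on stalks by Theorem~\ref{etalekthm}; at any other geometric point, $\pi_n(\mathcal{TC}/p)_{\bar x}$ vanishes since $\THH(-)/p \simeq 0$ on rings in which $p$ is invertible. This is the content of \eqref{stalkK}, and it promotes to the stated equivalence of hypercomplete sheaves of spectra by a Postnikov convergence argument using that $i_*$ for a closed immersion is exact on the small étale site and commutes with limits.

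Next, since $X \to \Spec R$ is proper and $(R, pR)$ is a henselian pair, classical proper base change combined with the affine analogue of Fujiwara–Gabber for the henselian pair $(R,pR)$ gives
\begin{equation*}
R\Gamma_\sub{\'et}(X, \mathcal{F}) \;\simeq\; R\Gamma_\sub{\'et}(X_p, i^*\mathcal{F})
\end{equation*}
for every (hypercomplete) sheaf of spectra $\mathcal{F}$ on $X_\sub{\'et}$ with $p$-power torsion homotopy sheaves; this is verified levelwise in the Postnikov tower. Applying this to $\mathcal{F} = \mathcal{K}^\sub{\'et}/p$ and combining with the first step yields
\begin{equation*}
K^\sub{\'et}(X)/p \;\simeq\; R\Gamma_\sub{\'et}(X_p, i^*\mathcal{K}^\sub{\'et}/p) \;\simeq\; R\Gamma_\sub{\'et}(X, i_* i^* \mathcal{K}^\sub{\'et}/p) \;\simeq\; R\Gamma_\sub{\'et}(X, \mathcal{TC}^\sub{\'et}/p).
\end{equation*}

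To conclude the desired comparison $K^\sub{\'et}(X)/p \simeq \TC(X)/p$, it remains to identify the right-hand side with $\TC(X)/p$, i.e., to verify that $\TC/p$ satisfies étale hyperdescent on $X$. This is the main obstacle, as the bulk of the present paper addresses only the affine case. The expected ingredients are étale hyperdescent for $\THH/p$ on qcqs $\mathbb{F}_p$-schemes (established via the de Rham–Witt description of subsection~\ref{dRWapproach} in the smooth case, then extended by simplicial resolution to all $\mathbb{F}_p$-algebras), propagation to $\TC/p$ via the formula $\TC/p = \mathrm{fib}(\TC^-/p \xrightarrow{\mathrm{can}-\varphi} \TP/p)$, and reduction to the characteristic $p$ setting via Lemma~\ref{modplemma0}. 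A self-contained treatment is deferred to \cite{CM18}.
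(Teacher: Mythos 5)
Your proposal is correct and follows essentially the same route as the paper: the stalkwise identification $\mathcal{TC}/p \simeq i_*i^*\mathcal{K}/p$ coming from Theorem~\ref{etalekthm}, proper base change combined with Gabber's affine analogue to pass to the mod-$p$ fiber, and \'etale (Postnikov) descent for $\TC$ as an external input. The only difference is packaging --- you phrase the argument with hypercomplete sheaves of spectra and $i_*i^*$, while the paper compares descent spectral sequences after noting finite $p$-cohomological dimension (Lemma~\ref{finiteetale}), and the paper handles \'etale descent for $\TC$ by citing Geisser--Hesselholt and Blumberg--Mandell rather than sketching a de Rham--Witt proof.
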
 
\begin{proof} The comparison map is induced by the cyclotomic trace, given that $\TC/p$ is an \'etale Postnikov sheaf (see \cite[Thm 5.16]{CM18} in full generality; for
affines, see \cite[Sec.~3]{GH}; in the semi-separated case see \cite{BMloc} for
the conclusion of Nisnevich descent).  Now assume $X$ is proper over $\Spec(R)$
henselian along $(p)$. Then the proper base change theorem together with
Gabber's affine analog of the proper base change theorem
\cite{Gabberaffine} combine to show $H^j_\sub{\'et}(X, \mathcal{F})\simeq
H^j_\sub{\'et}(X\times_{\Spec(R)}\Spec(R/pR), i^*\mathcal{F})$
for all torsion abelian sheaves $\mathcal{F}$ on $X_{et}$.  In particular,
$X_{et}$ has finite $p$-cohomological dimension by induction on an affine cover and the following Lemma~\ref{finiteetale}, so by comparing descent spectral sequences for $\hat{K}^\sub{\'et}(X)$ and $\TC(X)$ it suffices to show that $\pi_n(\mathcal{K}/p)$ and $\pi_n(\mathcal{TC}/p)$ have the same cohomology on $X_\sub{\'et}$.  But by the same base change results, this follows from the fact (see above) that they have the same restriction to $X\times_{\Spec(R)}\Spec(R/pR)$.\end{proof} 

Above we used the following standard lemma.

\begin{lemma} 
\label{finiteetale}
Let $R$ be an $\mathbb{F}_p$-algebra. Then the mod $p$ \'etale cohomological
dimension of $R$ is $\leq 1$. 
\end{lemma}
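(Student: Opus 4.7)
The plan is to use the Artin--Schreier sequence of étale sheaves on $\Spec R$:
\[ 0 \to \underline{\mathbb{F}_p} \to \mathbb{G}_a \xrightarrow{F-1} \mathbb{G}_a \to 0, \]
which is exact in the étale topology because the map $F - 1 \colon t \mapsto t^p - t$ is étale-locally surjective (for any $a$, the polynomial $t^p - t - a$ is separable, hence its splitting defines an étale cover). The key other input is Grothendieck's theorem that a quasi-coherent sheaf on an affine scheme has vanishing higher étale cohomology, which gives $H^i_{\mathrm{et}}(\Spec R, \mathbb{G}_a) = 0$ for $i \geq 1$. Taking the long exact sequence in cohomology, we immediately get $H^i_{\mathrm{et}}(\Spec R, \underline{\mathbb{F}_p}) = 0$ for $i \geq 2$.

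To promote this from constant coefficients to an arbitrary $p$-torsion (equivalently, $\mathbb{F}_p$-module) sheaf $\mathcal{F}$, I would first pass to the case of a finite étale cover $f \colon Y \to \Spec R$ with $Y$ affine (so still an affine $\mathbb{F}_p$-scheme): since $f$ is finite, $f_*$ is exact, and $f_*\mathbb{G}_a$ is quasi-coherent on $\Spec R$, so the same Artin--Schreier argument gives $H^i_{\mathrm{et}}(\Spec R, f_*\underline{\mathbb{F}_p}) = 0$ for $i \geq 2$. Any constructible $\mathbb{F}_p$-sheaf admits a two-step resolution by finite direct sums of such $f_*\underline{\mathbb{F}_p}$'s, so by dimension-shifting the vanishing extends to all constructible sheaves. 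Finally, an arbitrary $\mathbb{F}_p$-sheaf is a filtered colimit of constructible ones, and étale cohomology of $\Spec R$ (which is qcqs) commutes with filtered colimits of sheaves, so the vanishing $H^i_{\mathrm{et}} = 0$ for $i \geq 2$ passes to the colimit.

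The essential content is purely Artin--Schreier plus coherent cohomology vanishing; the only real subtlety is the reduction from general coefficients to constant ones, but this is standard and carried out in SGA 4, Exp.~X. No genuine obstacle arises.
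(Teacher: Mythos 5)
Your first half is fine and is exactly the classical input the paper relies on: Artin--Schreier plus vanishing of quasi-coherent cohomology on affines gives $H^i_{\mathrm{et}}(\Spec R,\mathbb{F}_p)=0$ for $i\ge 2$, and the same for pushforwards from finite covers. (The paper's own proof is essentially a citation: SGA~4, Exp.~X, Thm.~5.1 for the noetherian case, then the criterion of Exp.~IX, Prop.~5.5 plus commutation of cohomology with filtered colimits for general $R$.) The trouble is in the d\'evissage from constant to arbitrary $p$-torsion coefficients, which you dismiss as standard but which is where all the content lies.

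There are two concrete gaps. First, finite \emph{\'etale} covers do not suffice: for $f$ finite \'etale the sheaf $f_*\underline{\mathbb{F}_p}$ is locally constant, and a general constructible sheaf admits no embedding into, hence no resolution by, such sheaves. For example, on $X=\mathbb{A}^1_{\mathbb{F}_p}$ with $i$ the inclusion of a closed point, any map $i_*\underline{\mathbb{F}_p}\to\mathcal{L}$ with $\mathcal{L}$ locally constant sends the tautological global section to a section vanishing on a dense open, hence to zero. The correct statement (SGA~4, IX.2.14) embeds a constructible sheaf into a finite sum of $f_*\underline{M}$ with $f$ \emph{finite} but not necessarily \'etale; this is repairable, since $Y$ is then still affine over $\mathbb{F}_p$ and $f_*$ is still exact. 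Second, and more seriously, the dimension shift does not close up in degree $2$. Even granting a length-two resolution $0\to\mathcal{F}\to\mathcal{G}^0\to\mathcal{G}^1\to 0$ (which does not exist in general; iterated embeddings only yield an infinite right resolution), the long exact sequence gives $H^2(X,\mathcal{F})\cong\mathrm{coker}\bigl(H^1(X,\mathcal{G}^0)\to H^1(X,\mathcal{G}^1)\bigr)$, and there is no a priori reason for this cokernel to vanish; with an infinite resolution the hypercohomology spectral sequence likewise leaves potentially nonzero contributions in every total degree $\ge 2$ unless one already has a finite bound on $\mathrm{cd}_p(X)$ to anchor a descending induction. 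The genuinely non-formal step in SGA~4, X.5.1 is precisely here: for locally constant constructible $\mathcal{L}$ one uses the twisted Artin--Schreier sequence $0\to\mathcal{L}\to\mathcal{L}\otimes_{\mathbb{F}_p}\mathcal{O}_X\to\mathcal{L}\otimes_{\mathbb{F}_p}\mathcal{O}_X\to 0$ (the middle terms being quasi-coherent by \'etale descent), followed by a stratification d\'evissage for general constructible sheaves. As written, your reduction step is not a proof; you need either that argument or an explicit appeal to an a priori finiteness of $\mathrm{cd}_p$ in the finite-type case before running the filtered-colimit reduction.
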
 
\begin{proof} 
This is classical in the noetherian case from the Artin-Schreier sequence (see
\cite[Exp.~X, Thm.~5.1]{SGA4}). 
To obtain the non-noetherian case, we use the criterion from \cite[Exp. IX,
Prop.~5.5]{SGA4} together with the fact that cohomology commutes with filtered
colimits \cite[Tag 03Q4]{stacks-project}. 
\end{proof}

\subsection{Asymptotic comparison of $K$ and $ \TC$}

Next, 
we show that $K/p$ and $\TC/p$ agree in large degrees for $p$-adic rings
satisfying mild finiteness conditions. 
In view of Theorems~\ref{padicetaleKthy} and \ref{mainthm}, this yields a general $p$-adic
Lichtenbaum--Quillen statement for rings which are henselian along $(p)$. 
Note that for smooth algebras, the result follows from the
calculations of Geisser--Levine and Geisser--Hesselholt (Theorem~\ref{GHGL}) and 
for singular curves, a slight strengthening of this result appears in
\cite{GHexc}. 

\begin{theorem} 
\label{KTCasymptotic}
Let $R$ be a commutative ring and $p$ be a prime number. Suppose that $d
\geq 1$ and:  
\begin{enumerate}
\item  $R$ is henselian along $(p)$. 
\item The ring $R/p$ has finite Krull dimension.  
\item For any $x \in \spec (R/p)$, the residue field $k(x)$ has the property that
$[k(x): k(x)^p] \leq p^d$. \end{enumerate}
The map $K(R)/p^r
\to \TC(R)/p^r$ is an equivalence in degrees $\geq d$ for any $r$. 
\end{theorem}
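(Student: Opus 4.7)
I plan to reduce the claim to the case of residue fields of characteristic $p$ via two applications of rigidity (Theorem~\ref{mainthm}) combined with Nisnevich descent, and then invoke the Geisser--Levine--Hesselholt calculation. As a first reduction, the cofiber sequence $\knf(R)/p \to \knf(R)/p^{r+1} \to \knf(R)/p^r$ and its long exact sequence of homotopy groups show by induction on $r$ that it suffices to treat $r=1$. Next, applying Theorem~\ref{mainthm} to the henselian pair $(R, pR)$ yields an equivalence $\knf(R)/p \simeq \knf(R/pR)/p$, so we may assume that $R$ is an $\mathbb{F}_p$-algebra.

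I would next invoke Nisnevich descent for $\knf/p$ on $\spec R$, which follows by combining Thomason--Trobaugh descent for nonconnective $K$-theory with Nisnevich descent for $\TC$; the connective and nonconnective variants of $\knf/p$ agree in the positive degrees of interest. The resulting descent spectral sequence
\[ E_2^{s,t} = H^s_{\mathrm{Nis}}(\spec R, \pi_t(\knf/p)^{\mathrm{sh}}) \Longrightarrow \pi_{t-s}(\knf(R)/p) \]
converges because the Nisnevich cohomological dimension of $\spec R$ is bounded by its Krull dimension, which is finite by hypothesis. At a Nisnevich stalk $R_x^{h}$ with residue field $k(x)$, which is a henselian local ring, a second application of Theorem~\ref{mainthm} identifies $\knf(R_x^{h})/p$ with $\knf(k(x))/p$.

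The core calculation is then over a single field. For any field $k$ of characteristic $p$ satisfying $[k:k^p] \leq p^d$, the field $k$ is ind-smooth over $\mathbb{F}_p$ (choose a $p$-basis and view $k$ as a filtered colimit of localizations of smooth algebras) and local, so Theorem~\ref{GHGL}(3) yields an identification $\pi_n(\knf(k)/p) \simeq \widetilde{\nu}^{n+2}(k) = \mathrm{coker}(1-C^{-1}\colon\Omega^{n+2}_k\to \Omega^{n+2}_k/d\Omega^{n+1}_k)$. Under our hypothesis $\Omega^j_k$ vanishes for $j > d$, so $\widetilde{\nu}^{n+2}(k) = 0$ whenever $n \geq d-1$. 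Consequently the sheaf $\pi_t(\knf/p)^{\mathrm{sh}}$ vanishes for $t \geq d-1$, every $E_2^{s,t}$ with $t \geq d-1$ vanishes, and we conclude $\pi_i(\knf(R)/p) = 0$ for $i \geq d-1$. By the long exact sequence of $\knf \to K \to \TC$, this vanishing in degrees $\geq d-1$ is precisely what is needed for $K(R)/p \to \TC(R)/p$ to be an equivalence in degrees $\geq d$.

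The main technical obstacle is arranging Nisnevich descent and the spectral sequence convergence for possibly non-noetherian $R$; the standard workaround is to write $R$ as a filtered colimit of noetherian $\mathbb{F}_p$-subalgebras whose Krull dimensions and residue-field $p$-degrees are uniformly bounded by those of $R$, and to use that $K/p$ and $\TC/p$ commute with filtered colimits of rings (Corollary~\ref{TCring}).
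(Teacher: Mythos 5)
Your overall strategy coincides with the paper's: reduce modulo $p$ to the characteristic-$p$ case via Theorem~\ref{mainthm}, run Nisnevich descent for $\mathbb{K}/p$ and $\TC/p$, identify the stalks (henselizations of $R$ at primes) with their residue fields by a second application of Theorem~\ref{mainthm}, and conclude from Theorem~\ref{GHGL} together with the fact that $[k(x):k(x)^p]\le p^d$ forces $\Omega^j_{k(x)}=0$ for $j>d$. Your preliminary reduction to $r=1$ is a harmless simplification (the paper carries $p^r$ throughout), and your bookkeeping with $\knf$ versus its nonconnective variant and with the range of degrees is correct.

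The gap is in your final paragraph. You propose to handle convergence of the descent spectral sequence for non-noetherian $R$ by writing $R$ as a filtered colimit of noetherian $\mathbb{F}_p$-subalgebras ``whose Krull dimensions and residue-field $p$-degrees are uniformly bounded by those of $R$.'' No such presentation exists in general: a finitely generated subalgebra of $R$ can have Krull dimension and residue-field $p$-degrees far exceeding those of $R$. For instance, if $R$ is a perfect field of infinite transcendence degree over $\mathbb{F}_p$ (Krull dimension $0$ and $[R:R^p]=1$), its finitely generated subalgebras include polynomial rings $\mathbb{F}_p[x_1,\dots,x_n]$ of arbitrary dimension $n$, whose generic residue field satisfies $[k:k^p]=p^n$. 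So the filtered-colimit reduction preserves neither hypothesis (2) nor hypothesis (3), and the degree bounds obtained from the subalgebras are unbounded. What is actually needed, and what the paper invokes, is that the Nisnevich topos of $\spec R$ has finite homotopy dimension whenever $R$ has finite Krull dimension, with no noetherian hypothesis (this is proved in the forthcoming \cite{CM18}; the noetherian case is \cite[Thm.~3.7.7.1]{SAG}). That statement is the one ingredient your argument is missing; with it, the Postnikov-tower descent spectral sequence converges and the rest of your proof goes through.
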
 
\begin{proof} 
We use throughout the following basic observation: if $T \to T'$ is a map of
spectra which is an equivalence in degrees $\geq d$, then for any spectrum $T''$
with a map $T'' \to T'$, the map $T \times_{T'} T'' \to T''$ is an equivalence
in degrees $\geq d$. 
Using the pullback square from Theorem~\ref{mainthm}
(involving the spectra $K(R)/p^r, \TC(R)/p^r, K(R/p)/p^r,
\TC(R/p)/p^r$), we reduce to the case where $R$ is an
$\mathbb{F}_p$-algebra. 
It now suffices to see that $K(R)/p^r \to \TC(R)/p^r$ is an equivalence
in degrees $\geq d$.

Note also that the result is clearly equivalent if we replace $K(R)/p$ with
$\mathbb{K}(R)/p$. 
By the theorems of Thomason--Trobaugh \cite{TT90} and
Blumberg--Mandell \cite{BMloc} respectively, 
$\mathbb{K}(-)/p^r$ and $\TC(-)/p^r$ 
are Nisnevich sheaves 
on $\spec R$ with 
values in the $\infty$-category $\Sp$.

By \cite[Th.~3.17]{CM18}, the finiteness of Krull dimension implies that
the Nisnevich topos of $\spec R$ has \emph{finite homotopy dimension}  in the sense of \cite[Def. 7.2.2.1]{HTT}.
In the case where $R$ is noetherian, the finiteness of homotopy dimension
appears in \cite[Thm.~3.7.7.1]{SAG}. 
As a consequence, Postnikov towers in the $\infty$-category of Nisnevich sheaves of
spectra on $\spec R$ are convergent and one has a descent spectral sequence. 
Therefore, it suffices to see that 
the maps on stalks induce isomorphisms in degrees $\geq d$. 
The maps on stalks are 
\[  \mathbb{K}(A)/p^r \to \TC(A)/p^r,  \]
as $A$ ranges over the connected finite \'etale algebras over henselizations of $R$ at prime ideals. 
Since the map $K(A)/p \to \mathbb{K}(A)/p$ is an equivalence in degrees $\geq
1$, 
it suffices to see that $K(A)/p \to \TC(A)/p$ is an 
equivalence in degrees $\geq d$ for each such $A$. This in turn follows from the fiber square 
of Theorem~\ref{mainthm}  (applied to the henselian local ring $A$) 
and the fact that if $k$ is a field of characteristic $p$, then the map $K(k)/p^r
\to \TC(k)/p^r$ is an equivalence in degrees $\geq \log_p [k: k^p]$ (which
follows from Theorem~\ref{GHGL} and the theory of $p$-bases, which implies $\dim
\Omega^1_k \leq d$ and so $\Omega^n_k  = 0$ for $n > d$ \cite[Thm.~26.5]{Matsumura}). 
Note also that the invariant $\log_p [k:k^p] = \dim \Omega^1_k$ is invariant under finite
separable extensions of fields of characteristic $p$. 
\end{proof} 

We immediately conclude the following $p$-adic Lichtenbaum--Quillen isomorphism.

\begin{corollary}
Let $R$ be a commutative ring which is henselian along $(p)$ with $\Spec(R/p)$
of finite Krull dimension, and suppose that $d\ge1$ is such that $[k(x): k(x)^p]
\leq p^d$ for all $x\in\Spec (R/p)$. Then the map $K(R)/p^r\to \hat{K}^\sub{\'et}(R)/p^r$ is an equivalence in degrees $\ge d$ for any $r$.
\end{corollary}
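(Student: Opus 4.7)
The corollary follows by combining Theorem \ref{KTCasymptotic} with a comparison of $\TC(R)/p^r$ and $K^\sub{\'et}(R)/p^r$ for $R$ henselian along $(p)$.

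Since $\TC(-)/p^r$ satisfies \'etale Postnikov descent on affine schemes (as recalled just before Theorem \ref{padicetaleKthy}), the cyclotomic trace naturally factors as
\[
K(R)/p^r \longrightarrow K^\sub{\'et}(R)/p^r \longrightarrow \TC(R)/p^r.
\]
The second arrow is an equivalence: this is Theorem \ref{padicetaleKthy} applied to the trivially proper map $\spec R \to \spec R$. Theorem \ref{padicetaleKthy} is stated mod $p$, but either a short d\'evissage in the fiber sequence $M/p \to M/p^r \to M/p^{r-1}$, or direct inspection of the proof---which uses only Gabber's affine analog of proper base change, Lemma \ref{finiteetale}, and the stalk identification \eqref{stalkK}, all of which work equally well with $p^r$-torsion coefficients---yields the mod $p^r$ version.

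Finally, Theorem \ref{KTCasymptotic} says that the composite $K(R)/p^r \to \TC(R)/p^r$ is an equivalence in degrees $\geq d$. Since the second arrow of the factorization is an equivalence in all degrees, the first arrow $K(R)/p^r \to K^\sub{\'et}(R)/p^r$ must likewise be an equivalence in degrees $\geq d$, as required. There is no substantial obstacle here: the work lies entirely in the preceding theorems, and the derivation is essentially a formal assembly of already-established results, with the only mild check being the extension of Theorem \ref{padicetaleKthy} from $p$ to $p^r$ coefficients.
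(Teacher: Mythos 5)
Your proof is correct and takes essentially the same route as the paper, whose entire proof is ``Combine Theorems \ref{KTCasymptotic} and \ref{padicetaleKthy}.'' Your factorization of the trace through $K^\sub{\'et}(R)/p^r \to \TC(R)/p^r$ (using that the identity is proper) and the standard d\'evissage from mod $p$ to mod $p^r$ coefficients are exactly the intended assembly.
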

\begin{proof}
Combine Theorems \ref{KTCasymptotic} and \ref{padicetaleKthy}.
\end{proof}

\subsection{Examples}
\label{KTCexamples}
Finally, we include several explicit examples of comparisons between $K$ and $ \TC$. 
We begin with an example of Theorem~\ref{KTCasymptotic}. 
In particular, the standard rings occurring in algebra or algebraic geometry
over a perfect field of characteristic $p$ are covered by this example. 
\begin{example} 
Let $R$ be a 
noetherian $\mathbb{F}_p$-algebra
which is \emph{$F$-finite,} i.e., the Frobenius map is finite
(Definition~\ref{Ffinite}). 
Then $R$ has finite Krull dimension \cite[Prop. 1.1]{Kunz}. 
Moreover, if $R$ is generated
as a module by $p^d$ elements over the Frobenius, this passes to any localization. 
It follows that if $k$ is any residue field of $R$, then $[k: k^p] \leq p^d$. 
Therefore, the map $K(R)/p \to \TC(R)/p $ is an equivalence in degrees $\geq d$
thanks to Theorem~\ref{KTCasymptotic}. Note that Theorem~\ref{KTCasymptotic}  
assumes that $d \geq 1$, but if $d = 0$ then $R$ is a finite product of perfect
fields, so that the result   
follows from Example~\ref{perfex} below. 
\end{example}

We can also show that $K$-theory and $\TC$ agrees in connective degrees for
certain large rings. 
\begin{example} 
\label{perfex}
Let $R$ be a perfect $\mathbb{F}_p$-algebra, i.e., such that the Frobenius is
an isomorphism. Then the map $K(R)/p^r \to \TC(R)/p^r$ is an
equivalence on connective covers for each $r \geq 0$.

In fact, 
$\TC_i(R)$ vanishes for $i > 0$. 
This follows as in the calculation of $\TC(\mathbb{F}_p)$ in \cite[Sec.~IV-4]{nikolaus-scholze}. 
Indeed, by a variant of B\"okstedt's calculation, one finds
$\THH(R)_* \simeq R[\sigma]$ for $|\sigma| = 2$ and $$ \TC^-_*(R) =
W(R)[x, \sigma]/(x \sigma - p), \quad \TP_*(R) \simeq W(R)[x^{\pm 1}], \quad
|x| = -2.$$
If $R$ has no nontrivial idempotents, it follows that $\pi_0(\TC(R)/p^r) =
\mathbb{Z}/p^r \mathbb{Z}$
and that $\pi_{-1}(\TC_{}(R)/p^r)$ is the cokernel of $F -1 $ on $W_r(R)$. 
We refer to the work of Bhatt--Morrow--Scholze \cite{BMS2} for more details. 

Moreover, $K_i(R)$ is a $\mathbb{Z}[1/p]$-module for
$i > 0$. This follows from the existence of Adams operations in higher
$K$-theory and the fact that $\psi^p$ is the Frobenius, cf.~\cite{Hiller, Kratzer}. 
If $R$ has  no
nontrivial idempotents, it follows by Zariski descent that the kernel of the map $K_0(R) \to
\mathbb{Z}$ is a $\mathbb{Z}[1/p]$-module, so that $K_0(R)/p \simeq
\mathbb{Z}/p$.  
Combining all these observations, the claim for $R$ follows provided $\operatorname{Spec}(R)$ is connected.  To reduce the general case to that one, note that everything commutes with filtered colimits, so we can assume $R$ is the perfection of a finite type $\mathbb{F}_p$-algebra.  Then $\operatorname{Spec}(R)$ is noetherian, hence a finite disjoint union of connected affines, giving the reduction.
\end{example} 

We obtain the following corollary.
\begin{corollary}
Let $R$ be a ring henselian along $(p)$. 
Suppose $R/p$ is a \emph{semiperfect} ring, i.e., an $\mathbb{F}_p$-algebra 
such that the Frobenius map on $R/p$ is a surjection. 
The map $K(R)/p^r \to \TC(R)/p^r$ is an equivalence in degrees $\geq 0$ for any
$r$. 
\label{Kpfd}
\end{corollary}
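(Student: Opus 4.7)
The plan is to apply Theorem~\ref{mainthm} twice in opposite directions: first to reduce to the case where $R$ is itself a semiperfect $\mathbb{F}_p$-algebra, and then to lift from such a semiperfect ring to a perfect ring where Example~\ref{perfex} delivers the conclusion directly. The observation I will use throughout is that, via the fiber sequence $\knf \to K \to \TC$ and the connectivity of $K$, the assertion ``$K/p^r \to \TC/p^r$ is an equivalence in degrees $\geq 0$'' is equivalent to the vanishing $\pi_i(\knf/p^r) = 0$ for all $i \geq -1$. This vanishing depends only on the spectrum $\knf/p^r$ and so transports along any equivalence supplied by rigidity.

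Since $(R, (p))$ is henselian by hypothesis, a first application of Theorem~\ref{mainthm} yields $\knf(R)/p^r \simeq \knf(R/p)/p^r$, so it suffices to treat the case in which $R$ is itself a semiperfect $\mathbb{F}_p$-algebra $S$. I will present $S$ as the quotient of a perfect ring by a henselian ideal, as follows. Form the tilt $S^{\flat} = \varprojlim_F S$; this is a perfect $\mathbb{F}_p$-algebra, and the projection $S^{\flat} \to S$ onto the zeroth coordinate is surjective precisely because $S$ is semiperfect. Let $J$ denote its kernel, and set $T := (S^{\flat})^h_J$, the henselization of the pair $(S^{\flat}, J)$. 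Then Construction~\ref{construction_henselization} ensures that $(T, JT)$ is a henselian pair with $T/JT \simeq S$, and $T$ itself is perfect: it is a filtered colimit of \'etale $S^{\flat}$-algebras, each of which inherits perfectness from $S^{\flat}$ (since in characteristic $p$ the relative Frobenius of an \'etale morphism is an isomorphism, so absolute Frobenius remains an isomorphism after an \'etale extension of a perfect ring), and perfectness is preserved under filtered colimits.

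A second application of Theorem~\ref{mainthm}, now to the henselian pair $(T, JT)$, produces $\knf(T)/p^r \simeq \knf(S)/p^r$. On the other hand, because $T$ is perfect, Example~\ref{perfex} tells us that $K(T)/p^r \to \TC(T)/p^r$ is an equivalence on connective covers, i.e.\ $\pi_i(\knf(T)/p^r) = 0$ for $i \geq -1$. Transporting this vanishing across the chain of equivalences $\knf(T)/p^r \simeq \knf(S)/p^r \simeq \knf(R)/p^r$ completes the proof. The only point requiring a moment's justification is the perfectness of the henselization $T$, which follows from the standard characterization of \'etale morphisms in characteristic $p$ via the relative Frobenius; beyond this the argument is a purely formal consequence of the main rigidity theorem.
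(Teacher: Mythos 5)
Your argument is correct, but it takes a genuinely different route from the paper's at the second step. After the common first reduction (via Theorem~\ref{mainthm} applied to the henselian pair $(R,(p))$) to the case of a semiperfect $\mathbb{F}_p$-algebra $S$, the paper goes \emph{down}: it observes that the nilradical $I\subset S$ is locally nilpotent (hence $(S,I)$ is automatically a henselian pair, no henselization required) and that $S/I$ is perfect, since Frobenius on $S/I$ is surjective because it is so on $S$, and injective because $x^p$ nilpotent forces $x$ nilpotent. You instead go \emph{up}: you form the tilt $S^\flat=\varprojlim_F S$, use semiperfectness to see that $S^\flat\twoheadrightarrow S$, and henselize along the kernel to produce a perfect ring $T$ with $(T,JT)$ henselian and $T/JT\simeq S$; the perfectness of $T$ rests on the (correct) facts that \'etale algebras over perfect rings are perfect and that perfectness passes to filtered colimits. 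Both routes terminate in Example~\ref{perfex} and the translation between ``equivalence in degrees $\geq 0$'' and ``$\pi_i(\knf/p^r)=0$ for $i\geq -1$,'' which you justify correctly using the connectivity of $K$. The paper's version is shorter because it needs no henselization and no input about \'etale maps in characteristic $p$; yours has the mild conceptual appeal of exhibiting every semiperfect ring as a henselian quotient of a perfect ring via the tilting construction, which is the shape of argument one would also use in perfectoid settings where the nilradical quotient is not available.
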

\begin{proof} 
Using Theorem~\ref{mainthm}, we immediately reduce to the case where $R$ itself is a
semiperfect $\mathbb{F}_p$-algebra. 
In this case, let $I \subset R$ be the nilradical. It follows that $R/I$ is
perfect and that $I$ is locally nilpotent, so that $(R, I)$ is a henselian pair. 
By Theorem~\ref{mainthm} and Example~\ref{perfex}, it follows that the map
$K(R)/p^r \to \TC(R)/p^r$ is an equivalence in degrees $\geq 0$. 
\end{proof} 

\begin{example} Let $C$ be a complete nonarchimedean field whose residue field is perfect of
characteristic $p$. 
Let $\mathcal{O}_C \subset C$ be the ring of integers, and let $\pi \in
\mathcal{O}_C$ be a nonzero element of positive valuation. 
Then $\mathcal{O}_C$ is $\pi$-adically complete, and the image of the maximal ideal 
$\mathfrak{m}_C \subset \mathcal{O}_C$  is a locally nilpotent ideal in
$\mathcal{O}_C/\pi$: in particular, $\mathfrak{m}_C \subset \mathcal{O}_C$ is
henselian. We conclude 
that the map $K( \mathcal{O}_C)/p^r \to \TC(
\mathcal{O}_C)/p^r$ exhibits the former as the connective cover of the latter
for $r \geq 0$ since we know the analogous statement for the residue field (as
in Example~\ref{perfex}).

Given a perfectoid field $C$ (in the sense of \cite{Scholze}), the ring of
integers $\mathcal{O}_C$ has perfect residue field, so the above conclusion
holds. 
When $C = \mathbb{C}_p$ is the completed algebraic closure of $\mathbb{Q}_p$,
the $p$-adic $K$-theory of $\mathcal{O}_C$ 
was calculated by Nizio\l, cf.~\cite[Lem~3.1]{Niziol-crystalline} which shows that
$K(\mathcal{O}_C; \mathbb{Z}_p) \simeq K(C; \mathbb{C}_p)$, which in turn is
$p$-adic connective topological $K$-theory by \cite{Suslinalgclosed, Suslinlocal}. 
Similarly, $\TC(\mathcal{O}_C; \mathbb{Z}_p)$ was calculated by Hesselholt
\cite{HesselholtOC} and shown to agree with the $K$-theory. 
A description of $\TC(\mathcal{O}_C; \mathbb{Z}_p)$
in general has been given in \cite{BMS2}. 
\end{example}

\subsection{Split injectivity}
As we recalled in Theorem \ref{GHGL}, results of Geisser--Levine and
Geisser--Hesselholt show that the trace map $K(R)/p\to \TC(R)/p$ induces
split injections on homotopy groups whenever $R$ is an ind-smooth local $\bb
F_p$-algebra.  In fact, the same argument shows that this also holds with mod
$p^r$ coefficients for any $r$, if we use the logarithmic de Rham-Witt groups
(see Definition \ref{definition_dRW}) as the mod $p^r$ generalizations of the
$\nu^n(R)$.  The splitting comes from the \'{e}tale descent spectral sequence
for $\TC$, so that the complementary summand of $\pi_n(K(R)/p^r)$ in
$\pi_n(\TC(R)/p^r)$ is given by
$H^1(\Spec(R)_\sub{\'{e}t};\pi_{n+1}(\mathcal{TC}/p^r))$, where
$\pi_{n+1}(\mathcal{TC}/p^r)$ denotes the \'etale sheafification of the presheaf $\pi_n(\TC(-)/p^r)$.

We can use our main result to extend this to arbitrary local $\bb F_p$-algebras.

\begin{theorem}
Let $R$ be any local $\bb F_p$-algebra, $n\ge-1$, and $r\ge 1$. Then the
trace map $\pi_n (K(R)/p^r) \to \pi_n (\TC(R)/p^r)$ is split injective. More
precisely,  there is a functor $\widetilde{\nu^{n+1}_r}:\operatorname{CAlg}_{\mathbb{F}_p}\rightarrow\operatorname{Ab}$ and a natural transformation $\widetilde{\nu^{n+1}_r}(-)\rightarrow \pi_n(\TC(-)/p^r)$ such that, for $R$ local, the induced map
$$\widetilde{\nu^{n+1}_r}(R)\oplus \pi_n(K(R)/p^r)\To\pi_n(\TC(R)/p^r)$$
is an isomorphism.  This is also compatible with the natural transition maps as $r$ varies.

Moreover, $\widetilde{\nu^{n+1}_r}$ commutes with filtered colimits; in particular, the direct sum decomposition above holds for arbitrary $R$ after Zariski sheafification.
\label{injectivityresult}
\end{theorem}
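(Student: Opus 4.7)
The plan is to use \'etale descent on $\Spec R$ to reduce the splitting to a non-smooth generalization of the Geisser--Levine calculation. Define the functor on commutative $\mathbb{F}_p$-algebras by
\[ \widetilde{\nu^{n+1}_r}(R) := H^1_{\sub{\'et}}(\Spec R,\, W_r\Omega^{n+1}_{\sub{log}}), \]
with $W_r\Omega^{n+1}_{\sub{log}}$ the logarithmic de Rham--Witt sheaf of Definition~\ref{definition_dRW}. This commutes with filtered colimits, since $H^1_{\sub{\'et}}$ on a qcqs scheme commutes with filtered colimits in coefficient sheaves whose stalks do; the transition maps in $r$ come from Restriction.

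By Theorem~\ref{etalekthm}, together with Geisser--Levine applied at the strictly henselian local stalks, the \'etale sheaf $\pi_t(\mathcal{TC}/p^r)$ on $\Spec R$ identifies naturally with $W_r\Omega^t_{\sub{log}}$ for each $t$. Combined with \'etale hyperdescent for $\TC/p^r$ (a consequence of Theorem~\ref{padicetaleKthy}) and the bound $\operatorname{cd}_p(\Spec R) \leq 1$ of Lemma~\ref{finiteetale}, the resulting descent spectral sequence has at most two nonzero columns and yields a functorial short exact sequence
\[ 0 \to \widetilde{\nu^{n+1}_r}(R) \to \pi_n(\TC(R)/p^r) \to H^0_{\sub{\'et}}(\Spec R,\, W_r\Omega^n_{\sub{log}}) \to 0 \]
for any $\mathbb{F}_p$-algebra $R$, and in particular provides the natural transformation $\widetilde{\nu^{n+1}_r}\to\pi_n(\TC(-)/p^r)$.

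For $R$ local the right-hand term is $W_r\Omega^n_{R,\sub{log}}$ (the log subgroup is generated globally by dlog symbols of units). The splitting assertion of the theorem then reduces to showing that the composite
\[ f := q \circ \mathrm{trace}: \pi_n(K(R)/p^r) \To \pi_n(\TC(R)/p^r) \twoheadrightarrow W_r\Omega^n_{R,\sub{log}} \]
is an isomorphism: given such an $f$, the map $f^{-1}\circ q$ is a retraction of the trace whose kernel is exactly $\widetilde{\nu^{n+1}_r}(R)$, which gives both the split injectivity and the direct sum decomposition. Surjectivity of $f$ is immediate: every generator of $W_r\Omega^n_{R,\sub{log}}$ is a dlog symbol of units, which lifts to a Milnor symbol in $K_n(R)/p^r$ whose image in $W_r\Omega^n_{R,\sub{log}}$ under $f$ is the original form, by the compatibility of the cyclotomic trace with dlog on symbols (as already invoked in Corollary~\ref{corollary_Milnor}).

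The main obstacle is the injectivity of $f$ for general (possibly singular) local $\mathbb{F}_p$-algebras $R$, which amounts to a non-smooth extension of Geisser--Levine. My proposed route is to reduce first, via the commutation with filtered colimits of $K/p^r$, $\TC/p^r$ (Theorem~\ref{TCcommutes}), $W_r\Omega^n_{-,\sub{log}}$ and $\widetilde{\nu^{n+1}_r}$, to the case where $R$ is the localization of a finite-type $\mathbb{F}_p$-algebra at a prime; then to pass to the henselization $R^h$, for which Theorem~\ref{mainthm} identifies $K^{\mathrm{inv}}(R^h)/p^r$ with $K^{\mathrm{inv}}(k)/p^r$ for $k$ the residue field, reducing injectivity at $R^h$ to Geisser--Levine for the field $k$; and finally to descend this injectivity from $R^h$ to $R$ by a careful analysis comparing the log-dlog surjections for $R$ and $R^h$ and using that $R \to R^h$ is faithfully flat ind-\'etale. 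Once this last injectivity step is established, the final filtered-colimit conclusion of the theorem follows from the commutation properties of all functors in sight.
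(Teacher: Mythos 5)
There is a genuine gap, and it occurs at the very first substantive step. You claim that the \'etale sheaf $\pi_t(\mathcal{TC}/p^r)$ on $\Spec R$ identifies with $W_r\Omega^t_{\sub{log}}$ for an \emph{arbitrary} $\mathbb{F}_p$-algebra $R$, "by Theorem~\ref{etalekthm} together with Geisser--Levine applied at the strictly henselian local stalks." But the strict henselizations of a general (possibly singular) $R$ are not ind-smooth, and Geisser--Levine (Theorem~\ref{GHGL}) only applies to ind-smooth local rings. Theorem~\ref{etalekthm} tells you that the stalk is $\pi_t(K(A)/p^r)$ for $A$ strictly henselian local, but it does not tell you that this group is $W_r\Omega^t_{A,\sub{log}}$. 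Consequently your entire strategy reduces the theorem to proving that $\pi_n(K(R)/p^r)\to W_r\Omega^n_{R,\sub{log}}$ is an isomorphism for every local $\mathbb{F}_p$-algebra $R$ --- a non-pro Geisser--Levine statement for singular rings. This is false: for $R=k[x]/(x^2)$ with $k$ separably closed (already strictly henselian local, so no further reduction via henselization or rigidity is available), one has $\Omega^n_R=0$ for $n\ge 2$ since $\Omega^1_R$ is cyclic, hence $W_r\Omega^n_{R,\sub{log}}=0$ in those degrees, while the relative $K$-groups $K_*(k[x]/(x^2),(x);\mathbb{Z}/p^r)$ are famously nonzero in infinitely many degrees (Hesselholt--Madsen). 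The paper's own pro-Geisser--Levine theorem (Theorem~\ref{theorem_pro_GL}) would be vacuous if the statement you need were true on the nose. Your final "descend injectivity from $R^h$ to $R$" step therefore cannot be repaired.

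The paper's proof sidesteps this by never computing \'etale cohomology on $\Spec R$ itself. It builds a \emph{functorial} surjection $R'\twoheadrightarrow R$ from an ind-smooth $\mathbb{F}_p$-algebra with henselian kernel (the henselization of $\mathbb{F}_p[x_a]_{a\in R}\to R$), defines $\widetilde{\nu^{n+1}_r}(R):=H^1_{\sub{\'et}}(\Spec(R');\pi_{n+1}(\mathcal{TC}/p^r))$, applies Geisser--Levine and Geisser--Hesselholt on the ind-smooth local ring $R'$ --- where they are actually valid --- to get the splitting for $R'$, and then transfers it to $R$ using the rigidity theorem (Theorem~\ref{mainthm}), which identifies $\knf(R')/p^r$ with $\knf(R)/p^r$. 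The complementary summand is thus $H^1$ of the smooth cover, not of $\Spec R$; this is exactly the device that makes the singular case accessible without any new computation of $K$-theory of singular rings.
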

\begin{proof}
Recall that if $R$ is an $\mathbb{F}_p$-algebra, we can make a functorial surjection to $R$ from a polynomial algebra, namely take the polynomial algebra $\mathbb{F}_p[x_a]_{a\in R}$ on variables indexed by the elements of $R$ with the map $x_a\mapsto a$.  Let $R'\rightarrow R$ denote the henselization of this surjection at its kernel; thus $R'$ is a functorial ind-smooth $\mathbb{F}_p$-algebra surjecting onto $R$ with henselian kernel.  Moreover, $R\mapsto R'$ evidently commutes with filtered colimits.

Define
for $m \geq 0$
$$\widetilde{\nu^m_r}(R) := H^1(\operatorname{Spec}(R')_\sub{\'{e}t};\pi_{m}(\mathcal{TC}/p^r)).$$
Since the coefficient presheaf $\pi_{m}(\TC(-)/p^r)$ commutes with filtered
colimits by Corollary \ref{TCring}, this \'etale cohomology group commutes with filtered colimits in $R'$ by standard cocontinuity arguments.  Combining with the previous, we find that $\widetilde{\nu^m_r}(-)$ commutes with filtered colimits.

Furthermore, since $\operatorname{Spec}(R')$ has \'{e}tale $p$-cohomological dimension $\leq 1$, the descent spectral sequence for $\TC(-)/p^r$ gives a natural map $\widetilde{\nu^{n+1}_r}(R)\rightarrow \pi_n(\TC(R')/p^r)$ (compare with the proofs of Theorem \ref{GHGL} and Theorem \ref{padicetaleKthy}).  Composing with the map on $\TC$ induced by $R'\rightarrow R$ defines the desired natural transformation $\widetilde{\nu^{n+1}_r}(R)\rightarrow \pi_n(\TC(R)/p^r)$.

Now assume $R$ is local.  Then $R'$ is too, since a henselian ideal is radical, and it is moreover ind-smooth.  Thus the argument recalled before the statement of the theorem, based on the results of Geisser--Levine and Geisser--Hesselholt, shows that
$$\widetilde{\nu^{n+1}_r}(R)\oplus \pi_n(K(R')/p^r)\overset{\sim}{\rightarrow}\pi_n(\TC(R')/p^r).$$
On the other hand our main rigidity theorem, Theorem \ref{mainthm}, gives a long exact sequence
$$\ldots\rightarrow\pi_n(K(R')/p^r)\rightarrow \pi_n(\TC(R')/p^r)\oplus \pi_n(K(R)/p^r)\rightarrow \pi_n(\TC(R)/p^r)\rightarrow\ldots.$$
Combining shows both that this long exact sequence breaks up into short exact sequences and that 
$$\widetilde{\nu^{n+1}_r}(R)\oplus\pi_n(K(R)/p^r)\overset{\sim}{\rightarrow}\pi_n(\TC(R)/p^r),$$
as claimed.
\end{proof}

We next proceed to identify these 
constructions $\widetilde{\nu^{m}_r}$. 
When $r=1$, the proof of Theorem \ref{GHGL} shows that
$\widetilde{\nu^m_1}(R)= \widetilde{\nu^m}(R')$, and this combines with Proposition \ref{toyFprigidityTC} to give an identification
$$\widetilde{\nu^m_1}(R)= \widetilde{\nu^m}(R):=\operatorname{coker}( 1 - C^{-1}: \Omega^m_R \to \Omega^m_R / d \Omega^{m-1}_R)$$
for arbitrary $\mathbb{F}_p$-algebras $R$ and $m \geq 0$.  
More generally, we can obtain a similar description of the
$\widetilde{\nu_r^m}$ for $r> 1$ as follows. 
As in \cite[Sec.~4]{Morrow-HW}, 
we have a natural map 
\[ \overline{F}: W_r \Omega_R^{m} \to W_r \Omega_R^{m}/ d V^{r-1}
\Omega_R^{m}  \]
for an arbitrary $\mathbb{F}_p$-algebra $R$ which factors the Frobenius $F: W_{r+1}
\Omega_R^{m} \to W_r \Omega_R^{m}$.  Let $\pi: W_r \Omega_R^{m} \to 
 W_r \Omega_R^{m}/ d V^{r-1} \Omega_R^{m}$ be the natural projection and 
consider the map 
 \begin{equation} \label{piminusF} W _r \Omega_R^{m} \xrightarrow{\pi - \overline{F}}  
 W_r \Omega_R^{m}/ (d V^{r-1} \Omega_R^{m})
.\end{equation} 
By \cite[Cor. 4.2(iii)]{Morrow-HW}, the map $\pi - \overline{F}$
has kernel given by the logarithmic forms, i.e., we have a short exact sequence 
 \begin{equation} \label{piminusF2} 
 0 \to 
W_r \Omega_{R, \mathrm{log}}^{m} \to 
 W _r \Omega_R^{m} \xrightarrow{\pi - \overline{F}}  
 W_r \Omega_R^{m}/ (d V^{r-1} \Omega_R^{m})
.\end{equation}

\begin{proposition} 
\label{identifytwiddle}
For any $\mathbb{F}_p$-algebra $R$, we have a natural identification
of graded abelian groups
\[ \widetilde{\nu_r^m}(R) \simeq \mathrm{coker}( \pi - \overline{F}).  \]
\end{proposition}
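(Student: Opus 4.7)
The identification mirrors the case $r = 1$ sketched immediately before the statement. By definition (see the proof of Theorem \ref{injectivityresult}) one has $\widetilde{\nu^m_r}(R) = H^1(\spec(R')_\sub{\'et}, \pi_m(\mathcal{TC}/p^r))$, where $R' \twoheadrightarrow R$ is the functorial ind-smooth resolution with henselian kernel $I$. I will show in two steps that this group identifies with the cokernel of $\pi-\overline{F}$, first on $R'$ and then on $R$.

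\textbf{Step 1 (computation on the ind-smooth cover $R'$).} By Geisser--Hesselholt \cite{GH} (extended to ind-smooth algebras by filtered colimits), the \'etale sheafification of $\pi_m(\TC(-)/p^r)$ on ind-smooth $\bb F_p$-algebras is $W_r\Omega^m_{-,\sub{log}}$. Moreover, by \cite[Cor.~4.2(iii)]{Morrow-HW}, the sequence (\ref{piminusF2}) extends \'etale-locally (on any ind-smooth $\bb F_p$-algebra $S$) to a short exact sequence of \'etale sheaves
$$0 \to W_r\Omega^m_{S,\sub{log}} \to W_r\Omega^m_{S} \xrightarrow{\pi - \overline{F}} W_r\Omega^m_{S}/dV^{r-1}\Omega^{m-1}_{S} \to 0.$$
A standard d\'evissage via the Verschiebung filtration expresses $W_r\Omega^m_{S}$ and $W_r\Omega^m_{S}/dV^{r-1}\Omega^{m-1}_{S}$ as iterated extensions of quasi-coherent pieces built from $\Omega^i_S$ (compare Lemma \ref{qcohomega} and the globalisation discussion in Definition \ref{definition_dRW}), whence both have vanishing higher \'etale cohomology on the affine $\spec S$. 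Taking $S = R'$ and applying the associated long exact cohomology sequence collapses it to the identification $\widetilde{\nu^m_r}(R) \cong \mathrm{coker}(\pi - \overline{F})|_{R'}$.

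\textbf{Step 2 (henselian rigidity).} Apply the snake lemma to the commutative diagram whose rows are the surjections
$$0 \to V_0 \to W_r\Omega^m_{R'} \to W_r\Omega^m_{R} \to 0, \qquad 0 \to V_1 \to W_r\Omega^m_{R'}/dV^{r-1}\Omega^{m-1}_{R'} \to W_r\Omega^m_{R}/dV^{r-1}\Omega^{m-1}_{R} \to 0,$$
with $V_0, V_1$ the kernels of the vertical surjections and with $\pi - \overline{F}$ as vertical map. The resulting six-term sequence reduces the proposition to showing that the natural surjection $\mathrm{coker}(\pi - \overline{F})|_{R'} \twoheadrightarrow \mathrm{coker}(\pi - \overline{F})|_{R}$ is injective, equivalently that every element of $V_1$ admits a preimage in $W_r\Omega^m_{R'}$ under $\pi - \overline{F}$. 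This is the level-$r$ analogue of the rigidity assertion in Proposition~\ref{toyFprigidityTC}: a generator of $V_1$ can be represented (modulo $dV^{r-1}$) by a Teichm\"uller expression $[a]\,d[x_1]\wedge\cdots\wedge d[x_m]$ with one of $a, x_1, \ldots, x_m$ lying in the henselian ideal $I = \ker(R' \twoheadrightarrow R)$, and the required preimage is obtained by multiplying by a Teichm\"uller lift $[u]$ for a suitable $u \in R'$ satisfying a polynomial equation of Hensel type, whose solvability over $(R', I)$ ensures that the resulting element maps to the prescribed generator.

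\textbf{Main obstacle.} Making the Hensel-lifting rigorous at level $r > 1$ is the delicate point, since $\overline{F}$ agrees with the Frobenius on Teichm\"uller lifts only up to Verschiebung-valued corrections, so the relevant equation a priori lives in $W_r\Omega^m_{R'}$ rather than in $R'$. I would handle this by inducting on $r$ via the short exact sequence relating $W_r\Omega^*_{R'}$ to $W_{r-1}\Omega^*_{R'}$ (reducing the inductive step to the $r=1$ computation already performed in Proposition~\ref{toyFprigidityTC}), or, more efficiently, by invoking the surjectivity of the relative operator provided by Lemma~\ref{lemma_drw1}, which directly yields the required henselian rigidity of $\mathrm{coker}(\pi - \overline{F})$. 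The naturality of the construction $R \mapsto R'$ and of the Geisser--Hesselholt identification ensure functoriality in $R$ and compatibility with transition maps as $r$ varies.
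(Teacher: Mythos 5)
Your proposal is correct and follows essentially the same route as the paper: identify $\widetilde{\nu_r^m}(R)$ with the cokernel on the ind-smooth cover $R'$ via the \'etale resolution \eqref{piminusF2} together with the acyclicity of the two non-logarithmic terms on affines, then descend from $R'$ to $R$ by henselian rigidity of $\mathrm{coker}(\pi - \overline{F})$. Your ``more efficient'' fallback for the rigidity step --- reducing, via the snake lemma, to the surjectivity of $R-F$ on relative de Rham--Witt groups supplied by Lemma~\ref{lemma_drw1} --- is precisely the paper's argument, so the Teichm\"uller-lifting detour you flag as delicate can simply be discarded.
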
 
\begin{proof} 

We claim that the map $\pi - \overline{F}$ is surjective locally in the \'etale topology 
and the cokernel satisfies rigidity for henselian pairs. 
The first claim follows because the composite map 
\[ W_{r+1} \Omega_R^{m} \stackrel{R}{\to} W_r \Omega_R^{m}
\xrightarrow{\pi  - \overline{F}} W_r \Omega_R^{m}/dV^{r-1} \Omega_R^{m} \]
actually lifts to the map $W_{r+1}\Omega_R^{m} \xrightarrow{R - F} W_r
\Omega_R^{m}$, which is a surjection in the \'etale topology thanks to
\cite[Cor. 4.1(ii)]{Morrow-HW}. 

Second, 
let $(R, I)$ be a henselian pair of $\mathbb{F}_p$-algebras. 
To see that $\mathrm{coker}( \pi - \overline{F})$ is rigid, we can imitate the
strategy of Proposition~\ref{toyFprigidityTC}. 
Namely, we 
let $L$ denote the kernel of the surjection
$W_r \Omega_R^{m} / dV^{r-1} \Omega_R^{m} \to W_r \Omega_{R/I}^{m}/d
V^{r-1} \Omega_{R/I}^{m}$ and 
contemplate the commutative diagram
\[ \xymatrix{
 & & W_r \Omega_{(R, I)}^{m} \ar[d]  \\
W_{r+1} \Omega_{(R, I)}^{m} \ar[d] \ar[rru]^{R - F} \ar[r]^R &   W_r
\Omega_{(R, I)}^{m} \ar[d]  \ar[r]^{\psi} &  L \ar[d]  \\
W_{r+1} \Omega_R^{m} \ar[d]  \ar[r]^R &  W_r \Omega_R^{m}\ar[d]
\ar[r]^-{\pi - \overline{F}} &  W_r \Omega_R^{m} / dV^{r-1}
\Omega_R^{m} \ar[d]  \\ 
W_{r+1} \Omega_{R/I}^{m} \ar[r]^R &  W_{r} \Omega_{R/I}^{m} \ar[r]^-{\pi -
\overline{F}} &  W_r
\Omega_{R/I}^{m}/ d V^{r-1} \Omega_{R/I}^{m}
}.\]

To prove rigidity, it suffices by the snake lemma  to show that $\psi:W_r \Omega_{(R,
I)}^{m} \to L$ is a surjection. 
Note that the kernel $L$ is surjected upon by the relative forms $W_r
\Omega_{(R, I)}^{m}$; this follows easily from the fact that $\Omega_R^{m}
\to \Omega_{R/I}^{m}$ is a surjection. 
Thus, the surjectivity of $\psi$ now follows from the surjectivity of $R - F:
W_{r+1} \Omega^{m}_{(R, I)} \to W_r \Omega^{m}_{(R, I)}$ 
given by Lemma~\ref{lemma_drw1}. 

Now we prove the proposition.  Let $R$ be an $\mathbb{F}_p$-algebra and recall that, by definition,
$$\widetilde{\nu_r^{m}}(R) = H^1( \mathrm{Spec}(R')_{\mathrm{et}}; \pi_m (
\mathcal{TC}/p^r)),$$
where $R'\rightarrow R$ is a surjective map from an ind-smooth
$\mathbb{F}_p$-algebra with henselian kernel. Since $\pi_m ( \mathcal{TC}/p^r)
\simeq W_r \Omega^{m}_{\mathrm{log}}$ (as
\'etale sheaves) over the ind-smooth $\operatorname{Spec}(R')$ by the
results of Geisser--Levine and Geisser--Hesselholt, we can use the resolution of 
\eqref{piminusF2} in the \'etale topology and the fact that the two terms of
\eqref{piminusF2} have no higher cohomology on affines to calculate this etale cohomology group. It follows that 
$\widetilde{\nu_r^{m}}(R)$ identifies with $\mathrm{coker}( \pi - \overline{F})$ on $R'$. Since we have just seen this cokernel satisfies
rigidity, we can replace $R'$ by $R$, whence the claim. 
\end{proof} 

\bibliographystyle{abbrv}
\bibliography{TCfinite}

\begin{thebibliography}{10}

\bibitem{ARloc}
J.~Ad\'amek and J.~Rosick\'y.
\newblock {\em Locally presentable and accessible categories}, volume 189 of
  {\em London Mathematical Society Lecture Note Series}.
\newblock Cambridge University Press, Cambridge, 1994.

\bibitem{AhKu}
S.~T. Ahearn and N.~J. Kuhn.
\newblock Product and other fine structure in polynomial resolutions of mapping
  spaces.
\newblock {\em Algebr. Geom. Topol.}, 2:591--647, 2002.

\bibitem{AMN}
B.~Antieau, A.~Mathew, and T.~Nikolaus.
\newblock On the {B}lumberg-{M}andell {K}\"{u}nneth theorem for {TP}.
\newblock {\em Selecta Math. (N.S.)}, 24(5):4555--4576, 2018.

\bibitem{SGA4}
M.~Artin, A.~Grothendieck, and J.~L. Verdier.
\newblock {\em Th{\'e}orie des topos et cohomologie {\'e}tale des sch{\'e}mas
  ({SGA} 4)}.
\newblock Springer-{V}erlag, 1973.

\bibitem{ayala-mg-rozenblyum}
D.~Ayala, A.~Mazel-Gee, and N.~Rozenblyum.
\newblock The geometry of the cyclotomic trace.
\newblock {\em ar{X}iv eprints}, 2017.

\bibitem{BG}
C.~Barwick and S.~Glasman.
\newblock Cyclonic spectra, cyclotomic spectra, and a conjecture of {K}aledin.
\newblock {\em ar{X}iv eprints}, 2016.

\bibitem{Bass}
H.~Bass.
\newblock {\em Algebraic {$K$}-theory}.
\newblock W. A. Benjamin, Inc., New York-Amsterdam, 1968.

\bibitem{BLM}
B.~Bhatt, J.~Lurie, and A.~Mathew.
\newblock The de {R}ham-{W}itt complex revisited.
\newblock {\em {A}st{\'e}risque}, to appear.

\bibitem{BMS2}
B.~Bhatt, M.~Morrow, and P.~Scholze.
\newblock Topological {H}ochschild homology and integral {$p$}-adic {H}odge
  theory.
\newblock {\em Publ. Math. Inst. Hautes \'{E}tudes Sci.}, 129:199--310, 2019.

\bibitem{BlochEsnaultKerz2013}
S.~{Bloch}, H.~{Esnault}, and M.~{Kerz}.
\newblock {Deformation of algebraic cycle classes in characteristic zero}.
\newblock {\em Algebraic Geometry}, 1(3):290--310, 2014.

\bibitem{BEK}
S.~Bloch, H.~Esnault, and M.~Kerz.
\newblock {$p$}-adic deformation of algebraic cycle classes.
\newblock {\em Invent. Math.}, 195(3):673--722, 2014.

\bibitem{BMloc}
A.~J. Blumberg and M.~A. Mandell.
\newblock Localization theorems in topological {H}ochschild homology and
  topological cyclic homology.
\newblock {\em Geom. Topol.}, 16(2):1053--1120, 2012.

\bibitem{BMcyc}
A.~J. Blumberg and M.~A. Mandell.
\newblock The homotopy theory of cyclotomic spectra.
\newblock {\em Geom. Topol.}, 19(6):3105--3147, 2015.

\bibitem{BHM}
M.~B\"okstedt, W.~C. Hsiang, and I.~Madsen.
\newblock The cyclotomic trace and algebraic {$K$}-theory of spaces.
\newblock {\em Invent. Math.}, 111(3):465--539, 1993.

\bibitem{Borceux1}
F.~Borceux.
\newblock {\em Handbook of categorical algebra. 1}, volume~50 of {\em
  Encyclopedia of Mathematics and its Applications}.
\newblock Cambridge University Press, Cambridge, 1994.
\newblock Basic category theory.

\bibitem{Borceux}
F.~Borceux.
\newblock {\em Handbook of categorical algebra. 2}, volume~51 of {\em
  Encyclopedia of Mathematics and its Applications}.
\newblock Cambridge University Press, Cambridge, 1994.
\newblock Categories and structures.

\bibitem{Cartier1957}
P.~Cartier.
\newblock Une nouvelle op\'{e}ration sur les formes diff\'{e}rentielles.
\newblock {\em C. R. Acad. Sci. Paris}, 244:426--428, 1957.

\bibitem{CM18}
D.~Clausen and A.~Mathew.
\newblock Hyperdescent and \'etale {$K$}-theory.
\newblock {\em ar{X}iv eprints}, 2019.

\bibitem{cortinas}
G.~Corti\~nas.
\newblock Infinitesimal {$K$}-theory.
\newblock {\em J. Reine Angew. Math.}, 503:129--160, 1998.

\bibitem{dundas}
B.~I. Dundas.
\newblock Relative {$K$}-theory and topological cyclic homology.
\newblock {\em Acta Math.}, 179(2):223--242, 1997.

\bibitem{Dundascont}
B.~I. Dundas.
\newblock Continuity of {$K$}-theory: an example in equal characteristics.
\newblock {\em Proc. Amer. Math. Soc.}, 126(5):1287--1291, 1998.

\bibitem{dgm}
B.~I. Dundas, T.~G. Goodwillie, and R.~McCarthy.
\newblock {\em The local structure of algebraic {K}-theory}, volume~18 of {\em
  Algebra and Applications}.
\newblock Springer-Verlag London, Ltd., London, 2013.

\bibitem{DKexcint}
B.~I. Dundas and H.~{\O}. Kittang.
\newblock Integral excision for {$K$}-theory.
\newblock {\em Homology Homotopy Appl.}, 15(1):1--25, 2013.

\bibitem{DundasMorrow}
B.~I. Dundas and M.~Morrow.
\newblock Finite generation and continuity of topological {H}ochschild and
  cyclic homology.
\newblock {\em Ann. Sci. \'Ec. Norm. Sup\'er. (4)}, 50(1):201--238, 2017.

\bibitem{elkik}
R.~Elkik.
\newblock Solutions d'\'equations \`a coefficients dans un anneau hens\'elien.
\newblock {\em Ann. Sci. \'Ecole Norm. Sup. (4)}, 6:553--603 (1974), 1973.

\bibitem{Emmanouil}
I.~Emmanouil.
\newblock Mittag-{L}effler condition and the vanishing of {$\varprojlim^1$}.
\newblock {\em Topology}, 35(1):267--271, 1996.

\bibitem{gabber}
O.~Gabber.
\newblock {$K$}-theory of {H}enselian local rings and {H}enselian pairs.
\newblock In {\em Algebraic {$K$}-theory, commutative algebra, and algebraic
  geometry ({S}anta {M}argherita {L}igure, 1989)}, volume 126 of {\em Contemp.
  Math.}, pages 59--70. Amer. Math. Soc., Providence, RI, 1992.

\bibitem{Gabberaffine}
O.~Gabber.
\newblock Affine analog of the proper base change theorem.
\newblock {\em Israel J. Math.}, 87(1-3):325--335, 1994.

\bibitem{Geissersurvey}
T.~Geisser.
\newblock Motivic cohomology, {$K$}-theory and topological cyclic homology.
\newblock In {\em Handbook of {$K$}-theory. {V}ol. 1, 2}, pages 193--234.
  Springer, Berlin, 2005.

\bibitem{GH}
T.~Geisser and L.~Hesselholt.
\newblock Topological cyclic homology of schemes.
\newblock In {\em Algebraic {$K$}-theory ({S}eattle, {WA}, 1997)}, volume~67 of
  {\em Proc. Sympos. Pure Math.}, pages 41--87. Amer. Math. Soc., Providence,
  RI, 1999.

\bibitem{GHexc}
T.~Geisser and L.~Hesselholt.
\newblock Bi-relative algebraic {$K$}-theory and topological cyclic homology.
\newblock {\em Invent. Math.}, 166(2):359--395, 2006.

\bibitem{GHlocal}
T.~Geisser and L.~Hesselholt.
\newblock On the {$K$}-theory and topological cyclic homology of smooth schemes
  over a discrete valuation ring.
\newblock {\em Trans. Amer. Math. Soc.}, 358(1):131--145, 2006.

\bibitem{GHcomplete}
T.~Geisser and L.~Hesselholt.
\newblock On the {$K$}-theory of complete regular local {$\Bbb F_p$}-algebras.
\newblock {\em Topology}, 45(3):475--493, 2006.

\bibitem{GL}
T.~Geisser and M.~Levine.
\newblock The {$K$}-theory of fields in characteristic {$p$}.
\newblock {\em Invent. Math.}, 139(3):459--493, 2000.

\bibitem{GT}
H.~A. Gillet and R.~W. Thomason.
\newblock The {$K$}-theory of strict {H}ensel local rings and a theorem of
  {S}uslin.
\newblock {\em J. Pure Appl. Algebra}, 34(2-3):241--254, 1984.

\bibitem{Goodrel}
T.~G. Goodwillie.
\newblock Relative algebraic {$K$}-theory and cyclic homology.
\newblock {\em Ann. of Math. (2)}, 124(2):347--402, 1986.

\bibitem{GooIII}
T.~G. Goodwillie.
\newblock Calculus. {III}. {T}aylor series.
\newblock {\em Geom. Topol.}, 7:645--711, 2003.

\bibitem{EGA_IV_2}
A.~Grothendieck.
\newblock {\'E}l\'ements de g\'eom\'etrie alg\'ebrique. {IV}. \'etude locale
  des sch\'emas et des morphismes de sch\'emas. {II}.
\newblock {\em Inst. Hautes \'Etudes Sci. Publ. Math.}, (24):231, 1965.

\bibitem{Hesselholt}
L.~Hesselholt.
\newblock On the {$p$}-typical curves in {Q}uillen's {$K$}-theory.
\newblock {\em Acta Math.}, 177(1):1--53, 1996.

\bibitem{HesselholtOC}
L.~Hesselholt.
\newblock On the topological cyclic homology of the algebraic closure of a
  local field.
\newblock In {\em An alpine anthology of homotopy theory}, volume 399 of {\em
  Contemp. Math.}, pages 133--162. Amer. Math. Soc., Providence, RI, 2006.

\bibitem{hesselholt-tp}
L.~Hesselholt.
\newblock Topological {H}ochschild homology and the {H}asse-{W}eil zeta
  function.
\newblock In {\em An alpine bouquet of algebraic topology}, volume 708 of {\em
  Contemp. Math.}, pages 157--180. Amer. Math. Soc., Providence, RI, 2018.

\bibitem{HM97}
L.~Hesselholt and I.~Madsen.
\newblock On the {$K$}-theory of finite algebras over {W}itt vectors of perfect
  fields.
\newblock {\em Topology}, 36(1):29--101, 1997.

\bibitem{HMlocal}
L.~Hesselholt and I.~Madsen.
\newblock On the {$K$}-theory of local fields.
\newblock {\em Ann. of Math. (2)}, 158(1):1--113, 2003.

\bibitem{Hiller}
H.~L. Hiller.
\newblock {$\lambda $}-rings and algebraic {$K$}-theory.
\newblock {\em J. Pure Appl. Algebra}, 20(3):241--266, 1981.

\bibitem{Hovey}
M.~Hovey.
\newblock {\em Model categories}, volume~63 of {\em Mathematical Surveys and
  monographs}.
\newblock American {M}athematical {S}ociety, 2007.

\bibitem{illusie-derham-witt}
L.~Illusie.
\newblock Complexe de de\thinspace {R}ham-{W}itt et cohomologie cristalline.
\newblock {\em Ann. Sci. \'Ecole Norm. Sup. (4)}, 12(4):501--661, 1979.

\bibitem{Katz70}
N.~M. Katz.
\newblock Nilpotent connections and the monodromy theorem: {A}pplications of a
  result of {T}urrittin.
\newblock {\em Inst. Hautes \'{E}tudes Sci. Publ. Math.}, (39):175--232, 1970.

\bibitem{Kerz2010}
M.~Kerz.
\newblock Milnor {$K$}-theory of local rings with finite residue fields.
\newblock {\em J. Algebraic Geom.}, 19(1):173--191, 2010.

\bibitem{Kratzer}
C.~Kratzer.
\newblock Op\'erations d'{A}dams en {$K$}-th\'eorie alg\'ebrique.
\newblock {\em C. R. Acad. Sci. Paris S\'er. A-B}, 287(5):A297--A298, 1978.

\bibitem{Kunz}
E.~Kunz.
\newblock On {N}oetherian rings of characteristic {$p$}.
\newblock {\em Amer. J. Math.}, 98(4):999--1013, 1976.

\bibitem{land-tamme}
M.~Land and G.~Tamme.
\newblock On the {$K$}-theory of pullbacks.
\newblock {\em Ann. of Math. (2)}, 190(3):877--930, 2019.

\bibitem{LangerZink2004}
A.~Langer and T.~Zink.
\newblock De {R}ham-{W}itt cohomology for a proper and smooth morphism.
\newblock {\em J. Inst. Math. Jussieu}, 3(2):231--314, 2004.

\bibitem{Loday}
J.-L. Loday.
\newblock {\em Cyclic homology}, volume 301 of {\em Grundlehren der
  Mathematischen Wissenschaften [Fundamental Principles of Mathematical
  Sciences]}.
\newblock Springer-Verlag, Berlin, 1992.
\newblock Appendix E by Mar\'\i a O. Ronco.

\bibitem{SAG}
J.~Lurie.
\newblock {\em Spectral algebraic geometry}.
\newblock Available at \url{https://www.math.ias.edu/~lurie/}.

\bibitem{HTT}
J.~Lurie.
\newblock {\em Higher topos theory}, volume 170 of {\em Annals of Mathematics
  Studies}.
\newblock Princeton University Press, Princeton, NJ, 2009.

\bibitem{HA}
J.~Lurie.
\newblock {\em Higher Algebra}.
\newblock Available at \url{https://www.math.ias.edu/~lurie/}, 2014.

\bibitem{Mthick}
A.~Mathew.
\newblock A thick subcategory theorem for modules over certain ring spectra.
\newblock {\em Geom. Topol.}, 19(4):2359--2392, 2015.

\bibitem{MNN17}
A.~Mathew, N.~Naumann, and J.~Noel.
\newblock Nilpotence and descent in equivariant stable homotopy theory.
\newblock {\em Adv. Math.}, 305:994--1084, 2017.

\bibitem{Matsumura}
H.~Matsumura.
\newblock {\em Commutative ring theory}, volume~8 of {\em Cambridge Studies in
  Advanced Mathematics}.
\newblock Cambridge University Press, Cambridge, 1986.
\newblock Translated from the Japanese by M. Reid.

\bibitem{Matsumura1989}
H.~Matsumura.
\newblock {\em Commutative ring theory}, volume~8 of {\em Cambridge Studies in
  Advanced Mathematics}.
\newblock Cambridge University Press, Cambridge, second edition, 1989.
\newblock Translated from the Japanese by M. Reid.

\bibitem{Maygeom}
J.~P. May.
\newblock {\em The geometry of iterated loop spaces}.
\newblock Springer-Verlag, Berlin-New York, 1972.
\newblock Lectures Notes in Mathematics, Vol. 271.

\bibitem{mccarthy}
R.~McCarthy.
\newblock Relative algebraic {$K$}-theory and topological cyclic homology.
\newblock {\em Acta Math.}, 179(2):197--222, 1997.

\bibitem{Morrowpro}
M.~Morrow.
\newblock Pro unitality and pro excision in algebraic {$K$}-theory and cyclic
  homology.
\newblock {\em J. Reine Angew. Math.}, 736:95--139, 2018.

\bibitem{Morrow-HW}
M.~Morrow.
\newblock {$K$}-theory and logarithmic {H}odge--{W}itt sheaves of formal scheme
  in characteristic {$p$}.
\newblock {\em Ann. Sci. \'Ecole Norm. Sup. (4)}, 52(6):1537--1601, 2019.

\bibitem{nikolaus-scholze}
T.~Nikolaus and P.~Scholze.
\newblock On topological cyclic homology.
\newblock {\em Acta Math.}, 221(2):203--409, 2018.

\bibitem{Niziol-crystalline}
W.~Nizio\l.
\newblock Crystalline conjecture via {$K$}-theory.
\newblock {\em Ann. Sci. \'{E}cole Norm. Sup. (4)}, 31(5):659--681, 1998.

\bibitem{Panin}
I.~A. Panin.
\newblock The {H}urewicz theorem and {$K$}-theory of complete discrete
  valuation rings.
\newblock {\em Izv. Akad. Nauk SSSR Ser. Mat.}, 50(4):763--775, 878, 1986.

\bibitem{Popescu1}
D.~Popescu.
\newblock General {N}\'eron desingularization.
\newblock {\em Nagoya Math. J.}, 100:97--126, 1985.

\bibitem{Popescu2}
D.~Popescu.
\newblock General {N}\'eron desingularization and approximation.
\newblock {\em Nagoya Math. J.}, 104:85--115, 1986.

\bibitem{Qui}
D.~Quillen.
\newblock On the cohomology and {$K$}-theory of the general linear groups over
  a finite field.
\newblock {\em Ann. of Math. (2)}, 96:552--586, 1972.

\bibitem{QuillenHA}
D.~G. Quillen.
\newblock {\em Homotopical algebra}.
\newblock Lecture Notes in Mathematics, No. 43. Springer-Verlag, Berlin-New
  York, 1967.

\bibitem{Raynaud-henselian}
M.~Raynaud.
\newblock {\em Anneaux locaux hens\'{e}liens}.
\newblock Lecture Notes in Mathematics, Vol. 169. Springer-Verlag, Berlin-New
  York, 1970.

\bibitem{Rei87}
L.~Reid.
\newblock {$N$}-dimensional rings with an isolated singular point having
  nonzero {$K_{-N}$}.
\newblock {\em $K$-Theory}, 1(2):197--205, 1987.

\bibitem{Scholze}
P.~Scholze.
\newblock Perfectoid spaces.
\newblock {\em Publ. Math. Inst. Hautes \'Etudes Sci.}, 116:245--313, 2012.

\bibitem{Condensed}
P.~Scholze.
\newblock Lectures on condensed mathematics.
\newblock 2019.
\newblock Available at
  \url{https://www.math.uni-bonn.de/people/scholze/Condensed.pdf}.

\bibitem{Serresurvey}
J.-P. Serre.
\newblock Arithmetic groups.
\newblock In {\em Homological group theory ({P}roc. {S}ympos., {D}urham,
  1977)}, volume~36 of {\em London Math. Soc. Lecture Note Ser.}, pages
  105--136. Cambridge Univ. Press, Cambridge-New York, 1979.

\bibitem{Shiho2007}
A.~Shiho.
\newblock On logarithmic {H}odge-{W}itt cohomology of regular schemes.
\newblock {\em J. Math. Sci. Univ. Tokyo}, 14(4):567--635, 2007.

\bibitem{Arf}
V.~P. Snaith.
\newblock {\em Stable homotopy around the {A}rf-{K}ervaire invariant}, volume
  273 of {\em Progress in Mathematics}.
\newblock Birkh\"auser Verlag, Basel, 2009.

\bibitem{stacks-project}
T.~{Stacks Project Authors}.
\newblock {Stacks Project}.
\newblock \url{http://stacks.math.columbia.edu}, 2017.

\bibitem{Suslinalgclosed}
A.~Suslin.
\newblock On the {$K$}-theory of algebraically closed fields.
\newblock {\em Invent. Math.}, 73(2):241--245, 1983.

\bibitem{Suslinlocal}
A.~A. Suslin.
\newblock On the {$K$}-theory of local fields.
\newblock In {\em Proceedings of the {L}uminy conference on algebraic
  {$K$}-theory ({L}uminy, 1983)}, volume~34, pages 301--318, 1984.

\bibitem{ThomasonICM}
R.~W. Thomason.
\newblock The local to global principle in algebraic {$K$}-theory.
\newblock In {\em Proceedings of the {I}nternational {C}ongress of
  {M}athematicians, {V}ol.\ {I}, {II} ({K}yoto, 1990)}, pages 381--394. Math.
  Soc. Japan, Tokyo, 1991.

\bibitem{TT90}
R.~W. Thomason and T.~Trobaugh.
\newblock Higher algebraic {$K$}-theory of schemes and of derived categories.
\newblock In {\em The {G}rothendieck {F}estschrift, {V}ol.\ {III}}, volume~88
  of {\em Progr. Math.}, pages 247--435. Birkh\"auser Boston, Boston, MA, 1990.

\bibitem{vanderKallen1977}
W.~van~der Kallen.
\newblock The {$K\sb{2}$} of rings with many units.
\newblock {\em Ann. Sci. \'Ecole Norm. Sup. (4)}, 10(4):473--515, 1977.

\bibitem{vdK}
W.~van~der Kallen.
\newblock Homology stability for linear groups.
\newblock {\em Invent. Math.}, 60(3):269--295, 1980.

\bibitem{vanderKallen1986}
W.~van~der Kallen.
\newblock Descent for the {$K$}-theory of polynomial rings.
\newblock {\em Math. Z.}, 191(3):405--415, 1986.

\bibitem{Wei91}
C.~A. Weibel.
\newblock Pic is a contracted functor.
\newblock {\em Invent. Math.}, 103(2):351--377, 1991.

\bibitem{KBook}
C.~A. Weibel.
\newblock {\em The {$K$}-book}, volume 145 of {\em Graduate Studies in
  Mathematics}.
\newblock American Mathematical Society, Providence, RI, 2013.
\newblock An introduction to algebraic $K$-theory.

\end{thebibliography}

\end{document}